%
%

\documentclass[a4paper,10pt,leqno]{amsart}
        \title{Conformal nets I: coordinate-free nets}     
               \author{Arthur Bartels}
      \address{WWU M\"unster\\
               Mathematisches Institut\\
               Einsteinstr.~62,
               48149 M\"unster, Germany}
        \email{bartelsa@math.uni-muenster.de}
      \urladdr{http://www.math.uni-muenster.de/u/bartelsa}
       \author{Christopher L. Douglas} 
      \address{Mathematical Institute\\ 24Ð29 St Giles'\\ Oxford\\ OX1 3LB, United Kingdom}
        \email{cdouglas@maths.ox.ac.uk}
      \urladdr{http://people.maths.ox.ac.uk/cdouglas}
      \author{Andr{\'e} Henriques}
      \address{Mathematisch Instituut\\
               Universiteit Utrecht, Postbus 80.010\\
               3508 TA Utrecht, The Netherlands}
        \email[Corresponding author]{a.g.henriques@uu.nl}
      \urladdr{http://www.staff.science.uu.nl/\!\raisebox{-1mm}{~}\!henri105}

\usepackage{hyperref}
\usepackage{enumerate,amssymb}
\usepackage[arrow,curve,matrix,tips,2cell]{xy}
  \SelectTips{eu}{10} \UseTips
  \UseAllTwocells
\usepackage{tikz}                \usetikzlibrary{calc}               \usetikzlibrary{matrix}		\usetikzlibrary{patterns}
\usepackage{pdfcolmk}
\usepackage{calc}
\usepackage{multirow}
\usepackage{stmaryrd}


       \newcommand{\cala}{\mathcal{A}}
       \newcommand{\calb}{\mathcal{B}}
  \newcommand{\IC}{\mathbb{C}}     \newcommand{\calc}{\mathcal{C}}
  \newcommand{\ID}{\mathbb{D}}     
       \newcommand{\cale}{\mathcal{E}}

       \newcommand{\cali}{\mathcal{I}}

       \newcommand{\call}{\mathcal{L}}
  \newcommand{\IM}{\mathbb{M}}     
  \newcommand{\IN}{\mathbb{N}}     
       \newcommand{\calo}{\mathcal{O}}

  \newcommand{\IR}{\mathbb{R}}     
  \newcommand{\IS}{\mathbb{S}}

  \newcommand{\IZ}{\mathbb{Z}}     

  \newcommand{\bfA}{{\mathbf A}}      
  \newcommand{\bfB}{{\mathbf B}}

  \newcommand{\bfJ}{{\mathbf J}}


  \setlength{\marginparsep}{0.5cm}
  \setlength{\marginparwidth}{3.6cm}
  \definecolor{AHcolor}{rgb}{0.5,0.0,0.5}   
  \definecolor{CDcolor}{rgb}{0.7,0.0,0.3}   
  \definecolor{ABcolor}{rgb}{0.2,0.8,0.2}   


  \newcommand{\tikzmath}[2][]
     {\vcenter{\hbox{\begin{tikzpicture}[#1]#2
                     \end{tikzpicture}}}
     }


  \newcommand{\displscale}{.05}

  \definecolor{spacecolor}{gray}{.7}
  \definecolor{antispacecolor}{gray}{.45}


  \theoremstyle{plain}
  \newtheorem{theorem}{Theorem}[section]
  
  \newtheorem{lemma}[theorem]{Lemma}
  \newtheorem{corollary}[theorem]{Corollary}
  \newtheorem{proposition}[theorem]{Proposition}

  \newtheorem*{theorem*}{Theorem}

  \theoremstyle{definition}
  \newtheorem{definition}[theorem]{Definition}
  
  \newtheorem{warning}[theorem]{Warning}
  
  \newtheorem{convention}[theorem]{Convention}
  \newtheorem*{construction}{Construction}

  \theoremstyle{remark}
  
  \newtheorem{remark}[theorem]{Remark}
  \newtheorem{example}[theorem]{Example}
  \newtheorem*{question}{Question}
  \newtheorem*{note}{Note}

  \makeatletter\let\c@equation=\c@theorem\makeatother

  \DeclareRobustCommand{\SkipTocEntry}[5]{}

  
  \DeclareMathOperator{\Ad}{Ad}
  
  \DeclareMathOperator{\Aut}{Aut}
  \DeclareMathOperator{\colim}{colim}
  \DeclareMathOperator{\Conf}{Conf}
  
  \DeclareMathOperator{\Diff}{Diff}

  \DeclareMathOperator{\id}{id}
  
  \DeclareMathOperator{\Inn}{Inn}

  \DeclareMathOperator{\PSL}{PSL}
  \DeclareMathOperator{\PU}{PU}
  \DeclareMathOperator{\PN}{PN}
  \DeclareMathOperator{\Rep}{Sect}
  \DeclareMathOperator{\U}{U}
  
  \DeclareMathOperator{\supp}{supp}
  
  \DeclareMathOperator{\SU}{SU}
  \DeclareMathOperator{\tr}{tr}


  \newcommand{\INT}{{\mathsf{INT}}}
  \newcommand{\VN}{{\mathsf{VN}}}


  \newcommand{\alg}{{\mathit{alg}}}
  
  \newcommand{\op}{{\mathit{op}}}


  \newcommand{\x}{{\times}}
  \newcommand{\ox}{{\otimes}}
  
  \newcommand{\dd}{{\partial}}

  \newcommand{\nid}{\noindent}


\begin{document}

\begin{abstract}
We describe a coordinate-free perspective on conformal nets, as functors from intervals to von Neumann algebras. We discuss an operation of fusion of intervals and observe that a conformal net takes a fused interval to the fiber product of von Neumann algebras. Though coordinate-free nets do not a priori have vacuum sectors, we show that there is a vacuum sector canonically associated to any circle equipped with a conformal structure.  This is the first in a series of papers constructing a 3-category of conformal nets, defects, sectors, and intertwiners.
\end{abstract}

\maketitle


\tableofcontents

\section*{Introduction}
In 
their work on algebraic quantum field theory, Haag and Kastler studied nets of operator algebras.
These are covariant functors from the category of open subsets of space-time to that of $C^*$-algebras or von Neumann algebras~\cite{Doplicher-Haag-Roberts(fields-observablesI),Doplicher-Haag-Roberts(fields-observablesII),Haag(Local-quantum-physics),Haag-Kastler(algebraic-approach)}.

For two-dimensional conformal field theory, one takes space-time to be two-dimensional Minkowski space $\IM^2$, or a compactification thereof,
and requires the net to be covariant with respect to the group of conformal diffeomorphisms of~$\IM^2$.
Since that group contains $\Diff(\IR) \times \Diff(\IR)$ as a subgroup of finite index,
the natural next step is to consider nets of von Neumann algebras on the real line or the circle,
which are covariant with respect to diffeomorphisms of the line or the circle.
The latter correspond to chiral conformal field theories, and are called conformal nets.
They have been studied intensively---see for example the papers~\cite{Brunetti-Guido-Longo(1993modular+duality-in-CQFT),Buchholz-Mack-Todorov(1988current-alg),Buchholz-Schulz-Mirbach(Haag-duality-in-conformal-quantum-field-theory),Gabbiani-Froehlich(OperatorAlg-CFT), Wassermann(ICM)}.
Classification results and many more references can be found in Kawahigashi--Longo~\cite{Kawahigashi-Longo(2004classification)}.

Our interest in conformal nets was prompted by the following question of Stephan Stolz and Peter Teichner,
which arose in connection with their ongoing program to construct elliptic cohomology using local quantum field theories \cite{Stolz-Teichner(2004what-is), Stolz-Teichner(SUSY-field-theories-and-cohomology)}.
Recall that von Neumann algebras form the objects of a 2-category,
where the morphisms are bimodules, and the 2-morphisms are maps of bimodules.
Given an $n$-category $\calc$ equipped with a unit object $1\in \calc$, the $(n-1)$-category $\call:=\mathrm{Hom}_\calc(1,1)$ is called the loops on $\calc$.
The $n$-category $\calc$ is then said to deloop $\call$.

\begin{question} (Stolz-Teichner, $2004$)
Does there exist an interesting 3-category that deloops the $2$-category of von Neumann algebras?
\end{question}

\noindent 
Here, by an ``interesting'' 3-category, they meant a 3-category other than the obvious one-object 3-category defined by $\mathit{Ob}(\calc)=\{1\}$ and $\mathrm{Hom}_\calc(1,1)=\{$von Neumann algebras$\}$.
Actually, given that von Neumann algebras form a symmetric monoidal 2-category, one should ask for a 
symmetric monoidal 3-category that deloops von Neumann algebras.
Some axiomatizations of the notion symmetric monoidal 3-category were presented in \cite{Douglas-Henriques(Internal-bicategories)}.
One of them is the notion of an internal bicategory in the 2-category of symmetric monoidal categories.

The present paper is the first of a series~\cite{BDH(all-together)},
the goal of which is to provide a positive answer to the above question of Stolz and Teichner.
Namely, we will show that conformal nets form a symmetric monoidal 3-category (an internal bicategory in the 
2-category of symmetric monoidal categories) that deloops the symmetric monoidal 2-category of von Neumann algebras.
This first paper of our series contains our definition of conformal nets.
We also treat the notion of sectors of conformal nets, otherwise known as representations.

Our definition of conformal nets 
is different from the standard definition in two important ways.
The first difference is that we 
do not include a positive-energy assumption.
This is important for the following reason.
We want certain objects in our $3$-category to be dualizable.
The natural candidate for the dual of a net $\cala$ is its complex conjugate $\bar \cala$,
which is almost never positive-energy.
We expect our conformal nets include the usual ones (Virasoro, loop groups, free boson, orbifolds, and cosets, among others), but
we also expect them to include new examples which do not have positive-energy, 
such as the spatial slices of nets on $\IM^2$ studied in Kawahigashi--Longo--M\"uger~\cite{Kawahigashi-Longo-Mueger(2001multi-interval)}.

The second difference is that we expand the category of 
intervals on which a net is defined, and expand the category of 
von Neumann algebras in which they take values.
Traditionally, conformal nets assign to a subinterval of the circle
$S^1$ a von Neumann subalgebra of $\bfB(H_0)$ 
for a fixed ``vacuum" Hilbert space $H_0$.    
In our definition, a ``coordinate-free" conformal net 
assigns abstract von Neumann algebras to abstract intervals.
For local quantum field theory such a coordinate-free point of view
has been introduced in Brunetti--Fredenhagen--Verch~\cite{Brunetti-Fredenhagen-Verch(new-paradigm)}. 
The coordinate-free definition 
has the advantage that there is a priori no Hilbert space 
as part of the structure of a net.
(In the hierarchy of our $3$-category, Hilbert spaces appear later
as $2$-morphisms; the vacuum Hilbert space of a net $\cala$ will be the 
identity $2$-morphism of the identity $1$-morphism of the net.  In particular, we prove that the vacuum Hilbert space with its action of the diffeomorphism group of the circle can be reconstructed from the von Neumann algebras associated to intervals; our notion of conformal nets is therefore closely related to existing notions in the literature~\cite{Kawahigashi-Longo-Mueger(2001multi-interval)}.)
Moreover, we can glue abstract intervals to one another, but not arbitrary subintervals of the
circle: the gluing of two subintervals is typically no longer a subinterval.
This will be of crucial importance for the composition of $1$-morphisms, in the third paper of our series.

\subsection*{Outline}
\addtocontents{toc}{\SkipTocEntry}
In the first section, we introduce our definition of conformal nets and study some of their basic properties.
We discuss the category $\Rep(\cala)$ of sectors of a conformal net $\cala$, and define the monoidal structure on it.
We also introduce a coordinate-free version of the category of sectors, $\Rep_S(\cala)$, that depends on the choice of a circle~$S$.

In Section~\ref{sec:covariance}, we study the vacuum sector $H_0(S,\cala)$ of a conformal net $\cala$, which is the unit object in $\Rep_S(\cala)$.
In Theorem~\ref{thm: Vaccum Sector}, we show that if the circle $S$ is equipped with a conformal structure, then the vacuum sector is well defined up to unique unitary isomorphism.
The vacuum sector is covariant for conformal maps of circles, and projectively covariant for diffeomorphisms.

Section~\ref{sec:The finiteness condition} concerns finiteness properties of the category of sectors. 
We start by discussing the notion of finite index for conformal nets 
in terms of a certain minimal index of the vacuum sector.
We describe the characterization of finite index nets in terms
of the category of sectors: the conformal net $\cala$ has 
finite index if and only if the category of sectors $\Rep(\cala)$ is fusion. 
This provides alternative proofs of results of
Kawahigashi--Longo--M\"uger~\cite{Kawahigashi-Longo-Mueger(2001multi-interval)}
and Longo-Xu~\cite{Longo-Xu(dichotomy)}.

Finally, in Section~\ref{sec:pos-energy-nets}, we relate our definition of conformal nets to other definitions in the literature.
We start by presenting a circle-based definition that is in principle equivalent to our usual coordinate-free definition.
We then discuss the more classical definition of conformal nets, which includes the positive-energy condition and is not equivalent to our notion.
We call these classical nets ``positive-energy nets", and check that positive-energy nets yield examples of conformal nets in our sense.
We review the construction of positive-energy nets from loop groups, and collect the necessary results from the literature to show that they yield coordinate-free nets.

The appendix contains a brief  summary of definitions and results 
about von Neumann algebras, Connes fusion, dualizability, 
statistical dimension, and Haagerup's $u$-topology. 
With the exception of the last subsection, these topics are discussed in more detail in our paper~\cite{BDH(Dualizability+Index-of-subfactors)}.

\subsection*{Acknowledgements} 
\addtocontents{toc}{\SkipTocEntry}
We would like to thank Stephan Stolz and Peter Teichner for their continual support during this project and for having formulated the question that led to it, and to thank Michael M\"uger for his invaluable guidance regarding conformal nets. We thank Sebastiano Carpi for pointing out a missing argument and
the reference~\cite{Longo-Xu(dichotomy)}, and Marcel Bischoff for pointers to the literature.
The last author also would like to thank Michael Hopkins for his suggestion to read Wassermann's articles.  The first author was supported by the Sonderforschungsbereich 878, and the second author was partially supported by a Miller Research Fellowship.


\section{Conformal nets} \label{sec:nets}

\subsection{Definition of conformal nets}\label{sec: Definition of conformal nets}

  All $1$-manifolds are compact, smooth, and oriented.
  The standard circle $S^1:=\{z\in \IC:|z|=1\}$ is the set of 
  complex numbers of modulus one, equipped with the 
  counter-clockwise orientation.
  By a \emph{circle}, we shall mean a smooth manifold $S$ that is 
  diffeomorphic to the standard circle $S^1$.
  Similarly, by an \emph{interval}, we shall mean a smooth 1-manifold that 
  is diffeomorphic to the standard interval $[0,1]$. 
  Equivalently, an interval is a compact non-empty $1$-manifold with 
  boundary that is connected and simply connected.
  For a 1-manifold $I$, we denote by $\bar I$ the same manifold equipped 
  with the opposite orientation, by $\Diff(I)$ the group of diffeomorphisms of $I$, and by 
  $\Diff_+(I)$ the subgroup of orientation-preserving diffeomorphisms.
  Let $\INT$ be the category whose objects are intervals and whose 
  morphisms are embeddings, not necessarily orientation-preserving.
  We also let $\VN$ be the category whose objects are von Neumann algebras with separable preduals, and 
  whose morphisms are $\IC$-linear homomorphisms, 
  and $\IC$-linear anti-homomorphisms\footnote{An anti-homomorphism is a  
  unital map satisfying $f(ab) = f(b)f(a)$.}.

The following notion of conformal nets differs from the standard definition in the literature: our nets are ``coordinate-free'', in the sense that they take values on all abstract intervals, not only on subintervals of the standard circle; our nets also need not satisfy the usual positive-energy condition.

  A \emph{net} is a covariant functor $\cala \colon \INT \to \VN$ taking 
  orientation-preserving embeddings to injective homomorphisms and 
  orientation-reversing embeddings to injective antihomomorphisms.
  A net is said to be \emph{continuous} if for any intervals $I$ and $J$, 
  the natural map $\mathrm{Hom}_{\INT}(I,J)\to \mathrm{Hom}_{\VN}(\cala(I),\cala(J))$ is continuous 
  for the $\mathcal C^\infty$ topology on $\mathrm{Hom}_{\INT}(I,J)$ and 
  Haagerup's $u$-topology on 
  $\mathrm{Hom}_{\VN}(\cala(I),\cala(J))$, reviewed in the appendix.
  Given a subinterval $I \subseteq K$, we will 
  often not distinguish between $\cala(I)$ and its image in $\cala(K)$.

\begin{definition}
\label{def:conformal-net}
A \emph{conformal net} is a continuous net $\cala$ subject to the following conditions.
Here, $I$ and $J$ are subintervals of an interval $K$: 
\begin{enumerate}
\item \emph{Locality:} If $I,J\subset K$ have disjoint interiors, then $\cala(I)$ and $\cala(J)$ are commuting subalgebras of $\cala(K)$.
\item \emph{Strong additivity:} If $K = I \cup J$, then $\cala(K)$ is generated as a von Neumann algbera by its two subalgebras: $\cala(K) = \cala(I) \vee \cala(J)$.
\item \emph{Split property:} If $I,J\subset K$ are disjoint, then the map from the algebraic tensor product $\cala(I) \ox_{\alg} \cala(J) \to \cala(K)$ extends to a map from the spatial tensor product
$\cala(I) \, \bar{\ox} \, \cala(J) \to \cala(K)$.
\item \emph{Inner covariance:} 
  If $\varphi\in\Diff_+(I)$ restricts to the identity in a neighborhood of 
  $\partial I$, then $\cala(\varphi)$ is an inner automorphism 
  of $\cala(I)$. 
  (A unitary $u \in \cala(I)$ with $\Ad(u) = \cala(\varphi)$
  is said to \emph{implement} $\varphi$.)
\item \label{def:conformal-net:vacuum} 
\emph{Vacuum sector:} 
Suppose that $J \subsetneq I$ contains the boundary point
$p \in \dd I$, and let $\bar{J}$ denote $J$ with the reversed 
orientation; $\cala(J)$ acts on $L^2(\cala(I))$ 
via the left action of $\cala(I)$, and 
$\cala(\bar{J}) \cong \cala(J)^\op$ acts on $L^2(\cala(I))$ 
via the right action of $\cala(I)$.
In that case, we require that the action of
$\cala(J) \ox_{\alg} \cala( \bar{J} )$ on $L^2(\cala(I))$
extends to an action of $\cala(J \cup_p \bar{J})$:
\begin{equation}\label{eq: Vaccum sector axiom for nets}
\qquad\tikzmath{
\matrix [matrix of math nodes,column sep=1cm,row sep=5mm]
{ 
|(a)| \cala(J) \ox_{\alg} \cala( \bar{J} ) \pgfmatrixnextcell |(b)| \bfB(L^2\cala(I))\\ 
|(c)| \cala(J \cup_p \bar{J}) \\ 
}; 
\draw[->] (a) -- (b);
\draw[->] (a) -- (c);
\draw[->,dashed] (c) -- (b);
}
\end{equation}
Here, $J \cup_p \bar{J}$ is equipped with any smooth structure extending the given smooth structures on $J$ and $\bar J$, and for which the orientation-reversing involution that exchanges $J$ and $\bar{J}$ is smooth.
\end{enumerate}
\end{definition}

\noindent
Note that $\cala(\bar J)$ is canonically isomorphic to $\cala(J)^\op$ via the antihomomorphism $\cala(\mathrm{Id}_J):\cala(J)\to \cala(\bar J)$.
That fact was used above in the vacuum axiom in order to identify $\cala(\bar{J})$ with $\cala(J)^\op$.
Also, the proper way of visualizing the interval $J \cup_p \bar{J}$ is as submanifold of the circle $S:=I\cup_{\partial I}\bar I$:
\[
S\,:\,\,\,\tikzmath[scale=.07]
{ \useasboundingbox (-20,-18) rectangle (20,18); \draw[->] (-.7,15) -- (-.8,15);\draw[->] (.7,-15) -- (.8,-15); \draw (0,0) circle (15) (-16.5,0) -- (-13.5,0) (14,0) -- (17,0)
(16,0) arc (0:55:16) (16,0) arc (0:-55:16) (-6,9) node {$I$} (-6,-8.5) node {$\bar I$} (19.5,0) node {$p$} (17,9) node {$J$} (17,-9) node {$\bar{J}$};} 
\]
Here, the circle $S$ is equipped with a smooth structure such that the three embeddings $I\hookrightarrow S$, $\bar I\hookrightarrow S$, $J \cup_p \bar{J}\hookrightarrow S$ are smooth,
and the involution $S\to S$ that exchanges $I$ with $\bar I$ is smooth.

\begin{example}\label{ex: trivial conf net}
The trivial conformal net $\underline \IC$ is given by $\underline \IC(I)=\IC$ for any interval $I$,
and $\underline \IC(\iota)=\mathrm{Id}_\IC$ for any embedding of intervals $\iota$.
\end{example}

\noindent Some more substantial examples of conformal nets are discussed in Section \ref{subsec:loop-SU(N)-net}.

Given conformal nets $\cala$ and $\calb$,
their direct sum is given by $ (\cala \oplus \calb) (I) := \cala(I) \oplus \calb(I)$,
and their tensor product is $ (\cala \otimes \calb) (I) := \cala(I) \,\bar\otimes\, \calb(I)$.
Most of the axioms for $\cala \oplus \calb$ and $\cala \otimes \calb$ are straightforward. We just check the vacuum axiom for $\cala \otimes \calb$:
the Hilbert spaces $L^2(\cala(I)\,\bar\otimes\,\calb(I))$ and $L^2(\cala(I))\otimes L^2(\calb(I))$ are isomorphic as $\cala(I) \,\bar\otimes\, \calb(I)$-bimodules,
so the action of $\cala(J)\otimes_\alg\calb(J)\otimes_\alg \cala(\bar J)\otimes_\alg\calb(\bar J)$ extends to $\cala(J\cup_p\bar J) \,\bar\otimes\, \calb(J\cup_p\bar J)$.
This provides the desired extension of the action of $(\cala\otimes\calb)(J)\otimes_\alg (\cala\otimes\calb)(\bar J)$ on $L^2((\cala\otimes\calb)(I))$.

We record the following easy result for future use:

\begin{lemma} \label{lem:irrelevance-of-points}
Let $I_n\subset I$ be an increasing sequence of intervals whose union
is the interior of~$I$.
Then $\bigvee_n \cala(I_n) = \cala(I)$.
\end{lemma}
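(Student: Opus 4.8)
The plan is to prove the nontrivial inclusion $\cala(I)\subseteq B:=\bigvee_n\cala(I_n)$ (the reverse inclusion is immediate, since each $\cala(I_n)$ is by definition a subalgebra of $\cala(I)$). The idea is to approximate the identity embedding $\id_I$ by embeddings of $I$ into itself whose images are small compact subintervals of $\mathrm{int}(I)$, hence contained in one of the $I_n$, and then to use continuity of the net together with $\sigma$-weak closedness of the von Neumann algebra $B$. Concretely, I would fix a diffeomorphism $I\cong[0,1]$ and, for $m\ge 1$, let $f_m\colon I\to I$ be the transport of the affine map $x\mapsto\tfrac{1+mx}{m+2}$. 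Each $f_m$ is an orientation-preserving smooth embedding, hence a morphism in $\INT$, with image the compact subinterval corresponding to $[\tfrac1{m+2},\tfrac{m+1}{m+2}]\subset(0,1)=\mathrm{int}(I)$. Since the $I_n$ form an increasing sequence of intervals with $\bigcup_n I_n=\mathrm{int}(I)$, the two endpoints of $f_m(I)$ both lie in some $I_{n(m)}$, and as $I_{n(m)}$ is connected it contains all of $f_m(I)$; thus $f_m$ factors as $I\to I_{n(m)}\hookrightarrow I$. Finally $f_m\to\id_I$ in the $\mathcal C^\infty$ topology on $\mathrm{Hom}_{\INT}(I,I)$, since $f_m-\id_I\to 0$ uniformly, $f_m'=\tfrac m{m+2}\to 1$, and all higher derivatives vanish identically.

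Now apply the functor $\cala$. By functoriality the image of $\cala(f_m)\colon\cala(I)\to\cala(I)$ is the copy of $\cala(f_m(I))$ inside $\cala(I)$, which is contained in $\cala(I_{n(m)})$ and hence in $B$. On the other hand, since $\cala$ is a continuous net, $\cala(f_m)\to\cala(\id_I)=\id_{\cala(I)}$ in Haagerup's $u$-topology on $\mathrm{Hom}_{\VN}(\cala(I),\cala(I))$; in particular $\cala(f_m)(a)\to a$ in the $\sigma$-weak topology for every $a\in\cala(I)$, because the $u$-topology is at least as fine as the topology of pointwise $\sigma$-weak convergence (see the appendix). As $B$ is a von Neumann subalgebra of $\cala(I)$, it is $\sigma$-weakly closed, and since each $\cala(f_m)(a)$ lies in $B$, so does the limit $a$. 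Therefore $\cala(I)\subseteq B$, which is the assertion.

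There is no real obstacle here — this is an ``easy result for future use'', and once one commits to approximating $\id_I$ the argument is essentially forced. The only two points that require a moment's care are: one must approximate $\id_I$ by \emph{non-surjective} embeddings of $I$ into itself rather than by diffeomorphisms (a diffeomorphism of $I$ has image all of $I$ and can never land inside a proper subinterval $I_n$), so the relevant continuity is that of the net on embeddings, not merely on diffeomorphisms; and one must record the purely formal fact that $u$-topology convergence of normal homomorphisms implies $\sigma$-weak convergence on each fixed element, which is precisely what permits interchanging the limit with the closure defining $B$.
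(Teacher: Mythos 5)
Your proof is correct and takes essentially the same route as the paper: the paper likewise approximates $\id_I$ by embeddings $\varphi_n\colon I\to I$ with $\varphi_n(I)=I_n$ tending to $\id_I$, and writes $a=\lim_n\cala(\varphi_n)(a)\in\bigvee_n\cala(I_n)$. Your version merely makes the choice of embeddings explicit (requiring only $f_m(I)\subseteq I_{n(m)}$ rather than equality) and spells out why $u$-topology convergence suffices to pass to the $\sigma$-weakly closed algebra $B$.
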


\begin{proof}
Let $\varphi_n:I \to I$, $\varphi_n(I)=I_n$ be a sequence of
embeddings that tends to $\id_I$.
Then every element $a\in \cala(I)$ can be written as
$\lim_n\cala(\varphi_n)(a)\in\bigvee_n\cala(I_n)$.
\end{proof}

\begin{remark}
The continuity condition in the definition of conformal nets is equivalent to the following slightly weaker condition.
It is enough that for any interval $I$, the natural map $\Diff_+(I)\to \mathrm{Aut}(\cala(I))$ be continuous for the $\mathcal C^\infty$ topology on $\Diff_+(I)$ and 
the $u$-topology on $\mathrm{Aut}(\cala(I))$, see Lemma \ref{lem: equiv def of continuity axiom}.
\end{remark}

\begin{remark}
It is possible that the condition that the algebras $\cala(I)$ have separable preduals follows from Definition~\ref{def:conformal-net},
more specifically, from the split property axiom---compare with~\cite[Proposition~1.6]{Doplicher-Longo(Standard-and-split-inclusions-of-von-Neumann-algebras)}.
\end{remark}

\begin{definition}
  \label{def:irreducilbe-nets}
   A conformal net $\cala$ is called \emph{irreducible} 
   if every algebra $\cala(I)$ is a factor. 
   A direct sum of finitely many irreducible conformal nets is 
   called \emph{semisimple}.
\end{definition}

\subsection{Sectors of conformal nets}
\label{subsec:sectors-for-nets}

In the 3-category that our series of papers \cite{BDH(all-together)} constructs, $\cala$-sectors correspond to 2-morphisms
\[
\tikzmath{
\node (a) at (0,0) {$\cala$};\node (b) at (2.1,0) {$\cala$};
\draw[->] (a) to[bend right = 30]node[below, xshift=.7]{$\scriptstyle 1_\cala$} (b);
\draw[->] (a) to[bend left = 30]node[above, xshift=.7]{$\scriptstyle 1_\cala$} (b);
\node at (1.07,0) {$\Downarrow$};
\node at (2.4,-.1) {.};
}
\]
Here, we shall discuss $\cala$-sectors without any 
reference to the 3-category.

\begin{definition}
\label{def:sector-of-a-conf-net}
Let $S$ be a circle and let $\cala$ be a conformal net.
An $S$-sector of $\cala$ (also called an $\cala$-sector on $S$) consists of a Hilbert space $H$ and a collection of homomorphisms
\[
\rho_I \colon \cala(I) \to \bfB(H),\quad I\subset S
\]
subject to the compatibility condition $\rho_I|_{\cala(J)}=\rho_J$ whenever $J \subset I$.
The category of $S$-sectors of $\cala$ is denoted $\Rep_S(\cala)$.

For the standard circle $S^1:=\{z\in\IC\,:\,|z|=1\}$, an $S^1$-sector of $\cala$ is simply called a sector of $\cala$, or an $\cala$-sector.
The category of $\cala$-sectors is denoted  $\Rep(\cala)$.
\end{definition}

Given an interval $I$, let $\Diff_0(I)$ denote the group of diffeomorphisms that restrict to the identity near the boundary of $I$.

\begin{lemma}\label{lem: open cover of circle => sector}
Let $S$ be a circle, and let $I_i\subset S$ be intervals whose interiors cover~$S$.
Let $\rho_{i} \colon \cala(I_i) \to \bfB(H)$ be actions 
subject to the following two conditions:
\begin{enumerate}
 \item $\rho_{i}|_{\cala(I_i\cap I_j)}=\rho_{j}|_{\cala(I_i\cap I_j)}$,
 \item if $J\subset I_i$ and $K\subset I_j$ are disjoint, then $\rho_{i}(\cala(J))$ commutes with $\rho_{j}(\cala(K))$.
\end{enumerate}
These actions extend uniquely to the structure of an $S$-sector on $H$.
\end{lemma}

\begin{proof}
For every interval $J\subset S$, we will construct an action
\[
\hat\rho_J:\cala(J)\to\bfB(H),
\]
uniquely determined by the requirement that
$\hat\rho_J|_{\cala(J \cap I_\ell)}=\rho_\ell|_{\cala(J \cap I_\ell)}$ for every $I_\ell$ in our cover.

Fix an element $I_0$ of our cover, with corresponding action $\rho_0$ of $\cala(I_0)$ on $H$.
Pick a diffeomorphism $\varphi=\varphi_n\circ\ldots\circ \varphi_1$ with $\varphi_s\in \Diff_0(I_{i_s})$, for $I_{i_s}$ in our cover, such that $\varphi(J)\subset I_0$.
Let $u_s\in \cala(I_{i_s})$ be unitaries implementing $\varphi_s$.
Identifying the elements $u_s$ with their images in $\bfB(H)$, we set
\[
\qquad\hat\rho_J(a):=u^*_1\ldots u^*_n\rho_0\big(\cala(\varphi)(a)\big)u_n\ldots u_1.
\]
For every sufficiently small interval $K\subset J \cap I_\ell$, we will show that
\begin{equation}\label{eq:diff trick for nets - bis}
\hat\rho_J|_{\cala(K)}=\rho_\ell|_{\cala(K)}.
\end{equation}
Here, `sufficiently small' means that the intervals $K_s:=\varphi_s(\ldots(\varphi_1(K)))$ should be contained in some $I_{k_s}$ in our cover,
and that for every $s,s'\le n$, either $K_s\subset I_{i_{s'}}$ or $K_s\cap \supp(\varphi_{s'})=\emptyset$.

For every $s\le n$, we claim that
\begin{equation}\label{eq:diff trick for nets - ter}
\quad\qquad u^*_1\ldots u^*_s\rho_{k_s}\big(\cala(\varphi_s\circ\ldots\circ \varphi_1)(a)\big)u_s\ldots u_1 = \rho_\ell(a) \qquad \forall a\in \cala(K).
\end{equation}
Note that \eqref{eq:diff trick for nets - bis} is the special case $s=n$ of this equation.
We prove \eqref{eq:diff trick for nets - ter} by induction on $s$.
The base case ($s=0$, $k_0=\ell$) is trivial.
The induction step reduces to the equation
\[
\rho_{k_s}\big(\cala(\varphi_{s})(b)\big) = u_{s} \rho_{k_{s-1}}(b)u^*_s,
\]
with $b=\cala(\varphi_{s-1}\circ\ldots\circ \varphi_1)(a)$.
Recall that $b\in\cala(K_{s-1})$, $u_s\in \cala(I_{i_s})$, and that, by assumption, either $K_{s-1}\subset I_{i_s}$ or $K_{s-1}\cap \supp(\varphi_s)=\emptyset$.
In the first case, we have
\[
u_s \rho_{k_{s-1}}(b)u^*_s  = u_s \rho_{i_s}(b)u^*_s = \rho_{i_s}(u_s b u^*_s) = \rho_{i_s}\big(\cala(\varphi_s)(b)\big) = \rho_{k_s}\big(\cala(\varphi_s)(b)\big).
\]
In the second case, by condition (ii), the elements $\rho_{k_{s-1}}(b)$ and $u_s$ commute in $\bfB(H)$.
It follows that
\[
u_s \rho_{k_{s-1}}(b)u^*_s = \rho_{k_{s-1}}(b) = \rho_{k_s}(b) = \rho_{k_s}\big(\cala(\varphi_s)(b)\big).
\]
This finishes the proof of \eqref{eq:diff trick for nets - ter} and hence of \eqref{eq:diff trick for nets - bis}.

Finally, by the strong additivity axiom, it follows from \eqref{eq:diff trick for nets - bis} that
$\hat\rho_J(a)=\rho_\ell(a)$ for every $a\in \cala(J \cap I_\ell)$.
\end{proof}

Let $S$ be a circle, let $j\in\Diff_-(S)$ be an orientation-reversing involution fixing the boundary $\dd I$ of some interval $I\subset S$, 
and let $I':=j(I)$.
The Hilbert space $H:=L^2(\cala(I))$ is equipped with:
\begin{itemize}
\item for each $J\subset I$, an action
\[
\rho_J:\cala(J)\hookrightarrow \cala(I)\xrightarrow{\,\,\,\text{left action of $\cala(I)$ on $H$}\,\,\,} \bfB(H)
\]
of the algebra $\cala(J)$.
\item for each $J\subset I'$, an action
\[
\qquad\rho_J:\cala(J)\hookrightarrow \cala(I')\xrightarrow{\cala(j)}\cala(I)^\op \xrightarrow{\,\,\text{right action of $\cala(I)$ on $H$}\,\,\,} \bfB(H)
\]
of $\cala(J)$.
\item If $J\subset S$ satisfies $j(J)=J$, then by the vacuum sector axiom, 
the homomorphism
\[
\rho_{J\cap I}\otimes \rho_{J\cap I'}:\cala(J\cap I)\otimes_\alg \cala(J\cap I')\to \bfB(H)
\]
extends to an action $\rho_J$ of $\cala(J)$ on $H$.
\end{itemize}
Applying Lemma \ref{lem: open cover of circle => sector}, we see that 
$L^2(\cala(I))$ comes naturally equipped with the structure of an $S$-sector of $\cala$.
We call it the {\em vacuum sector of $\cala$ associated to $S$, $I$, and $j$}.

We record the following subintervals of the standard circle for future usage:
\[
\begin{split}
S^1_\top:=\{z\in S^1\,|\, \Im\mathrm{m}(z)\ge 0\},
\qquad S^1_\dashv:=\{z\in S^1\,|\, \Re\mathrm{e}(z)\ge 0\},\\
S^1_\bot:=\{z\in S^1\,|\, \Im\mathrm{m}(z)\le 0\},
\qquad S^1_\vdash:=\{z\in S^1\,|\, \Re\mathrm{e}(z)\le 0\}.
\end{split}
\]

\begin{definition}\label{def: L2(A)}
We let $H_0(\cala)$ denote the vacuum sector of $\cala$ associated to the standard circle $S^1$, its upper half $S^1_\top$, and the involution $z\mapsto \bar z$.
It is defined by
\[
H_0(\cala):=L^2(\cala(S^1_\top)),
\]
and has left actions of $\cala(I)$ for every $I\subset S^1$.
\end{definition}

Given two circles $S_1$, $S_2$ and a diffeomorphism $\varphi:S_1\to S_2$, there is a corresponding functor
\begin{equation} \label{eq: functor phi^*}
\begin{split}
\Rep_{S_2}(\cala)\,&\longrightarrow\,\Rep_{S_1}(\cala)\\
H\,\,\,\,\,\,&\,\mapsto\,\,\,\,\, \varphi^*H.
\end{split}
\end{equation}
For an orientation-preserving diffeomorphism $\varphi$, that functor sends $(H,\{\rho_J\}_{J\subset S_2})$ to 
the $S_1$-sector with underlying Hilbert space $H$, and actions $\rho_{\varphi(J)}\circ \cala(\varphi|_J)$.
If the diffeomorphism $\varphi$ is orientation-reversing, then $\varphi^*H$ is the complex conjugate Hilbert space $\overline{H}$, equipped with the actions
\[
\cala(J)\xrightarrow{\cala(\varphi|_J)}\cala(\varphi(J))^\op
\xrightarrow{\,\rho_{\varphi(J)}\,} \bfB(H)^\op
\xrightarrow{\,\,\,*\,\,\,}\, \overline{\bfB(H)}\,=\,\bfB(\overline{H})
\]
for $J\subset S_1$.

\begin{proposition}\label{prop: phi* == psi*}
Let $S_1$ and $S_2$ be two circles, and let $\varphi,\psi\in\Diff_+(S_1,S_2)$ be diffeomorphisms.
Then the functors
\[
\varphi^*,\,\psi^*\,\,:\,\,\Rep_{S_2}(\cala)\to\Rep_{S_1}(\cala)
\] 
are non-canonically unitarily naturally equivalent (in other words, there exists a unitary natural equivalence $\varphi^*\cong \psi^*$, but there is no canonical way of choosing such a natural equivalence).
\end{proposition}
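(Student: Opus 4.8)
The plan is to reduce the statement to the case $S_1 = S_2$ and $\psi = \mathrm{id}$, and then to exhibit the required natural isomorphism $\varphi^* \cong \mathrm{id}^*$ for an orientation-preserving diffeomorphism $\varphi \in \Diff_+(S)$ by implementing $\varphi$ by unitaries sector-by-sector, using inner covariance together with the ``diffeomorphism trick'' already appearing in the proof of Lemma~\ref{lem: open cover of circle => sector}. Concretely, given $\varphi, \psi \in \Diff_+(S_1, S_2)$, the composite $\chi := \psi^{-1} \circ \varphi$ lies in $\Diff_+(S_1)$, and since pullback is (strictly) functorial on the nose up to the canonical identifications, $\varphi^* \cong \chi^* \circ \psi^*$; so it suffices to produce a unitary natural isomorphism $\chi^* \cong \mathrm{id}_{\Rep_{S_1}(\cala)}$ for each $\chi \in \Diff_+(S_1)$. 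This is the heart of the matter.

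First I would fix a finite cover of $S := S_1$ by intervals $I_1, \dots, I_n$ whose interiors cover $S$, chosen fine enough that the ``sufficiently small'' condition from Lemma~\ref{lem: open cover of circle => sector} can be arranged along a decomposition of $\chi$. Using that $\Diff_+(S)$ is generated by the subgroups $\Diff_0(I_i)$, write $\chi = \chi_1 \circ \dots \circ \chi_m$ with $\chi_k \in \Diff_0(I_{i_k})$. For any $S$-sector $(H, \{\rho_J\})$, inner covariance provides unitaries $v_k \in \cala(I_{i_k})$ with $\Ad(v_k) = \cala(\chi_k)$; set $U_H := \rho_{I_{i_1}}(v_1)\cdots\rho_{I_{i_m}}(v_m) \in \bfB(H)$. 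The computation in the proof of Lemma~\ref{lem: open cover of circle => sector} (the ``diff trick'') shows, for sufficiently small $K$ and hence, by strong additivity, for every $J \subset S$, that
\[
U_H \, \rho_J^{\chi^* H}(a) \, U_H^* \;=\; \rho_{\chi(J)}\big(\cala(\chi|_J)(a)\big) \;=\; \rho_J^{\chi^* H}(a)
\]
— i.e., $U_H$ intertwines the $S$-sector structure on $H$ with that on $\chi^* H$ (which, as an orientation-preserving pullback, has the same underlying Hilbert space $H$). Thus each $U_H$ is a unitary isomorphism $H \xrightarrow{\ \sim\ } \chi^* H$ of $S$-sectors.

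Next I would check naturality: for a morphism $f : H \to H'$ of $S$-sectors, $f$ commutes with each $\rho_{I_{i_k}}(v_k)$ (as these lie in the image of $\cala(I_{i_k})$), hence $f \circ U_H = U_{H'} \circ f$, so the $U_H$ assemble into a unitary natural transformation $\chi^* \Rightarrow \mathrm{id}$, which is invertible since each $U_H$ is. Finally, for the non-canonicity claim, it suffices to observe that the $U_H$ depend on the choices of decomposition $\chi = \chi_1 \circ \dots \circ \chi_m$ and of implementing unitaries $v_k$ (the latter ambiguous by the unitary group of the center of $\cala(I_{i_k})$, and even in the irreducible case by a phase); a clean way to make this rigorous is to note that in the trivial net $\underline{\IC}$ any two natural isomorphisms $\chi^* \cong \mathrm{id}$ differ by a scalar, and no coherent normalization across all circles and all diffeomorphisms exists (e.g.\ it would force a splitting of the central extension of $\Diff_+(S^1)$ governing the vacuum sector). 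The main obstacle I anticipate is bookkeeping rather than conceptual: verifying that the ``sufficiently small'' hypothesis of Lemma~\ref{lem: open cover of circle => sector} can genuinely be met simultaneously for the chosen cover and a chosen factorization of $\chi$ — i.e., that one can interpolate $\chi$ through a path and subdivide so that every intermediate image $\chi_k(\cdots\chi_m(K))$ stays inside a single cover element — and then chaining the local identities $\rho_J|_{\cala(K)} = \rho_{I_\ell}|_{\cala(K)}$ through strong additivity exactly as in the lemma's proof. Everything else is formal.
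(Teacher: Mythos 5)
Your proposal is correct and follows essentially the same route as the paper: factor $\psi^{-1}\circ\varphi$ (the paper uses $\psi\circ\varphi^{-1}$, acting on $S_2$) into diffeomorphisms supported in intervals, implement each by a unitary via inner covariance, conjugate by their product, and observe naturality because morphisms of sectors commute with these unitaries. The only blemish is a typo in your displayed intertwining relation, where the operator being conjugated should be the action on the original sector, so that it reads $U_H\,\rho_J(a)\,U_H^* = \rho_{\chi(J)}\big(\cala(\chi|_J)(a)\big) = \rho_J^{\chi^*H}(a)$.
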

\begin{proof}
Write $\psi\circ\varphi^{-1} = \varphi_1\circ\ldots\circ\varphi_n$ 
as a product of diffeomorphisms $\varphi_i\in\Diff_0(I_i)$ with support in intervals $I_i\subset S_2$,
and let $u_i\in \cala(I_i)$ be unitaries implementing them (one may arrange this with $n=2$).
For any sector $(H,\{\rho_J\}_{J\subset S_2})$, conjugation by $\rho_{I_1}(u_1)\ldots \rho_{I_n}(u_n)$ provides a unitary isomorphism of $S_2$-sectors from $H$ to $(\varphi^{-1})^* \psi^* H$, which can be interpreted as a unitary isomorphism of $S_1$-sectors from $\varphi^* H$ to $\psi^* H$.
The collection of all those isomorphisms is a unitary natural transformation from $\varphi^*$ to $\psi^*$.
\end{proof}

\begin{corollary}\label{phi* == psi*}
Let $S$ be a circle, and $\varphi\in\Diff_+(S)$ a diffeomorphism.
Then for any $S$-sector $H$ we have $\varphi^* H\cong H$.
\end{corollary}

\begin{proof}
Take $\psi=\mathrm{Id}_S$ in the above proposition.
\end{proof}

\begin{corollary}\label{cor: non canonical vacuum}
Let $S$ be a circle, $I_1,I_2\subset S$ intervals, and let $j_1,j_2\in\Diff_-(S)$ be involutions that fix $\partial I_1$ and $\partial I_2$, respectively.
Let $H_1$ be the vacuum sector of $\cala$ associated to $S$, $I_1$, $j_1$,
and let $H_2$ be the vacuum sector of $\cala$ associated to 
$S$, $I_2$, $j_2$.
Then $H_1$ and $H_2$ are isomorphic as $S$-sectors.
\end{corollary}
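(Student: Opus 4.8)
The plan is to use the $\Diff_+$-equivariance of the vacuum-sector construction to reduce both vacuum sectors to the single standard triple $(S^1, S^1_\top, z\mapsto\bar z)$, and then to quote Proposition~\ref{prop: phi* == psi*}. The only input needed is equivariance: for an orientation-preserving diffeomorphism $\varphi\colon S'\to S$, an interval $I\subset S$, and an involution $j\in\Diff_-(S)$ fixing $\partial I$, the functor $\varphi^*$ of~\eqref{eq: functor phi^*} carries the vacuum sector of $\cala$ associated to $(S,I,j)$ to the vacuum sector associated to $(S', \varphi^{-1}(I), \varphi^{-1} j \varphi)$. This is routine: $\varphi^*$ leaves the underlying Hilbert space $L^2(\cala(I))$ unchanged, up to the canonical identification induced by $\cala(\varphi^{-1}|_I)$, and it identifies the left actions, the right actions, and the vacuum-axiom gluing actions of the one sector with those of the other.

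Granting this, the corollary reduces to the geometric claim that every triple $(S,I,j)$ --- with $S$ a circle, $I\subset S$ an interval, and $j\in\Diff_-(S)$ an involution fixing $\partial I$ --- admits $\varphi\in\Diff_+(S,S^1)$ with $\varphi(I)=S^1_\top$ and $\varphi j\varphi^{-1} = (z\mapsto\bar z)$. Indeed, choosing such $\varphi_1$ and $\varphi_2$ for the triples $(S,I_1,j_1)$ and $(S,I_2,j_2)$, equivariance identifies the vacuum sector associated to $(S,I_i,j_i)$ with $\varphi_i^* H_0(\cala)$, where $H_0(\cala)$ is as in Definition~\ref{def: L2(A)}; since $\varphi_1, \varphi_2 \in \Diff_+(S,S^1)$, Proposition~\ref{prop: phi* == psi*} produces a unitary natural equivalence $\varphi_1^* \cong \varphi_2^*$, and its value at $H_0(\cala)$ is the desired isomorphism $H_1 \cong H_2$.

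For the geometric claim, first pick an orientation-preserving diffeomorphism $S\to S^1$ and post-compose it with a diffeomorphism of $S^1$ carrying the image of $I$ to $S^1_\top$ (using transitivity of $\Diff_+(S^1)$ on intervals); conjugating $j$ along the way, we may assume $S = S^1$, $I = S^1_\top$, $\partial I = \{1,-1\}$, with $j$ an orientation-reversing involution fixing $\pm 1$ and interchanging $S^1_\top$ with $S^1_\bot$. Any $\varphi \in \Diff_+(S^1)$ with $\varphi(S^1_\top) = S^1_\top$ and $\varphi\circ j = (z\mapsto\bar z)\circ\varphi$ is then forced, on $S^1_\bot$, to equal $(z\mapsto\bar z)\circ \varphi|_{S^1_\top}\circ j|_{S^1_\bot}$; so the only freedom is the choice of the orientation-preserving self-diffeomorphism $\varphi|_{S^1_\top}$ of $S^1_\top$ fixing $\pm 1$, and the only thing to verify is that the resulting $\varphi$ is smooth at $1$ and $-1$. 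This is the classical local fact that a smooth orientation-reversing involution of $\IR$ fixing $0$ is conjugate to $t\mapsto -t$ by an orientation-preserving germ (Bochner linearization): $j$ has derivative $-1$ at $\pm 1$, the relation $j\circ j = \id$ constrains its higher Taylor coefficients there, and one prescribes the Taylor expansions of $\varphi|_{S^1_\top}$ at $\pm 1$ accordingly --- possible by Borel's lemma --- so that the two pieces of $\varphi$ meet smoothly, interpolating monotonically in between.

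The main obstacle is exactly this smooth patching at the fixed points of $j$; everything else is formal manipulation plus Proposition~\ref{prop: phi* == psi*}. If one prefers to trade Borel's lemma for inner covariance, there is a softer endgame: using transitivity of $\Diff_+(S)$ on intervals and on germs of orientation-reversing involutions at a point, reduce to the case $I_1 = I_2 = I$ with $j_1$ and $j_2$ agreeing near $\partial I$; then $\beta := j_1 j_2 \in \Diff_+(S)$ preserves $I$ and restricts on $I$ to an element of $\Diff_0(I)$, so by inner covariance $\cala(\beta|_I) = \Ad(u)$ for a unitary $u \in \cala(I)$, and one checks that right multiplication by $u$ on $L^2(\cala(I))$ is a unitary intertwining the two vacuum-sector structures on $I$, on $I' := \overline{S\setminus I}$, and on sufficiently small $j_1$-invariant (hence also $j_2$-invariant) intervals about the points of $\partial I$ --- hence, by Lemma~\ref{lem: open cover of circle => sector}, on all subintervals of $S$, giving an isomorphism of $S$-sectors.
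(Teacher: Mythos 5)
Your main route coincides with the paper's: the printed proof is precisely ``choose $\varphi\in\Diff_+(S)$ sending $I_1$ to $I_2$ and intertwining $j_1$ with $j_2$; then $\varphi^*H_2\cong H_1$ by the definition of the vacuum sector, and $\varphi^*H_2\cong H_2$ by Corollary~\ref{phi* == psi*}.'' What you add is the one point the paper leaves unproved, namely the existence of such a $\varphi$: your reduction to the standard triple and the smooth linearization of an orientation-reversing involution at its fixed points is correct, and is really the only nontrivial geometric content of the statement (note that the averaging germ $t\mapsto\tfrac12(t-j(t))$ conjugates $j$ to $t\mapsto -t$ directly, so Borel's lemma can be avoided). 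Your alternative endgame via inner covariance is also sound and does not appear in the paper; it trades the smooth-conjugacy analysis for the inner covariance axiom together with strong additivity and Lemma~\ref{lem: open cover of circle => sector}. The only slip there is a bookkeeping one: with $\cala(j_1|_{I'})=\Ad(u)\circ\cala(j_2|_{I'})$ and $\Ad(u)(a)=uau^*$, the intertwiner $H_2\to H_1$ is right multiplication by $u^*$ rather than by $u$ (equivalently, replace $\beta$ by $\beta^{-1}$).
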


\begin{proof}
Let $\varphi\in\Diff_+(S)$ be a diffeomorphism that sends $I_1$ to $I_2$ and that intertwines the actions of $j_1$ and $j_2$.
By the definition of vacuum sector, the diffeomorphism $\varphi$ induces an isomorphism $\varphi^* H_2\cong H_1$ and by Corollary~\ref{phi* == psi*}, we also have $\varphi^* H_2 \cong H_2$.
\end{proof}

Given a sector $H\in\Rep(\cala)$ on the standard circle,
and given another circle $S$, we denote by $H(S)\in \Rep_S(\cala)$ the $S$-sector $\varphi^*H$, where $\varphi\in\Diff_+(S,S^1)$ is \emph{some} diffeomorphism.
Note that by Proposition \ref{prop: phi* == psi*}, the $S$-sector $H(S)$ is well defined up to (non-canonical) unitary isomorphism.

In the case of the vacuum sector, the above construction specializes to:

\begin{definition}\label{def: non canonical vacuum}
Given a circle $S$, we let $$H_0(S,\cala)\in\Rep_S(\cala)$$ stand for any one of the Hilbert spaces $H_0(\cala)(S)$ considered in Corollary \ref{cor: non canonical vacuum} or equivalently any one of the Hilbert spaces considered 
 in the paragraph following Corollary \ref{cor: non canonical vacuum},
and call it \emph{the vacuum sector of $\cala$ associated to $S$}.
We sometimes abbreviate $H_0(S,\cala)$ by $H_0(S)$.
That Hilbert space is well defined up to non-canonical unitary isomorphism of $S$-sectors.
\end{definition}

\nid We will see later, in Section~\ref{sec:covariance}, that $H_0(S,\cala)$ can be
determined canonically if we fix a conformal structure on $S$.

For $S$ a circle and $I\subset S$ an interval, let us denote by $I'\subset S$ the closure of its complement in $S$.

\begin{proposition}[Haag duality]\label{prop: [Haag duality]}
Let $\cala$ be a conformal net, and $S$ be a circle.
Then for any $I\subset S$, the algebra $\cala(I')$ is the commutant of $\cala(I)$ on $H_0(S,\cala)$.

Given intervals $J\subset K$ such that $J^c$, the closure of $K\setminus J$, is itself an interval, the commutant of $\cala(J)$ in $\cala(K)$ is $\cala(J^c)$.
\end{proposition}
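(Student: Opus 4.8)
The plan is to obtain the first assertion essentially for free from the realization of the vacuum sector as a standard form, and then to bootstrap to the relative-commutant statement using strong additivity.

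For the first assertion, recall that $H_0(S,\cala)$ is only defined up to unitary isomorphism of $S$-sectors, and that the claim refers only to the images of $\cala(I)$ and $\cala(I')$ under the sector homomorphisms; it therefore suffices to verify it for one convenient model. Pick an involution $j\in\Diff_-(S)$ fixing $\partial I$ and exchanging $I$ with its complementary interval $I'$ (such a $j$ exists: near each of the two points of $\partial I$ take $j$ to be $x\mapsto -x$ in a chart identifying the two sides, and extend). By Corollary~\ref{cor: non canonical vacuum} and Definition~\ref{def: non canonical vacuum} we may take $H_0(S,\cala)$ to be the vacuum sector associated to $S$, $I$, $j$; as constructed just before Definition~\ref{def: L2(A)}, this is $L^2(\cala(I))$ with $\cala(I)$ acting by left multiplication and $\cala(I')$ acting through the isomorphism $\cala(j)\colon\cala(I')\xrightarrow{\ \sim\ }\cala(I)^{\op}$ followed by the right action. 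By the Tomita--Takesaki commutation theorem for the standard form (see the appendix), the commutant of the left action of $\cala(I)$ on $L^2(\cala(I))$ is exactly the right action of $\cala(I)$, i.e.\ the image of $\cala(I')$. Hence $\cala(I')=\cala(I)'$ on $H_0(S,\cala)$.

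For the second assertion, first observe that $J$ must contain exactly one of the two boundary points of $K$: otherwise $\overline{K\setminus J}$ is disconnected or empty, contradicting that $J^c$ is an interval. Choose an embedding of $K$ into a circle $S$ and put $\tilde K:=\overline{S\setminus K}$, so that $S=K\cup\tilde K$ and, by the first assertion applied to $\tilde K\subset S$, $\cala(\tilde K)'=\cala(K)$ on $H_0(S,\cala)$. A picture shows that the complementary interval of the subinterval $J^c\subset S$ is $(J^c)'=J\cup\tilde K$, with $J\cap\tilde K$ a single point. Strong additivity (Definition~\ref{def:conformal-net}(ii)) gives $\cala(J\cup\tilde K)=\cala(J)\vee\cala(\tilde K)$, so applying the first assertion to $J^c\subset S$ yields
\[
\cala(J^c)=\cala\big((J^c)'\big)'=\big(\cala(J)\vee\cala(\tilde K)\big)'=\cala(J)'\cap\cala(\tilde K)'=\cala(J)'\cap\cala(K),
\]
all commutants taken on $H_0(S,\cala)$. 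Since the relative commutant of one von Neumann algebra inside another is independent of the choice of faithful normal representation, this computes the commutant of $\cala(J)$ inside $\cala(K)$, as claimed.

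The only non-formal ingredient is the standard-form commutation theorem used in the first step; everything else — the existence of the involution $j$, the point-set identity $(J^c)'=J\cup\tilde K$, and the representation-independence of relative commutants — is bookkeeping, the genuine analytic content of the net axioms having already been spent in the construction of the vacuum sector. I would expect the point-set bookkeeping in the second assertion (keeping straight which arcs are which inside $S$) to be the only place where a little care is needed.
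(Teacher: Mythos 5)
Your proof is correct and follows essentially the same route as the paper: the first assertion is the standard-form commutation theorem applied to the model $L^2(\cala(I))$ of the vacuum sector (the paper calls this "obvious on $H$"), and your chain of equalities for the second assertion is exactly the paper's chain $\cala(J)'\cap\cala(K)=(\cala(J)\vee\cala(K'))'=\cala(J\cup K')'=\cala(J^c)$ read in the opposite direction, with $\tilde K=K'$. The extra remarks you supply (that $J$ must share one boundary point with $K$, and that relative commutants are representation-independent) are correct and harmless elaborations.
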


\begin{proof}
Let $j\in\Diff_-(S)$ be an involution that fixes $\partial I$, and let $H=L^2(\cala(I))$ be the vacuum sector associated to $S$, $I$, and $j$.
The equation $\cala(I')=\cala(I)'$ is obvious on $H$, and follows for any vacuum $H_0(S,\cala)$ since the two are isomorphic.

For the second statement, pick an embedding $K\to S$.
Using strong additivity, we then have 
$\cala(J)'\cap \cala(K)=\cala(J)'\cap \cala(K')'=(\cala(J)\vee\cala(K'))'=\cala(J\cup K')'=\cala(J'\cap K)=\cala(J^c)$.
\end{proof}

\subsection{Fusion along intervals}\label{sec: Fusion along intervals}
In this section, we introduce the operation $\circledast$ of fusion of von Neumann algebras, and
show that it corresponds to the geometric operation of gluing two intervals and then discarding the part along which they were glued:
\begin{equation}\label{eq: picture of fusion of intervals}
\tikzmath[scale=.4] {\useasboundingbox (-2.1,-1.1) rectangle (.2,1.1); 
\draw (-2,-.7) [rounded corners=7pt]-- (-2,-.1) -- (-1.5,1) [rounded corners=4pt]-- (-.6,0) [rounded corners=1.5pt]-- (0,0) -- (0,-1);} 
\tikzmath[scale=.4] {\useasboundingbox (-.2,-1.1) rectangle (2.1,1.1);
\draw (0,-1) [rounded corners=1.5pt]-- (0,0) [rounded corners=3pt]-- (.5,0) -- (1,-.3) -- (1.7,.5) -- (2.2,.2);} 
\qquad\quad
\tikzmath[scale=.032] {\node at (8.7,-.65) {$\rightsquigarrow$}; \node at (3,7) {$\scriptstyle\circledast$};
\draw[line width = .5, line cap=round, line join=round] (-8,0) -- (-7,.9) -- (-5,-.9) -- (-3,.9) -- (-1,-.9) -- (1,.9) -- (3,-.9) -- (4,0);} 
\qquad\quad \tikzmath[scale=.4] {\useasboundingbox (-2.1,-1.1) rectangle (2.1,1.1);\draw (-2,-.7) [rounded corners=7pt]-- (-2,-.1) -- 
(-1.5,1) [rounded corners=4pt]-- (-.6,0) -- (0,0); \draw (0,0) [rounded corners=3pt]-- (.5,0) -- (1,-.3) -- (1.7,.5) -- (2.2,.2);} 
\end{equation}
Later in this section, we will use this operation to define the operation of fusion of sectors.
This operation will also be crucial in the third paper of our series \cite{BDH(all-together)}, in order to define the composition of two defects.

\subsubsection*{Fusion of von Neumann algebras}
\begin{definition}\label{def: Fusion of vN alg}
Let $A\leftarrow C^\op$, $C\to B$ be two homomorphisms between von Neumann algerbas, and let ${}_AH$ and ${}_BK$ be faithful modules.
Viewing $H$ as a right $C$-module, we may form the Connes fusion 
$H\boxtimes_C K$ (recalled in the Appendix).
One then defines 
\begin{equation}\label{eq:def fusion of algebras}
A \circledast_C B := (A\cap {C^\op\hspace{.2mm}}'\hspace{.2mm})\vee(C'\cap B)\,\subset\, \bfB(H\boxtimes_C K),
\end{equation}
where the commutants of $C^\op$ and $C$ are taken in $H$ and $K$, respectively. 
This algebra is independent of the choices of $H$ and $K$ (see Proposition~\ref{A oast_C B independent of H and K}).
\end{definition}

\begin{warning}\label{warn: poor formal properties} The operation $\circledast$ has rather poor formal properties.
For example, given homomorphisms $A_1\leftarrow B_1^\op$, $B_1\to A_2$, $A_2\leftarrow B_2^\op$, $B_2\to A_3$, such that the images of $B_1$ and $B_2^\op$ commute in $A_2$,
one might ask for an associator isomorphism $(A_1 \circledast_{B_1} A_2) \circledast_{B_2} A_3\cong A_1 \circledast_{B_1} (A_2 \circledast_{B_2} A_3)$.
Such an isomorphism does not exist in general: there are algebras for which one of the following two inclusions
\[
\tikzmath{
\node at (0,0) {$(A_1\cap {B_1^\op\hspace{.2mm}}'\hspace{.2mm})\vee(B_1'\cap A_1 \cap {B_2^\op\hspace{.2mm}}'\hspace{.2mm})\vee(B_2'\cap A_3)$};
\node at (5.55,.4) {$(A_1 \circledast_{B_1} A_2) \circledast_{B_2} A_3$};
\node at (5.65,-.4) {$A_1 \circledast_{B_1} (A_2 \circledast_{B_2} A_3)$};
\node[rotate=20] at (3.7,.23) {$\hookrightarrow$};
\node[rotate=-20] at (3.7,-.25) {$\hookrightarrow$};}
\]
is an isomorphism, but the other isn't.
We present a counterexample (but without further justifications as this would take us too far afield):
take a conformal net $\cala$ with $\mu(\cala)>1$ (Definition \ref{def:index-for-nets}), and let $A_1=\cala([0,2])^\op$, $B_1=\cala([1,2])$, $A_2= \cala([-2,2])\cap\cala([-1,0])'$, $B_2=A_3=\cala([0,1])^\op$.
\end{warning}

\begin{proposition}\label{A oast_C B independent of H and K}
The algebra $A \circledast_C B$ is independent, up to canonical isomorphism, of the choice of faithful modules ${}_AH$ and ${}_BK$.
\end{proposition}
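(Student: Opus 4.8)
The plan is to show that for two choices $({}_AH, {}_BK)$ and $({}_AH', {}_BK')$ of faithful modules, the resulting algebras inside $\bfB(H\boxtimes_C K)$ and $\bfB(H'\boxtimes_C K')$ are canonically isomorphic, and that these isomorphisms are compatible for a third choice (cocycle condition). First I would reduce to the case where one module changes at a time, say $H \rightsquigarrow H'$ with $K$ fixed, since the general case follows by composing such moves. So the core task is: given two faithful left $A$-modules $H, H'$, produce a canonical isomorphism $A\circledast_C B \to A'\circledast_C B$ (where $A\circledast_C B\subset\bfB(H\boxtimes_C K)$ and the primed version sits in $\bfB(H'\boxtimes_C K)$), and check it is canonical and transitive.

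The key tool is the standard uniqueness theory of von Neumann algebra modules recalled in the appendix: any two faithful normal representations of $A$ are related by amplification and a spatial isomorphism, and more usefully, $H$ and $H'$ are both isomorphic (as $A$-modules, after stabilizing) to modules built from the standard form $L^2(A)$. Even better, Connes fusion is functorial: a unitary $A$-module map, or more generally the canonical comparison between faithful modules, induces a unitary $H\boxtimes_C K \to H'\boxtimes_C K$ intertwining the relevant algebra actions. Concretely, I would first treat the case $H' = L^2(A)$: here $H\boxtimes_C K$ and $L^2(A)\boxtimes_C K$ are canonically identified, because $L^2(A)\boxtimes_C K$ is (by the associativity/bimodule-balancing properties of Connes fusion recalled in the appendix) canonically $K$-with-its-$C$-action-promoted-to-$A$, and the comparison map is the canonical one coming from the $A$-$A$ bimodule structure on $L^2(A)$. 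Under this identification one checks directly that $(A\cap C^{\op\prime})\vee(C'\cap B)$ goes to the corresponding algebra for $H'$. Then, for general $H, H'$, I factor the comparison through $L^2(A)$: the canonical iso $A\circledast_C B$ (built from $H$) $\to$ $A\circledast_C B$ (built from $L^2(A)$) composed with the inverse of the analogous one for $H'$ gives the desired canonical isomorphism, and canonicity of the intermediate maps gives both well-definedness and the cocycle condition automatically.

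There is a subtlety I should flag and handle carefully: two faithful $A$-modules need not be unitarily isomorphic on the nose — they differ by a (possibly infinite) amplification, i.e.\ $H' \cong H \otimes \ell^2$ after suitable identification, or more precisely one must pass to $H\oplus H'$ or use the relative commutant description. I would phrase the comparison using the well-known fact that for faithful ${}_AH$, the commutant $A' \subset \bfB(H)$ together with $A$ acting on $H$ is, up to canonical iso, independent of $H$ in the appropriate 2-categorical sense; concretely I would use that $H$ is a $A$-$A'$ bimodule and that the natural transformation between "$\boxtimes_C K$ computed via $H$" and "via $H'$" is implemented by the canonical Morita-type equivalence. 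The point is that the operators in $A\cap C^{\op\prime}$ and in $C'\cap B$ are characterized intrinsically — the former by their action on $H$ (they lie in $A$ and commute with the right $C$-action that defines the fusion) and the latter by their action on $K$ — and Connes fusion only remembers $H$ as a right $C$-module and $K$ as a left $C$-module, so a change of left $A$-module structure that is compatible with the right $C$-module structure passes through transparently.

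The main obstacle I anticipate is precisely making "canonical" into an honest statement with a cocycle condition rather than just "exists an isomorphism": one must be careful that the comparison isomorphisms $H\boxtimes_C K \cong H'\boxtimes_C K$ are genuinely canonical (not just unique up to inner automorphism), which relies on the rigidity of the standard form $L^2(A)$ and on the associativity coherence of Connes fusion being strict enough (or at least coherent enough) for the composites through $L^2(A)$ to agree. Once the $L^2$-based comparison is pinned down, transitivity is formal. I do not expect the spatial/amplification bookkeeping to cause real trouble, but it is where the writing needs care.
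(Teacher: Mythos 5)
There is a genuine gap in the central mechanism you propose. Your comparison is built on unitary identifications of the fused Hilbert spaces themselves, first $H\boxtimes_C K\cong L^2(A)\boxtimes_C K$ and then $H\boxtimes_C K\cong H'\boxtimes_C K$. Such unitaries do not exist in general: already for $C=B=\IC$ and $K=\IC$ this would assert that any two faithful $A$-modules are unitarily isomorphic, which is false (they differ by amplification, as you yourself note later). Moreover the unit property of Connes fusion you invoke is $L^2(C)\boxtimes_C K\cong K$, not $L^2(A)\boxtimes_C K\cong K$; here $L^2(A)$ is only a right $C$-module via $C^\op\to A$ and is typically not the standard form of $C$, so fusing with it over $C$ does not return $K$. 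So the "factor through $L^2(A)$" route collapses at its first step, and the cocycle/canonicity discussion built on top of it has nothing to stand on. The proposition is a statement about the abstract von Neumann algebra generated by $(A\cap {C^\op}')\ox_\alg(C'\cap B)$, not about the ambient Hilbert spaces, and the comparison must be made at that level.

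The repair is essentially what your "subtlety" paragraph gestures at but does not carry out, and it is the paper's (short) argument: choose isomorphisms $H_1\otimes\ell^2\cong H_2\otimes\ell^2$ and $K_1\otimes\ell^2\cong K_2\otimes\ell^2$ (these exist for any faithful modules), and observe that the amplification $\bfB(H_i\boxtimes_C K_i)\to\bfB\bigl((H_i\otimes\ell^2)\boxtimes_C(K_i\otimes\ell^2)\bigr)$, $x\mapsto x\otimes 1$, is a normal injection, so the von Neumann algebra generated by the image of $(A\cap {C^\op}')\ox_\alg(C'\cap B)$ is unchanged by it. Both completions are thereby identified with the completion taken in the common stabilized space, and the resulting isomorphism is canonical because it is the unique normal extension of the identity on the algebraic tensor product (in particular it is independent of the chosen stabilizing unitaries, which are $A$- resp.\ $B$-linear and hence act trivially on this comparison). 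No detour through $L^2(A)$, and no unitary between the unstabilized fusions, is needed or available.
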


\begin{proof}
Let $H_1$ and $H_2$ be faithful $A$-modules, and let $K_1$ and $K_2$ be faithful $B$-modules.
Upon choosing isomorphisms $H_1\otimes \ell^2\cong H_2\otimes \ell^2$ and $K_1\otimes \ell^2\cong K_2\otimes \ell^2$,
we get the following commutative diagram of algebra homomorphisms:\smallskip
\[
\tikzmath{
\node (a) at (-2.7,0) {$(A\cap {C^\op\hspace{.2mm}}'\hspace{.2mm})\otimes_\alg(C'\cap B)$};
\node (b) at (1.1,.6) {$\bfB(H_1\boxtimes_C K_1)$};
\node (b') at (5,.6) {$\bfB\big((H_1\otimes \ell^2)\boxtimes_C (K_1\otimes \ell^2)\big)$};
\node (c) at (1.1,-.6) {$\bfB(H_2\boxtimes_C K_2)$};
\node (c') at (5.05,-.6) {$\bfB\big((H_2\otimes \ell^2)\boxtimes_C (K_2\otimes \ell^2)\big).$};
\draw [->] ($(a.east)+(0,.2)$) -- ($(b.west)-(0,.1)$);
\draw [->] ($(a.east)-(0,.2)$) -- ($(c.west)+(0,.1)$);
\draw [->] ($(b.east) + (.05,.1)$) arc (90:270:.05) -- (b');
\draw [->] ($(c.east) + (.05,.1)$) arc (90:270:.05) -- (c');
\path (c'.north-|b') --node[sloped]{$\cong$} (b');
}
\smallskip\]
The completions of
$(A\cap {C^\op\hspace{.2mm}}'\hspace{.2mm})\otimes_\alg(C'\cap B)$ in $\bfB(H_1\boxtimes_C K_1)$ and $\bfB(H_2\boxtimes_C K_2)$
therefore agree since they might as well be taken in $\bfB\big((H_1\otimes \ell^2)\boxtimes_C (K_1\otimes \ell^2)\big)$
and $\bfB\big((H_2\otimes \ell^2)\boxtimes_C (K_2\otimes \ell^2)\big)$, respectively.
\end{proof}

If the modules $H$ and $K$ are not faithful, then there is still an action, albeit non-faithful, of $A \circledast_C B$ on $H\boxtimes_C K$:

\begin{lemma}\label{lem: extends to A (*)_C B}
Let $A\leftarrow C^\op$, $C\to B$ be homomorphisms, and let ${}_AH$ and ${}_BK$ be any modules.
Then the natural map
$(A\cap {C^\op\hspace{.2mm}}'\hspace{.2mm})\otimes_\alg(C'\cap B)\to \bfB(H\boxtimes_C K)$
extends to an action of $A \circledast_C B$ on $H\boxtimes_C K$.\hfill $\square$
\end{lemma}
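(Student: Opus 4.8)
The plan is to reduce the non-faithful case to the faithful case of Proposition~\ref{A oast_C B independent of H and K} by replacing $H$ and $K$ with faithful modules built from them, and then transport the action back along a morphism of modules. First I would choose a faithful $A$-module $\widetilde H$ and a faithful $B$-module $\widetilde K$ — for instance $\widetilde H = H \oplus L^2(A)$ and $\widetilde K = K \oplus L^2(B)$, each of which is faithful since it contains a faithful summand. The inclusions $H \hookrightarrow \widetilde H$ and $K \hookrightarrow \widetilde K$ are $A$-linear (resp.\ $B$-linear) isometries; viewing $H$ as a right $C$-module via $C \to B \to \bfB(H)$ and similarly $\widetilde H$, the inclusion $H \hookrightarrow \widetilde H$ is also right $C$-linear, so by functoriality of Connes fusion it induces an isometric $\bfB(K)$–$\bfB(\widetilde K)$-ish intertwiner; more precisely, combining with $K \hookrightarrow \widetilde K$ one gets an isometry $\iota \colon H \boxtimes_C K \hookrightarrow \widetilde H \boxtimes_C \widetilde K$ that is equivariant for the algebraic tensor product $(A\cap {C^\op}')\ox_\alg (C'\cap B)$, since that algebra acts on the two ``outer'' tensor factors and the fusion is formed over $C$.

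Next, by Proposition~\ref{A oast_C B independent of H and K} the algebra $A \circledast_C B$ acts on $\widetilde H \boxtimes_C \widetilde K$, extending the algebraic action. The key point is that the range of the isometry $\iota$ is invariant under this action of $A \circledast_C B$: the subspace $\iota(H \boxtimes_C K)$ is the closure of the algebraic image, and it is preserved by $(A\cap {C^\op}')$ acting through the $\widetilde H$-factor and by $(C'\cap B)$ acting through the $\widetilde K$-factor because each of $H \subset \widetilde H$ and $K \subset \widetilde K$ is invariant under the respective relative commutants. Since $A \circledast_C B$ is generated as a von Neumann algebra by these two commuting pieces, the whole algebra preserves $\iota(H \boxtimes_C K)$. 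Compressing the action to this invariant subspace and conjugating by $\iota$ then yields the desired action of $A \circledast_C B$ on $H \boxtimes_C K$, which by construction restricts to the natural map of $(A\cap {C^\op}')\ox_\alg(C'\cap B)$ on the algebraic image, hence on all of $H \boxtimes_C K$ by normality.

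The main obstacle I expect is the invariance claim — verifying carefully that $\iota(H \boxtimes_C K)$ is invariant under the $\widetilde H$-side action of $A \cap {C^\op}'$ and the $\widetilde K$-side action of $C' \cap B$. One has to be slightly careful because Connes fusion over $C$ genuinely mixes the two factors, so ``acting on one side'' needs to be phrased via bounded intertwiners $L^2(C) \to H$ (or via the standard description of $H \boxtimes_C K$ as closure of $H \ox_{\mathrm{alg}} K$ with respect to the fusion inner product), and one must check these intertwiners restrict compatibly to the sub- and ambient modules. This is routine given the material recalled in the appendix, but it is the step where the argument actually has content. A minor point to dispatch is independence of the resulting action on $H \boxtimes_C K$ from the choices of $\widetilde H$ and $\widetilde K$, which follows immediately from Proposition~\ref{A oast_C B independent of H and K} applied to the faithful models together with the naturality of the compressions.
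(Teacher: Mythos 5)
Your argument is correct; the paper omits the proof of this lemma entirely (it is stated without one), and your reduction to the faithful case via $\widetilde H = H \oplus L^2(A)$, $\widetilde K = K \oplus L^2(B)$ followed by compression is exactly the standard argument the authors leave to the reader. The invariance step you flag as the main obstacle is in fact immediate: Connes fusion is additive in each variable, so $\widetilde H \boxtimes_C \widetilde K$ splits into four summands each of which is preserved by $a \boxtimes 1$ for $a \in A\cap {C^\op}'$ and by $1 \boxtimes b$ for $b \in C'\cap B$, whence the projection onto the summand $H \boxtimes_C K$ commutes with the von Neumann algebra these generate, namely $A \circledast_C B$, and the compressed action is the unique normal extension of the algebraic one by ultraweak density.
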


The operation $\circledast$ is compatible with spatial tensor product in the sense that
given algebras $A_1$, $B_1$, $C_1$, $A_2$, $B_2$, $C_2$ and homomorphisms $A_1\leftarrow C_1^\op$, $C_1\to B_1$, $A_2\leftarrow C_2^\op$, $C_2\to B_2$,
there is a canonical isomorphism
\begin{equation}\label{eq: compatibility between circledast and bartimes}
(A_1\,\bar\otimes\,A_2) \circledast_{C_1\,\bar\otimes\,C_2} (B_1\,\bar\otimes\,B_2)\cong
(A_1 \circledast_{C_1} B_1)\,\bar\otimes\,(A_2 \circledast_{C_2} B_2).
\end{equation}

\begin{remark}\label{rm: fusion is generated by the relative commutants}
In~\cite{Timmermann(Invitation-2-quantum-groups)}, 
a similar operation $A \ast_C B$ is defined, under the name \emph{fiber product} of von Neumann algebras.
It is given by
\[
A \ast_C B := (A'\otimes_\alg B')',
\]
where the commutants $A'$ and $B'$ are taken in $\bfB(H)$ and $\bfB(K)$ respectively, while the last one is taken in
$\bfB(H\boxtimes_C K)$. 
Unlike $\circledast$, the operation $\ast$ is associative.
There is always an inclusion $A \circledast_C B\hookrightarrow A \ast_C B$ and under favorable circumstances, it can happen that those two algebras agree.
This will always be true in the cases that we consider (see the section on associativity of composition in the third paper of our series \cite{BDH(all-together)} for a precise statement).  We work with $A \circledast_C B$ as opposed to $A \ast_C B$ for technical convenience:
it is easier to check that the former commutes with other von Neumann algebras.
\end{remark}

\subsubsection*{The algebra of a fused interval}
We now make precise the heuristic of picture 
\eqref{eq: picture of fusion of intervals}.
Consider three intervals $I$, $I_l$, and $I_r$ equipped with two maps $i_l:I\to I_l$ and $i_r:I\rightarrow I_r$.
The maps $i_l$ and $i_r$ are orientation-reversing and orientation-preserving, respectively.
Moreover, we require that the closure $J_l$ of $I_l\setminus i_l(I)$ and the closure $J_r$ of $I_r\setminus i_r(I)$ be (non-empty) intervals,
and that $I_l\cup_I I_r$ be a ``Y-graph''---see \eqref{eq: Y-graph}.
We can then define the \emph{fused} interval $I_l\circledast_I I_r:=J_l\cup J_r\subset I_l\cup_I I_r$:
\begin{equation}\label{eq: Y-graph}
\tikzmath[scale=.4] {\useasboundingbox (-3,-2) rectangle (3,1); 
\draw (-2,-.7) [rounded corners=7pt]-- (-2,-.1) -- (-1.5,1) [rounded corners=4pt]-- (-.6,0) [rounded corners=1.5pt]-- (0,0) -- (0,-1);
\draw[->] (-.7,.139) -- (-.69,.13);
\node at (-.9,-1) {$I_l$};
\pgftransformxshift{10} 
\draw(0,0) -- (0,-.45);\draw[<-](0,-.44)--(0,-1) node[below]{$I$};
\pgftransformxshift{10} 
\draw (0,-1) [rounded corners=1.5pt]-- (0,0) [rounded corners=3pt]-- (.5,0) -- (1,-.3) -- (1.7,.5) -- (2.2,.2);
\draw[->] (.69,-.12) -- (.7,-.125);
\node at (.9,-1) {$I_r$};
} 
\,\,\,\,\qquad I_l\cup_I I_r:\,\, \tikzmath[scale=.4] {\useasboundingbox (-2.1,-1.5) rectangle (2.1,1.1);\draw (0,-.05)--(0,-1)(-2,-.7) [rounded corners=7pt]-- (-2,-.1) -- 
(-1.5,1) [rounded corners=4pt]-- (-.6,0) [rounded corners=.5pt]-- (-.05,0) -- (0,-.05) [sharp corners]-- (0,-1); \draw[rounded corners=.5pt] (0,-.05)  -- (.05,0) [rounded corners=3pt]-- (.5,0) -- (1,-.3) -- (1.7,.5) -- (2.2,.2);} 
\,\,\,\,\qquad I_l\circledast_I I_r:\,\, \tikzmath[scale=.4] {\useasboundingbox (-2.1,-1.5) rectangle (2.1,1.1);\draw[->] (-2,-.7) [rounded corners=7pt]-- (-2,-.1) -- 
(-1.5,1) [rounded corners=4pt]-- (-.6,0) -- (0.2,0); \draw (.19,0) [rounded corners=3pt]-- (.5,0) -- (1,-.3) -- (1.7,.5) -- (2.2,.2);} 
\end{equation}
Note that the fused interval $I_l\circledast_I I_r$ only inherits a canonical $C^1$ structure.
Indeed, we have a natural identification of tangent spaces
\[
\def\hh{2}
\qquad\qquad\qquad\tikzmath{
\node (a) at (-\hh,0) {$T_pJ_l$};\node (b) at (0,0) {$T_pI$};\node (c) at (\hh,0) {$T_pJ_r$};
\draw [<-] (a) -- node[above]{$\scriptstyle -T_pi_l$}node[below, yshift=1]{$\scriptstyle \cong$} (b);	
\draw [->] (b) -- node[above]{$\scriptstyle T_pi_r$}node[below, yshift=1]{$\scriptstyle \cong$} (c);
}\qquad\qquad  \tikzmath{\node[scale=.9]{$p:=J_l\cap J_r$};}
\]
but no way to compatibly identify the higher germs of $J_l$ and $J_r$ at $p$.
Pick involutions $\alpha\in\Diff_-(I_l)$ and $\beta\in\Diff_-(I_r)$ that fix $p$, and such that the map
\[
\alpha|_{J_l}\cup\hspace{.3mm} \beta|_{I} : I_l=J_l\cup I \to I\cup J_r = I_r
\]
is smooth.
We equip $I_l\circledast_I I_r$ with the smooth structure pulled back via $\alpha|_{J_l}\cup \mathrm{Id}_{J_r}: I_l\circledast_I I_r\to I_r$ or, equivalently,
the one pulled back via $\mathrm{Id}_{J_l}\cup\beta|_{J_r} : I_l\circledast_I I_r\to I_l$.

The smooth structure on $I_l\circledast_I I_r$ depends on the involutions $\alpha$ and $\beta$.
The distinguishing feature of smooth structures arising this way is that there exists an action of the symmetric group $\mathfrak S_3$ on $I_l\cup_I I_r$
such that all the induced maps between $I_l$, $I_r$ and $I_l\circledast_I I_r$ are smooth.
The next proposition shows that the algebra $\cala(I_l\circledast_I I_r)$ associated to the fused interval does not depend on the above choices, up to canonical isomorphism:

\begin{proposition}
\label{prop: nets-and-fiber-product}
Let $\cala$ be a conformal net, and let $I$, $I_l$, $I_r$, $J_l$, $J_r$ be as above. 
Then there is a canonical isomorphism $\cala(I_l\circledast_I I_r)\cong \cala(I_l)\circledast_{\cala(I)} \cala(I_r)$, compatible with the inclusions of $\cala(J_l)$ and $\cala(J_r)$.
\end{proposition}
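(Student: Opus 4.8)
The plan is to build the canonical isomorphism out of the vacuum sector of $\cala$ on the circle obtained by doubling the Y-graph $I_l\cup_I I_r$, and to recognize the fusion algebra $\cala(I_l)\circledast_{\cala(I)}\cala(I_r)$ as realized on a Connes fusion of two copies of an $L^2$-space inside that vacuum sector. Concretely, first I would choose involutions $\alpha\in\Diff_-(I_l)$, $\beta\in\Diff_-(I_r)$ fixing $p$ as in the discussion preceding the statement, giving a smooth structure on $I_l\circledast_I I_r$. Let $S$ be the circle obtained by gluing $I_l\cup_I I_r$ to its mirror image along the $\mathfrak S_3$-symmetric boundary, chosen so that all the relevant embeddings $I_l\hookrightarrow S$, $I_r\hookrightarrow S$, $I_l\circledast_I I_r\hookrightarrow S$, and the doubled copies, are smooth. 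Concretely it is cleanest to take $S$ so that it contains $I_l$ and $\overline{I_l}$ forming a circle with the vacuum sector $H:=H_0(S,\cala)=L^2(\cala(I_l))$ associated to $S$, $I_l$, and the involution extending $\alpha$; by Corollary~\ref{cor: non canonical vacuum} any other admissible choice gives an $S$-sector isomorphic to this one, and by Haag duality (Proposition~\ref{prop: [Haag duality]}) the algebras $\cala(K)$ for $K\subset S$ act with the expected commutation relations on $H$.

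The second step is to identify the two $L^2$-spaces appearing in the Connes fusion defining $\cala(I_l)\circledast_{\cala(I)}\cala(I_r)$. By Definition~\ref{def: Fusion of vN alg}, with $A=\cala(I_l)$, $C=\cala(I)$, $B=\cala(I_r)$ (the map $C^\op\to A$ being $\cala(i_l)$, which is an antihomomorphism since $i_l$ is orientation-reversing, and $C\to B$ being $\cala(i_r)$), we may compute $A\circledast_C B$ using \emph{any} faithful $A$-module and faithful $B$-module. I would take $_AH_l=L^2(\cala(I_l))$ and $_BH_r=L^2(\cala(I_r))$. Viewing $H_l$ as a right $\cala(I)$-module via $\cala(i_l)$ and $H_r$ as a left $\cala(I)$-module via $\cala(i_r)$, the Connes fusion $H_l\boxtimes_{\cala(I)}H_r$ is then a concrete Hilbert space on which $A\circledast_C B=(\cala(I_l)\cap \cala(I)'_{\mathrm{op}})\vee(\cala(I)'\cap\cala(I_r))$ acts faithfully, where the relative commutants are, by Haag duality inside $\cala(I_l)$ and $\cala(I_r)$, exactly $\cala(J_l)$ and $\cala(J_r)$ respectively. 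Thus $A\circledast_C B$ is generated by (the images of) $\cala(J_l)$ and $\cala(J_r)$ on $H_l\boxtimes_{\cala(I)}H_r$, compatibly with the inclusions of those two subalgebras — which is already half of the asserted compatibility.

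The third and central step is to produce a canonical unitary $H_l\boxtimes_{\cala(I)}H_r\cong H_0(S,\cala)$ intertwining the $\cala(J_l)$- and $\cala(J_r)$-actions, and to show that under it $\cala(I_l\circledast_I I_r)\subset\bfB(H_0(S,\cala))$ corresponds to $\cala(J_l)\vee\cala(J_r)$. For the unitary: $H_0(S,\cala)$ restricted to the sub-circle through $I_l$ is $L^2(\cala(I_l))=H_l$ as an $\cala(I_l)$-$\cala(I_l)$-bimodule, and restricted to the sub-circle through $I_r$ it is $L^2(\cala(I_r))=H_r$; the standard interpretation of Connes fusion (see the appendix and \cite{BDH(Dualizability+Index-of-subfactors)}) realizes $H_l\boxtimes_{\cala(I)}H_r$ as the completion of intertwiners, and one checks this agrees canonically with $H_0(S,\cala)$ by matching the two half-sided actions of $\cala(I)\cong\cala(\bar I)$ coming from the vacuum-sector axiom applied to $I\subset I_l$ and to $\bar I\subset I_r$. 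Granting this identification, $\cala(I_l\circledast_I I_r)=\cala(J_l\cup J_r)$ is generated by $\cala(J_l)$ and $\cala(J_r)$ by strong additivity, hence equals $\cala(J_l)\vee\cala(J_r)=A\circledast_C B$ on $H_0(S,\cala)$; Lemma~\ref{lem: extends to A (*)_C B} guarantees the generated algebra is exactly the fusion even when one is careful about non-faithfulness. Finally, independence of $\alpha$, $\beta$, and of the circle $S$ follows from Proposition~\ref{A oast_C B independent of H and K} on the algebra side and Corollary~\ref{cor: non canonical vacuum} / Proposition~\ref{prop: phi* == psi*} on the sector side, giving that the isomorphism is canonical.

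I expect the main obstacle to be the third step — pinning down the \emph{canonical} unitary $H_l\boxtimes_{\cala(I)}H_r\cong H_0(S,\cala)$ and verifying it is independent of all auxiliary choices, rather than merely exhibiting some unitary. The delicate point is that the two half-sided actions of $\cala(I)$ (left on $H_l$ via $\cala(I)\subset\cala(I_l)$, and the "opposite" action used to form the fusion) must be shown to be implemented by the vacuum-sector structure in a way compatible with the $\mathfrak S_3$-symmetry of the Y-graph; this is exactly where the vacuum-sector axiom (Definition~\ref{def:conformal-net}(\ref{def:conformal-net:vacuum})) and Haag duality do the real work, and where one must be most careful about orientations and about the $C^1$-versus-$C^\infty$ subtlety in the smooth structure on $I_l\circledast_I I_r$.
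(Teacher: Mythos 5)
Your overall architecture is right --- double an interval to get a circle, use a vacuum sector, identify the relative commutants via Haag duality as $\cala(J_l)$ and $\cala(J_r)$, and conclude by strong additivity --- and your second step coincides with the paper's. But there is a genuine gap exactly where you flag it: you never construct the unitary $L^2(\cala(I_l))\boxtimes_{\cala(I)}L^2(\cala(I_r))\cong H_0(S,\cala)$, and without it you cannot invoke strong additivity, because on the bare Connes fusion of your two $L^2$-spaces you only have separate actions of $\cala(J_l)$ and $\cala(J_r)$; the equality $\cala(J_l)\vee\cala(J_r)=\cala(J_l\cup J_r)$ only makes sense once you know that the fused interval's algebra acts on that Hilbert space at all, which is essentially the content of the proposition being proved (cf.\ Corollary~\ref{cor:cala(K)-acts}). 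As written, the argument is circular at its central step.

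The fix, which is the paper's trick, is to choose the faithful modules so that the problematic identification becomes the \emph{unit} isomorphism of Connes fusion. Take $S:=I\cup_{\partial I}\bar I$ (double the small interval $I$, not $I_l$), with involution $j_0$ exchanging $I$ and $\bar I$, and let $H_0:=L^2(\cala(I))$ be the vacuum sector for $(S,I,j_0)$. Both $I_l=J_l\cup\bar I$ and $I_r=I\cup J_r$ embed smoothly in $S$ (after choosing embeddings $f_l:J_l\to I$ and $g_r:J_r\to\bar I$ compatible with $\alpha$ and $\beta$ near $p$), so $H_0$ is simultaneously a faithful $\cala(I_l)$-module and a faithful $\cala(I_r)$-module, and by Proposition~\ref{A oast_C B independent of H and K} one may compute $\cala(I_l)\circledast_{\cala(I)}\cala(I_r)$ on $H_0\boxtimes_{\cala(I)}H_0$. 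Since $H_0=L^2(\cala(I))$ as a right $\cala(I)$-module, that fusion is canonically $H_0$ itself --- no unitary needs to be constructed. Haag duality then identifies the relative commutants as $\cala(J_l)$ and $\cala(J_r)$ exactly as in your step two, and since $J_l\cup J_r=I_l\circledast_I I_r$ also embeds in $S$, strong additivity applies inside $\bfB(H_0)$ to give $\cala(J_l)\vee\cala(J_r)=\cala(I_l\circledast_I I_r)$.
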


\begin{proof}
Let $S:=I\cup_{\partial I} \bar I$ be the circle obtained by gluing two copies of $I$ along their common boundary,
and let $j_0:S\to S$ be the orientation-reversing involution that exchanges $I$ and $\bar I$.
Equip $S$ with any smooth structure compatible with those on $I$ and $\bar I$, and for which the map $j_0$ is smooth.

Recall the involutions $\alpha\in\Diff_-(I_l)$ and $\beta\in\Diff_-(I_r)$ described above.
Identify $I$ with its image in $I_l$ and $I_r$ under the inclusions $i_l$ and $i_r$ respectively, and
let $p:=J_l\cap J_r$ be the trivalent vertex of the Y-graph $I_l\cup_I I_r$.
Pick orientation-preserving embeddings $f_l:J_l\to I$, $g_r:J_r\to \bar I$ that send $p$ to itself, and satisfy the following two conditions: ({\it i}\hspace{.4mm}) the maps
$f:I_l\to S$ and $g:I_r\to S$ given by
\[
\begin{split}
f\,:\,\, I_l&=J_l\cup \bar I \xrightarrow{\,\,f_l\cup \mathrm{Id}_{\bar I}\,\,} I\cup \bar I =S\\
g\,:\,\, I_l&=I\cup J_r \xrightarrow{\,\,\mathrm{Id}_{I}\cup g_r\,\,} I\cup \bar I =S
\end{split}
\]
are injective and smooth, and ({\it ii}\hspace{.4mm}) the equations $f \circ \alpha = j_0\circ f$ and $g \circ \beta = j_0\circ g$ are satisfied in a neighborhood of $p$.
Note that the map $f_l\cup g_r:I_l\circledast_I I_r\to S$ is then also smooth.

Let $H_0:=L^2(\cala(I))$ be the vacuum sector associated to $S$, $I$, and $j_0$.
We have two faithful actions
\[
\cala(I_l)\to \bfB(H_0)\quad \text{and}\quad \cala(I_r)\to \bfB(H_0)
\]
associated to $f$ and $g$.
We can therefore compute $\cala(I_l) \circledast_{\cala(I)} \cala(I_r)$ inside $\bfB\big(H_0\boxtimes_{\cala(I)} H_0\big) = \bfB\big(H_0\big)$.
By Haag duality, the relative commutant of $\cala(I)^\op$ in $\cala(I_l)$ is $\cala(J_l)$, and 
the relative commutant of $\cala(I)$ in $\cala(I_r)$ is $\cala(J_r)$.
We therefore have
\[
\cala(I_l) \circledast_{\cala(I)} \cala(I_r)
= \cala(J_l) \vee \cala(J_r)\,\,\subset\,\, \bfB\big(H_0\big),
\]
which is equal to $\cala(J_l\cup J_r)=\cala(I_l\circledast_I I_r)$ by the strong additivity axiom.
\end{proof}

\begin{corollary} \label{cor:cala(K)-acts}
Let $\cala$ be a conformal net, let $H_l$ be an $\cala(I_l)$-module, and let $H_r$ be an $\cala(I_r)$-module.
Then the two actions $\cala(J_l)$ and  $\cala(J_r)$ extend to an action of $\cala(I_l\circledast_I I_r)$ on $H_l \boxtimes_{\cala(I)} H_r$.
\end{corollary}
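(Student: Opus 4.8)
The plan is to concatenate the algebraic statement of Lemma~\ref{lem: extends to A (*)_C B} with the geometric identification of Proposition~\ref{prop: nets-and-fiber-product}. Put $A:=\cala(I_l)$, $B:=\cala(I_r)$, and $C:=\cala(I)$. Since $i_l$ is orientation-reversing, $\cala(i_l)$ is an injective antihomomorphism, i.e.\ a homomorphism $C^\op\to A$; since $i_r$ is orientation-preserving, $\cala(i_r)$ is a homomorphism $C\to B$. Thus $(A\leftarrow C^\op,\ C\to B)$ is precisely the data entering Definition~\ref{def: Fusion of vN alg}, and $H_l$, $H_r$ are (not necessarily faithful) modules over $A$, $B$ respectively. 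In particular the Connes fusion $H_l\boxtimes_{\cala(I)}H_r$ appearing in the statement is the one formed by viewing $H_l$ as a right $\cala(I)$-module via $\cala(i_l)$ and $H_r$ as a left $\cala(I)$-module via $\cala(i_r)$.

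First I would invoke Lemma~\ref{lem: extends to A (*)_C B}: the natural homomorphism $(A\cap (C^\op)')\otimes_\alg(C'\cap B)\to\bfB(H_l\boxtimes_{\cala(I)}H_r)$ extends to an action of $A\circledast_C B$ on $H_l\boxtimes_{\cala(I)}H_r$. By construction the subalgebra $A\cap (C^\op)'$ acts through the left tensor factor $H_l$ and $C'\cap B$ acts through the right tensor factor $H_r$, so this action restricts on those two subalgebras to (the images on the fusion of) their given actions on $H_l$ and $H_r$.

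Next I would identify the relative commutants. The image of $C^\op$ in $A$ is $\cala$ of the subinterval $i_l(I)\subset I_l$, taken with the orientation it inherits from $I_l$ (so that it is identified with $\bar I$ and the map $C^\op\to A$ agrees with the one used in Definition~\ref{def: Fusion of vN alg}); the complementary closure of $i_l(I)$ in $I_l$ is the interval $J_l$. By locality, $\cala(J_l)$ commutes with $\cala(i_l(I))$, and by the nested-intervals case of Proposition~\ref{prop: [Haag duality]} this inclusion is an equality: $A\cap (C^\op)'=\cala(J_l)$, the action of this algebra on $H_l$ being the given action of $\cala(J_l)$. Symmetrically $C'\cap B=\cala(J_r)$ with its given action. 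Finally, Proposition~\ref{prop: nets-and-fiber-product} supplies a canonical isomorphism $A\circledast_C B=\cala(I_l)\circledast_{\cala(I)}\cala(I_r)\cong\cala(I_l\circledast_I I_r)$ compatible with the inclusions of $\cala(J_l)$ and $\cala(J_r)$; transporting the action of Lemma~\ref{lem: extends to A (*)_C B} along this isomorphism yields the desired action of $\cala(I_l\circledast_I I_r)$ on $H_l\boxtimes_{\cala(I)}H_r$ restricting to the two given actions.

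There is no real obstacle: the corollary is essentially a formal consequence of the two cited results. The only point requiring care is bookkeeping of orientation conventions—matching the homomorphisms $C^\op\to A$ and $C\to B$ used in Definition~\ref{def: Fusion of vN alg} with those induced by $\cala(i_l)$ and $\cala(i_r)$—together with checking that $i_l(I)$ and $J_l$ (resp.\ $i_r(I)$ and $J_r$) satisfy the hypotheses of the nested-intervals part of Proposition~\ref{prop: [Haag duality]}, which is guaranteed by the standing assumption that $I_l\cup_I I_r$ is a Y-graph.
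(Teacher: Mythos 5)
Your proof is correct and follows exactly the route the paper intends: the corollary is stated without proof precisely because it is the immediate concatenation of Lemma~\ref{lem: extends to A (*)_C B} (action of $A\circledast_C B$ on a fusion of possibly non-faithful modules) with Proposition~\ref{prop: nets-and-fiber-product} (the canonical isomorphism $\cala(I_l)\circledast_{\cala(I)}\cala(I_r)\cong\cala(I_l\circledast_I I_r)$ compatible with the inclusions of $\cala(J_l)$ and $\cala(J_r)$). Your identification of the relative commutants via Haag duality merely re-derives a step already contained in the proof of that proposition, so nothing is missing.
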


\subsubsection*{Fusion of sectors}
Consider now a theta-graph $\Theta$ with trivalent vertices $p$ and $q$, and let $S_1, S_2, S_3\subset \Theta$ be its three circle subgraphs,
with orientations as drawn below:
\begin{equation}\label{eq: Theta-graph}
\Theta:\, \tikzmath[scale=.055]{ \draw (60:14) arc (60:300:14); \draw (14,0) +(120:14) arc (120:-120:14); \draw (0,0) +(60:14) -- +(300:14); \node at (4,10) {$\scriptstyle p$}; \node at (4,-10) {$\scriptstyle q$}; }
\,\,\,\,\,\,\qquad \tikzmath[scale=.033]{ \useasboundingbox (-14,-30) rectangle (28,24); \draw[line width=.7] (60:14) arc (60:300:14); \draw[densely dotted] (14,0) +(120:14) arc (120:-120:14);
\draw[line width=.7] (0,0) +(60:14) -- +(300:14); \node at (8,-24) {$S_1$}; \draw[->] (90:14) ++ (-.5,0) -- +(-.1,0); }
\,\,,\quad \tikzmath[scale=.033]{ \useasboundingbox (-14,-30) rectangle (28,24); \draw[densely dotted] (60:14) arc (60:300:14); \draw[line width=.7] (14,0) +(120:14) arc (120:-120:14);
\draw[line width=.7] (0,0) +(60:14) -- +(300:14); \node at (8,-24) {$S_2$}; \draw[->] (14,14) ++ (-.5,0) -- +(-.1,0); }
\,\,,\quad \tikzmath[scale=.033]{ \useasboundingbox (-14,-30) rectangle (28,24); \draw[line width=.7] (60:14) arc (60:300:14); \draw[line width=.7] (14,0) +(120:14) arc (120:-120:14);
\draw[densely dotted] (0,0) +(60:14) -- +(300:14); \node at (8,-24) {$S_3$}; \draw[->] (14,14) ++ (-.5,0) -- +(-.1,0); }
\,\,.\end{equation}
Equip $S_1$, $S_2$, $S_3$ with smooth structures for which there exists an action of the symmetric group $\mathfrak S_3$ on $\Theta$
that fixes $p$ and $q$, permutes the three circles, and such that $\pi|_{S_a}$ is smooth for every $\pi\in \mathfrak S_3$ and $a\in \{1,2,3\}$.
Let
\[
I:=S_1\cap S_2,\quad K:=S_1\cap S_3,\quad L:=S_2\cap S_3.
\]
We equip $K$ with the orientation inherited from $S_1$, and give $I$ and $L$ the ones coming from $S_2$.

\begin{definition}
Given $S_1$, $S_2$, $S_3$, $I$, $K$, $L$ as above, the operation \medskip
\begin{equation}\label{eq: fusion of sectors}
\boxtimes_I\,\,:\,\,\Rep_{S_1}(\cala)\,
\times\,\Rep_{S_2}(\cala)\,\, \rightarrow \,\,\Rep_{S_3}(\cala) 
\medskip
\end{equation}
of \emph{fusion of sectors} is given by
$(H_1, H_2)\mapsto H_1\boxtimes_{\cala(I)} H_2$.
That space inherits an $\cala(K)$ action from $H_1$, and an $\cala(L)$ action from $H_2$ (both are left actions).
If $J\subset S_3$ is an interval not contained in and not containing $K$ or $L$, then $J=((J\cap S_1)\cup \bar I)\circledast_I ((J\cap S_2)\cup I)$,
and so $\cala(J)$ acts on $H_1\boxtimes_{\cala(I)} H_2$ by Corollary~\ref{cor:cala(K)-acts}.
The Hilbert space $H_1\boxtimes_{\cala(I)} H_2$ is then an $S_3$-sector by Lemma \ref{lem: open cover of circle => sector}.
\end{definition}

\subsubsection*{Associativity of fusion}
The operation \ref{eq: fusion of sectors} satisfies a certain version of associativity.
Given a graph that looks as follows
\tikz[scale=.3]{ \useasboundingbox (-2.2,-.6) rectangle (2.25,1.2); \draw (1.2,0)+(120:1) to[bend right=25] ($(-1.2,0)+(60:1)$); \draw  (-1.2,0)+(-60:1) to[bend right=25] ($(1.2,0)+(-120:1)$);
\draw (1.2,0) +(120:1) arc (120:-120:1); \draw (-1.2,0) +(60:1) arc (60:300:1); \draw (.2,0) +(60:1) -- +(300:1); \draw (-.2,0) +(120:1) -- +(240:1);},
with circles subgraphs $S_1,\ldots,S_6$ as indicated below \smallskip
\[
\tikzmath[scale=.38]{
\draw[densely dotted] (1.2,0)+(120:1) to[bend right=25] ($(-1.2,0)+(60:1)$);\draw[densely dotted]  (-1.2,0)+(-60:1) to[bend right=25] ($(1.2,0)+(-120:1)$);
\draw[densely dotted] (1.2,0) +(120:1) arc (120:-120:1);\draw[line width=.7] (-1.2,0) +(60:1) arc (60:300:1); \draw[densely dotted] (.2,0) +(60:1) -- +(300:1);
\draw[line width=.7] (-.2,0) +(120:1) -- +(240:1); \node[scale=.9] at (0,-1.7) {$S_1$}; \draw[->] (-1.25,1) -- +(-.01,0); }\,\,\, 
\tikzmath[scale=.38]{
\draw[line width=.7] (1.2,0)+(120:1) to[bend right=25] ($(-1.2,0)+(60:1)$); \draw[line width=.7]  (-1.2,0)+(-60:1) to[bend right=25] ($(1.2,0)+(-120:1)$);
\draw[densely dotted] (1.2,0) +(120:1) arc (120:-120:1); \draw[densely dotted] (-1.2,0) +(60:1) arc (60:300:1); \draw[line width=.7] (.2,0) +(60:1) -- +(300:1);
\draw[line width=.7] (-.2,0) +(120:1) -- +(240:1); \node[scale=.9] at (0,-1.7) {$S_2$}; \draw[->] (-.06,1.04) -- +(-.01,0); }\,\,\, 
\tikzmath[scale=.38]{
\draw[densely dotted] (1.2,0)+(120:1) to[bend right=25] ($(-1.2,0)+(60:1)$); \draw[densely dotted]  (-1.2,0)+(-60:1) to[bend right=25] ($(1.2,0)+(-120:1)$);
\draw[line width=.7] (1.2,0) +(120:1) arc (120:-120:1); \draw[densely dotted] (-1.2,0) +(60:1) arc (60:300:1); \draw[line width=.7] (.2,0) +(60:1) -- +(300:1);
\draw[densely dotted] (-.2,0) +(120:1) -- +(240:1); \node[scale=.9] at (0,-1.7) {$S_3$}; \draw[->] (1.13,1) -- +(-.01,0); }\,\,\, 
\tikzmath[scale=.38]{
\draw[line width=.7] (1.2,0)+(120:1) to[bend right=25] ($(-1.2,0)+(60:1)$); \draw[line width=.7]  (-1.2,0)+(-60:1) to[bend right=25] ($(1.2,0)+(-120:1)$);
\draw[densely dotted] (1.2,0) +(120:1) arc (120:-120:1); \draw[line width=.7] (-1.2,0) +(60:1) arc (60:300:1); \draw[line width=.7] (.2,0) +(60:1) -- +(300:1);
\draw[densely dotted] (-.2,0) +(120:1) -- +(240:1); \node[scale=.9] at (0,-1.7) {$S_4$}; \draw[->] (-.06,1.04) -- +(-.01,0); }\,\,\, 
\tikzmath[scale=.38]{
\draw[line width=.7] (1.2,0)+(120:1) to[bend right=25] ($(-1.2,0)+(60:1)$); \draw[line width=.7]  (-1.2,0)+(-60:1) to[bend right=25] ($(1.2,0)+(-120:1)$);
\draw[line width=.7] (1.2,0) +(120:1) arc (120:-120:1); \draw[densely dotted] (-1.2,0) +(60:1) arc (60:300:1); \draw[densely dotted] (.2,0) +(60:1) -- +(300:1);
\draw[line width=.7] (-.2,0) +(120:1) -- +(240:1); \node[scale=.9] at (0,-1.7) {$S_5$}; \draw[->] (-.06,1.04) -- +(-.01,0); }\,\,\, 
\tikzmath[scale=.38]{
\draw[line width=.7] (1.2,0)+(120:1) to[bend right=25] ($(-1.2,0)+(60:1)$); \draw[line width=.7]  (-1.2,0)+(-60:1) to[bend right=25] ($(1.2,0)+(-120:1)$);
\draw[line width=.7] (1.2,0) +(120:1) arc (120:-120:1); \draw[line width=.7] (-1.2,0) +(60:1) arc (60:300:1); \draw[densely dotted] (.2,0) +(60:1) -- +(300:1);
\draw[densely dotted] (-.2,0) +(120:1) -- +(240:1); \node[scale=.9] at (0,-1.7) {$S_6$}; \draw[->] (-.06,1.04) -- +(-.01,0); } 
\]
then there is an associator coming from the associator of Connes fusion that makes the following diagram commute:
\[
\tikzmath{
\matrix [matrix of math nodes,column sep=1.2cm,row sep=7mm]
{ 
|(a)| \Rep_{S_1}(\cala)\,\times\,\Rep_{S_2}(\cala)\times\,\Rep_{S_3}(\cala) \pgfmatrixnextcell |(b)| \Rep_{S_1}(\cala)\,\times\,\Rep_{S_5}(\cala)\\ 
|(c)| \Rep_{S_4}(\cala)\,\times\,\Rep_{S_3}(\cala) \pgfmatrixnextcell |[xshift=3](d)| \Rep_{S_6}(\cala).\\ 
}; 
\draw[->] (a) -- (b);
\draw[->] (c) -- (d);
\draw[->] (a) -- (c);
\draw[->] (b) -- (d.north-|b);
}
\]
\begin{note} 
There should be a similar canonical associator when the circles $S_1,\ldots, S_6$ are arranged as follows:
\[
\tikzmath[scale=.45]{
\draw[densely dotted] (120:1) arc (120:300:1);
\draw[densely dotted] (1,0) +(60:1) arc (60:-120:1);
\draw[line width=.7] (1,0) +(60:1) arc (0:180:1);
\draw[densely dotted] (0,0) +(30:.577) -- +(-60:1); 
\draw[line width=.7] (0,0) +(30:.577) -- +(120:1); 
\draw[line width=.7] (0,0) +(30:.577) -- +(30:1.732); 
\node at (.55,-1.7) {$S_1$}; \draw[->] (.5,1.86) ++ (-.05,0) -- +(-.01,0); }\hspace{.3cm}
\tikzmath[scale=.45]{
\draw[line width=.7] (120:1) arc (120:300:1);
\draw[densely dotted] (1,0) +(60:1) arc (60:-120:1);
\draw[densely dotted] (1,0) +(60:1) arc (0:180:1);
\draw[line width=.7] (0,0) +(30:.577) -- +(-60:1); 
\draw[line width=.7] (0,0) +(30:.577) -- +(120:1); 
\draw[densely dotted] (0,0) +(30:.577) -- +(30:1.732); 
\node at (.55,-1.7) {$S_2$}; \draw[->] (140:1)  -- +(227:.01); }\hspace{.3cm}
\tikzmath[scale=.45]{
\draw[densely dotted] (120:1) arc (120:300:1);
\draw[line width=.7] (1,0) +(60:1) arc (60:-120:1);
\draw[densely dotted] (1,0) +(60:1) arc (0:180:1);
\draw[line width=.7] (0,0) +(30:.577) -- +(-60:1); 
\draw[densely dotted] (0,0) +(30:.577) -- +(120:1); 
\draw[line width=.7] (0,0) +(30:.577) -- +(30:1.732); 
\node at (.55,-1.7) {$S_3$}; \draw[->] (1,0) ++(45:1)  -- +(133:.01); }\hspace{.3cm}
\tikzmath[scale=.45]{
\draw[line width=.7] (120:1) arc (120:300:1);
\draw[densely dotted] (1,0) +(60:1) arc (60:-120:1);
\draw[line width=.7] (1,0) +(60:1) arc (0:180:1);
\draw[line width=.7] (0,0) +(30:.577) -- +(-60:1); 
\draw[densely dotted] (0,0) +(30:.577) -- +(120:1); 
\draw[line width=.7] (0,0) +(30:.577) -- +(30:1.732); 
\node at (.55,-1.7) {$S_4$}; \draw[->] (.5,1.86) ++ (-.05,0) -- +(-.01,0); }\hspace{.3cm}
\tikzmath[scale=.45]{
\draw[line width=.7] (120:1) arc (120:300:1);
\draw[line width=.7] (1,0) +(60:1) arc (60:-120:1);
\draw[densely dotted] (1,0) +(60:1) arc (0:180:1);
\draw[densely dotted] (0,0) +(30:.577) -- +(-60:1); 
\draw[line width=.7] (0,0) +(30:.577) -- +(120:1); 
\draw[line width=.7] (0,0) +(30:.577) -- +(30:1.732); 
\node at (.55,-1.7) {$S_5$}; \draw[->] (140:1)  -- +(227:.01); }\hspace{.3cm}
\tikzmath[scale=.45]{
\draw[line width=.7] (120:1) arc (120:300:1);
\draw[line width=.7] (1,0) +(60:1) arc (60:-120:1);
\draw[line width=.7] (1,0) +(60:1) arc (0:180:1);
\draw[densely dotted] (0,0) +(30:.577) -- +(-60:1); 
\draw[densely dotted] (0,0) +(30:.577) -- +(120:1); 
\draw[densely dotted] (0,0) +(30:.577) -- +(30:1.732); 
\node at (.55,-1.7) {$S_6$}; \draw[->] (.5,1.86) ++ (-.05,0) -- +(-.01,0); } 
\]
but we only know how to construct it non-canonically.
We will not discuss this construction.
\end{note} 

\subsubsection*{Unitality of fusion}
The unitality of fusion of sectors can be formulated as follows.
Recall that given a sector $H\in\Rep(\cala)$ on the standard circle and given another circle $S$, we denote by $H(S)\in \Rep_S(\cala)$ the corresponding sector on $S$ (well defined up to non-canonical isomorphism).

\begin{lemma}\label{lem: vacuum * vacuum = vacuum -- PRE}
Let $S_1$, $S_2$, $S_3$ and $I=S_1\cap S_2$ be as in \eqref{eq: Theta-graph}.
Then for any sector $H\in\Rep(\cala)$, there exists (non-canonical) unitary isomorphisms of $S_3$-sectors
\begin{equation*} 
\begin{split}
H(S_1) \boxtimes_{\cala(I)} H_0(S_2) \,\,\cong\,\, H(S_3)&\qquad\qquad\quad\\
\text{and}\qquad H_0(S_1) \boxtimes_{\cala(I)} H(S_2) \,\,\cong\,\, H(S_3)&.
\end{split}
\end{equation*}
\end{lemma}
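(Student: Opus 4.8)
The plan is to deduce the statement from the fact that the standard form $L^2(N)$ is the unit for Connes fusion, applied to $N=\cala(I)$. I would begin by fixing a convenient model for the vacuum $S_2$-sector. Using the symmetric-group action on the theta-graph \eqref{eq: Theta-graph}, let $j\in\Diff_-(S_2)$ be the restriction to $S_2$ of the permutation of $\Theta$ that transposes $S_1$ and $S_3$; it fixes the two vertices $p,q$ and exchanges $I=S_1\cap S_2$ with $L=S_2\cap S_3$, so that $L$ is the closure of the complement of $I$ in $S_2$. By Definition~\ref{def: non canonical vacuum} and Corollary~\ref{cor: non canonical vacuum} we may therefore take $H_0(S_2)$ to be the vacuum sector associated to $S_2$, $I$, and $j$, that is, $L^2(\cala(I))$ with its standard left $\cala(I)$-action, with $\cala(L)\cong\cala(I)^\op$ acting by the standard right action precomposed with $\cala(j)$, and with the straddling subintervals acting via the extensions furnished by the vacuum axiom.

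In the fusion operation $\boxtimes_I$ the interval $I$ carries its $S_2$-orientation, so the $S_1$-sector $H:=H(S_1)$ is naturally a right module over $C:=\cala(I)$, while $H_0(S_2)=L^2(C)$ is $C$ in standard form. The canonical unitary $H\boxtimes_C L^2(C)\xrightarrow{\ \sim\ }H$ from the appendix on Connes fusion then gives a unitary of Hilbert spaces $H(S_1)\boxtimes_{\cala(I)}H_0(S_2)\cong H(S_1)$; it remains to transport the $S_3$-sector structure through it.

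The heart of the argument is identifying this transported structure. Fix an orientation-preserving diffeomorphism $\varphi\colon S_3\to S_1$ with $\varphi|_K=\id_K$ and $\varphi|_L=j|_L\colon L\to I$; such a $\varphi$ exists because the smooth structures on $S_1,S_2,S_3$ were chosen compatibly with the symmetric-group action. I claim the unitary above intertwines the $S_3$-sector structure on $H(S_1)\boxtimes_{\cala(I)}H_0(S_2)$ with that of $\varphi^*H(S_1)$. By Lemma~\ref{lem: open cover of circle => sector} it suffices to check this on a family of intervals whose interiors cover $S_3$ — take $K$, $L$, and small neighborhoods of $p$ and $q$ — and, by strong additivity, on such an interval it suffices to check the pieces lying in $S_1$ or in $S_2$. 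The $\cala(K)$-action comes from the $H(S_1)$-factor and matches the $\cala(\varphi|_K)$-twisted action of $\varphi^*H(S_1)$ tautologically. The $\cala(L)$-action comes from the $L^2(C)$-factor, where it is right multiplication precomposed with $\cala(j|_L)$; under $H\boxtimes_C L^2(C)\cong H$, right multiplication on the $L^2(C)$-factor becomes the right $C$-action on $H(S_1)$, i.e.\ $\rho_I\circ\cala(j|_L)=\rho_{\varphi(L)}\circ\cala(\varphi|_L)$, which is precisely the $\cala(L)$-action of $\varphi^*H(S_1)$. An interval $J$ straddling a vertex decomposes, as in Corollary~\ref{cor:cala(K)-acts} and Proposition~\ref{prop: nets-and-fiber-product}, as a fusion $\circledast_I$ of a piece in $S_1$ and a piece in $L$, so its action is pinned down by the two previous cases. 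Hence $H(S_1)\boxtimes_{\cala(I)}H_0(S_2)\cong\varphi^*H(S_1)$; writing $H(S_1)=\chi^*H$ with $\chi\in\Diff_+(S_1,S^1)$ gives $\varphi^*H(S_1)=(\chi\circ\varphi)^*H\cong H(S_3)$ by Proposition~\ref{prop: phi* == psi*}, which is the first isomorphism.

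The second isomorphism is the mirror image: realize $H_0(S_1)$ as $L^2(\cala(I))$ using the involution of $S_1$ exchanging $I$ with $K$, so that it is $C$ in standard form viewed as a right $C$-module, apply $L^2(C)\boxtimes_C H(S_2)\cong H(S_2)$, and identify the transported structure with $\psi^*H(S_2)$ for an orientation-preserving $\psi\colon S_3\to S_2$ restricting to $\id_L$ on $L$ and to the analogously chosen involution on $K$, then conclude as before via Proposition~\ref{prop: phi* == psi*}. I expect the only real difficulty to be the bookkeeping in the third paragraph: keeping the orientations of $I$, $K$, $L$ straight, and correctly matching the ``new'' $\cala(L)$-action — which does not exist on $H(S_1)$ before fusion — with right multiplication on the $L^2$-factor.
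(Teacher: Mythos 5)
Your proof is correct and follows essentially the same route as the paper's: realize $H_0(S_2)$ as $L^2(\cala(I))$ with sector structure induced by the involution $j_2$ from the $\mathfrak S_3$-action, apply the unit isomorphism for Connes fusion, identify the result with $\varphi^*H(S_1)$ for $\varphi=\mathrm{Id}_K\cup j_2|_L$, and verify equivariance interval-by-interval via strong additivity before concluding with Proposition~\ref{prop: phi* == psi*}. The paper merely compresses your third paragraph into one sentence; your orientation bookkeeping for the $\cala(L)$-action is the correct expansion of that step.
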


\begin{proof}
We only show the first equality.
Let $K$ and $L$ be as above, let $j_2$ be the involution of $S_2$ coming from the action of $\mathfrak S_3$ on $S_1\cup S_2$,
and let $\varphi:=\mathrm{Id}_K\cup j_2|_L:S_3\to S_1$.
By definition, we can take $H_0(S_2) = L^2(\cala(I))$, with $S_2$-sector structure induced by $j_2$.
We then have
\[
H(S_1) \boxtimes_{\cala(I)} H_0(S_2)
\,=\,
H(S_1) \boxtimes_{\cala(I)} L^2(\cala(I))
\,\cong\, \varphi^* H(S_1) \,\cong\, H(S_3).
\]
For $J\subset S_3$ an interval, the above isomorphism is both $\cala(J\cap K)$ and $\cala(J\cap L)$-equivariant,
and hence $\cala(J)$-equivariant by strong additivity.
\end{proof}

\begin{corollary}\label{cor: vacuum * vacuum = vacuum}
Letting $S_1$, $S_2$, $S_3$, and $I$ be as in~(\ref{eq: Theta-graph}), we have
\begin{equation}\label{eq: HoxHo=Ho}
H_0(S_1) \boxtimes_{\cala(I)} H_0(S_2)\,\,\cong\,\, H_0(S_3).
\end{equation}
\end{corollary}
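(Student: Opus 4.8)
The plan is to derive this immediately from Lemma~\ref{lem: vacuum * vacuum = vacuum -- PRE} by specializing the ``variable'' sector $H$ in that lemma to a vacuum sector. Concretely, I would proceed in three short steps. First, take $H := H_0(\cala) \in \Rep(\cala)$, the vacuum sector on the standard circle from Definition~\ref{def: L2(A)} (the Hilbert space $L^2(\cala(S^1_\top))$, which is an $S^1$-sector by the discussion preceding that definition). Second, recall that by Definition~\ref{def: non canonical vacuum}, for any circle $S$ the $S$-sector $H_0(\cala)(S)$ is by construction one of the admissible representatives of $H_0(S,\cala)=H_0(S)$; in particular $H(S_1)\cong H_0(S_1)$ as $S_1$-sectors and $H(S_3)\cong H_0(S_3)$ as $S_3$-sectors. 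Third, substitute these identifications into the first isomorphism $H(S_1)\boxtimes_{\cala(I)}H_0(S_2)\cong H(S_3)$ supplied by Lemma~\ref{lem: vacuum * vacuum = vacuum -- PRE}; this reads exactly as \eqref{eq: HoxHo=Ho}.

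I do not expect any genuine obstacle. The only point that deserves a word is that the symbol $H_0(S_2)$ appearing in the statement of Lemma~\ref{lem: vacuum * vacuum = vacuum -- PRE} and the one appearing in \eqref{eq: HoxHo=Ho} refer to the same object: both are ``the vacuum sector of $\cala$ associated to $S_2$'' in the sense of Definition~\ref{def: non canonical vacuum}, which is well defined up to non-canonical unitary isomorphism of $S_2$-sectors by Corollary~\ref{cor: non canonical vacuum} (itself a consequence of Proposition~\ref{prop: phi* == psi*}). Since the asserted isomorphism in \eqref{eq: HoxHo=Ho} is only claimed up to such non-canonical isomorphism anyway, no compatibility needs to be tracked. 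One could alternatively re-run the argument of Lemma~\ref{lem: vacuum * vacuum = vacuum -- PRE} directly with $H$ a vacuum sector, but invoking the lemma as a black box is cleaner and avoids repeating the strong-additivity equivariance check at the end of its proof.
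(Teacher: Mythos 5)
Your proposal is correct and matches the paper's (implicit) argument: the corollary is stated in the paper with no separate proof precisely because it is the specialization $H := H_0(\cala)$ of Lemma~\ref{lem: vacuum * vacuum = vacuum -- PRE}, using that $H_0(\cala)(S_1)$ and $H_0(\cala)(S_3)$ are admissible representatives of $H_0(S_1)$ and $H_0(S_3)$ in the sense of Definition~\ref{def: non canonical vacuum}. Your remark that everything is only claimed up to non-canonical isomorphism, so no further compatibility needs tracking, is exactly the right observation.
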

\vspace{2pt}

\subsubsection*{Monoidal fusion products}
There is another (closely related) notion fusion of sectors, for which the source and target categories are the same category.
Let $S$ be a circle.
Given an interval $I\subset S$, and an involution $j\in\Diff_-(S)$ fixing $\partial I$,
we can turn any $S$-sector into an $\cala(I)$-$\cala(I)$-bimodule by equipping it with the right action induced by $\cala(j):\cala(I)^\op\to \cala(I')$.
The fusion of $\cala(I)$-$\cala(I)$-bimodules then equips the category of $S$-sectors of $\cala$ with a monoidal structure:
\smallskip
\begin{equation}\label{eq: associative operation on A-reps}
\begin{cases}
\,\,\raisebox{.1cm}{\it product\,\rm:} &\begin{split}
\Rep_S(\cala)\,\times\,\Rep_S(\cala)\,\, \xrightarrow{\,\,\boxtimes_{I}} \,\,\Rep_S&(\cala)\\
H \,\boxtimes_{I} K \,\,:=\,\, H\,\boxtimes_{\cala(I)} K\,\,\,&
\end{split}\vspace{.2cm}\\
\,\,\text{\it unit object}:& \qquad\qquad\!\! L^2(\cala(I))\,\in\,\Rep_S(\cala)\vspace{.2cm}
\end{cases}
\end{equation}
(We usually drop $j$ from the notation, but occasionally write $\boxtimes_{I\!,j}$ when we need to be precise.)
The unit object is the vacuum sector of $\cala$ associated to $S$, $I$, and $j$.

We now explain why $H \boxtimes_{I} K$ is an $S$-sector.
The algebras $\cala(I)$ and $\cala(I')$ act on $H \boxtimes_{I} K$ by their respective actions on $H$ and $K$.
If $J\subset S$ is an interval that crosses $\partial I$ once, then we have
\begin{equation}\label{eq: A(J) can be re-expressed as a fusion}
\cala(J)\,\cong\, \cala(J\cup I')\circledast_{\cala(I)} \cala(J\cup I)
\end{equation}
by Proposition \ref{prop: nets-and-fiber-product}.
Here, the two maps $\cala(J\cup I')\leftarrow\cala(I)^\op$, $\cala(I)\to \cala(J\cup I)$ used in the definition of the right-hand side of \eqref{eq: A(J) can be re-expressed as a fusion}
are induced by $j:I\to J\cup I'$, and by the inclusion $I\hookrightarrow J\cup I$, respectively.
The algebra \eqref{eq: A(J) can be re-expressed as a fusion} then acts on $H \boxtimes_{I} K$ by Corollary~\ref{cor:cala(K)-acts}. 
This construction works for any $J\subset S$ that crosses $\partial I$ once, and so $H \boxtimes_I K$ is an $S$-sector by Lemma \ref{lem: open cover of circle => sector}.

\begin{proposition}\label{prop: all monoidal structures are equivalent}
Let $S$ be a circle, $I_1,I_2\subset S$ intervals, and $j_1,j_2\in\Diff_-(S)$ involutions that fix $\partial I_1$ and $\partial I_2$, respectively.
Let $\,\boxtimes_1$ and $\,\boxtimes_2$ be the monoidal structures on $\Rep_S(\cala)$ associated to $(I_1,j_1)$ and $(I_2,j_2)$, respectively, as in \eqref{eq: associative operation on A-reps}.
Then these two monoidal structures are equivalent (but non-canonically).
\end{proposition}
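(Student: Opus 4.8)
\emph{Overview of the plan.} I would produce the monoidal equivalence by pulling back along a single diffeomorphism of $S$. \emph{Step 1: find a diffeomorphism matching the two data.} First I would construct $\varphi\in\Diff_+(S)$ with $\varphi(I_1)=I_2$ and $\varphi\circ j_1=j_2\circ\varphi$. The fixed-point set of $j_k$ is exactly $\partial I_k$, and $j_k$ exchanges $I_k$ with its complementary arc $I_k'$. So it suffices to take any $\varphi_0\in\Diff_+(S)$ with $\varphi_0(I_1)=I_2$, modify it near $\partial I_1$ so that in $j_1$-/$j_2$-symmetric local coordinates it becomes an odd function (hence locally intertwines $j_1$ with $j_2$) while still carrying $I_1$ into $I_2$, and then \emph{redefine} $\varphi$ on $I_1'$ by $\varphi|_{I_1'}:=j_2\circ(\varphi|_{I_1})\circ j_1$. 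The two prescriptions agree to all orders along $\partial I_1$ because those points are fixed by $j_1$ and $j_2$, so $\varphi$ is a well-defined element of $\Diff_+(S)$, and $\varphi\circ j_1=j_2\circ\varphi$ holds on all of $S$ by construction.

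\emph{Step 2: upgrade $\varphi^*$ to a monoidal functor.} The pullback functor $\varphi^*\colon\Rep_S(\cala)\to\Rep_S(\cala)$ of \eqref{eq: functor phi^*} is an equivalence of categories, with quasi-inverse $(\varphi^{-1})^*$. I claim it carries the monoidal structure $\boxtimes_2$ to $\boxtimes_1$. The heart of the matter is a bookkeeping computation with the bimodule structures: since $\varphi$ is orientation-preserving with $\varphi(I_1)=I_2$, the left $\cala(I_1)$-action on $\varphi^*H$ is $\rho^{H}_{I_2}\circ\cala(\varphi|_{I_1})$; and since $\varphi\circ j_1=j_2\circ\varphi$, the right $\cala(I_1)$-action used by $\boxtimes_1$, namely $\rho^{\varphi^*H}_{I_1'}\circ\cala(j_1)$, unwinds to $\rho^{H}_{I_2'}\circ\cala(j_2)\circ\cala(\varphi|_{I_1})$. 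In other words, the $\cala(I_1)$-bimodule $\varphi^*H$ is obtained from the $\cala(I_2)$-bimodule $H$ by restriction along the algebra isomorphism $\cala(\varphi|_{I_1})\colon\cala(I_1)\xrightarrow{\ \cong\ }\cala(I_2)$. Functoriality of Connes fusion under this isomorphism then supplies natural unitaries $\varphi^*(H\boxtimes_2 K)\cong\varphi^*H\boxtimes_1\varphi^*K$ at the level of Hilbert spaces, and $\varphi^*$ sends the $\boxtimes_2$-unit $L^2(\cala(I_2))$ (with its $j_2$-sector structure) to the $\boxtimes_1$-unit $L^2(\cala(I_1))$ (with its $j_1$-sector structure) via $L^2(\cala(\varphi|_{I_1}))$. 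It then remains to check that these comparison maps are morphisms of $S$-sectors and satisfy the associativity and unit coherence diagrams of a monoidal functor. By Lemma~\ref{lem: open cover of circle => sector} the $S$-sector structure on a $\boxtimes$-product is determined by the actions of $\cala(I_k)$, $\cala(I_k')$, and of the $\cala(J)$ with $J$ crossing $\partial I_k$ once, and by \eqref{eq: A(J) can be re-expressed as a fusion} (that is, Proposition~\ref{prop: nets-and-fiber-product}) those last actions are themselves assembled from Connes fusion; so both the sector structure and the associators transport along the very same algebra isomorphisms, and equivariance on an arbitrary interval reduces to intervals meeting $\partial I_k$ by strong additivity, exactly as in the proof of Lemma~\ref{lem: vacuum * vacuum = vacuum -- PRE}.

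\emph{Step 3: conclude, and account for non-canonicity.} A monoidal functor whose underlying functor is an equivalence is automatically a monoidal equivalence (the quasi-inverse inherits a canonical monoidal structure), so $(\Rep_S(\cala),\boxtimes_1)$ and $(\Rep_S(\cala),\boxtimes_2)$ are equivalent as monoidal categories. The construction depends on the choice of $\varphi$: there is ample freedom in choosing $\varphi$ with $\varphi(I_1)=I_2$ and $\varphi j_1=j_2\varphi$, and two such choices are related only by the non-canonical equivalences of Proposition~\ref{prop: phi* == psi*}; this is the source of the parenthetical ``non-canonically''. \emph{The main obstacle} is Step 2: showing that the Hilbert-space comparison unitaries are honest morphisms of $S$-sectors and slot into the hexagon/triangle coherence diagrams. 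Steps 1 and 3 are elementary or formal, and the coherence is formal \emph{given} functoriality of Connes fusion, but making it rigorous requires carefully unwinding Proposition~\ref{prop: nets-and-fiber-product} and repeatedly appealing to strong additivity, in the spirit of Lemma~\ref{lem: vacuum * vacuum = vacuum -- PRE}.
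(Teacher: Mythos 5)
Your proposal is correct and follows essentially the same route as the paper: choose $\varphi\in\Diff_+(S)$ with $\varphi(I_1)=I_2$ and $\varphi\circ j_1=j_2\circ\varphi$, and use the compatibility of $\varphi^*$ with Connes fusion, i.e.\ $(\varphi^*X)\boxtimes_1(\varphi^*Y)\cong\varphi^*(X\boxtimes_2 Y)$ --- your Step~2, which the paper asserts as a bare ``$\cong$'' without the justification you supply. The only difference is packaging: the paper's notion of equivalence is a pair $(\sigma,\mu)$ carried by the identity functor, so it additionally whiskers your comparison with a natural isomorphism $v:\varphi^*\cong\mathrm{id}$ furnished by Proposition~\ref{prop: phi* == psi*}, setting $\sigma=v\circ(\text{your iso})\circ(v^{-1}\boxtimes_1 v^{-1})$ and $\mu=v\circ L^2(\cala(\varphi|_{I_1}))$; you already have all the ingredients for this, since you invoke that proposition to explain non-canonicity.
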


\begin{proof}
The first monoidal structure is given by the data of a monoidal product $(X, Y\mapsto X\boxtimes_1 Y)$, a unit object $\mathbf 1_1$, an associator $a_1:(X\boxtimes_1 Y)\boxtimes_1 Z \stackrel{\scriptscriptstyle\sim}\to X\boxtimes_1 (Y\boxtimes_1 Z)$, a left unit map $\ell_1:\mathbf 1_1\boxtimes_1 X\stackrel{\scriptscriptstyle\sim}\to X$, and a right unit map $r_1:X\boxtimes_1 \mathbf 1_1\stackrel{\scriptscriptstyle\sim}\to X$; the second monoidal structure is similarly given by $\boxtimes_2$, $\mathbf 1_2$, $a_2$, $\ell_2$, and $r_2$.

\noindent An equivalence between these two monoidal structures consists of a unitary natural transformation $\sigma:X\boxtimes_1 Y\stackrel{\scriptscriptstyle\sim}\to X\boxtimes_2 Y$,
and a unitary isomorphism $\mu:\mathbf 1_1\stackrel{\scriptscriptstyle\sim}\to \mathbf 1_2$ making the following three diagrams commute:
\[
\tikzmath{
\node (c) at (0,1) {$(X\boxtimes_1 Y)\boxtimes_1 Z$};
\node (a) at (0,0) {$X\boxtimes_1 (Y\boxtimes_1 Z)$};
\node (d) at (5,1) {$(X\boxtimes_2 Y)\boxtimes_2 Z$};
\node (b) at (5,0) {$X\boxtimes_2 (Y\boxtimes_2 Z)$};
\draw [->] (c) --node[above]{$\scriptstyle \sigma\circ(\sigma{\scriptscriptstyle \boxtimes} 1)$} (d);
\draw [->] (a) --node[above]{$\scriptstyle \sigma\circ(1{\scriptscriptstyle\boxtimes} \sigma)$} (b);
\draw [<-] (a) --node[left]{$\scriptstyle a_1$} (c);
\draw [<-] (b) --node[right]{$\scriptstyle a_2$} (d);
}
\]
\[
\tikzmath{
\node (a) at (0,1) {$\mathbf 1_1 \boxtimes_1 X$};
\node (b) at (3,1) {$\mathbf 1_2 \boxtimes_2 X$};
\node (c) at (1.5,0) {$X$};
\draw [->] (a) --node[above]{$\scriptstyle \sigma\circ(\mu{\scriptscriptstyle \boxtimes} 1)$} (b);
\draw [->] (a) --node[left]{$\scriptstyle \ell_1$} (c);
\draw [->] (b) --node[right]{\,$\scriptstyle \ell_2$} (c);
}\qquad
\tikzmath{
\node (a) at (0,1) {$X\boxtimes_1 \mathbf 1_1$};
\node (b) at (3,1) {$X\boxtimes_2 \mathbf 1_2$};
\node (c) at (1.5,0) {$X$};
\draw [->] (a) --node[above]{$\scriptstyle \sigma\circ(1{\scriptscriptstyle \boxtimes} \mu)$} (b);
\draw [->] (a) --node[left]{$\scriptstyle r_1$} (c);
\draw [->] (b) --node[right]{$\,\scriptstyle r_2$} (c);
}
\]
The claim is that it is possible to find such a pair $(\sigma, \mu)$, but that there is no canonical choice.

Pick a diffeomorphism $\varphi\in\Diff_+(S)$ that maps $I_1$ to $I_2$, and that intertwines the involutions $j_1$ with $j_2$.
By Proposition \ref{prop: phi* == psi*}, the functor $\varphi^*$ is naturally isomorphic to the identity functor on $\Rep_S(\cala)$.
Pick such a natural isomorphism $v$.
The pair $(\sigma,\mu)$ is then given by
\[
\sigma\,:\,X\boxtimes_1 Y \xrightarrow{\,v^{-1}{\scriptscriptstyle \boxtimes}\hspace{.2mm} v^{-1}} (\varphi^*X) \boxtimes_1 (\varphi^*Y) \cong \varphi^*(X \boxtimes_2 Y) \xrightarrow{\,\,v\,\,} X\boxtimes_2 Y,
\]
and
\[
\mu\,:\,L^2(\cala(I_1))\xrightarrow{L^2(\cala(\varphi|_{I_1}))}\varphi^*\big(L^2(\cala(I_2))\big)\xrightarrow{\,\,v\,\,} L^2(\cala(I_2)). \qedhere
\]
\end{proof}

\begin{definition}\label{def: fusion v h}
Given a circle $S$, we let $H, K\mapsto H\boxtimes K$ denote any one of the monoidal structures on $\Rep_S(\cala)$ considered in Proposition \ref{prop: all monoidal structures are equivalent},
and call it ``the fusion of $H$ and $K$''. It is well defined up to non-canonical unitary isomorphism.
\end{definition}

In the case when $S$ is the standard circle, there are two important special cases of \eqref{eq: associative operation on A-reps}:
the \emph{vertical fusion} and the \emph{horizontal fusion} on $\Rep(\cala)$, given by
\[
H\boxtimes^\mathsf{v}\!K\,:=\,H\boxtimes_{S^1_\top,j}K\qquad\qquad
H\boxtimes^\mathsf{h}\!K\,:=\,H\boxtimes_{S^1_\vdash,j'}K,
\]
respectively. Here, $S^1_\top$ is the upper half of the standard circle, 
$S^1_\vdash$ its left half, and $j$ and $j'$ are the reflections given by $j(z)= \bar z$ and $j'(z)= -\bar z$.

\subsection{Central decomposition} \label{subsec:central-decomposition}

Given a conformal net $\cala$ and an orientation-preserving embedding $f:I\to J$, we will show that $\cala(f):Z(\cala(I))\to Z(\cala(J))$ is always an isomorphism.
In fact, there is an algebra $Z(\cala)$, called the \emph{center} of $\cala$, that only depends on $\cala$, and that is canonically isomorphic to $Z(\cala(I))$ for every $I$.
See~\cite[Sec.~3]{Buchholz-DAntoni-Fredenhagen(univ-loc-structure)} 
for a similar discussion.

\begin{proposition} \label{lem: Z(A)}
Let $\cala$ be a conformal net.
Then there is an abelian von Neumann algebra $ Z(\cala)$ such that for
every interval $I$ we have a canonical isomorphism $ Z(\cala(I))\xrightarrow{\scriptscriptstyle\sim}  Z(\cala)$,
and such that for each embedding $J\hookrightarrow I$,
the inclusion $\cala(J)\hookrightarrow\cala(I)$ induces a commutative diagram
\begin{equation}\label{eq: three Z(A)'s}
\def\hh{2}\def\vv{1}
\tikzmath{
\draw[->] 
(-\hh,0) node (a) {$Z(\cala(J))$} 
(\hh,0) node (b) {$Z(\cala)$} 
(0,-\vv) node (c) {$Z(\cala(I))$}
(a.east) -- (b.west);
\draw[->] (a) -- (c);
\draw[->] (c) -- (b);
}
\end{equation}
\end{proposition}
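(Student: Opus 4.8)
The plan is to build $Z(\cala)$ by choosing a single interval, say the standard interval $[0,1]$, setting $Z(\cala) := Z(\cala([0,1]))$, and then showing that every $Z(\cala(I))$ maps canonically and isomorphically onto it. The key input is that an orientation-preserving embedding $f\colon I\hookrightarrow J$ induces an isomorphism $Z(\cala(I))\xrightarrow{\sim} Z(\cala(J))$; once this is established, one transports along an arbitrary chain of embeddings connecting $I$ to $[0,1]$ and checks the result is independent of the chain, which gives the canonical isomorphism $Z(\cala(I))\xrightarrow{\sim} Z(\cala)$ and the commutativity of \eqref{eq: three Z(A)'s} for free (the triangle is built out of maps that are all of this transport form).

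So the heart of the matter is: \emph{for an orientation-preserving embedding $f\colon I\hookrightarrow J$, the map $\cala(f)$ restricts to an isomorphism on centers.} First I would reduce to the case where $f$ is a proper inclusion $I\subsetneq J$ of subintervals: a general embedding factors as a diffeomorphism onto its image followed by an inclusion, and for a diffeomorphism $\cala(f)$ is an isomorphism of algebras, hence certainly an isomorphism on centers. Now assume $I\subset J$. If $I$ contains neither endpoint of $J$, then by Lemma~\ref{lem:irrelevance-of-points} we may first enlarge $I$ to an interval $I'$ with the same interior as $J$ — and $\cala(I') = \cala(J)$ by that lemma — so it suffices to treat the case where $I$ and $J$ share (at least) one boundary point $p$. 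In this boundary-sharing case, let $J^c\subset J$ be the closure of $J\setminus I$, which is an interval, and note $\cala(J) = \cala(I)\vee\cala(J^c)$ by strong additivity, with $\cala(I)$ and $\cala(J^c)$ commuting (locality) — in fact, by the split property, $\cala(J)$ is a quotient of $\cala(I)\,\bar\ox\,\cala(J^c)$. Then any $z\in Z(\cala(J))$ commutes with $\cala(I)$ and with $\cala(J^c)$; I would use Haag duality (Proposition~\ref{prop: [Haag duality]}, second statement: the commutant of $\cala(I)$ in $\cala(J)$ is $\cala(J^c)$, and symmetrically) to conclude $z\in \cala(I)\cap\cala(J^c) = Z(\cala(I))\cap Z(\cala(J^c))$, and in particular $z\in\cala(I)$, so $z$ lies in the image of $Z(\cala(I))\to Z(\cala(J))$; injectivity is automatic since $\cala(f)$ is injective. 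One more wrinkle: when $I$ and $J$ share both endpoints, $J^c$ is a disjoint union of two intervals rather than one, so Haag duality must be applied on each half separately (or one reduces to the one-shared-endpoint case by factoring the inclusion through an intermediate interval sharing exactly one endpoint with $J$).

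Once $\cala(f)\colon Z(\cala(I))\to Z(\cala(J))$ is known to be an isomorphism for every orientation-preserving $f$, I would define, for each interval $I$, the canonical isomorphism $Z(\cala(I))\to Z(\cala)$ by picking any orientation-preserving embedding $I\to[0,1]$ (e.g.\ an orientation-preserving diffeomorphism onto a subinterval) and composing with the transport isomorphism. Independence of this choice follows because any two such embeddings differ by composition through further embeddings, all of which induce the identity-compatible transport maps; more precisely, given two embeddings $g_1,g_2\colon I\to[0,1]$, one finds a common interval $K$ receiving orientation-preserving embeddings from both images, and functoriality of $\cala$ together with the just-proved isomorphism property forces the two transports to agree. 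Functoriality of $\cala$ then immediately yields the commuting triangle~\eqref{eq: three Z(A)'s}: all three legs are transport isomorphisms and the triangle is an instance of $\cala(J\hookrightarrow I)$ composed appropriately. The main obstacle I anticipate is the two-shared-endpoint case of the center-isomorphism claim — making sure Haag duality is invoked correctly when $J^c$ is disconnected — and, secondarily, the bookkeeping needed to verify that the canonical isomorphism $Z(\cala(I))\to Z(\cala)$ really is independent of all choices; both are routine but require care.
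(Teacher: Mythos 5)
Your treatment of the inclusion case $I\subset J$ (Haag duality plus strong additivity to identify $Z(\cala(I))$ and $Z(\cala(J))$ as the same subalgebra $\cala(I)\cap\cala(J^c)$ of $\cala(J)$) is essentially the paper's argument, modulo two small slips: there is no proper subinterval of $J$ with the same interior as $J$ (intervals here are compact), so the reduction to the boundary-sharing case should go via factoring $I\subset K\subset J$ with each inclusion sharing one endpoint, as you suggest parenthetically; and you should also record the easy converse containment (an element of $\cala(I)\cap\cala(J^c)$ commutes with $\cala(I)\vee\cala(J^c)=\cala(J)$), which is what makes the map $Z(\cala(I))\to Z(\cala(J))$ land in the center at all. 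These are cosmetic. The first real gap is your independence-of-embedding step. A functor that sends every morphism to an isomorphism is \emph{not} automatically constant (think of a single object with monoid of self-maps $\IN$ acting on a group by a nontrivial automorphism), and the category of intervals has many orientation-preserving self-embeddings of each object; ``functoriality of $\cala$ together with the just-proved isomorphism property'' does not force two embeddings $g_1,g_2\colon I\to[0,1]$ to induce the same map on centers, since $\iota_1\circ g_1$ and $\iota_2\circ g_2$ are simply different morphisms. Some genuinely new input is needed here. The paper reduces to a self-embedding $\alpha\colon I\to I$ and extends it to a diffeomorphism $\varphi$ of a larger interval $I_+$ that restricts to the identity on a third subinterval $K\subset I_+$; since $K\hookrightarrow I_+$ is already known to induce an isomorphism on centers, $\varphi_*=\mathrm{id}$ on $Z(\cala(I_+))$, and injectivity of $i_*$ then gives $\alpha_*=\mathrm{id}$.

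The second, larger gap is that you never address orientation-reversing embeddings, which are part of $\INT$ and of the statement of the proposition. The anti-isomorphism $\cala(\mathrm{Id}_J)\colon\cala(J)\to\cala(\bar J)$ does carry center to center, but a priori it could permute central projections nontrivially, and nothing in your argument rules this out. This is precisely where the paper invokes the vacuum sector axiom: if some central projection $p$ were orthogonal to its image $q$ under this map, then $p\otimes q$ would act as zero on $L^2(\cala(I))$ via $\cala(J)\otimes_\alg\cala(J)^\op$, while the factorization through $\cala(J\cup_p\bar J)$ forces it to act as $p\neq 0$ --- a contradiction. Without this step the construction of $Z(\cala)$ and the commutativity of \eqref{eq: three Z(A)'s} are only established for the orientation-preserving subcategory.
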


\begin{proof}
We first show that $\zeta(I):= Z(\cala(I))$
is a functor from intervals with orientation-preserving embeddings to von Neumann algebras.  We define $Z(\cala):=\colim \zeta$.
The natural map $Z(\cala(I))\to Z(\cala)$ will be an isomorphism if and only if $\zeta$ is equivalent to a constant functor.
Checking the latter condition involves two things:
({\it a}) given an embedding $J\hookrightarrow I$, we show the induced map $\cala(J)\hookrightarrow\cala(I)$
sends $Z(\cala(J))$ isomorphically onto $Z(\cala(I))$,
and ({\it b})
given two orientation-preserving embeddings $\alpha,\beta:J\hookrightarrow I$, we show that
the induced isomorphisms $\alpha_*,\beta_*:Z(\cala(J))\to  Z(\cala(I))$ are equal to each other.

(\hspace{-.1mm}{\it a}) Let $J\hookrightarrow I$ be an embedding.
Without loss of generality, we may assume that $I$ and $J$ share a boundary point.
Let $J^c$ be the closure in $I$ of the complement of $J$.
By Lemma \ref{prop: [Haag duality]}, $\cala(J)$ and $\cala(J^c)$ are each other's commutants in $\cala(I)$.
Hence
\[
 Z\big(\cala(J)\big)=
 Z\big(\cala(J^c)\big)=\cala(J)\cap\cala(J^c).
\]
An element of $\cala(J)\cap\cala(J^c)$ commutes with both $\cala(J^c)$ and $\cala(J)$,
and so it also commutes with $\cala(I)=\cala(J)\vee\cala(J^c)$.
An element of $ Z(\cala(I))$ commutes with both $\cala(J)$ and $\cala(J^c)$,
and is therefore in the intersection $\cala(J^c)\cap\cala(J)$ of their commutants.
It follows that $ Z(\cala(I))= Z(\cala(J))$.

({\it b}) 
Without loss of generality, we may take $I=J$, and $\beta=\mathrm{Id}_I$.
Pick intervals $I\subset I_+\supset K$, and a diffeomorphism $\varphi:I_+\to I_+$ that restricts to $\alpha$ on $I$ and to the identity on $K$:
\[
\def\hh{1.7}\def\vv{1}
\tikzmath{
\node (a) at (-\hh,\vv) {$I$};\node (b) at (0,\vv) {$I_+$};\node (c) at (\hh,\vv) {$K$};
\node (a') at (-\hh,0) {$I$};\node (b') at (0,0) {$I_+$};\node (c') at (\hh,0) {$K$};
\draw [->] (a) -- node[above]{$\scriptstyle i$} (b);		\draw [->] (c) -- node[above]{$\scriptstyle j$} (b);
\draw [->] (a') -- node[above]{$\scriptstyle i$} (b');		\draw [->] (c') -- node[above]{$\scriptstyle j$} (b');
\draw [->] (a) -- node[left]{$\scriptstyle \varphi|_I=\alpha$} (a');
\draw [->] (b) -- node[left]{$\scriptstyle \cong$} node[right]{$\scriptstyle \varphi$} (b');
\draw [->] (c) -- node[right]{$\scriptstyle \varphi|_K=\mathrm{Id}_K$} (c');
}
\]
As the maps $i_*: Z(\cala(I))\to  Z(\cala(I_+))$ and $j_*: Z(\cala(K))\to  Z(\cala(I_+))$ are isomorphisms,
it follows that $\alpha_*: Z(\cala(I))\to  Z(\cala(I))$ is the identity map.

We have now canonically identified every $Z(\cala(I))$ with $Z(\cala)$.
To show that every diagram \eqref{eq: three Z(A)'s} is commutative, we still need to treat the case of orientation-reversing maps.
For that purpose, it is enough to analyze the identity map from $J$ to $\bar J$.
Namely, we need to show that the map 
\[
Z(\cala)\cong Z(\cala(J))\xrightarrow{\,i_*\,} Z(\cala(\bar J))\cong Z(\cala)
\]
induced by
$i:=\mathrm{Id}_J:J\to\, \bar J$ is the identity.
Assuming the contrary, there would be a non-zero central projection $p\in \cala(J)$
that is orthogonal to its image $q:=i_*(p)$.
Letting $I$ be an interval containing $J$ and sharing one endpoint (as in the statement of the vacuum axiom), we would then have the following commutative diagram
\[
\qquad\tikzmath{
\matrix [matrix of math nodes,column sep=1cm,row sep=4mm]
{ 
|(x1)| p\otimes q	\pgfmatrixnextcell [-1cm] \in	\pgfmatrixnextcell [-1.1cm] |(a2)| \cala(I) \ox_{\alg} \cala(I)^\op\\
|(x)| p\otimes q		\pgfmatrixnextcell \in			\pgfmatrixnextcell [-1.1cm] |(a1)| \cala(J) \ox_{\alg} \cala(J)^\op\\
|(y)| p\otimes p		\pgfmatrixnextcell \in			\pgfmatrixnextcell |(a)| \cala(J) \ox_{\alg} \cala(\bar J) \\ 
|(z)| \,p^2			\pgfmatrixnextcell 			\pgfmatrixnextcell |(c)| \cala(J \cup \bar{J}) \\ 
}; 
\node (b) at ($(a1.center)!.5!(a.center) + (3.5,0)$) {$\bfB(L^2\cala(I))$};
\draw[->] (a2) -- (b);
\draw[->] (a1) -- (a2);
\draw[->] (a1) -- (a);
\draw[->] (a) -- (c);
\draw[->,dashed] (c) -- (b);
\draw[->]  (x) -- (x1);
\draw[->] (x) -- (y);
\draw[->] (y) -- (z);
\foreach \zz/\loc in {x/north, x/south, y/south}
{\draw ($(\zz.\loc) + (.07,0)$) -- ($(\zz.\loc) - (.07,0)$);}
\node[right, xshift = -5, yshift = -1.4] at (z.east) {$=p\,\,\in$};
}
\]
The element $p\otimes p$ goes to $p^2$ under the bottom map because it is the product of $p\otimes 1$ with $1\otimes p$, and both get mapped to $p$.
Since $p\otimes q$ acts as zero on $L^2\cala(I)$, by the commutativity of the above diagram, so must $p$,
contradicting the fact that $p\not = 0$.
\end{proof}

Given an abelian von Neumann algebra $A$, let $\mathrm{Spec}(A)$ refer to any nice measure space $X$ equipped with an isomorphism $L^\infty(X)\cong A$.  Any von Neumann algebra can be written as a direct integral of 
factors~\cite{Dixmier(vN-algebras)} indexed over Spec of its center.
Applying this to the algebras $\cala(I)$, one can then write every conformal net $\cala$ as a direct integral of irreducible conformal nets:
\begin{equation}\label{eq: Dir Int}
\cala = \int_{x\in \mathrm{Spec}(Z(\cala))}^\oplus \cala_x
\end{equation}
Here, we have secretly used that $\INT$ is equivalent to a category with countably many objects (actually one object) and has separable hom spaces.

Conformal nets form a category: a morphism $\cala\to \calb$ is a natural transformation
$\tau:\cala \to \calb$ that assigns to each interval $I$ a unital 
homomorphism $\tau_I:\cala(I) \to\calb(I)$ of von Neumann algebras.
Objectwise spatial tensor product defines a symmetric monoidal structure on that category.

\begin{lemma}
Let $\cala$ and $\calb$ be conformal nets, and let $\tau:\cala\to \calb$ 
be a natural transformation.
Then $\tau(Z(\cala))\subset Z(\calb)$.
\end{lemma}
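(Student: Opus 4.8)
The crucial point is that a homomorphism of von Neumann algebras need not send centers to centers, so the proof must genuinely use the net axioms --- strong additivity and Haag duality. The plan is to reduce to a single interval, use Haag duality to express the center of $\cala(I)$ as an intersection of the algebras of two complementary subintervals, and then transport this description through $\tau$ by naturality. For the reduction: by Proposition~\ref{lem: Z(A)}, the canonical isomorphisms $Z(\cala)\cong Z(\cala(I))$ and $Z(\calb)\cong Z(\calb(I))$ are induced by the inclusions of algebras of subintervals, so since $\tau$ is natural, the induced map $Z(\cala)\to Z(\calb)$ is computed by any single $\tau_I$ and is independent of $I$; it therefore suffices to fix one interval $I$ and prove $\tau_I\big(Z(\cala(I))\big)\subseteq Z(\calb(I))$.

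Next I would pick a proper subinterval $J\subsetneq I$ containing exactly one of the two boundary points of $I$, and let $J^c$ denote the closure of $I\setminus J$, which is again an interval with $I=J\cup J^c$. By strong additivity, $\cala(I)=\cala(J)\vee\cala(J^c)$; by Haag duality (Proposition~\ref{prop: [Haag duality]}), the commutant of $\cala(J)$ in $\cala(I)$ is $\cala(J^c)$, and the commutant of $\cala(J^c)$ in $\cala(I)$ is $\cala(J)$. Combining these, $Z(\cala(I))$, being the commutant of $\cala(J)\vee\cala(J^c)$ inside $\cala(I)$, equals $\cala(J)\cap\cala(J^c)$. The identical reasoning applied to $\calb$ gives $Z(\calb(I))=\calb(J)\cap\calb(J^c)$.

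Finally, given $z\in Z(\cala(I))$, the previous step shows $z\in\cala(J)$ and $z\in\cala(J^c)$ as subalgebras of $\cala(I)$. Applying naturality of $\tau$ to the inclusions $J\hookrightarrow I$ and $J^c\hookrightarrow I$, and suppressing the inclusion maps, we obtain $\tau_I(z)=\tau_J(z)\in\calb(J)$ and $\tau_I(z)=\tau_{J^c}(z)\in\calb(J^c)$, hence $\tau_I(z)\in\calb(J)\cap\calb(J^c)=Z(\calb(I))$. The only real obstacle is the conceptual step of realizing one cannot argue abstractly and must instead extract from the axioms that $Z(\cala(I))$ already lives inside the smaller algebras $\cala(J)$ and $\cala(J^c)$; once that is observed, naturality finishes the argument immediately. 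A minor technical point is that $J$ must be chosen as a boundary subinterval so that $J^c$ is an interval and Haag duality applies.
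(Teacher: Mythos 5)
Your proof is correct and is essentially the paper's own argument in a slightly different packaging: the paper splits an interval $K$ into two adjacent subintervals and uses Proposition~\ref{lem: Z(A)} (whose proof contains exactly your Haag-duality identity $Z(\cala(I))=\cala(J)\cap\cala(J^c)$) together with locality and strong additivity, which is the same mechanism as your ``center lives in both subalgebras, hence its image commutes with $\calb(J)\vee\calb(J^c)=\calb(I)$'' step. No gap.
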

\begin{proof}
Let $I=[0,1]$, $J=[1,2]$, and $K=[0,2]$.
By Proposition \ref{lem: Z(A)}, the natural maps
$Z(\cala(I))\to Z(\cala(K)) \leftarrow Z(\cala(J))$ are isomorphisms.
By locality, the algebra $\tau(Z(\cala(I)))$ commutes with $\calb(J)$, 
and the algebra $\tau(Z(\cala(J)))$ commutes with $\calb(I)$.
It follows that the image of $Z(\cala(K))$ commutes with 
$\calb(I)\vee\calb(J)=\calb(K)$.
\end{proof}

\begin{corollary}
Let $\tau$ be a natural transformation between semisimple conformal nets (Definition \ref{def:irreducilbe-nets}).
Then $\tau$ is a direct sum of maps of the form
\[
\cala\,\longrightarrow\, \bigoplus_{i=1}^n\cala\,\xrightarrow{\oplus \iota_i}\, \bigoplus_{i=1}^n\calb_i,
\]
where $\cala$ and $\calb_i$ are irreducible conformal nets,
$n\in\IN$ ($n=0$ allowed), $\cala\to \bigoplus_{i=1}^n\cala$ is the diagonal map, and $\iota_i:\cala\to\calb_i$ are inclusions.
\end{corollary}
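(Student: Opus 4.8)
The plan is to combine the preceding lemma with the fact that a semisimple net has a finite-dimensional center. Write $\cala=\bigoplus_{j=1}^m\cala_j$ and $\calb=\bigoplus_{k=1}^{m'}\calb_k$ as finite direct sums of irreducible conformal nets. For every interval $I$ the algebras $\cala_j(I)$ and $\calb_k(I)$ are factors, so $Z(\cala(I))$ is spanned by the mutually orthogonal projections $z_j:=1_{\cala_j(I)}$, which sum to $1$, and similarly $Z(\calb(I))$ is spanned by $w_k:=1_{\calb_k(I)}$. Since the maps $\cala(f)$, $\calb(f)$ associated to embeddings $f$ of intervals carry these projections into one another, they define, via the canonical identifications of Proposition~\ref{lem: Z(A)}, elements $z_j\in Z(\cala)$ and $w_k\in Z(\calb)$.

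First I would verify that $\tau$ induces a single homomorphism on centers, independent of $I$. Naturality gives $\tau_I\circ\cala(f)=\calb(f)\circ\tau_J$ for every embedding $f\colon J\hookrightarrow I$; restricting to centers and using that $\cala(f),\calb(f)$ induce the canonical isomorphisms of Proposition~\ref{lem: Z(A)}, together with the inclusion $\tau(Z(\cala))\subseteq Z(\calb)$ from the preceding lemma, one gets a unital homomorphism $\bar\tau\colon Z(\cala)\to Z(\calb)$ with $\bar\tau=\tau_I|_{Z(\cala(I))}$ for all $I$. As $\bar\tau$ is a unital ring homomorphism $\IC^m\to\IC^{m'}$, the elements $\bar\tau(z_j)$ are mutually orthogonal projections summing to $1$; hence there is a decomposition $\{1,\dots,m'\}=S_1\cup\cdots\cup S_m$ into disjoint (possibly empty) subsets with $\bar\tau(z_j)=\sum_{k\in S_j}w_k$.

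Next, for fixed $j$, the projection $\tau_I(z_j)=\sum_{k\in S_j}w_k$ is the unit of $\bigoplus_{k\in S_j}\calb_k(I)$ inside $\calb(I)$, so $\tau_I$ restricts to a unital homomorphism $\cala_j(I)=z_j\cala(I)\to\bigoplus_{k\in S_j}\calb_k(I)$; these restrictions are natural in $I$, hence assemble into a morphism of conformal nets $\tau^{(j)}\colon\cala_j\to\bigoplus_{k\in S_j}\calb_k$, and $\tau=\bigoplus_{j=1}^m\tau^{(j)}$ since the $S_j$ partition $\{1,\dots,m'\}$. Post-composing $\tau^{(j)}$ with the projection onto the $k$-th summand ($k\in S_j$) gives a morphism $\iota_k\colon\cala_j\to\calb_k$, and $\tau^{(j)}$ is by construction the composite of the diagonal $\cala_j\to\bigoplus_{k\in S_j}\cala_j$ with $\bigoplus_{k\in S_j}\iota_k$. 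It then remains to see that each $\iota_k$ is an inclusion: $(\iota_k)_I$ is a unital normal homomorphism between the factors $\cala_j(I)$ and $\calb_k(I)$, so its kernel is a $\sigma$-weakly closed two-sided ideal, hence $0$ or all of $\cala_j(I)$; the latter is excluded by $(\iota_k)_I(1)=1\neq 0$. The case $S_j=\emptyset$ is exactly the $n=0$ case of the statement.

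The argument is essentially bookkeeping once the preceding lemma is available. The one point that genuinely uses structural input is the $I$-independence of $\bar\tau$ (equivalently, of the sets $S_j$), which rests on Proposition~\ref{lem: Z(A)} together with the naturality of $\tau$; I do not anticipate any serious obstacle there.
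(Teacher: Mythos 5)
Your proof is correct and is exactly the argument the paper intends (the corollary is stated there without proof, as an immediate consequence of the preceding lemma $\tau(Z(\cala))\subseteq Z(\calb)$ together with Proposition~\ref{lem: Z(A)}): decompose via the minimal central projections, use the partition $\{1,\dots,m'\}=\bigsqcup_j S_j$ induced by the unital map on centers, and note that a unital normal homomorphism out of a factor is injective. No gaps.
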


In view of the above results, the study of arbitrary conformal nets reduces to that of irreducible conformal nets.
From now on, we shall therefore assume that our conformal 
nets are irreducible.

\subsection{Conformal embeddings}

In this section, we discuss two classes of morphisms of special interest, namely finite morphisms and conformal embeddings, and we show that all finite morphisms are conformal embeddings.

Let $\cala$ and $\calb$ be irreducible conformal nets.
Recall that given an interval $I$, we denote by $\Diff_0(I)$ the subgroup of diffeomorphisms that restrict to the identity near the boundary of $I$.

\begin{definition} \label{def:finite nets maps}
A natural tranformation $\tau:\cala\to \calb$ is called \emph{finite} if for every interval $I$ the map $\tau_I:\cala(I)\to \calb(I)$ is a finite homomorphism (Definition \ref{def: finite homomrphism}).
\end{definition}

\nid We borrow the following terminology from affine Lie algebras~\cite{Bais-Bouwknegt(classification-subgroup-truncations)}:

\begin{definition}
A natural tranformation $\tau:\cala\to \calb$ is called a \emph{conformal embedding} if for every $\varphi\in\Diff_0(I)$, and every unitary $u\in \cala(I)$,
\[
\Ad(u) = \cala(\varphi)\quad\Rightarrow\quad \Ad (\tau(u)) = \calb(\varphi).
\]
\end{definition}

\begin{lemma}\label{lem: Rel com of A(I) in B(I) is C}
Let $\tau:\cala\to \calb$ be a finite natural transformation between irreducible conformal nets.
Then the relative commutant of $\tau(\cala(I))$ inside $\calb(I)$ is trivial.
\end{lemma}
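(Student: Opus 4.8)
The plan is to show that the relative commutant $M := \tau(\cala(I))' \cap \calb(I)$ is trivial by exploiting the rigidity coming from the diffeomorphism action together with the finiteness hypothesis. First I would observe that $M$ is automatically a von Neumann subalgebra of $\calb(I)$ that is fixed pointwise, in a suitable sense, by the inner-covariance structure coming from $\cala$: if $\varphi\in\Diff_0(I)$ is implemented by a unitary $u\in\cala(I)$, then $\tau(u)$ implements $\calb(\varphi)$ on $\tau(\cala(I))$, but more importantly, since $u$ commutes with nothing extra a priori, I would instead look at how $\calb(\varphi)$ acts on $M$. The key point is that for $\varphi$ supported in a small subinterval $K\subsetneq I$, the automorphism $\calb(\varphi)$ is $\Ad$ of a unitary in $\calb(K)$; one wants to play the algebras $\cala(K)$, $\calb(K)$, and their complements against each other using Haag duality (Proposition~\ref{prop: [Haag duality]}) and strong additivity.

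**Key steps.** (1) Reduce to showing $M \subseteq Z(\calb(I))$, which is trivial since $\calb$ is irreducible; so it suffices to show $M$ is central in $\calb(I)$, i.e.\ that $M$ commutes with all of $\calb(I)$. Given $m\in M$ and an interval $K\subsetneq I$ sharing a boundary point with $I$, with complementary interval $K^c$, I would try to show $m$ commutes with $\calb(K)$. (2) Use that $m$ commutes with $\tau(\cala(I))\supseteq\tau(\cala(K^c))$, and then invoke Haag duality inside $\calb(I)$: $\calb(K^c)' \cap \calb(I) = \calb(K)$. The obstacle is that $m$ commutes with $\tau(\cala(K^c))$, not with $\calb(K^c)$, so one cannot directly conclude. (3) This is where finiteness enters: $\tau_{K^c}\colon\cala(K^c)\to\calb(K^c)$ being a finite homomorphism of factors means (by the appendix material on finite index) that $\calb(K^c)$ is generated in a controlled way over $\tau(\cala(K^c))$, e.g.\ via a conditional expectation / finite-index inclusion, and in particular the relative commutant $\tau(\cala(K^c))'\cap\calb(K^c)$ is finite-dimensional. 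The strategy would be to show $M$ normalizes or is absorbed by such relative commutants and then run a dimension/centrality argument. Alternatively, and perhaps more cleanly, use the diffeomorphism group directly: pick $\varphi_n\in\Diff_0(I)$ shrinking $K^c$ into smaller and smaller intervals (as in Lemma~\ref{lem:irrelevance-of-points} and the diffeomorphism trick in Lemma~\ref{lem: open cover of circle => sector}); since $m$ commutes with $\tau(\cala(I))$ it commutes with each $\tau(u_n)$ where $u_n$ implements $\varphi_n$ in $\cala$, and conjugating one gets that $m$ commutes with $\Ad(\tau(u_n))(\calb(K))=\calb(\varphi_n(K))$; letting $\varphi_n(K)$ exhaust the interior of $I$ and using Lemma~\ref{lem:irrelevance-of-points} gives $m\in\calb(I)'\cap\calb(I)=Z(\calb(I))=\IC$.

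**Main obstacle.** The crux is step (3): bridging the gap between commuting with $\tau(\cala(K^c))$ and commuting with $\calb(K^c)$ (or $\calb(\varphi_n(K))$). The naive diffeomorphism argument only shows $\tau(u_n)$ implements $\calb(\varphi_n)$ \emph{on the image} $\tau(\cala(I))$, not on all of $\calb(I)$ — indeed that is precisely the \emph{conclusion} of the later Lemma asserting finite morphisms are conformal embeddings, so we cannot assume it here without circularity. Thus the real work is to leverage finiteness of $\tau$ to control the relative commutant: I expect one shows that $\tau(\cala(I))'\cap\calb(I)$ is finite-dimensional (from finite index), that it carries a natural action of the connected group $\Diff_0(I)$ through $\calb$, hence that action is trivial, and then a local-to-global argument over a cover of $I$ together with strong additivity and Haag duality forces this finite-dimensional algebra into the center of each local algebra, hence to be $\IC$ by irreducibility. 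The delicate point is making the $\Diff_0(I)$-action on the relative commutant well-defined and continuous, which should follow from the continuity axiom and the fact that an automorphism of a von Neumann algebra preserves any canonically-defined finite-dimensional relatively-commuting subalgebra.
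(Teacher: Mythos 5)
Your proposal correctly isolates the central difficulty (commuting with $\tau(\cala(K^c))$ is weaker than commuting with $\calb(K^c)$), correctly flags the circularity of invoking the conformal-embedding property, and correctly identifies the key input from finiteness, namely that relative commutants of finite inclusions are finite-dimensional (Lemma~\ref{lem: rel comm is finite dim}). But the argument is not actually closed. The two mechanisms you offer for the "real work" do not hold up: the diffeomorphism-conjugation route you yourself concede is circular, and the claim that a finite-dimensional algebra carrying an action of the connected group $\Diff_0(I)$ must be acted on trivially is false in general (a connected group can act nontrivially by inner automorphisms on, say, a matrix algebra), nor is the "local-to-global argument forcing the algebra into the center" spelled out in a way one could check. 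So as written there is a genuine gap exactly where you located it.

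The paper closes this gap with a short rigidity argument you did not find. Take adjacent intervals $I=[0,1]$, $J=[1,2]$ with union $K=[0,2]$, and write $\cala(I)^c$, $\cala(J)^c$, $\cala(K)^c$ for the commutants of the images of $\tau$ inside $\calb(K)$. By locality, $\cala(I)^c\cap\calb(I)$ commutes with $\tau(\cala(J))$ as well as with $\tau(\cala(I))$, hence with $\cala(K)=\cala(I)\vee\cala(J)$; so both $\cala(I)^c\cap\calb(I)$ and $\cala(J)^c\cap\calb(J)$ include into $\cala(K)^c$. All three algebras are finite-dimensional by Lemma~\ref{lem: rel comm is finite dim}, and they all have the \emph{same} dimension because any two intervals are diffeomorphic and $\tau$ is natural, so the three relative commutants are abstractly isomorphic. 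An injection between finite-dimensional algebras of equal dimension is an isomorphism, so $\cala(I)^c\cap\calb(I)=\cala(K)^c=\cala(J)^c\cap\calb(J)$ as subalgebras of $\calb(K)$. This common algebra lies in $\calb(I)$ (so commutes with $\calb(J)$) and also in $\calb(J)$ (so commutes with $\calb(I)$), hence by strong additivity it is central in $\calb(K)=\calb(I)\vee\calb(J)$, and irreducibility of $\calb$ finishes the proof. The equality-of-dimensions trick for two overlapping intervals is the missing idea; everything else in your outline is compatible with it.
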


\begin{proof}
Let $I=[0,1]$, $J=[1,2]$, $K=[0,2]$,
and let $\cala(I)^c$, $\cala(J)^c$, $\cala(K)^c$ be the commutants of $\cala(I)$, $\cala(J)$, $\cala(K)$ inside $\calb(K)$.
By locality, the algebras $\cala(I)^c\cap \calb(I)$ and $\cala(J)^c\cap \calb(J)$ commute with $\cala(K)=\cala(I)\vee \cala(J)$.
We therefore have inclusions
\[
\cala(I)^c\cap\calb(I)\,\hookrightarrow\, \cala(K)^c\,\hookleftarrow\, \cala(J)^c\cap\calb(J).
\]
Since $\tau$ is finite, these algebras are finite-dimensional by Lemma \ref{lem: rel comm is finite dim}.
They are all of the same dimension, and so the above inclusions are actually isomorphisms.
Moreover, $\cala(I)^c\cap\calb(I)$ and $\cala(J)^c\cap\calb(J)$ commute with $\calb(J)$ and $\calb(I)$, respectively.
It follows that $\cala(I)^c\cap\calb(I)= \cala(J)^c\cap\calb(J)$ is central in $\calb(I)\vee\calb(J)=\calb(K)$.
This finishes the argument since, by assumption, $Z(\calb(K))=\IC$.
\end{proof}

\begin{proposition}\label{prop: finite homom ==> conf emb}
If $\tau:\cala\to\calb$ is finite, then it is a conformal embedding.
\end{proposition}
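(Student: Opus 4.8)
The plan is to deduce the statement directly from Lemma~\ref{lem: Rel com of A(I) in B(I) is C}, which says that the relative commutant of $\tau(\cala(I))$ inside $\calb(I)$ is trivial. Fix an interval $I$, a diffeomorphism $\varphi\in\Diff_0(I)$, and a unitary $u\in\cala(I)$ with $\Ad(u)=\cala(\varphi)$. The goal is to show that $\Ad(\tau(u))$ and $\calb(\varphi)$ are the same automorphism of $\calb(I)$.

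First I would check that these two automorphisms already agree on the subalgebra $\tau(\cala(I))\subseteq\calb(I)$. For $a\in\cala(I)$ one has $\Ad(\tau(u))(\tau(a))=\tau(u)\tau(a)\tau(u)^*=\tau(uau^*)=\tau(\cala(\varphi)(a))$, while naturality of $\tau$ applied to the embedding $\varphi\colon I\to I$ gives $\calb(\varphi)\circ\tau_I=\tau_I\circ\cala(\varphi)$, hence $\calb(\varphi)(\tau(a))=\tau(\cala(\varphi)(a))$ as well. Therefore the automorphism $\beta:=\calb(\varphi)^{-1}\circ\Ad(\tau(u))$ of $\calb(I)$ restricts to the identity on $\tau(\cala(I))$.

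Next I would observe that $\beta$ is inner. Since $\varphi$ restricts to the identity near $\partial I$, so does $\varphi^{-1}$, and both lie in $\Diff_+(I)$; applying the inner covariance axiom to the net $\calb$ produces a unitary $v\in\calb(I)$ with $\Ad(v)=\calb(\varphi)^{-1}=\calb(\varphi^{-1})$. Then $\beta=\Ad(v)\circ\Ad(\tau(u))=\Ad(v\tau(u))$. Setting $w:=v\tau(u)$, the fact that $\Ad(w)$ is the identity on $\tau(\cala(I))$ means precisely that $w$ lies in the relative commutant $\tau(\cala(I))'\cap\calb(I)$, which is $\IC$ by Lemma~\ref{lem: Rel com of A(I) in B(I) is C}. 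Hence $\beta=\id$, i.e.\ $\Ad(\tau(u))=\calb(\varphi)$, which is exactly the condition defining a conformal embedding.

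There is no serious obstacle here: the content has already been concentrated into Lemma~\ref{lem: Rel com of A(I) in B(I) is C} (which itself uses finiteness of $\tau$ together with Haag duality, strong additivity, and irreducibility of $\calb$). The only point to handle with a little care is the elementary remark that $\Diff_0(I)\subseteq\Diff_+(I)$, so that the inner covariance axiom genuinely applies to $\varphi$ and $\varphi^{-1}$; and one should note that the argument never needs such a $u$ to exist, only that, when it does, $\Ad(\tau(u))$ is forced to equal $\calb(\varphi)$.
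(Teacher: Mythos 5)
Your proof is correct and is essentially the same as the paper's: both reduce the statement to Lemma~\ref{lem: Rel com of A(I) in B(I) is C} by producing, via the inner covariance axiom for $\calb$, a unitary $v$ implementing $\calb(\varphi)$ and observing that $v^{-1}\tau(u)$ (resp.\ $\tau(u)v^*$ in the paper) lies in the trivial relative commutant $\tau(\cala(I))'\cap\calb(I)$. The only difference is cosmetic (you conjugate by $\calb(\varphi)^{-1}$ rather than comparing $\tau(u)$ and $v$ directly), and your explicit use of naturality of $\tau$ to identify the two automorphisms on $\tau(\cala(I))$ matches the paper's restriction-along-$\tau$ step.
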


\begin{proof}
Let $\varphi\in\Diff_0(I)$ be a diffeomorphism.
By the inner covariance axiom, there exists a unitary $v\in\calb(I)$ such that $\Ad(v)=\calb(\varphi)$.
Similarly, there exists $u\in\cala(I)$ such that $\Ad(u)=\cala(\varphi)=\calb(\varphi)|_{\cala(I)}$, where the restriction occurs along the morphism $\tau$.
It follows that $\Ad(\tau(u)v^*)|_{\cala(I)}=\mathrm{Id}_{\cala(I)}$, and hence that $\tau(u)v^*\in \cala(I)'\cap\calb(I)$.
By Lemma \ref{lem: Rel com of A(I) in B(I) is C}, $\tau(u)$ is therefore a scalar multiple of $v$.
It follows that $\Ad(\tau(u))=\Ad(v)=\calb(\varphi)$.
\end{proof}


\section{Covariance for the vacuum sector}
  \label{sec:covariance}

In this section, we study the natural projective actions of $\Diff(S^1)$ and its various subgroups on the vacuum sector of a conformal net.
The main result of this section is that the vacuum sector construction can be upgraded to a functor from the category of conformal circles to the category of Hilbert spaces.

From now on, all conformal nets are irreducible, unless stated otherwise.

\subsection{Implementation of diffeomorphisms}
Given a Hilbert space $H$, we let $\U_\pm(H)=\U(H)\,\cup\, \U_-(H)$ be the group of unitary and anti-unitary operators on $H$,
equipped with the strong operator topology.
This is a topological group\footnote{This might be surprising since, on $\bfB(H)$, the map $a\mapsto a^*$ is not continuous for the strong topology.}.
Note that, on $\U_\pm(H)$, the strong, weak, and ultraweak topologies all agree.

Let $S$ be a circle, let $I_0 \subset S$ be an interval, and let $j \colon S \to S$ be an orientation-reversing involution that fixes $\dd I_0$.
For a conformal net $\cala$, let $H_0 := L^2(\cala(I_0))\in \Rep_S(\cala)$ be the vacuum sector associated to $S$, $I_0$ and $j$, as in Section~\ref{subsec:sectors-for-nets}.

\begin{definition}\label{def:implements}
Let ${\varphi\in \Diff(S)}$ be a diffeomorphism, and
let $u \in \U_\pm(H_0)$ be an operator that is
complex linear if $\varphi\in\Diff_+(S)$, and complex antilinear otherwise.
We say that $u$ \emph{implements} $\varphi$ if 
$$u:H_0 \to \varphi^*H_0$$
is a morphism of $\cala$-sectors.
\end{definition}

Unpacking the definition, a unitary $u \in \U(H_0)$ implements a diffeomorphism $\varphi\in \Diff_+(S)$ if
\begin{equation} \label{eq:implement}
  \cala(\varphi) (a) = \Ad(u)(a) = u\, a\, u^*
\end{equation}
for all $I\subset S$ and $a \in \cala(I)$, and that
an anti-unitary $u \in \U_-(H_0)$ implements an orientation-reversing diffeomorphism $\varphi\in \Diff_-(S)$ if
\begin{equation} \label{eq:implement*}
  \cala(\varphi) (a) = \Ad(u)(a^*) = u\, a^* u^*.
\end{equation}
Here, the adjoint $u^*$ of an antilinear operator $u$ is defined by $\langle u\xi,\eta\rangle=\overline{\langle\xi,u^*\eta\rangle}$. 

Throughout this section, we will adopt the notation $I_0'$ for the closure of $S\setminus I_0$.

\begin{lemma}\label{lem: enough to check I_0 and I_0'}
Let $u$ be an (anti-)unitary operator on the Hilbert space $H_0$.
In order to check that $u$ implements a diffeomorphism $\varphi$,
it is enough to check \eqref{eq:implement} or \eqref{eq:implement*} for $a \in \cala(I_0)$ and $a \in \cala(I_0')$.
\end{lemma}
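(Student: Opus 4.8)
The plan is to reduce the verification of \eqref{eq:implement}/\eqref{eq:implement*} for an arbitrary subinterval $I \subset S$ to the two distinguished intervals $I_0$ and $I_0'$, using the diffeomorphism trick from Lemma \ref{lem: open cover of circle => sector} together with strong additivity. First I would recall that, by the construction of the vacuum sector in Section \ref{subsec:sectors-for-nets}, the $S$-sector structure on $H_0$ is generated (via Lemma \ref{lem: open cover of circle => sector}) by the actions $\rho_{I_0}$ and $\rho_{I_0'}$; so $u$ implements $\varphi$ as a morphism of $\cala$-sectors as soon as the compatibility \eqref{eq:implement} or \eqref{eq:implement*} holds for all $a$ in the image of $\cala(I_0)$ and $\cala(I_0')$ in $\bfB(H_0)$.

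The core step is then to show that $u$-conjugation intertwines the $S$-sector structure on $H_0$ with that on $\varphi^*H_0$ on \emph{all} intervals once it does so on $I_0$ and $I_0'$. Given an arbitrary interval $J \subset S$, I would first treat the case of a \emph{sufficiently small} $J$, i.e.\ one that is contained in $I_0$ or in $I_0'$: there the claim is immediate, since $\rho_J$ is the restriction of $\rho_{I_0}$ or $\rho_{I_0'}$. For a general $J$, choose $\psi \in \Diff_+(S)$ with $\psi(J) \subset I_0$, and write $\psi = \psi_1 \circ \cdots \circ \psi_n$ with $\psi_k \in \Diff_0(I_{i_k})$ where each $I_{i_k}$ is $I_0$ or $I_0'$; by inner covariance pick unitaries $v_k \in \cala(I_{i_k})$ implementing $\psi_k$. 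Then the formula \eqref{eq:diff trick for nets - bis} expresses $\rho_J$ in terms of $\rho_{I_0}$ and the $v_k$'s; since $u$-conjugation is already compatible with the actions $\rho_{I_0}$, $\rho_{I_0'}$ and with the unitaries $v_k$ (each $v_k$ lies in $\cala(I_0)$ or $\cala(I_0')$), it follows that $u$-conjugation is compatible with $\rho_J$ as well, i.e.\ \eqref{eq:implement}/\eqref{eq:implement*} holds for all $a \in \cala(J)$. Since every interval $J \subset S$ is covered by the intervals $J \cap I_0$ and $J \cap I_0'$, and $\cala(J)$ is generated by $\cala(J\cap I_0)$ and $\cala(J\cap I_0')$ by strong additivity, one more application of strong additivity finishes the argument.

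I expect the main obstacle to be bookkeeping with the orientation-reversing case: when $\varphi \in \Diff_-(S)$ the operator $u$ is antilinear, so conjugation by $u$ sends a homomorphism to an antihomomorphism, and one must track carefully how $\cala(\varphi|_J)$ relates $\cala(J)$ to $\cala(\varphi(J))^\op$ and how the stars in \eqref{eq:implement*} interact with the factorization $\psi = \psi_1 \circ \cdots \circ \psi_n$. Also, one should note at the outset that $\varphi$ need not fix $I_0$, so strictly the definition of ``$u$ implements $\varphi$'' compares $H_0$ with $\varphi^*H_0$, whose sector structure involves $\rho_{\varphi^{-1}(I_0)}$ etc.; but since both $S$-sector structures are determined by their values on any fixed interval (again Lemma \ref{lem: open cover of circle => sector} plus strong additivity), it suffices to check the morphism condition on $\cala(I_0)$ and $\cala(I_0')$ as claimed.
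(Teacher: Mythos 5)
Your closing sentence---that $\cala(J)$ is generated by $\cala(J\cap I_0)$ and $\cala(J\cap I_0')$, both handled by the hypothesis, so strong additivity finishes---is in fact essentially the entire proof the paper gives. The only thing you need to add to make that step rigorous is the observation that the set of $a\in\cala(J)$ satisfying \eqref{eq:implement} (resp.\ \eqref{eq:implement*}) is an ultraweakly closed $*$-subalgebra, being the locus where two normal (anti)homomorphisms agree; that is what licenses passing from the two generating subalgebras to the von Neumann algebra they generate.

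The ``core step'' you insert before that, however, does not work and should be deleted. The factorization $\psi=\psi_1\circ\cdots\circ\psi_n$ with each $\psi_k$ in $\Diff_0(I_0)$ or $\Diff_0(I_0')$ is impossible in general: every element of $\Diff_0(I_0)$ and of $\Diff_0(I_0')$ is the identity on a neighborhood of the two points of $\partial I_0$, hence so is any product of such elements. The subgroup they generate therefore fixes $\partial I_0$ pointwise and cannot move an interval $J$ whose interior meets $\partial I_0$ into $I_0$---and those are precisely the intervals for which there is anything to prove. The underlying issue is that Lemma~\ref{lem: open cover of circle => sector} requires the \emph{interiors} of the chosen intervals to cover $S$, which $I_0$ and $I_0'$ do not; this is also why the vacuum sector construction in Section~\ref{subsec:sectors-for-nets} needs the third family of $j$-invariant intervals straddling $\partial I_0$, and why your opening claim that the $S$-sector structure on $H_0$ is generated by $\rho_{I_0}$ and $\rho_{I_0'}$ alone via that lemma is not quite right. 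Keep the last sentence, add the ultraweak-closedness remark, and drop the diffeomorphism trick.
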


\begin{proof}
Let $\varphi$ be a diffeomorphism, and let $u$ be an (anti-)unitary on $H_0$ that satisfies \eqref{eq:implement} (or \eqref{eq:implement*}) for all $a$ in $\cala(I_0)$ and $\cala(I_0')$.
Let $I\subset S$ be an interval.
Consider the subalgebra of all elements $a\in \cala(I)$ that satisfy \eqref{eq:implement} (or \eqref{eq:implement*}).
That subalgebra  is closed in the ultraweak topology and contains $\cala(I\cap I_0)$ and $\cala(I\cap I_0')$ by assumption.
By strong additivity, it is therefore equal to $\cala(I)$.
\end{proof}

Recall that given a von Neumann algebra $A$, the modular conjugation $J:L^2(A)\to L^2(A)$ is an antilinear involution that satisfies $J(a\xi b)=b^*J(\xi)a^*$.

\begin{lemma}\label{lem: j is implemented}
The modular conjugation $J$ for $L^2(\cala(I_0))$ (see~\eqref{eq: main property of J}) implements $j$.
\end{lemma}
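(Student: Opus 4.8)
The plan is to use Lemma~\ref{lem: enough to check I_0 and I_0'}, which reduces the verification that $J$ implements $j$ to checking the defining equation \eqref{eq:implement*} only for $a\in\cala(I_0)$ and for $a\in\cala(I_0')$. Since $j$ is orientation-reversing and $J$ is antilinear, the equation to verify is $\cala(j)(a)=J\,a^*\,J^*=J\,a^*\,J$ (using that $J$ is an involution, so $J^*=J$), interpreted via the $S$-sector structure on $H_0=L^2(\cala(I_0))$.

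First I would treat the case $a\in\cala(I_0)$. Here the action $\rho_{I_0}$ is the left action of $\cala(I_0)$ on $L^2(\cala(I_0))$, so $\rho_{I_0}(a)$ is left multiplication by $a$. On the other side, $j(I_0)=I_0'$, and the action $\rho_{I_0'}$ on $H_0$ is by definition the composite $\cala(I_0')\xrightarrow{\cala(j)}\cala(I_0)^\op\to\bfB(H_0)$, i.e.\ the \emph{right} action of $\cala(I_0)$ on $L^2(\cala(I_0))$ transported through $\cala(j):\cala(I_0')\xrightarrow{\sim}\cala(I_0)^\op$. So what must be shown is: for $a\in\cala(I_0)$, the operator $J\,(\text{left mult.\ by }a)^*\,J$ equals right multiplication by $a$ (the right action being the canonical right $\cala(I_0)$-module structure on $L^2$). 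But that is exactly the defining property of the modular conjugation recalled just before the statement: $J(a\xi b)=b^*J(\xi)a^*$, equivalently $J\,L_a\,J = R_{a^*}$ and $J\,R_b\,J=L_{b^*}$, where $L$ and $R$ denote left and right multiplication. Plugging $a^*$ for $a$ gives $J\,L_{a^*}\,J=R_a$, which is precisely the claim after unwinding the identification of $\rho_{I_0'}$ with the right action; one only has to be careful that $\cala(j)$ applied to the element $a$, viewed in $\cala(I_0')$ via the right-action identification, lands on the same $a$ one started with, which is forced by the canonical isomorphism $\cala(\mathrm{Id}_{I_0}):\cala(I_0)\to\cala(\bar I_0)$ and the fact that $j$ restricted to $I_0$ is (up to this identification) the identity.

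For $a\in\cala(I_0')$ the argument is symmetric: now $\rho_{I_0'}(a)$ is by definition $R_{\cala(j)(a)}$ (right multiplication by the image of $a$ in $\cala(I_0)^\op$), while $j(I_0')=I_0$ so the relevant action on the target side is the left action $\rho_{I_0}$, and one uses the other half of the modular identity, $J\,R_b\,J=L_{b^*}$, applied with $b$ the image of $a$. The main obstacle — really the only place requiring care rather than routine checking — is the bookkeeping of the several canonical isomorphisms $\cala(\bar I_0)\cong\cala(I_0)^\op$ and $\cala(j):\cala(I_0')\cong\cala(I_0)^\op$ together with the $\op$-versus-non-$\op$ conventions for left/right actions, so that both sides of \eqref{eq:implement*} are compared as operators on the \emph{same} Hilbert space with matching identifications; once these identifications are pinned down, the modular conjugation identity $J(a\xi b)=b^*J(\xi)a^*$ closes both cases immediately, and Lemma~\ref{lem: enough to check I_0 and I_0'} finishes the proof.
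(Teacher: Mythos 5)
Your proof is correct and follows the same route as the paper: reduce to $\cala(I_0)$ and $\cala(I_0')$ via Lemma~\ref{lem: enough to check I_0 and I_0'}, then apply the modular identity $J(a\xi b)=b^*J(\xi)a^*$ (equivalently $JL_aJ=R_{a^*}$ and $JR_bJ=L_{b^*}$) in each case, keeping track of the identification $\cala(j):\cala(I_0')\cong\cala(I_0)^{\op}$. Nothing is missing.
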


\begin{proof}
By Lemma \ref{lem: enough to check I_0 and I_0'}, it is enough to verify \eqref{eq:implement*} for $a\in\cala(I_0)$ and for $b\in\cala(I_0')$. 
For $a\in\cala(I_0)$ and $\xi\in H_0$, we have $\cala(j)(a)\xi=\xi a=J(a^* J(\xi))$, and
for $b\in\cala(I_0')$, we have $\cala(j)(b)\xi=J(J(\xi)\cala(j)(b^*)) =J(b^* J(\xi))$.
These equation are equivalent to \eqref{eq:implement*} because $J$ is self-adjoint.
\end{proof}

\begin{corollary}\label{phi* == psi* -- orientation-reversing}
For any diffeomorphism $\varphi\in\Diff(S)$, the $S$-sectors $H_0$ and $\varphi^*H_0$ are unitarily isomorphic.
\end{corollary}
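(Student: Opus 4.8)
The plan is to reduce everything to the two facts already in hand: Corollary~\ref{phi* == psi*}, which handles orientation-preserving diffeomorphisms of $S$, and Lemma~\ref{lem: j is implemented}, which says that the modular conjugation $J$ on $L^2(\cala(I_0))$ is (via Definition~\ref{def:implements}) a unitary isomorphism of $S$-sectors $H_0\cong j^*H_0$. If $\varphi\in\Diff_+(S)$, then $\varphi^*H_0\cong H_0$ is immediate from Corollary~\ref{phi* == psi*} with $H=H_0$, so the only genuinely new case is that of an orientation-reversing $\varphi$.

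For such a $\varphi$, I would set $\psi:=j\circ\varphi$. Since $j$ and $\varphi$ are both orientation-reversing, $\psi\in\Diff_+(S)$, and $\varphi=j\circ\psi$ because $j$ is an involution. As the pullback of sectors is contravariant in the diffeomorphism, this gives $\varphi^*H_0=(j\circ\psi)^*H_0=\psi^*(j^*H_0)$. Now I would chain the two isomorphisms
\[
\varphi^*H_0\;=\;\psi^*(j^*H_0)\;\cong\;j^*H_0\;\cong\;H_0,
\]
where the first isomorphism is Corollary~\ref{phi* == psi*} applied to the $S$-sector $j^*H_0$ (that corollary holds for an arbitrary $S$-sector, so feeding it $j^*H_0$ is legitimate), and the second is the isomorphism of Lemma~\ref{lem: j is implemented}. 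The resulting isomorphism $H_0\cong\varphi^*H_0$ is the composite of the antilinear unitary $J\colon H_0\to j^*H_0$ with a linear unitary $j^*H_0\to\psi^*(j^*H_0)$, hence is antilinear, which is exactly the right shape for a unitary isomorphism of $S$-sectors $H_0\to\varphi^*H_0$ when $\varphi$ is orientation-reversing (recall $\varphi^*H_0$ has underlying space $\overline{H_0}$).

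I do not expect a serious obstacle here; the only point requiring care is the orientation bookkeeping and the direction of composition for $\varphi^*$ — specifically, that it is $j\circ\varphi$ (not $\varphi\circ j$) that one wants as the orientation-preserving factor, so that $\varphi^*=\psi^*\circ j^*$ with $\psi$ orientation-preserving. As an alternative, one could avoid invoking Corollary~\ref{phi* == psi*} as a black box and instead imitate the proof of Proposition~\ref{prop: phi* == psi*} directly: express $\psi$ as a product of diffeomorphisms supported in small intervals, implement each by a unitary in the corresponding local algebra, and compose these unitaries with $J$. The reduction above is cleaner, so that is the route I would take.
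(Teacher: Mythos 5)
Your proof is correct and follows essentially the same route as the paper: factor $\varphi=j\circ\psi$ with $\psi\in\Diff_+(S)$, use Lemma~\ref{lem: j is implemented} to get $j^*H_0\cong H_0$, and use Corollary~\ref{phi* == psi*} for the orientation-preserving factor. The only (immaterial) difference is the order in which the two isomorphisms are applied in the chain $\varphi^*H_0=\psi^*j^*H_0\cong H_0$.
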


\begin{proof}
For $\varphi\in\Diff_+(S)$, this is Corollary \ref{phi* == psi*}.
For $\varphi\in\Diff_-(S)$, write $\varphi=j\circ \psi$ for some $\psi\in\Diff_+(S)$.
By the previous lemma, we have $j^*H_0\cong H_0$. Therefore $\varphi^* H_0= \psi^* j^*H_0\cong \psi^*H_0 \cong H_0$.
\end{proof}

\begin{lemma} \label{lem:L^2=conformal-implementaion}
Let $\varphi \in \Diff_+(S, \dd I_0)$ be a diffeomorphism that commutes with $j$, and let $\varphi_0:=\varphi|_{I_0}$.
Then $L^2(\cala(\varphi_0))$ implements $\varphi$.
\end{lemma}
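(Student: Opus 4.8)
The plan is to check the implementation condition \eqref{eq:implement} directly: using Lemma~\ref{lem: enough to check I_0 and I_0'} one reduces to verifying $\cala(\varphi)(a)=uau^*$ for $a$ in $\cala(I_0)$ and in $\cala(I_0')$, where $u:=L^2(\cala(\varphi_0))$, and both verifications come down to the bimodule equivariance of the $L^2$-construction.

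First I would note that, being orientation-preserving and fixing the two points $\dd I_0$, the diffeomorphism $\varphi$ carries each of the complementary arcs $I_0$ and $I_0'$ to itself; hence $\varphi_0:=\varphi|_{I_0}\in\Diff_+(I_0)$, the automorphism $\cala(\varphi_0)$ of $\cala(I_0)$ is defined, and $u:=L^2(\cala(\varphi_0))$ is a well-defined complex-linear unitary of $H_0=L^2(\cala(I_0))$ with $u^*=L^2(\cala(\varphi_0^{-1}))$. The property of $u$ that does all the work is the functoriality of $L^2$: for $a,b\in\cala(I_0)$ and $\xi\in H_0$ one has $u(a\,\xi\,b)=\cala(\varphi_0)(a)\,u(\xi)\,\cala(\varphi_0)(b)$, i.e. $u$ intertwines both the left and the right $\cala(I_0)$-actions on $H_0$ along the automorphism $\cala(\varphi_0)$.

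For $a\in\cala(I_0)$, the operator $\rho_{I_0}(a)$ on $H_0$ is left multiplication by $a$, so left equivariance of $u$ gives $u\,\rho_{I_0}(a)\,u^*=\rho_{I_0}(\cala(\varphi_0)(a))$, which is $\cala(\varphi)(a)$ because $\varphi$ preserves $I_0$ and restricts there to $\varphi_0$. For $a\in\cala(I_0')$, I would unwind the definition of $\rho_{I_0'}$ from Section~\ref{subsec:sectors-for-nets}: $\rho_{I_0'}(a)$ is right multiplication by $\cala(j|_{I_0'})(a)\in\cala(I_0)$, so right equivariance of $u$ turns $u\,\rho_{I_0'}(a)\,u^*$ into right multiplication by $\cala(\varphi_0)\bigl(\cala(j|_{I_0'})(a)\bigr)=\cala(\varphi_0\circ j|_{I_0'})(a)$. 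On the other hand $\cala(\varphi)(a)$ is, by definition and the fact $\varphi(I_0')=I_0'$, the operator $\rho_{I_0'}(\cala(\varphi|_{I_0'})(a))$, i.e. right multiplication by $\cala(j|_{I_0'}\circ\varphi|_{I_0'})(a)$. The two expressions agree because $\varphi_0\circ j|_{I_0'}=j|_{I_0'}\circ\varphi|_{I_0'}$ as maps $I_0'\to I_0$, which is precisely the hypothesis that $\varphi$ commutes with $j$, together with functoriality of $\cala$.

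The $\cala(I_0)$ computation is purely formal; the $\cala(I_0')$ one is where the hypothesis that $\varphi$ commutes with $j$ is actually consumed, via the need to match $\cala(j|_{I_0'}\circ\varphi|_{I_0'})$ with $\cala(\varphi_0\circ j|_{I_0'})$. I expect that keeping track of the left and right actions and of the orientation-reversing map $\cala(j)$ is the only real subtlety, and it is a minor one.
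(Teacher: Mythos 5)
Your proposal is correct and follows essentially the same route as the paper's proof: reduce to $\cala(I_0)$ and $\cala(I_0')$ via Lemma~\ref{lem: enough to check I_0 and I_0'}, use the left-equivariance of $L^2(\cala(\varphi_0))$ for the $\cala(I_0)$ case, and use the right-equivariance together with the identity $\cala(\varphi_0)\circ\cala(j|_{I_0'})=\cala(j|_{I_0'})\circ\cala(\varphi|_{I_0'})$ (where the commutation hypothesis enters) for the $\cala(I_0')$ case. The bookkeeping of left/right actions and of $\cala(j)$ is exactly as in the paper.
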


\begin{proof}
  By Lemma \ref{lem: enough to check I_0 and I_0'}, it is enough to verify \eqref{eq:implement} for $a\in\cala(I_0)$ and for $b\in\cala(I_0')$. 
  Given a von Neumann algebra $A$, and an automorphism $f:A\to A$, we always have $L^2(f)(a\xi)=f(a)L^2(f)(\xi)$.
  Substituting $A=\cala(I_0)$, $f=\cala(\varphi_0)$, and $\xi=L^2(\cala(\varphi_0))^*\eta$ for some $\eta\in H_0$, we get
  \[
  L^2(\cala(\varphi_0))\, a\, L^2(\cala(\varphi_0))^*\eta = \big(\cala(\varphi_0)a\big)\eta,
  \]
  which shows that \eqref{eq:implement} holds for $a\in \cala(I_0)$.
  
  Given an automorphism $f$ of a von Neumann algebra $A$, we also have $L^2(f)(\xi a)=\big(L^2(f)(\xi)\big)f(a)$.
  Substituting $A=\cala(I_0)$, $f=\cala(\varphi_0)$, $\xi=L^2(\cala(\varphi_0))^*\eta$, and $a=\cala(j)b$ for some $b\in\cala(I_0')$, we get
\[
L^2(\cala(\varphi_0))\big(\big(L^2(\cala(\varphi_0))^*\eta\big)\cala(j)b\big)=\eta\,\big(\cala(\varphi_0)\cala(j)b\big).
\]
The left hand side is given by 
\[
L^2(\cala(\varphi_0))\big(\big(L^2(\cala(\varphi_0))^*\eta\big)\cala(j)b\big)=L^2(\cala(\varphi_0))\, b\, L^2(\cala(\varphi_0))^*\eta
\]
and the right-hand side is 
\[
\eta\,\big(\cala(\varphi_0)\cala(j)b\big)=\big(\cala(j)\cala(\varphi_0)\cala(j)b\big)\,\eta=\big(\cala(\varphi|_{I_0'})b\big)\eta,
\]
which shows that \eqref{eq:implement} holds for $b\in \cala(I_0')$.
\end{proof}

Recall that, by Remark~\ref{rem: L^2(A^op)}, 
an anti-isomorphism $f:A\to B$ induces a linear isomorphism
$L^2(f):L^2(A)\to L^2(B)$ that exchanges left and right actions, that is, 
such that
\begin{equation}\label{eq:L^2(f)(a_1 xi a_2)}
L^2(f)(a_1\xi a_2)=f(a_2) L^2(f)(\xi) f(a_1).
\end{equation}

\begin{lemma} \label{lem:L^2=conformal-implementaion OP}
Let $\varphi \in \Diff_-(S)$ be a diffeomorphism that commutes with $j$ and exchanges the endpoints of $I_0$.
Let $\varphi_0:=\varphi|_{I_0}$.
Then $L^2(\cala(\varphi_0))\circ J$ implements $\varphi$.
\end{lemma}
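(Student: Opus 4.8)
The plan is to reduce the orientation-reversing case to the orientation-preserving Lemma~\ref{lem:L^2=conformal-implementaion} together with the fact (Lemma~\ref{lem: j is implemented}) that $J$ implements $j$. Write $\varphi = j\circ\psi$ where $\psi := j\circ\varphi\in\Diff_+(S)$. Since both $\varphi$ and $j$ commute with $j$ and fix $\partial I_0$ setwise, and $\varphi$ exchanges the endpoints of $I_0$ while $j$ also exchanges them (being an orientation-reversing involution fixing $\partial I_0$, it swaps the two boundary points is not automatic—$j$ fixes $\partial I_0$ pointwise; so actually I should be careful here). Let me reconsider: $j$ fixes $\partial I_0$ pointwise, and $\varphi$ swaps the two points of $\partial I_0$; hence $\psi = j\circ\varphi$ also swaps them, and $\psi\in\Diff_+(S)$. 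This is a problem because Lemma~\ref{lem:L^2=conformal-implementaion} requires $\psi\in\Diff_+(S,\partial I_0)$, i.e. $\psi$ fixing $\partial I_0$. So instead I would not factor through $j$ directly.

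A cleaner approach: verify \eqref{eq:implement*} directly for $a\in\cala(I_0)$ and $b\in\cala(I_0')$ using Lemma~\ref{lem: enough to check I_0 and I_0'}, imitating the proof of Lemma~\ref{lem:L^2=conformal-implementaion OP}'s orientation-preserving analogue but inserting $J$. Set $u := L^2(\cala(\varphi_0))\circ J$; note $u$ is antilinear since $J$ is. First I would record that $\varphi_0 := \varphi|_{I_0}\colon I_0\to I_0$ is an orientation-reversing diffeomorphism (it exchanges the endpoints and $\varphi$ is orientation-reversing on $S$), so $\cala(\varphi_0)\colon\cala(I_0)\to\cala(I_0)$ is an \emph{anti}-automorphism, and hence by \eqref{eq:L^2(f)(a_1 xi a_2)} the map $L^2(\cala(\varphi_0))$ satisfies $L^2(\cala(\varphi_0))(a_1\xi a_2)=\cala(\varphi_0)(a_2)\,L^2(\cala(\varphi_0))(\xi)\,\cala(\varphi_0)(a_1)$, exchanging left and right actions. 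Combined with $J(a\xi b)=b^*J(\xi)a^*$, the composite $u$ will again intertwine the left action with a twisted left action.

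For $a\in\cala(I_0)$: compute $u\,a^*\,u^*\eta$. Writing $\xi$ for a generic vector and tracking how $J$ then $L^2(\cala(\varphi_0))$ move $a^*$ from a left-multiplication to the appropriate operator, I expect to land on $\cala(\varphi_0)(a)\,\eta = \cala(\varphi)(a)\eta$, confirming \eqref{eq:implement*} on $\cala(I_0)$. For $b\in\cala(I_0')$: here $\cala(\varphi_0)$ does not see $b$ directly; one uses that $\varphi$ commutes with $j$ so that $\varphi|_{I_0'} = j\circ\varphi_0\circ j$ (as maps $I_0'\to I_0'$, after identifying $I_0'$ with $I_0$ via $j$), and that $J$ converts the right action of $\cala(I_0)$ (which is how $\cala(I_0')$ acts, via $\cala(j)$) into the left action. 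Propagating $b$ through $u^* = J\circ L^2(\cala(\varphi_0))^*$ and then $u$, and using $JaJ = \cala(j)(a^*)$-type identities together with the right-action formula $L^2(f)(\xi a)=L^2(f)(\xi)\,f(a)$ for anti-automorphisms (suitably modified), should yield $\cala(\varphi|_{I_0'})(b)\,\eta$, i.e. \eqref{eq:implement*} on $\cala(I_0')$. By Lemma~\ref{lem: enough to check I_0 and I_0'} this suffices.

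The main obstacle is bookkeeping: with both $J$ and $L^2(\cala(\varphi_0))$ being antilinear and order-reversing, one must very carefully track the interplay of left/right actions, adjoints, and the two involutions — in particular verifying the compatibility $\varphi\circ j = j\circ\varphi$ is used at exactly the right place when handling $b\in\cala(I_0')$, and checking that the antilinearity of $u$ matches the antilinearity required of an implementer of an orientation-reversing diffeomorphism. A sanity check is the special case $\varphi = j$ (so $\varphi_0 = \id_{I_0}$, $\cala(\varphi_0)=\id$, $L^2(\cala(\varphi_0))=\id$), where the statement reduces to Lemma~\ref{lem: j is implemented}; I would use this to fix all sign/adjoint conventions before doing the general computation.
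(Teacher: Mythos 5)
Your proposal is correct and follows essentially the same route as the paper: after discarding the factorization through $j$ (rightly, since $j\circ\varphi$ swaps the endpoints of $I_0$ and so falls outside Lemma~\ref{lem:L^2=conformal-implementaion}), you reduce to checking \eqref{eq:implement*} on $\cala(I_0)$ and $\cala(I_0')$ via Lemma~\ref{lem: enough to check I_0 and I_0'}, using the anti-isomorphism identity \eqref{eq:L^2(f)(a_1 xi a_2)}, the modular relation $\xi a = Ja^*J\xi$, and the commutation $\varphi j=j\varphi$ (in the form $\cala(j)\cala(\varphi|_{I_0'})=\cala(\cala(\varphi_0))\cala(j)$) exactly where the paper uses them. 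The only thing left is to actually carry out the two short chains of identities you sketch, which go through as expected.
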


\begin{proof}
Let $\varphi_0':=\varphi|_{I_0'}$.  Given $a\in \cala(I_0)$, then by \eqref{eq:L^2(f)(a_1 xi a_2)}, we have
\[
\begin{split}
\cala(\varphi_0)(a)\xi&=
L^2(\cala(\varphi_0))\Big(\big(L^2(\cala(\varphi_0))^*\xi\big)a\Big)\\&=
L^2(\cala(\varphi_0))Ja^*JL^2(\cala(\varphi_0))^*\xi\\&=
\big(L^2(\cala(\varphi_0))J\big)a^*\big(L^2(\cala(\varphi_0))J\big)^*\xi\hspace{.04cm}.\hspace{.4cm}
\end{split}
\]
For $b\in \cala(I_0')$, we also have that
\[
\begin{split}
\hspace{.55cm}\cala(\varphi_0')(b)\xi&=
\xi\big(\cala(j)\cala(\varphi_0')(b)\big)\\&=
\xi\big(\cala(\varphi_0)\cala(j)(b)\big)\\&=
L^2(\cala(\varphi_0))\big(\cala(j)(b)\big(L^2(\cala(\varphi_0))^*\xi\big)\big)\\&=
L^2(\cala(\varphi_0))J\Big(\big(JL^2(\cala(\varphi_0))^*\xi\big)\cala(j)(b^*)\Big)\\&=
\big(L^2(\cala(\varphi_0))J\big)b^*\big(L^2(\cala(\varphi_0))J\big)^*\xi\,,
\end{split}
\]
which finishes the proof by Lemma \ref{lem: enough to check I_0 and I_0'}.
\end{proof}

Given a Hilbert space $H$, equip $\PU_\pm(H)=\PU(H)\,\cup\, \PU_-(H):=\U_\pm(H)/S^1$ with the quotient strong topology\footnote{With this topology, the projection $\U_\pm(H)\to \PU_\pm(H)$ is a locally trivial bundle.}.
Recall that $H_0 := L^2(\cala(I_0))$ denotes the vacuum sector associated to $S$, $I_0$ and $j$, and that $\cala$ is assumed to be irreducible.

\begin{proposition} \label{prop:projective-implementation-diffeo} 
Let $\cala$ be a conformal net, and let $H_0$ be as above.
Then there is a unique continuous representation $\Diff(S)\to \PU_\pm(H_0)$, $\varphi\,\, \mapsto\,\, [u_\varphi]$
such that
\begin{enumerate}
\item $u_\varphi$ is complex linear for $\varphi\in\Diff_+(S)$, and complex antilinear otherwise.
\item $u_\varphi$ implements $\varphi$.
\end{enumerate}
Here, $u$ denotes any preimage of $[u]$ in $\U_\pm(H_0)$.
\end{proposition}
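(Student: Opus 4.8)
The plan is to build the representation by hand on the canonical generators of $\Diff(S)$ and then check it is well defined and continuous. First I would reduce to the case where the circle is the standard one with $I_0 = S^1_\top$ and $j(z) = \bar z$, which is harmless since any two choices of $(S, I_0, j)$ give canonically isomorphic vacuum sectors by Corollary~\ref{cor: non canonical vacuum} (and the isomorphism is compatible with the $\Diff(S)$-action via conjugation). \emph{Existence.} For $\varphi \in \Diff_+(S)$ that restricts to the identity near $\partial I_0$ and commutes with $j$, Lemma~\ref{lem:L^2=conformal-implementaion} provides an explicit implementing unitary $L^2(\cala(\varphi|_{I_0}))$; for the orientation-reversing involution $j$ itself, Lemma~\ref{lem: j is implemented} gives the modular conjugation $J$; and Lemma~\ref{lem:L^2=conformal-implementaion OP} handles orientation-reversing $\varphi$ commuting with $j$ that swap the endpoints of $I_0$. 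The inner covariance axiom supplies implementers for elements of $\Diff_0(I)$ for any interval $I \subset S$, namely the unitaries $u \in \cala(I)$ with $\Ad(u) = \cala(\varphi)$, acting through $\rho_I$. Since $\Diff_+(S)$ is generated by the subgroups $\Diff_0(I_i)$ for a finite interval cover (as used in Lemma~\ref{lem: open cover of circle => sector}), every $\varphi \in \Diff_+(S)$ can be written as a finite product $\varphi_1 \circ \cdots \circ \varphi_n$ with each $\varphi_k$ implemented by some unitary $u_k$; then $u_1 \cdots u_n$ implements $\varphi$, because ``$u$ implements $\varphi$'' is exactly the condition $\cala(\varphi)(a) = \Ad(u)(a)$ on each $\cala(I)$, and this is manifestly closed under composition. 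For an orientation-reversing $\varphi$, write $\varphi = j \circ \psi$ with $\psi \in \Diff_+(S)$ and compose the implementer of $\psi$ with $J$, exactly as in the proof of Corollary~\ref{phi* == psi* -- orientation-reversing}; the antilinearity bookkeeping in \eqref{eq:implement*} works out because $J$ is antilinear and self-adjoint.

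\emph{Uniqueness and well-definedness of the projective class.} Here irreducibility is the key input. If $u$ and $u'$ both implement the same $\varphi \in \Diff_+(S)$, then $\Ad(u^* u')$ is the identity on every $\cala(I)$, hence on $\bigvee_I \cala(I) = \bfB(H_0)$ by Haag duality (Proposition~\ref{prop: [Haag duality]}: $\cala(I)' = \cala(I')$, so $\cala(I_0) \vee \cala(I_0') $ has trivial commutant on $H_0$). Therefore $u^* u' \in \IC$, i.e.\ $[u] = [u']$ in $\PU(H_0)$. This shows the map $\varphi \mapsto [u_\varphi]$ is well defined independently of the chosen factorization, and simultaneously that it is unique. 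It is automatically a homomorphism: $u_\varphi u_\psi$ implements $\varphi \circ \psi$, so $[u_{\varphi\psi}] = [u_\varphi][u_\psi]$.

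\emph{Continuity.} This is the step I expect to be the main obstacle. One must show $\varphi \mapsto [u_\varphi]$ is continuous from the $\mathcal C^\infty$ topology to the quotient strong topology on $\PU_\pm(H_0)$. The strategy is to use the continuity axiom of the conformal net together with the explicit implementers. Near a diffeomorphism $\varphi_0$ that commutes with $j$ and is supported away from or symmetric about $\partial I_0$, one has the formula $u_\varphi = L^2(\cala(\varphi|_{I_0}))$ (possibly post-composed with $J$), and $\varphi \mapsto \cala(\varphi|_{I_0})$ is continuous for the $u$-topology by the continuity axiom, while $f \mapsto L^2(f)$ is continuous from the $u$-topology to the strong topology on $\U(L^2)$ (this is a standard fact about the standard form, which should be recalled in the appendix; cf.\ Haagerup's $u$-topology subsection). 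For a general $\varphi$, cover $\Diff(S)$ by translates of such a neighborhood: write $\varphi = \varphi' \circ \varphi_0$ with $\varphi_0$ in a fixed small model neighborhood and $\varphi'$ near the identity implemented by unitaries $u' \in \cala(I)$ depending continuously on $\varphi'$ in the $u$-topology (inner covariance plus the continuity axiom applied to $\Diff_0(I) \to \Aut(\cala(I))$, as in the Remark following Lemma~\ref{lem:irrelevance-of-points}), and multiply. Since multiplication $\U_\pm(H_0) \times \U_\pm(H_0) \to \U_\pm(H_0)$ is jointly continuous for the strong topology on the unitary group, and since $\U_\pm(H_0) \to \PU_\pm(H_0)$ is open, continuity of $\varphi \mapsto [u_\varphi]$ follows. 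The one genuinely delicate point is arranging the local factorizations $\varphi = \varphi' \circ \varphi_0$ to depend continuously on $\varphi$ — this is a statement about the local structure of $\Diff(S)$ as a Fréchet Lie group, and I would phrase it as: the multiplication maps $\Diff_0(I_1) \times \cdots \times \Diff_0(I_n) \to \Diff_+(S)$ are open and admit local continuous sections, which is elementary once one trivializes using flows of vector fields subordinate to the cover.
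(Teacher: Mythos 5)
Your existence and uniqueness arguments track the paper's: existence by factoring $\varphi$ into diffeomorphisms supported in intervals (plus the modular conjugation $J$ for the orientation-reversing part, as in Corollary~\ref{phi* == psi* -- orientation-reversing}), and uniqueness up to phase by Schur's lemma for the irreducible sector $H_0$, which is exactly your Haag-duality-plus-irreducibility argument. Those parts are fine.

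The continuity step has a genuine gap, and it is not the point you flagged as delicate. The local factorization of $\varphi$ into compactly supported pieces depending continuously on $\varphi$ is comparatively harmless; the paper sidesteps it entirely by noting that the $\mathcal C^\infty$ topology on $\Diff(S)$ is the finest for which the inclusions $\Diff_{0,K}(I)\hookrightarrow\Diff(S)$ are continuous, so it suffices to check continuity on each subgroup $\Diff_{0,K}(I)$. The real problem is your passage from the continuity axiom of the net to strong continuity of the implementing unitaries. The continuity axiom controls $\psi\mapsto\cala(\psi)$ as a map $\Diff_0(I)\to\Aut(\cala(I))$ in Haagerup's $u$-topology, whereas what must be proved is continuity of $\psi\mapsto[u_\psi]$ into $\PU(H_0)$ with the \emph{quotient strong} topology; the paper explicitly warns that it does not claim these two topologies agree under the identification $\Inn(\cala(I))=\PU(\cala(I))$. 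Your appeal to the continuity of $f\mapsto L^2(f)$ only yields strong convergence of the canonical implementations $u_nJu_n^*J$ on $L^2(\cala(I))$, and deducing from this that $[u_n]$ converges in $\PU$ requires Proposition~\ref{prop:subspace-topology-on-PU(A_0)}, whose hypothesis (that $\cala(K)\ox_{\alg}\cala(\hat I)'$ extends to the spatial tensor product) is exactly what the split property of the net supplies after enlarging $K\subset I\subset\hat I$ --- this is the content of Lemma~\ref{lem:cont-inner-cov}. Since your proposed continuity argument never invokes the split property, it cannot close this gap as written. A secondary slip: the formula $u_\varphi=L^2(\cala(\varphi|_{I_0}))$ is valid only for $\varphi$ in the subgroup fixing $\partial I_0$ and commuting with $j$, not on a neighborhood of such a $\varphi_0$ in $\Diff(S)$.
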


\begin{proof}
  The vacuum $H_0$ is an irreducible sector.
  By Schur's lemma, the implementation $u_\varphi$ of a diffeomorphism $\varphi$ is therefore unique up to phase.
  Moreover, an implementation always exists since, by Corollary \ref{phi* == psi* -- orientation-reversing}, $H_0\cong \varphi^*H_0$ for any diffeomorphism $\varphi$.

  It remains to show that the homomorphism $\Diff(S)\to \PU_\pm(H_0)$ is continuous.  
     For a subinterval $K \subset I$ whose boundary is contained in
     the interior of $I$, write $\Diff_{0,K}(I)$ for the diffeomorphisms
     of $I$ that fix the complement of $K$ pointwise. 
  The restrictions $\Diff_{0,K}(I)\to \PU_\pm(H_0)$ 
  are continuous by Lemma~\ref{lem:cont-inner-cov} below.
  The result then follows as the $\mathcal C^\infty$ topology on 
  $\Diff(S)$ is the finest one for which the inclusions
  $\Diff_{0,K}(I)\hookrightarrow \Diff(S)$ are continuous.
\end{proof}

Given an interval $I$, by the inner covariance axiom, we have a group homomorphism $\Diff_0(I) \to \Inn(\cala(I))\cong \PU(\cala(I)):=\U(\cala(I))/S^1$.
By definition the net $\cala$ is continuous for the $\calc^\infty$ topology on $\Diff_0(I)$ and the $u$-topology on $\Inn(\cala(I))$
(note that we do not claim that the $u$-topology and the quotient strong topology coincide under the identification $\Inn(\cala(I)) = \PU(\cala(I))$).

  \begin{lemma}
    \label{lem:cont-inner-cov}
    Let $\Diff_{0,K}(I)$ be as in the previous proof.
    Then the map $\Diff_{0,K}(I) \to \PU(\cala(I))$
    is continuous with respect to the $\calc^\infty$ topology on 
    $\Diff_{0,K}(I)$ and the quotient strong topology on 
    $\PU(\cala(I))=\U(\cala(I))/S^1$.
  \end{lemma}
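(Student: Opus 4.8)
The goal is to upgrade the $u$-topology continuity of the inner covariance map $\Diff_0(I) \to \Inn(\cala(I))$ to continuity into the quotient strong topology on $\PU(\cala(I))$, but only for the subgroup $\Diff_{0,K}(I)$ of diffeomorphisms supported in a smaller interval $K \Subset I$. The point is that $\Diff_{0,K}(I)$ carries more room: we can pick a slightly larger interval $L$ with $K \subset \mathrm{int}(L)$ and $\bar L \subsetneq \mathrm{int}(I)$, choose a complementary interval $L^c$ (the closure of $I \setminus L$), and exploit a faithful action of $\cala(I)$ on a Hilbert space built from $\cala(L^c)$. Concretely, I would realize $\cala(I)$ on $H := L^2(\cala(I))$, but more usefully on a space where the elements $\cala(\varphi)$ for $\varphi \in \Diff_{0,K}(I)$ are implemented by genuine (not merely inner) unitaries that vary continuously.

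First I would reduce to comparing topologies on a group of unitaries rather than on the projective unitary group: since $\U(\cala(I)) \to \PU(\cala(I))$ is a locally trivial $S^1$-bundle (the analogue of the footnoted fact, valid in the $u$-topology as well), it suffices to produce, locally near the identity of $\Diff_{0,K}(I)$, a continuous lift $\varphi \mapsto u_\varphi \in \U(\cala(I))$ implementing $\cala(\varphi)$, and show this lift is continuous for the strong topology. The natural candidate comes from the vacuum sector machinery of Section~\ref{subsec:sectors-for-nets}: pick an interval $L$ with $K \subset \mathrm{int}(L) \subset L \subset \mathrm{int}(I)$, let $L^c$ be the closure of $I \setminus L$, and form the vacuum-type sector $L^2(\cala(L^c))$, which carries commuting left and right actions and on which, by the vacuum sector axiom applied to the circle $S = I \cup_{\partial I}\bar I$ (or directly to a doubled interval), $\cala(I)$ acts. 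A diffeomorphism $\varphi \in \Diff_{0,K}(I)$ extends by the identity to a diffeomorphism of this doubled interval fixing a neighborhood of the "seam", and I would use Lemma~\ref{lem:L^2=conformal-implementaion}-style reasoning: the operator $L^2(\cala(\varphi|_{L^c}))$—but here $\varphi$ is trivial on $L^c$, so in fact $\varphi$ is implemented on $L^2(\cala(L^c))$ by conjugation that only sees $\cala(L)$, and the key is that the assignment factors through a continuous map because the $u$-topology on $\Aut(\cala(I))$ restricted to these "localized" automorphisms agrees with the strong topology of their implementations.

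The cleanest route: recall the vacuum sector $H_0 := L^2(\cala(I_0))$ for a circle $S \supset I_0$ with involution $j$, and note $\cala(I)$ acts on $H_0$ for $I \subset S$. By Proposition~\ref{prop:projective-implementation-diffeo}, $\Diff(S) \to \PU_\pm(H_0)$ is continuous — but that proposition's proof *uses* Lemma~\ref{lem:cont-inner-cov}, so I must not invoke it circularly. Instead I would argue directly: embed $I$ into a circle $S$, take the vacuum $H_0 = L^2(\cala(I_0))$ with $I_0$ chosen so that $K \subset \mathrm{int}(I_0)$ and the complement $I_0'$ meets $I$; then for $\varphi \in \Diff_{0,K}(I_0)$, Lemma~\ref{lem:L^2=conformal-implementaion} (or a mild variant, since $\varphi$ extends over $S$ by the identity and commutes with $j$ after an adjustment) tells us $\varphi$ is implemented by $L^2(\cala(\varphi_0))$. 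The assignment $\psi \mapsto L^2(\cala(\psi))$ from $\Aut(\cala(I_0))$ to $\U(H_0)$ is continuous for the $u$-topology and the strong topology (this is a standard property of the standard form: $L^2$ is a continuous functor, and on automorphisms $u$-convergence implies strong convergence of $L^2(\psi)$ on the canonical cone). Composing with the continuous-in-$u$-topology map $\Diff_{0,K}(I) \to \Aut(\cala(I))$, and using that the implementation on $H_0$ restricts to an implementation that, modulo $S^1$, recovers the inner automorphism, gives continuity of $\Diff_{0,K}(I) \to \PU(\cala(I))$ once we know the $u$-topology on $\Inn(\cala(I))$ and the quotient strong topology agree *after pulling back along this particular continuous lift* — but that's automatic: the composite $\Diff_{0,K}(I) \to \U(H_0) \to \PU_\pm(H_0)$ is continuous for the strong quotient topology, and its image lands in (the image of) $\PU(\cala(I))$, which carries the subspace strong topology, compatible with the quotient strong topology on $\PU(\cala(I)) = \U(\cala(I))/S^1$ via the faithful action $\cala(I) \hookrightarrow \bfB(H_0)$.

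The main obstacle is precisely this last compatibility point, i.e.\ reconciling the *intrinsic* quotient strong topology on $\PU(\cala(I)) = \U(\cala(I))/S^1$ with the topology it inherits as a subspace of $\PU_\pm(H_0)$ through the (faithful, normal) representation $\cala(I) \to \bfB(H_0)$. On a von Neumann algebra with separable predual, the strong topology on the unit ball of $\cala(I)$ agrees with the strong topology inherited from $\bfB(H_0)$ for any faithful normal representation — this is standard, but it must be invoked carefully and combined with the $S^1$-bundle local triviality (the footnote) to pass to the projective quotients. I would therefore structure the proof as: (1) choose $S$, $I_0$, $j$ adapted to $K$; (2) quote/adapt Lemma~\ref{lem:L^2=conformal-implementaion} to get an explicit implementation $u_\varphi = L^2(\cala(\varphi_0))$ on $H_0$ for $\varphi \in \Diff_{0,K}(I)$; (3) observe $\varphi \mapsto u_\varphi$ is strong-continuous because $L^2(-)$ turns $u$-convergence of automorphisms into strong convergence and the net is $u$-continuous; (4) conclude continuity into $\PU_\pm(H_0)$, hence into $\PU(\cala(I))$ with its (intrinsic = subspace) quotient strong topology; no circularity arises because Lemma~\ref{lem:L^2=conformal-implementaion} and the continuity of $L^2$ do not depend on Lemma~\ref{lem:cont-inner-cov}.
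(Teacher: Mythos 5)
Your steps (1)--(3) are sound and non-circular: the composite $\Diff_{0,K}(I)\to\Aut(\cala(I_0))\xrightarrow{\,L^2\,}\U(L^2\cala(I_0))$ is indeed continuous, by the continuity axiom together with Haagerup's result that the canonical implementation is a homeomorphism of $\Aut$ (with the $u$-topology) onto a closed subgroup of the unitary group with the strong topology. The gap is entirely in step (4), and it is the essential content of the lemma. The canonical implementation of an inner automorphism $\Ad(v)$, $v\in\U(\cala(I_0))$, is $L^2(\Ad(v))=vJv^*J$, which is \emph{not} an element of $\cala(I_0)$: it lies in $\cala(I_0)\vee J\cala(I_0)J$, and $[vJv^*J]=[w]$ for some $w\in\U(\cala(I_0))$ only when $v$ is a scalar. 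So your continuous map lands in $\{[vJv^*J]\}\subset\PU(H_0)$, not in the image of $\PU(\cala(I))$, and the remaining task is to show that $[vJv^*J]\mapsto[v]$ is continuous from the strong topology to the quotient strong topology on $\PU(\cala(I))$. This is exactly the point the paper flags just before the lemma (``we do not claim that the $u$-topology and the quotient strong topology coincide under the identification $\Inn(\cala(I))=\PU(\cala(I))$''), and it genuinely fails for a bare factor in standard form: one cannot in general split a strongly convergent sequence $v_nJv_n^*J$ into convergent factors, because $\cala(I_0)$ and its commutant do not generate a spatial tensor product on $L^2(\cala(I_0))$. The ``main obstacle'' you identify --- comparing strong topologies across faithful normal representations, plus local triviality of the $S^1$-bundle --- is the easy part and does not address this.

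The missing ingredient is the split property, and with it the relevant statement is Proposition \ref{prop:subspace-topology-on-PU(A_0)}: for an enlargement $\hat I$ of $I$ with $I$ contained in the interior of $\hat I$, the algebras $\cala(I)$ and $\cala(\hat I)'$ \emph{do} generate a spatial tensor product on $L^2(\cala(\hat I))$ (by the vacuum and split axioms, since $\cala(\hat I)'=J\cala(\hat I)J$ is the algebra of the reflected interval, which is disjoint from $I$), and then Lemma \ref{lem:u_n-ox-v_n--->u-ox-v} produces phases $\lambda_n$ with $\lambda_nv_n\to v$ whenever $v_nJv_n^*J\to vJv^*J$. Hence $\Ad\colon\PU(\cala(I))\to\Aut(\cala(\hat I))$ is a homeomorphism onto its image, and the lemma follows by composing with the continuous map $\Diff(\hat I)\to\Aut(\cala(\hat I))$ and the embedding $\Diff_{0,K}(I)\hookrightarrow\Diff(\hat I)$. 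Your outline never invokes the split property, so step (4) cannot be completed as written; note also that this is where the hypothesis that $K$ has its boundary in the interior of $I$ (so that everything extends by the identity to $\hat I$) is actually used.
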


  \begin{proof} 
  Pick an enlargement $\hat I$ of $I$, such that $I$ is contained in the interior of $\hat I$.
  By the split property axiom, the subfactor $\cala(I) \subset \cala(\hat I)$ satisfies
  the assumption of Proposition~\ref{prop:subspace-topology-on-PU(A_0)}.
  The two vertical maps in the following diagram are therefore homeomorphisms onto their images:
\[
\tikzmath{
\matrix [matrix of math nodes,column sep=1.2cm,row sep=7mm]
{ 
|(a)| \Diff_{0,K}(I) \pgfmatrixnextcell |(b)| \PU(\cala(I))\\ 
|(c)| \Diff(\hat I) \pgfmatrixnextcell |(d)| \Aut(\cala(\hat I)).\\ 
}; 
\draw[->] (a) -- (b);
\draw[->] (c) -- (d);
\draw[->] (a) -- (c);
\draw[->] (b) -- (d);
}
\]
The map $\Diff(\hat I) \to \Aut(\cala(\hat I))$ is continuous by our definition of conformal nets, and therefore so is
the map $\Diff_{0,K}(I) \to \PU(\cala(I))$.
\end{proof}

\subsection{Conformal circles and their vacuum sectors}

\subsubsection*{Conformal circles}
  The group $\Conf(S^1)$ of conformal maps of the standard circle 
  $S^1 = \{ z \in \IC : |z| = 1 \}$ consists of all maps of the form
  \begin{equation*} 
  z \mapsto \frac{\alpha z + \beta}{\bar \beta z + \bar \alpha}
  \end{equation*}
  where $\alpha, \beta \in \IC$, $|\alpha|^2 - |\beta|^2 = \pm 1$.
Those are the maps that extend to conformal transformations of unit disc in $\IC$.
We let
\[
\Conf_+(S^1)\,\cong\, \mathrm{PSU}(1,1)=\Big\{\big(\begin{smallmatrix}a & b \\ c & d\end{smallmatrix}\big)\,\Big|\,
\text{\small det}\big(\begin{smallmatrix}a & b \\ c & d\end{smallmatrix}\big)=1,
\,\big(\begin{smallmatrix}a & b \\ c & d\end{smallmatrix}\big)^{-1}=
\big(\begin{smallmatrix}\bar a & -\bar c \\ -\bar b & \bar d\end{smallmatrix}\big)
\Big\}\big/\{\pm1\}
\]
be the subgroup where $|\alpha|^2 - |\beta|^2 = 1$,
  and $\Conf_-(S^1):=\Conf(S^1)\setminus \Conf_+(S^1)$ its complement.
  Elements in the former are orientation-preserving maps, 
  while elements in the latter are orientation-reversing.
  The subgroup $\Conf_+(S^1)$ can also be identified with 
  $\PSL_2(\IR)$ by congugating it with the Cayley transform.
  Explicitely, the identification sends the matrix $\big(\begin{smallmatrix} \alpha&\beta\\\bar\beta&\bar\alpha \end{smallmatrix}\big)\in \mathrm{PSU(1,1)}$ to
  $  
  \frac12\scriptstyle
    \big(\begin{smallmatrix} -1&1\\-i&-i  \end{smallmatrix}\big)
    \big(\begin{smallmatrix} \alpha&\beta\\\bar\beta&\bar\alpha \end{smallmatrix}\big)
    \big(\begin{smallmatrix} -1&i\\1&i \end{smallmatrix}\big)\textstyle\in \PSL_2(\IR)$.

\begin{definition} \label{def:conformal-circle}
  Let $S$ be a circle. A \emph{conformal structure} $\tau$ on $S$ is an  orbit of the right $\Conf(S^1)$ action on the set $\Diff(S,S^1)$.
  Thus, a conformal structure on $S$ is an identification $S \to S^1$ that is only determined up to elements of $\Conf(S^1)$.
A \emph{conformal circle} is a circle equipped with a conformal structure.

If $S$ and $S'$ are conformal circles, we write
$\Conf(S,S')$ for the set of all diffeomorphisms $S \to S'$ that are compatible with the conformal structures,
and abbreviate $\Conf(S,S)$ by $\Conf(S)$.
We also let $\Conf_+(S,S') := \Conf(S,S') \cap \Diff_+(S,S')$, $\Conf_+(S) := \Conf_+(S,S)$, and similarly for $\Conf_-$.
\end{definition}

The collection of all conformal circles forms a category. The objects of that category are conformal circles (always equipped with an orientation), and the morphisms from $S$ to $S'$ are 
given by $\Conf(S,S')=\Conf_+(S,S')\sqcup \Conf_-(S,S')$.

\subsubsection*{The vacuum sector functor}
The main goal of this section is to prove the following theorem.
Recall that our conformal nets are assumed to be irreducible.

\begin{theorem}\label{thm: Vaccum Sector}
Let $\cala$ be a conformal net.
There is a functor $$S \mapsto H_0(S,\cala)$$ from the category of conformal circles to the category of complex Hilbert spaces.
For $\varphi$ a conformal map, the operator $H_0(\varphi,\cala)$ is unitary when $\varphi$ is orientation-preserving, and anti-unitary when $\varphi$ is orientation-reversing.
Moreover, $H_0(S,\cala)$ is naturally equipped with the following structure:
\begin{enumerate}
\item \label{prop:vacuum-sector:local-actions}
The Hilbert space $H_0(S,\cala)$ is an $S$-sector of $\cala$, and it is a vacuum sector of $\cala$ associated to $S$, in the sense of Definition~\ref{def: non canonical vacuum}.
 \item \label{prop:vacuum-sector:covariant}
The representation $\varphi \mapsto H_0(\varphi,\cala)$ of $\Conf(S)$ on $H_0(S,\cala)$ extends to a continuous
projective representation $\varphi \mapsto [u_\varphi]$ of $\Diff(S)$ satisfying the two conditions listed in Proposition \ref{prop:projective-implementation-diffeo}.
\item \label{prop:vacuum-sector:L^2}
For any interval $I \subset S$, there is a unitary isomorphism of $S$-sectors
\[
v_I \colon H_0(S,\cala) \to L^2(\cala(I))
\]
between $H_0(S,\cala)$ and the vacuum sector associated to $S$, $I$, and $j$,
where $j\in\Conf_-(S)$ is the involution that fixes $\partial I$ (see Lemma \ref{lem:properties-of-Conf} below).
Moreover, given two conformal circles $S$ and $S'$, an interval $I\subset S$, and a conformal map $\varphi\in\Conf(S,S')$,
the diagrams
\begin{equation}\label{eq: naturality of v_I}
\qquad\quad\tikzmath{
\matrix [matrix of math nodes,column sep=1.4cm,row sep=7mm]
{ 
|(a)| H_0(S,\cala) \pgfmatrixnextcell |(c)| L^2(\cala(I))\\ 
|(b)| H_0(S',\cala) \pgfmatrixnextcell |(d)| L^2(\cala(\varphi(I)))\\ 
}; 
\draw[->] (a) -- node [left, xshift=-1]	{$\scriptstyle H_0(\varphi,\cala)$} (b);
\draw[->] (c) -- node [right, xshift=1]	{$\scriptstyle L^2(\cala(\varphi))$} (d);
\draw[->] (a) -- node [above]		{$\scriptstyle v_{I}$} (c);
\draw[->] (b) -- node [above]	{$\scriptstyle v_{\varphi(I)}$} (d);
}\qquad\text{if}\,\,\varphi\in\Conf_+(S,S'),
\end{equation}
\begin{equation}\label{eq: naturality of v_I -- BIS}
\qquad\quad\tikzmath{
\matrix [matrix of math nodes,column sep=1.4cm,row sep=7mm]
{ 
|(a)| H_0(S,\cala) \pgfmatrixnextcell |(c)| L^2(\cala(I))\\ 
|(b)| H_0(S',\cala) \pgfmatrixnextcell |(d)| L^2(\cala(\varphi j(I)))\\ 
}; 
\draw[->] (a) -- node [left, xshift=-1]	{$\scriptstyle H_0(\varphi,\cala)$} (b);
\draw[->] (c) -- node [right, xshift=1]	{$\scriptstyle L^2(\cala(\varphi j))\circ J$} (d);
\draw[->] (a) -- node [above]		{$\scriptstyle v_{I}$} (c);
\draw[->] (b) -- node [above]	{$\scriptstyle v_{\varphi j(I)}$} (d);
}\quad\text{if}\,\,\varphi\in\Conf_-(S,S'),
\end{equation}
commute.

\item \label{prop:vacuum-sector:J} 
If $j \in \Conf_-(S)$ is the involution that fixes the boundary of $I \subset S$, then
$v_I\circ H_0(j,\cala)\circ v_I^*$ is the modular conjugation on $L^2(\cala(I))$.
\end{enumerate}
\end{theorem}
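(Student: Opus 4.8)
The plan is to exploit the rigidity of the conformal group to eliminate the non-canonicity in the vacuum sector $H_0(S,\cala)$ of Definition~\ref{def: non canonical vacuum}. Everything reduces to the following assertion: the projective action $\Diff(S^1)\to\PU_\pm(H_0(\cala))$ of Proposition~\ref{prop:projective-implementation-diffeo} restricts to an \emph{honest} continuous representation $\rho\colon\Conf(S^1)\to\U_\pm(H_0(\cala))$ --- complex-linear on $\Conf_+(S^1)$, antilinear on $\Conf_-(S^1)$ --- sending the reflection $z\mapsto\bar z$ to the modular conjugation $J$ of $L^2(\cala(S^1_\top))$. Granting this, one builds the functor by transport of structure: a conformal circle is a circle $S$ together with a chosen $\Conf(S^1)$-orbit $\calo\subset\Diff(S,S^1)$, and one defines $H_0(S,\cala)$ to be the quotient of the disjoint union $\bigsqcup_{\psi\in\calo}\psi^*H_0(\cala)$ by the identifications $\xi\in\psi^*H_0(\cala)\leftrightarrow\rho(g)\xi\in(g\psi)^*H_0(\cala)$, $g\in\Conf(S^1)$; because $\rho$ is a genuine representation this is a single well-defined Hilbert space, and it carries an $S$-sector structure descending from the pullbacks of \eqref{eq: functor phi^*}. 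A conformal map $\varphi\in\Conf(S,S')$ sends the class of $\xi\in\psi^*H_0(\cala)$ to the class of $\xi$ regarded in $(\psi\varphi^{-1})^*H_0(\cala)$, $\psi\varphi^{-1}\in\calo'$; independence of $\psi$, functoriality, and the (anti)unitarity of $H_0(\varphi,\cala)$ are then immediate. For $S=S^1$ with the standard structure one recovers $H_0(\cala)$.

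It remains to establish the assertion. Uniqueness of $\rho$ is cheap: $\Conf_+(S^1)\cong\mathrm{PSU}(1,1)$ is perfect, so it admits no nontrivial homomorphism to $S^1$, and two honest lifts therefore agree on $\Conf_+(S^1)$; the value on an orientation-reversing element is fixed by the prescription on $z\mapsto\bar z$, consistently with Lemma~\ref{lem: j is implemented}. For existence, the idea is to assemble $\rho$ from the \emph{canonical} implementations supplied by Lemmas~\ref{lem: j is implemented}, \ref{lem:L^2=conformal-implementaion}, and~\ref{lem:L^2=conformal-implementaion OP}. Write $j_0(z)=\bar z$, $j'(z)=-\bar z$, $r_\pi(z)=-z=j_0\circ j'$. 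The one-parameter group $D$ of M\"obius maps fixing $\partial S^1_\top=\{1,-1\}$ consists of maps commuting with $j_0$, so $\delta\mapsto L^2(\cala(\delta|_{S^1_\top}))$ is, by Lemma~\ref{lem:L^2=conformal-implementaion}, an honest representation of $D$ on $H_0(\cala)$; by Lemmas~\ref{lem: j is implemented} and~\ref{lem:L^2=conformal-implementaion OP}, $r_\pi$ is implemented by the operator $W:=J\,L^2(\cala(j'|_{S^1_\top}))\,J$, which (since $\cala(j'|_{S^1_\top})$ and $J$ are involutions, and $J L^2(f) J=L^2(f)$ for automorphisms $f$) satisfies the \emph{exact} identities $W^2=\id$ and $W\,L^2(\cala(\delta|_{S^1_\top}))\,W^{-1}=L^2(\cala(\delta^{-1}|_{S^1_\top}))$, using $j'\circ\delta\circ j'=\delta^{-1}$. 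Transporting the analogous data for the base interval $S^1_\dashv$ and involution $j'$ along a conformal map carrying $(S^1_\top,j_0)$ to $(S^1_\dashv,j')$ --- such as $z\mapsto-iz$ --- yields an honest representation of the transverse hyperbolic group $D'$ fixing $\{i,-i\}$, together with implementations of $j_0$ and $j'$; one calibrates the residual scalar ambiguity so that the implementation of $j_0$ coincides with $J$. Since $D$ and $D'$ generate $\Conf_+(S^1)$, the subgroup $\widehat G\subset\U_\pm(H_0(\cala))$ generated by all these operators maps onto $\Conf(S^1)$, and over $\Conf_+(S^1)$ it is a central extension of $\mathrm{PSU}(1,1)$ by a subgroup of $S^1$. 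As such extensions are classified by $H^2_{\mathrm{cts}}(\mathrm{PSU}(1,1);S^1)\cong\IR/\IZ$, the representation $\rho$ exists precisely when the associated invariant --- the multiplier $\lambda$ of the projective conformal action --- is trivial.

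I expect proving $\lambda=1$ to be the main obstacle. The order-$2$ implementation $W$ of the order-$2$ element $r_\pi$ already forces $\lambda\ne-1$ (a central extension of $\langle r_\pi\rangle\cong\IZ/2$ by $S^1$ admits an order-$2$ lift iff it is trivial), and I would rule out the remaining values by realizing further elements of $\Conf_+(S^1)$ --- e.g. other finite-order rotations, or a loop generating $\pi_1(\mathrm{PSU}(1,1))\cong\IZ$ --- via the phase-rigid $L^2$-functor and checking that the exact relations they satisfy force the lift of the rotation loop $\theta\mapsto(z\mapsto e^{i\theta}z)$ to close up. The reason this needs genuine work, rather than being the formality it is in the classical setting, is that we do not assume positive energy: the usual argument that the full rotation equals $\exp(2\pi iL_0)$ with $L_0$ of integral spectrum on the vacuum module is not available, and the replacement is to track every phase through the $L^2$-functor, where no multiplier can arise.

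Properties (i)--(iv) are then bookkeeping. (i): the $S$-sector structures descend, and for a single $\psi$ one has $H_0(S,\cala)=\psi^*H_0(\cala)=H_0(\cala)(S)$, which is a vacuum sector associated to $S$ in the sense of Definition~\ref{def: non canonical vacuum}. (ii): transport the projective $\Diff(S^1)$-action of Proposition~\ref{prop:projective-implementation-diffeo} along any $\psi\in\calo$; changing $\psi$ by $g\in\Conf(S^1)$ alters it by conjugation by the honest unitary $\rho(g)$, so the projective $\Diff(S)$-action is well defined and continuous and extends $\varphi\mapsto H_0(\varphi,\cala)$. (iii): given $I\subset S$, let $j\in\Conf_-(S)$ be the unique conformal involution fixing $\partial I$ (Lemma~\ref{lem:properties-of-Conf}), pick $\psi\in\calo$ with $\psi(I)=S^1_\top$ and $\psi j\psi^{-1}=j_0$, and let $v_I$ be the composite of the structural identification $H_0(S,\cala)\cong\psi^*H_0(\cala)$ with $L^2(\cala(\psi|_I))\colon\psi^*L^2(\cala(S^1_\top))\to L^2(\cala(I))$; the squares \eqref{eq: naturality of v_I} and \eqref{eq: naturality of v_I -- BIS} follow from functoriality of $L^2$ and Lemmas~\ref{lem:L^2=conformal-implementaion} and~\ref{lem:L^2=conformal-implementaion OP}, which identify $H_0(\varphi,\cala)$ for $\varphi$ conformal. (iv): $v_I$ conjugates $j$ to $j_0$, which acts on $H_0(\cala)$ as $J$ (Lemma~\ref{lem: j is implemented}), so $v_I\circ H_0(j,\cala)\circ v_I^{*}$ is the modular conjugation of $L^2(\cala(I))$.
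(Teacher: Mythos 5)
Your overall architecture is the same as the paper's: everything reduces to upgrading the projective action of $\Conf(S^1)$ on $H_0(\cala)$ to an honest representation sending $z\mapsto\bar z$ to the modular conjugation $J$ (this is Proposition~\ref{prop: Conf --> U_pm}), after which the functor is a limit/transport-of-structure construction and parts (i)--(iv) are bookkeeping. But the step you flag as ``the main obstacle'' is genuinely the heart of the theorem, and you do not prove it. Worse, the one concrete claim you make toward it is a non-sequitur: every central extension of $\IZ/2$ by $S^1$ is trivial (squaring is surjective on $S^1$, so $H^2(\IZ/2;S^1)=0$), so an order-two implementation of $r_\pi$ always exists and carries no information about the multiplier. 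Explicitly, the lift $\tilde r_\pi$ of $r_\pi$ to $\widetilde\Conf_+(S^1)$ squares to the generator $c$ of the center, so $u_{\tilde r_\pi}^2=u_c=\lambda$; any implementation $W=\theta u_{\tilde r_\pi}$ has $W^2=\theta^2\lambda$, which equals $1$ for a suitable $\theta$ whatever $\lambda$ is. Hence ``$W^2=\mathrm{id}$ forces $\lambda\neq-1$'' is false. The remaining sketch (``realize further finite-order rotations via the phase-rigid $L^2$-functor'') also does not go through as stated: rotations preserve no interval, so Lemmas~\ref{lem:L^2=conformal-implementaion} and~\ref{lem:L^2=conformal-implementaion OP} give them no canonical implementation, and writing them as products of reflections canonically implemented relative to \emph{different} base intervals reintroduces exactly the phase ambiguity you are trying to eliminate --- the identification of $L^2(\cala(S^1_\top))$ with $L^2(\cala(S^1_\dashv))$ as $S^1$-sectors is itself only canonical up to phase at this stage (that is part~(iii) of the theorem, which sits downstream of the honest action). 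Your ``calibration of the residual scalar ambiguity'' for the transverse group $D'$ is therefore circular as written.

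For comparison, the paper closes this gap with the Guido--Longo trick in the proof of Proposition~\ref{prop: Conf --> U_pm}: uniqueness of the Bargmann lift forces the exact identity $u_{j\varphi j}=Ju_\varphi J$; taking $R=u_r$ for $r$ a lift of the quarter rotation (so $r^4=c$ and $\bar r j\bar r^{-1}=f$), this gives $R^{-1}=JRJ$, and then $RJR^{-1}$ and $F=L^2(\cala(f|_{S^1_\top}))\circ J$ are two antiunitary involutions implementing $f$ that both commute with $J$, hence agree up to a sign; this pins the phase and yields $R^4=(\pm FJ)^2=1$, so the lift descends. Any repair of your argument needs a relation of this precision, tying the phase of an implementation of a rotation to the canonically normalized operators $J$ and $F$; the single-interval data $D$, $W$, $J$ you assemble do not suffice. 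The rest of your proposal (the colimit over the orbit of parametrizations in place of the paper's limit over intervals via the maps $T(\varphi,I)$, and the verifications of (i)--(iv)) is sound modulo this point, though the orientation-reversing cases of (iii) and the functoriality check for $\Conf_-\circ\Conf_-$ require the diagram chases the paper carries out explicitly.
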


\begin{remark}\label{rem: vacuum construction}
If the conformal net $\cala$ is not irreducible, then most of Theorem \ref{thm: Vaccum Sector} remains true.
Indeed, given the direct integral decomposition \eqref{eq: Dir Int} of $\cala$, we can define $H_0(\cala,S)$ as $\int^\oplus_{x\in\mathrm{Spec}(Z(\cala))} H_0(\cala_x,S)$.
The only piece of structure that is no longer present on $H_0(\cala,S)$ is the projective action of $\Diff(S)$.
The issue is that the direct sum or direct integral of two projective representations is typically no longer a projective representation (except if the 2-cocycles are equal).
\end{remark}

Before embarking on the proof of Theorem \ref{thm: Vaccum Sector},
we list a few of its consequences.

\begin{definition}
\label{def:vacuum-rep-on-conformal}
Let $S$ be a conformal circle and $\cala$ a conformal net.
The Hilbert space $H_0(S,\cala)$ constructed in Theorem~\ref{thm: Vaccum Sector} is called the \emph{vacuum sector} of $\cala$ on~$S$.
\end{definition}

The vacuum sector $H_0(S,\cala)$ is a unit for Connes fusion along any interval $I\subset S$.
Indeed, given a right $\cala(I)$-module $H$,
composing the isometry $v_I \colon H_0(S,\cala) \to L^2(\cala(I))$ with the unit map $H \boxtimes_{\cala(I)} L^2(\cala(I)) \cong H$, we obtain a natural isomorphism
\begin{equation}
\label{eq:left-unit-vacuum}
H \boxtimes_{\cala(I)} H_0(S,\cala) \xrightarrow{\cong} H.
\end{equation}
Recall that $\bar I$ denotes $I$ with the opposite orientation, and that there is a canonical isomorphism $i \colon L^2(\cala(I)) \cong L^2(\cala(\bar I))$ 
under which the left/right $\cala(I)$-actions on $L^2(\cala(I))$ corresponds to the right/left $\cala(\bar I)$-actions on $L^2(\cala(\bar I))$.
For every $I\subset S$, the vacuum sector $H_0(S,\cala)$ is a right $\cala(\bar I)$-module via the isomorphism $\cala(\mathrm{Id}_I):\cala(\bar I) \cong \cala(I)^\op$.
Composing $i$ and $v_I$, we obtain a right $\cala(\bar I)$ linear isomorphism $H_0(S,\cala) \to L^2(\cala(\bar I))$.
Let $K$ be any left $\cala(\bar I)$-module.
Using the left unit map $L^2(\cala(\bar I)) \boxtimes_{\cala(\bar I)} K \to K$, we obtain a natural isomorphism
\begin{equation}
\label{eq:right-unit-vacuum}
H_0(S,\cala) \boxtimes_{\cala(\bar I)} K \xrightarrow{\cong} K
\end{equation}
similar to~\eqref{eq:left-unit-vacuum}.

Now let $S_1$ and $S_2$ be conformal circles,
let $I_1\subset S_1$ and $I_2\subset S_2$ be intervals, and let $\varphi:I_2\to I_1$ be an orientation-reversing diffeomorphism.
Let us also assume that $\varphi$ is the restriction of some element in $\Conf_-(S_2,S_1)$.
Let $I_i'$ denote the closure of $S_i\setminus I_i$.
Finally, let $S_3=I_1'\cup_{\partial I_2} I_2'$ be the circle obtained by gluing $S_1$ and $S_2$ along $\varphi$, 
and then removing the interior of $I_1$.

The circle $S_3$ is given a conformal structure as follows.
Letting $j_1\in\Conf_-(S_1)$ be the involution that fixes $\partial I_1$, the conformal structure on $S_3$ is the one making $j_1|_{I_1'}\cup \mathrm{Id}_{I_2'}:S_3\to S_2$ into a conformal map.
Note that $\mathrm{Id}_{I_1'}\cup j_2|_{I_2'}:S_3\to S_1$ is then also a conformal map, where $j_2\in \Conf_-(S_2)$ is the involution that fixes $\partial I_2$.\\
\[
S_1:
\tikzmath[scale=\displscale]{
\draw (60:12) arc (60:300:12);
\draw (60:12) -- (300:12);
\draw[->] (90:12) ++ (-.5,0) -- +(-.1,0);
}\,,\quad 
S_2:
\tikzmath[scale=\displscale]{
\draw (12,0) +(120:12) arc (120:-120:12);
\draw (12,0) +(120:12) -- +(-120:12);
\draw[->] (12,0) ++ (90:12) ++ (-.5,0) -- +(-.1,0);
},\quad 
S_3:
\tikzmath[scale=\displscale]{
\draw (60:12) arc (60:300:12);
\draw (12,0) +(120:12) arc (120:-120:12);
\draw[->] (90:12) ++ (-.5,0) -- +(-.1,0);
},\quad 
j_1:
\tikzmath[scale=\displscale]{
\draw (60:12) arc (60:300:12);
\draw (60:12) -- (300:12);
\draw[<->] (180:10) -- (0:4.5);
\draw[<->] (128:10) to[in=178, out=-50] (4.5,3);
\draw[<->] (90:10.2) to[in=175, out=-80] (4.5,6.2);
\draw[<->] (-128:10) to[in=-178, out=50] (4.5,-3);
\draw[<->] (-90:10.2) to[in=-175, out=80] (4.5,-6.2);
}\,. 
\]

The following is a refinement of Corollary \ref{cor: vacuum * vacuum = vacuum}
in the sense that \eqref{eq: HoxHo=Ho} is now replaced by a \emph{canonical} isomorphism:

\begin{corollary}\label{cor: Conformal version of vacuum * vacuum = vacuum}
Let $S_1$, $S_2$ and $S_3$ be as above.
View $H_0(S_1,\cala)$ as a right $\cala(I_2)$-module via $\cala(\varphi^{-1})$.
Then there is a canonical unitary isomoprhism of $S_3$-sectors of $\cala$:
\[
H_0(S_1,\cala) \boxtimes_{\cala(I_2)} H_0(S_2,\cala)\cong H_0(S_3,\cala).
\]
\end{corollary}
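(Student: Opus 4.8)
The plan is to re-run the proof of Lemma~\ref{lem: vacuum * vacuum = vacuum -- PRE}, but to replace every non-canonical choice made there by the canonical datum supplied by Theorem~\ref{thm: Vaccum Sector}. The first thing to observe is that gluing $S_1$ and $S_2$ along $\varphi\colon I_2\to I_1$ turns $S_1\cup_{I_1}S_2$ into a theta-graph whose arcs are $I:=I_1\cong I_2$, $I_1'$, $I_2'$ and whose circle subgraphs are $S_1$, $S_2$, $S_3$, exactly as in \eqref{eq: Theta-graph}; moreover the conformal structure put on $S_3$ is by construction the one compatible with this theta-graph, so that the conformal involution $j_2\in\Conf_-(S_2)$ fixing $\partial I_2$ is the one coming from the $\mathfrak S_3$-symmetry of the graph. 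Hence $H_0(S_1,\cala)\boxtimes_{\cala(I_2)}H_0(S_2,\cala)$ is an instance of the fusion of sectors \eqref{eq: fusion of sectors}, so it carries a canonical $S_3$-sector structure, and by Corollary~\ref{cor: vacuum * vacuum = vacuum} it is abstractly isomorphic to $H_0(S_3,\cala)$; the point is to pin down such an isomorphism canonically.

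I would produce the isomorphism as the composite of three canonical unitaries. Write $\chi\colon S_3\to S_1$ for the conformal diffeomorphism $\mathrm{Id}_{I_1'}\cup j_2|_{I_2'}$ used to define the conformal structure on $S_3$; as $\varphi$ and $j_2|_{I_2'}$ both reverse orientation, $\chi$ is orientation-preserving, so $H_0(\chi,\cala)$ will be unitary. First, by the naturality of the maps $v_I$ in Theorem~\ref{thm: Vaccum Sector}, the isomorphism $v_{I_2}\colon H_0(S_2,\cala)\xrightarrow{\cong}L^2(\cala(I_2))$ identifies $H_0(S_2,\cala)$, as an $S_2$-sector, with the vacuum sector of $\cala$ associated to $S_2$, $I_2$, $j_2$; fusing over $\cala(I_2)$ gives $H_0(S_1,\cala)\boxtimes_{\cala(I_2)}H_0(S_2,\cala)\cong H_0(S_1,\cala)\boxtimes_{\cala(I_2)}L^2(\cala(I_2))$. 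Second, the unit isomorphism for Connes fusion identifies the latter with $H_0(S_1,\cala)$, and, exactly as in the proof of Lemma~\ref{lem: vacuum * vacuum = vacuum -- PRE}, one checks that under this identification the $S_3$-sector structure carried by the fusion becomes the pullback $\chi^*H_0(S_1,\cala)$: for $J\subset I_1'$ the action of $\cala(J)$ is the one inherited from the left factor, while for $J\subset I_2'$ it comes from the $S_2$-structure of $L^2(\cala(I_2))$, hence factors through its right $\cala(I_2)$-action, which is precisely the pullback along $\chi|_{I_2'}=\varphi\circ j_2|_{I_2'}$ by the way $H_0(S_1,\cala)$ was made a right $\cala(I_2)$-module; agreement of the two structures on $\cala(J\cap I_1')$ and $\cala(J\cap I_2')$ then forces agreement on every $\cala(J)$ with $J\subset S_3$, by strong additivity together with Lemma~\ref{lem: open cover of circle => sector} and Corollary~\ref{cor:cala(K)-acts}. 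Third, by functoriality of the vacuum-sector construction, $H_0(\chi,\cala)\colon H_0(S_3,\cala)\to H_0(S_1,\cala)$ is a unitary isomorphism of $S_3$-sectors $H_0(S_3,\cala)\xrightarrow{\cong}\chi^*H_0(S_1,\cala)$; composing its inverse with the first two maps yields the desired canonical unitary isomorphism of $S_3$-sectors
\[
H_0(S_1,\cala)\boxtimes_{\cala(I_2)}H_0(S_2,\cala)\xrightarrow{\cong}H_0(S_3,\cala).
\]

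The main obstacle is the second step: checking that the residual $S_3$-sector structure of the fusion really is the pullback $\chi^*H_0(S_1,\cala)$, and not a twist of it, forces one to keep careful track of the orientation-reversing maps $\varphi$ and $j_2$ and of the opposite algebras they produce. This is essentially the computation already carried out in Lemma~\ref{lem: vacuum * vacuum = vacuum -- PRE}; what is new is only that Theorem~\ref{thm: Vaccum Sector} now lets us — and requires us to — use the canonical identifications $v_{I_2}$ and $H_0(\chi,\cala)$ rather than arbitrary ones, which is exactly what upgrades Corollary~\ref{cor: vacuum * vacuum = vacuum} to a canonical statement. It is also worth noting at the end that the resulting isomorphism does recover the one of Corollary~\ref{cor: vacuum * vacuum = vacuum}, since both are isomorphisms of the irreducible $S_3$-sector $H_0(S_3,\cala)$ and hence differ at most by a phase.
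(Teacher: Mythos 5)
Your proof is correct and is essentially the paper's argument: the paper defines the isomorphism as $v_{I_1'}^{*}\circ(\mathrm{unit})\circ(v_{I_1'}\boxtimes v_{I_2})$, and since the naturality diagram \eqref{eq: naturality of v_I} applied to $\chi=\mathrm{Id}_{I_1'}\cup j_2|_{I_2'}$ gives $H_0(\chi,\cala)=(v_{I_1'}^{S_1})^{*}\circ v_{I_1'}^{S_3}$, your composite $(1\boxtimes v_{I_2})$, unit map, then $H_0(\chi,\cala)^{-1}$ is literally the same unitary. Your extra verification that the residual $S_3$-sector structure on the fused space is $\chi^{*}H_0(S_1,\cala)$ is the (correct) content that the paper leaves implicit, having already carried it out in Lemma~\ref{lem: vacuum * vacuum = vacuum -- PRE}.
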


\begin{proof}
The isomorphism is given by
\[\begin{split}
H_0(S_1,\cala) \boxtimes_{\cala(I_2)} H_0(S_2,\cala)
\xrightarrow{v_{I_1'}\boxtimes\hspace{.3mm} v_{I_2\phantom{'}}\!}\,\,& L^2(\cala(I_1'))\boxtimes_{\cala(I_2)}L^2(\cala(I_2))\\
\xrightarrow{\cong}\,\, &L^2(\cala(I_1')) \xrightarrow{v_{I_1'}^*}H_0(S_3,\cala).
\end{split}\]
\end{proof}

\subsubsection*{Vacuum representations of the conformal group}
We now discuss a number of results we will need for the proof of Theorem~\ref{thm: Vaccum Sector}.
Most importantly, we construct an action of the group $\Conf(S^1)$ on the vacuum sector of a conformal net.
We begin with two well known facts about conformal transformations:

\begin{lemma}
  \label{lem:properties-of-Conf}
  Let $S$ be a conformal circle and let $\zeta, \zeta' \in S$ be two distinct points.
  \begin{enumerate}
  \rm\item\it \label{lem:properties-of-Conf:unique-reflection}
     There is a unique 
     $j \in \Conf_{-}(S^1)$ that fixes $\zeta$ and $\zeta'$ and
     is an involution.
  \rm\item\it \label{lem:properties-of-Conf:Moebius-flow}
     The subgroup $\Conf_+(S^1,\{\zeta, \zeta'\})$ 
     of $\Conf_+(S^1)$ that fixes both $\zeta$ and $\zeta'$
     is isomorphic to $\IR$. 
     The elements of this subgroup commute with the unique involution
     $j \in \Conf_{-}(S)$ that fixes $\zeta$ and $\zeta'$.
  \end{enumerate}
\end{lemma}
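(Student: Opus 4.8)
The plan is to transport everything to the upper half-plane model, place the two marked points at $0$ and $\infty$, and reduce both statements to a one-line computation with dilations.

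First I would fix a representative $\psi\colon S\to S^1$ of the conformal structure on $S$; it conjugates $\Conf(S)$ to $\Conf(S^1)$, carries $\Conf_\pm(S)$ to $\Conf_\pm(S^1)$, and sends $\{\zeta,\zeta'\}$ to a pair of distinct points of $S^1$, so it suffices to treat $S=S^1$. Conjugating by the inverse Cayley transform then identifies $S^1$ with $\dd\IH=\IR\cup\{\infty\}$, carries $\Conf_+(S^1)$ to $\PSL_2(\IR)=\mathrm{Isom}^+(\IH)$ acting by M\"obius transformations, and carries $\Conf_-(S^1)$ to the orientation-reversing isometries of $\IH$, i.e.\ the maps $z\mapsto g(-\bar z)$ with $g\in\PSL_2(\IR)$, where $z\mapsto-\bar z$ is the reflection across the imaginary axis. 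Since $\PSL_2(\IR)$ acts transitively on the geodesics of $\IH$, hence on unordered pairs of distinct points of $\dd\IH$, I may assume $\{\zeta,\zeta'\}=\{0,\infty\}$.

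With these normalizations, an element $z\mapsto\tfrac{az+b}{cz+d}$ of $\PSL_2(\IR)$ fixes both $0$ and $\infty$ iff $b=c=0$, i.e.\ iff it is a dilation $z\mapsto\lambda z$ with $\lambda>0$; these dilations form a subgroup isomorphic to $(\IR,+)$ via $\lambda\mapsto\log\lambda$, which gives the first assertion of (ii). For (i), existence is clear --- the reflection $j_0\colon z\mapsto-\bar z$ across the geodesic joining $0$ and $\infty$ is an orientation-reversing conformal involution fixing both points --- and for uniqueness I would note that any anti-M\"obius involution $\iota(z)=g(-\bar z)$ fixing $0$ and $\infty$ forces $g$ to fix $0$ and $\infty$, so $g(z)=\mu z$ with $\mu>0$ and $\iota(z)=-\mu\bar z$; then $\iota^2(z)=\mu^2 z$, so $\iota^2=\id$ forces $\mu=1$ and $\iota=j_0$. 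Finally $j_0(\lambda z)=-\lambda\bar z=\lambda\,j_0(z)$ shows that every dilation commutes with $j_0$, the second assertion of (ii). Transporting these facts back along the Cayley transform and $\psi$ --- conjugation carrying the pointwise stabilizer of $\{\zeta,\zeta'\}$ to the dilation subgroup and the reflection fixing $\{\zeta,\zeta'\}$ to $j_0$ --- yields the statements for a general conformal circle $S$.

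The step requiring the most care is the orientation bookkeeping in this reduction: one must verify that orientation-reversing conformal maps of $S^1$ correspond under the Cayley transform exactly to orientation-reversing isometries of $\IH$ --- equivalently, that the correct normal form for the reflection is $z\mapsto-\bar z$, which preserves $\IH$, rather than $z\mapsto\bar z$, which interchanges $\IH$ with the lower half-plane. The remaining computations are elementary.
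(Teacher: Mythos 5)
Your proof is correct and follows essentially the same route as the paper: normalize the two points to a standard position and compute the pointwise stabilizer explicitly. The paper works in the disc model with $\zeta,\zeta'$ sent to $\pm 1$, identifying the stabilizer as $\IR\times\IZ/2$ and invoking its unique (and central) element of order two, whereas you conjugate to the half-plane with the points at $0$ and $\infty$ and verify the same facts by hand with dilations and the reflection $z\mapsto-\bar z$; the two computations are conjugate under the Cayley transform.
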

 
\begin{proof}
  We may assume without loss of generality that $S=S^1$, $\zeta = 1$ and $\zeta' = -1$. 
  The stabilizer of the pair $(1,-1)$ is given by
  \[
  \Conf\big(S^1,\{1, -1\}\big) = \left.\left\{\frac{az+b}{bz+a}\,\right|\,a,b\in\IR,\,a^2-b^2=\pm1\right\}.
  \]
  There is an isomorphism from $\IR\times \IZ/2$ to the above group that sends $(t,0)$ to $\frac{\cosh(t)z + \sinh(t)}{\sinh(t)z + \cosh(t)}$ 
  and $(t,1)$ to $\frac{\sinh(t)z + \cosh(t)}{\cosh(t)z + \sinh(t)}=\frac{\cosh(t)\bar z + \sinh(t)}{\sinh(t)\bar z + \cosh(t)}$.
  The result follows since $\IR\times \IZ/2$ has a unique element of order two, and this element is central.
\end{proof}

\begin{lemma} \label{lem:no-central-ext-for-conf}
Let $\PSL_2(\IR)\to \PU(H)$ be a continuous representation for the quotient strong topology.
Then it lifts uniquely to a continuous representation 
$\widetilde{\PSL_2}(\IR)\to \U(H)$ of the universal cover of 
$\PSL_2(\IR)$.
\end{lemma}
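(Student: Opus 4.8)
The plan is to use the well-known facts about the topology and homotopy type of $\PSL_2(\IR)$ together with a cohomological obstruction argument. First I would recall that $\PSL_2(\IR)$ is a connected Lie group whose maximal compact subgroup is $\SO(2)/\{\pm1\}\cong S^1$, so $\pi_1(\PSL_2(\IR))\cong\IZ$ and the universal cover $\widetilde{\PSL_2}(\IR)\to\PSL_2(\IR)$ is a central extension by $\IZ$. A continuous projective representation $\pi\colon\PSL_2(\IR)\to\PU(H)$ pulls back, along $\U(H)\to\PU(H)$, to a central extension of $\PSL_2(\IR)$ by $S^1$; call it $E$. I would then argue that this extension splits: because $\pi_1(\PSL_2(\IR))$ is generated by a single loop and that loop can be pushed into the compact subgroup $S^1\subset\PSL_2(\IR)$, the only potential obstruction lies in $H^2$ of the group, and for $\PSL_2(\IR)$ the relevant (continuous/Lie) second cohomology with $S^1$ coefficients vanishes — equivalently, every central $S^1$-extension of $\PSL_2(\IR)$ admits a continuous section after pulling back to the universal cover, since $\widetilde{\PSL_2}(\IR)$ is simply connected and has vanishing $H^2$.

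The clean way to package this is: pull the extension $E$ back along $p\colon\widetilde{\PSL_2}(\IR)\to\PSL_2(\IR)$ to get a central $S^1$-extension $\widetilde E$ of $\widetilde{\PSL_2}(\IR)$. Since $\widetilde{\PSL_2}(\IR)$ is simply connected with $H^2_{\mathrm{Lie}}(\widetilde{\PSL_2}(\IR);\IR)=0$ (the Lie algebra $\mathfrak{sl}_2(\IR)$ is semisimple, so $H^2(\mathfrak{sl}_2(\IR);\IR)=0$ by Whitehead's lemma), $\widetilde E$ is trivial as a Lie-group extension, hence there is a continuous homomorphic section $s\colon\widetilde{\PSL_2}(\IR)\to\widetilde E\subset\U(H)$. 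Composing, $s$ is the desired continuous lift $\widetilde{\PSL_2}(\IR)\to\U(H)$ of $\pi\circ p$. For uniqueness: two lifts differ by a continuous homomorphism $\widetilde{\PSL_2}(\IR)\to S^1$, and since $\widetilde{\PSL_2}(\IR)$ is perfect (being a connected cover of a perfect group, as $\mathfrak{sl}_2(\IR)=[\mathfrak{sl}_2(\IR),\mathfrak{sl}_2(\IR)]$), the only such homomorphism is trivial.

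One technical point worth spelling out is that the section provided by the vanishing of Lie-algebra cohomology is a priori only a local homomorphism near the identity; to upgrade it to a global one I would use that $\widetilde{\PSL_2}(\IR)$ is connected and simply connected, so a local homomorphism into a topological group extends uniquely to a global continuous homomorphism (monodromy principle). I expect the main obstacle — or at least the part requiring the most care — to be verifying the topological hypotheses in the right category: one must know that pulling back a continuous $S^1$-central extension of topological groups along the covering map, and then invoking that $\widetilde{\PSL_2}(\IR)$ admits no nontrivial such extension, is legitimate with only continuity (not smoothness) assumed on $\pi$. This is handled by the standard fact that continuous representations of Lie groups on Hilbert spaces are automatically smooth on the Gårding space, or more directly by Bargmann's theorem on projective representations of $\PSL_2(\IR)$, which is exactly the statement that $H^2(\PSL_2(\IR);S^1)$-obstructions vanish on the simply connected cover. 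I would cite Bargmann for this rather than reprove it.
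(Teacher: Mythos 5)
Your proof is correct and follows essentially the same route as the paper: the existence of the lift is obtained by invoking Bargmann's theorem for the continuous projective representation of the simply connected semisimple group $\widetilde{\PSL_2}(\IR)$, and uniqueness follows because any two lifts differ by a continuous character, which must be trivial since $\widetilde{\PSL_2}(\IR)$ is perfect. The extra cohomological unpacking (Whitehead's lemma, the monodromy argument) is just the content of Bargmann's theorem spelled out, which the paper simply cites.
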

\begin{proof}
The composite $\widetilde{\PSL_2}(\IR)\twoheadrightarrow \PSL_2(\IR)\to \PU(H)$ is a continuous projective representation of a semisimple simply-connected Lie group,
and thus lifts to $\U(H)$ by the main theorem of~\cite{Bargmann(unitary-ray-reps)}.
Moreover, this lift is unique: any two lifts differ by a character, but the abelianization of $\widetilde{\PSL_2}(\IR)$ is trivial, and so it has no characters.
\end{proof}

Let $\cala$ be a conformal net and $S$ a conformal circle.
We let $j \in \Conf_-(S)$ be an involution, $I \subset S$ an interval whose boundary is fixed by $j$, and we consider the Hilbert space $H_0 := L^2(\cala(I))$.
By Proposition~\ref{prop:projective-implementation-diffeo}, 
$H_0$ carries a projective $\Diff(S)$ action $\varphi \mapsto [u_\varphi]$ implementing diffeomorphims.
On the subgroup $\Conf(S)$, this action lifts to an honest representation $\varphi \mapsto u_\varphi$:

\begin{proposition}\label{prop: Conf --> U_pm}
Let $S$ be a conformal circle, and let $H_0$ be as above.
Then there exists a unique lift
\begin{equation}\label{eq: Conf lifts}
\begin{matrix}
\begin{tikzpicture}
\node (conf) at (-1.6,1) {$\Conf_+(S)$}; 
\node (U) at (1.6,1) {$\U(H_0)$} edge [<-,dashed] (conf); 
\node (Diff) at (-1.6,-.2) {$\Diff_+(S)$} edge [to-to reversed] (conf);
\node (PU) at (1.6,-.2) {$\PU(H_0)$} edge [<-] (Diff) edge [<<-] (U); 
\node at (0,1.2) {$\varphi\mapsto u_\varphi$};
\node at (0,0.8) {$\scriptstyle \exists !$};
\end{tikzpicture} 
\end{matrix}
\end{equation}
of the projective action $\Conf_+(S)\to \PU(H_0)$ constructed in Proposition \ref{prop:projective-implementation-diffeo}
to an honest action $\Conf_+(S) \to \U(H_0)$.
There also exists a lift
\begin{equation}\label{eq: Conf lifts*}
\begin{matrix}
\begin{tikzpicture}
\node (conf) at (-1.6,1) {$\Conf(S)$}; 
\node (U) at (1.6,1) {$\U_\pm(H_0)$} edge [<-,dashed] (conf); 
\node (Diff) at (-1.6,-.2) {$\Diff(S)$} edge [to-to reversed] (conf);
\node (PU) at (1.6,-.2) {$\PU_\pm(H_0)$} edge [<-] (Diff) edge [<<-] (U); 
\node at (0,0.8) {$\scriptstyle \exists$};
\end{tikzpicture} 
\end{matrix}
\end{equation}
of the projective action $\Conf(S)\to \PU_\pm(H_0)$ 
to an honest action $\Conf(S) \to \U_\pm(H_0)$.
The lift \ref{eq: Conf lifts*} is unique up to multiplication by the character $\Conf(S)\twoheadrightarrow \pi_0(\Conf(S))\cong \{\pm1\}$.
\end{proposition}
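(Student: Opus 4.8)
The plan is to fix a conformal identification $S \cong S^1$, so that $\Conf_+(S)\cong \PSL_2(\IR)$ via the Cayley transform and $\Conf_-(S)$ is the nonidentity component. By Proposition~\ref{prop:projective-implementation-diffeo}, restricting the projective $\Diff_+(S)$-action to $\Conf_+(S)$ gives a continuous homomorphism $\PSL_2(\IR)\to \PU(H_0)$, which by Lemma~\ref{lem:no-central-ext-for-conf} lifts \emph{uniquely} to a continuous representation $U\colon \widetilde{\PSL_2}(\IR)\to \U(H_0)$. The real content is then to show that $U$ descends along $\widetilde{\PSL_2}(\IR)\twoheadrightarrow \PSL_2(\IR)$: since the generator $z_0$ of the central kernel $\IZ$ maps to the identity, $U(z_0)$ implements the identity diffeomorphism, hence commutes with every $\cala(I)$, so by irreducibility of $H_0$ it is a scalar $\mu$, and we must prove $\mu = 1$.

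The first step toward this is to identify $U$ on the one-parameter Möbius group $\Lambda\subset \Conf_+(S)$ fixing $\dd I_0$; by Lemma~\ref{lem:properties-of-Conf} this group is isomorphic to $\IR$ and commutes with the reflection $j$. On the one hand $\Lambda$ is genuinely implemented on $H_0 = L^2(\cala(I_0))$ by $t\mapsto L^2(\cala(\varphi_t|_{I_0}))$ (Lemma~\ref{lem:L^2=conformal-implementaion}), and this one-parameter group commutes with the modular conjugation $J$, since $L^2$ of an automorphism commutes with the canonical modular conjugation of the standard form. On the other hand, because $j$ commutes with $\Lambda$, the map $g\mapsto JU(g)J$ is again a continuous lift of the projective Möbius representation precomposed with conjugation by $j$; by the uniqueness in Lemma~\ref{lem:no-central-ext-for-conf} it equals $U$ composed with the lifted conjugation, which is trivial on $\widetilde\Lambda$, so $U|_{\widetilde\Lambda}$ also commutes with $J$. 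Now $U|_{\widetilde\Lambda}$ and $t\mapsto L^2(\cala(\varphi_t|_{I_0}))$ are two continuous one-parameter unitary groups lifting the same projective $\IR$-representation, hence differ by a character $e^{i\gamma t}$; since both commute with the antilinear operator $J$, applying $J$ forces $e^{-i\gamma t} = e^{i\gamma t}$, so $\gamma = 0$ and $U|_{\widetilde\Lambda}(t) = L^2(\cala(\varphi_t|_{I_0}))$. The same argument, transported along an $S^1$-sector isomorphism $L^2(\cala(I_1))\cong H_0$ (the resulting ambiguity being a character of the perfect group $\widetilde{\PSL_2}(\IR)$, hence trivial), identifies $U$ on the Möbius group $\Lambda'$ fixing $\dd I_1$, for any second interval $I_1$.

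Choose $I_1$ so that $\Lambda$ and $\Lambda'$ generate $\PSL_2(\IR)$ — equivalently, their axes cross. Then their lifts generate $\widetilde{\PSL_2}(\IR)$, which is perfect, so $z_0$ is a product of commutators of words in $\widetilde\Lambda\cup \widetilde{\Lambda'}$. Evaluating $U$ on such a product, the algebraic sum of the $\Lambda$-parameters, and that of the $\Lambda'$-parameters, vanishes on each commutator, so $U(z_0)$ is expressible purely through the $L^2$-implementations of $\Lambda$ and $\Lambda'$; these fix a common cyclic vector of the vacuum sector (Reeh--Schlieder property), and tracking this vector through the word forces $\mu = 1$. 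This extraction of $\mu = 1$ from the explicit implementations of two linking Möbius flows is the heart of the matter and the main obstacle; the rest is formal. With $\mu = 1$ in hand, the lift over $\Conf_+(S)$ is unique because two lifts differ by a character of $\PSL_2(\IR)$, which is trivial.

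For the extension to $\Conf(S)$: let $u\colon \Conf_+(S)\to \U(H_0)$ be the genuine representation just built, put $u_{j_0} := J$ for the reflection $j_0$ fixing $\dd I_0$, and define $u_\varphi := Ju_{j_0\varphi}$ for $\varphi\in \Conf_-(S)$. The same uniqueness argument applied with $j_0$ in place of $j$ gives $Ju_gJ = u_{j_0 g j_0}$ for all $g\in \Conf_+(S)$; combining this with $J^2 = 1$, a short case check verifies that $u$ is a homomorphism $\Conf(S)\to \U_\pm(H_0)$ implementing the diffeomorphisms. Finally, any two such lifts agree over the perfect group $\Conf_+(S)$ and hence differ only by a constant phase on $\Conf_-(S)$; absorbing this phase into an overall rescaling of $u|_{\Conf_-(S)}$, i.e.\ conjugation by a scalar, the remaining ambiguity is exactly the sign character $\Conf(S)\twoheadrightarrow \pi_0(\Conf(S))\cong \{\pm 1\}$.
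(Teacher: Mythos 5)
Your overall strategy starts the same way as the paper's: lift the projective action to $\widetilde\Conf_+(S)\cong\widetilde{\PSL_2}(\IR)$ via Lemma \ref{lem:no-central-ext-for-conf} and reduce to showing that the generator of the central kernel acts by the scalar $\mu=1$. Your identification of the lift on the M\"obius flow fixing $\partial I_0$ with $t\mapsto L^2(\cala(\varphi_t|_{I_0}))$ is also essentially sound (it is a variant of the paper's Lemma \ref{lem: L^2(cala(varphi|_{I}))=u_varphi}, obtained by a $J$-commutation argument rather than the paper's conjugation by the reflection $F$). The problem is the step you yourself flag as ``the heart of the matter'': to get $\mu=1$ you invoke a common cyclic vector fixed by the $L^2$-implementations of two linking M\"obius flows, citing ``the Reeh--Schlieder property''. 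No such vector is available here. Reeh--Schlieder, and more basically the existence of a $\Conf_+(S)$-invariant unit vector, are features of \emph{positive-energy} nets (Definition \ref{def:positive-energy-nets}); the whole point of Section \ref{sec:covariance} is to build the vacuum structure for coordinate-free nets, where no distinguished vector is given. Even for a single flow, a fixed unit vector of $t\mapsto L^2(\cala(\varphi_t|_{I_0}))$ amounts to a $\cala(\varphi_t)$-invariant normal state on $\cala(I_0)$, whose existence is not known at this stage, and a simultaneous fixed vector for two crossing flows is essentially the vacuum vector itself --- so the argument is circular. (The commutator bookkeeping does no independent work: with a common fixed unit vector every letter of the word fixes it and $\mu=1$ is immediate; without one, nothing in the word calculus evaluates $U(z_0)$.)

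The paper's proof of this crucial step is purely algebraic and needs no invariant vector: from the identity $u_\varphi=Ju_{j\varphi j}J$ (forced by uniqueness of the lift) one gets $R^{-1}=JRJ$ for $R=u_r$, where $r$ is a lift of rotation by $\pi/2$, so that $r^4$ generates the central kernel and $\bar r j\bar r^{-1}=f$ is the reflection swapping the endpoints of $I_0$. The anti-unitary $F=L^2(\cala(f|_{I_0}))\circ J$ is an involution implementing $f$ and commuting with $J$ (Lemma \ref{lem:L^2=conformal-implementaion OP}), whence $RJR^{-1}=\pm F$ and $R^4=(RJR^{-1}J)^2=(\pm FJ)^2=1$. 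To repair your proof you would need to replace the invariant-vector mechanism by an argument of this kind; the remainder of your write-up (uniqueness over the perfect group $\Conf_+(S)$, and the extension to $\Conf(S)$ by sending $j$ to $\pm J$) is in line with the paper.
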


\begin{proof}
The following proof is based on a trick from \cite[Prop. 1.1]{Guido-Longo(The-conformal-spin-and-statistics-theorem)}.
Let $\widetilde\Conf_+(S)$ be the universal cover of $\Conf_+(S)$.
By Lemma \ref{lem:no-central-ext-for-conf}, the projective action of $\Conf_+(S)$ lifts uniquely to an action $\widetilde\Conf_+(S)\to \U(H_0)$.
Let us denote by $u_\varphi\in\U(H_0)$ the image of an element $\varphi\in\widetilde\Conf_+(S)$.
By construction, $u_\varphi$ implements the image $\bar\varphi\in\Conf_+(S)$ of $\varphi$.
The conjugation action of $j$ on $\Conf_+(S)$ lifts to an action, again denoted $\varphi\mapsto j\varphi j$, on its universal cover.
Note that
\begin{equation}\label{eq:widetildeConf}
\begin{split}
\widetilde\Conf_+(S)\,&\to\,\, \U(H_0)\\
\varphi\,\,\,&\mapsto\, Ju_{j\varphi j} J
\end{split}
\end{equation}
is a group homomorphism. 
By Lemma \ref{lem: j is implemented}, $Ju_{j\varphi j} J$ also implements $\bar\varphi$.
The uniqueness in Lemma \ref{lem:no-central-ext-for-conf} then implies that $u_\varphi=Ju_{j\varphi j} J$.
Equivalently, we have $u_{j\varphi j}=Ju_\varphi J$.

Let $f\in \Conf_-(S)$ be an involution that exchanges the two boundary points of $I$, and let $F:=L^2(\cala(f|_{I}))\circ J$. 
The operator $F$ squares to one, commutes with $J$, and implements $f$ by Lemma \ref{lem:L^2=conformal-implementaion OP}.
Let $r\in\widetilde\Conf_+(S)$ be a lift of `rotation by $\pi/2$', so that $r^4$ generates the kernel of the projection map $\widetilde\Conf_+(S)\twoheadrightarrow \Conf_+(S)$,
and its image $\bar r\in \Conf_+(S)$ satisfies $\bar rj\bar r^{-1}=f$.
\[
I:\,\,\,\tikzmath[scale=.8]{\draw (.7,0) arc (0:180:.7);\draw[white] (0,-.7); \draw[->] (0,.7) -- (-.01,.7);}
\qquad j:\,\,\,\tikzmath[scale=.8]{\draw circle (.7);\draw[dashed]  (-.95,0)--(1,0); \draw[<->] (.9,-.2) -- (.9,.2);}
\qquad f:\,\,\,\tikzmath[scale=.8]{\draw circle (.7);\draw[dashed] (0,-.95)--(0,1); \draw[<->] (-.2,.9) -- (.2,.9);}
\qquad \bar r:\,\,\,\tikzmath[scale=.8]{\draw circle (.7); \fill circle (.03); \draw[->, dashed] (.9,0) arc (0:90:.9);}
\]
Finally, let $R:=u_r$ be the unitary implementing $r$.
By the above computation, the inverse of $R$ is given by
\[
R^{-1}=u_{r^{-1}}=u_{jrj} = J u_r J = JRJ.
\]
This implies $R=JR^{-1}J$.
The involutions $F$ and $RJR^{-1}$ both implement $f$. It follows that $RJR^{-1}=\lambda F$ for some $\lambda=\pm 1$.
Now we compute
\[
R^4=(RJR^{-1}J)^2=(\lambda FJ)^2=\lambda^2F^2J^2=1,
\]
which shows that the action \eqref{eq:widetildeConf} descends to $\Conf_+(S)$.

To extend it to $\Conf(S)$, it is enough to specify where $j$ should go.
The only anti-unitary involutions implementing $j$ are $J$ and $-J$.
Both assignments $j\mapsto J$ and $j\mapsto -J$ produce homomorphisms $\Conf(S)\to \U_\pm(H_0)$.
\end{proof}

\begin{convention}\label{conv: j to J}
As shown above, there are two possible actions of $\Conf(S)$ on $H_0$.
Henceforth, we will always consider the one that sends $j$ to $J$.
\end{convention}

\begin{lemma}\label{lem: L^2(cala(varphi|_{I}))=u_varphi}
Let $\varphi\in\Conf_+(S,\partial I)$ be a conformal map that commutes with $j$,
and let $u_\varphi$ be the unitary constructed in \ref{prop: Conf --> U_pm} and \ref{conv: j to J}.
Then $L^2(\cala(\varphi|_{I}))=u_\varphi$.
\end{lemma}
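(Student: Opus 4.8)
The statement to prove is Lemma~\ref{lem: L^2(cala(varphi|_{I}))=u_varphi}: for $\varphi\in\Conf_+(S,\partial I)$ commuting with $j$, the unitary $u_\varphi$ (constructed in Proposition~\ref{prop: Conf --> U_pm} with Convention~\ref{conv: j to J}) equals $L^2(\cala(\varphi|_I))$ on $H_0=L^2(\cala(I))$. The plan is to show that both operators implement $\varphi$ \emph{and} agree on a distinguished vector, then invoke irreducibility.

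First I would recall that since $\varphi\in\Conf_+(S,\partial I)$ commutes with $j$, Lemma~\ref{lem:L^2=conformal-implementaion} applies: $L^2(\cala(\varphi|_I))$ implements $\varphi$. By the construction in Proposition~\ref{prop: Conf --> U_pm}, the unitary $u_\varphi$ also implements $\varphi$. Because $\cala$ is irreducible, $H_0$ is an irreducible sector, so by Schur's lemma (exactly as used in the proof of Proposition~\ref{prop:projective-implementation-diffeo}) the two operators differ by a scalar phase $\lambda\in S^1$: $u_\varphi=\lambda\, L^2(\cala(\varphi|_I))$. It remains to pin down $\lambda=1$.

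To fix the phase I would use the interaction with the modular conjugation $J$, which is where Convention~\ref{conv: j to J} enters. On one hand, from the proof of Proposition~\ref{prop: Conf --> U_pm} we have the identity $u_{j\psi j}=J u_\psi J$ for $\psi\in\Conf_+(S)$; since $\varphi$ commutes with $j$, this gives $u_\varphi=J u_\varphi J$, i.e. $u_\varphi$ commutes with $J$. On the other hand, $L^2(\cala(\varphi|_I))$ also commutes with $J$: indeed for an automorphism $f$ of a von Neumann algebra $A$, the modular conjugation of $L^2(A)$ satisfies $L^2(f)\circ J = J\circ L^2(f)$ because $J$ is characterized intrinsically (via $J(a\xi b)=b^*J(\xi)a^*$ together with the cyclic separating vector) and $L^2(f)$ is an isometry intertwining the bimodule structures — one checks $L^2(f)(J(a\,\xi_0\,b)) = L^2(f)(b^*\xi_0 a^*) = f(b)^* \xi_0 f(a)^* = J(f(a)\,\xi_0\,f(b)) = J(L^2(f)(a\,\xi_0\,b))$ on the dense span, where $\xi_0$ is the canonical vector. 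So commuting with $J$ gives no information on $\lambda$ directly; instead I would evaluate both operators on the canonical cyclic vector $\xi_0\in L^2(\cala(I))$. We have $L^2(\cala(\varphi|_I))(\xi_0)=\xi_0$ by definition of the $L^2$-functoriality (it sends the canonical vector to the canonical vector). For $u_\varphi$: since $\varphi\in\Conf_+(S,\partial I)$ fixes $\partial I$ and commutes with $j$, the positive-energy/modular argument shows $\xi_0$ is fixed — more precisely, $u_\varphi$ commutes with $J$ and preserves the natural positive cone $P^\natural\subset L^2(\cala(I))$ (conjugation by $u_\varphi$ is a $\ast$-automorphism of $\cala(I)$ commuting with modular conjugation, hence preserves the standard form and its positive cone), and the unique unit vector in $P^\natural$ fixed by the $\ast$-automorphism $\Ad(u_\varphi)$ (which has a cyclic fixed... ) — here I would argue $u_\varphi \xi_0$ is a unit vector in the positive cone implementing the same automorphism, and by uniqueness of the implementing vector in the cone, $u_\varphi\xi_0=\xi_0$. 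Then $\lambda\xi_0 = u_\varphi\xi_0 = \xi_0$ forces $\lambda=1$.

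The main obstacle is the last step: rigorously showing $u_\varphi$ fixes the canonical cyclic vector $\xi_0$ (equivalently, that $u_\varphi$ lies in the natural cone's stabilizer), rather than merely commuting with $J$. The clean way is the standard-form uniqueness theorem: an automorphism $\theta$ of a von Neumann algebra in standard form $(A,L^2(A),J,P^\natural)$ has a \emph{unique} unitary implementation $U(\theta)$ with $U(\theta)J=JU(\theta)$ and $U(\theta)P^\natural=P^\natural$; both $u_\varphi$ (by the $J$-commutation just established, \emph{provided} one also checks cone-preservation) and $L^2(\cala(\varphi|_I))$ (which by construction is $U(\Ad^{-1}...)$, i.e. is exactly this canonical implementation) are such implementations of $\theta=\cala(\varphi|_I)^{-1}\circ\Ad(u_\varphi)|_{\cala(I)}=\id$... one must be slightly careful, but the upshot is $u_\varphi=L^2(\cala(\varphi|_I))$. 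I would cite the standard-form material from the appendix (the subsection on von Neumann algebras referenced near \eqref{eq: main property of J}) for the uniqueness of the canonical implementation, and reduce the lemma to: $u_\varphi$ commutes with $J$ $\Rightarrow$ $u_\varphi$ preserves $P^\natural$ $\Rightarrow$ $u_\varphi$ is \emph{the} canonical implementation $=L^2(\cala(\varphi|_I))$.
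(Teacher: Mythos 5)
Your reduction of the problem is sound and matches the paper's starting point: both operators implement $\varphi$, so by irreducibility $u_\varphi=\lambda\, L^2(\cala(\varphi|_I))$ for some $\lambda\in S^1$, and the identity $u_{j\psi j}=Ju_\psi J$ from the proof of Proposition \ref{prop: Conf --> U_pm} (applied with $j\varphi j=\varphi$) together with $JL^2(\cala(\varphi|_I))=L^2(\cala(\varphi|_I))J$ forces $\lambda=\bar\lambda$, i.e.\ $\lambda\in\{\pm1\}$. The gap is in the last step, where you try to rule out $\lambda=-1$. First, there is no ``canonical cyclic vector $\xi_0$'' in $L^2(\cala(I))$: the algebra $\cala(I)$ is typically a type III factor, Haagerup's $L^2$-space carries a canonical positive cone but no distinguished unit vector, and $L^2(f)$ sends $\sqrt\phi$ to $\sqrt{\phi\circ f^{-1}}$, so it fixes no particular vector. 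Second, and more seriously, cone-preservation does \emph{not} follow from commuting with $J$: the operator $-u_\varphi$ also commutes with $J$ and implements the same automorphism, so at most one of $\pm u_\varphi$ can preserve $P^\natural$, and nothing in your argument selects which one. Saying ``$u_\varphi\xi_0$ is a unit vector in the positive cone'' presupposes exactly the cone-preservation you are trying to prove. So the proposal, as written, only re-derives the ambiguity $\lambda\in\{\pm1\}$ and does not resolve it.

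To close the gap one needs an argument that sees more than a single diffeomorphism $\varphi$. The paper's route: set $c_\varphi:=u_\varphi^*\,L^2(\cala(\varphi|_I))$; using that $\Conf_+(S,\partial I)\cong\IR$ is abelian one checks $c$ is a homomorphism into $S^1$, and conjugating by the implementer $F$ of a reflection exchanging the endpoints of $I$ gives $c_\varphi=c_{\varphi^{-1}}$, hence $c_\varphi^2=1$; a homomorphism from the connected (equivalently, divisible) group $\IR$ into $\{\pm1\}$ is trivial. An alternative fix along your lines would be to observe that $\varphi\mapsto c_\varphi$ is continuous (continuity of the projective action plus continuity of the net) with $c_{\mathrm{id}}=1$, and again use connectedness of $\Conf_+(S,\partial I)$; or to write $\varphi=\psi^2$ in $\Conf_+(S,\partial I)$ and use $c_\varphi=c_\psi^2=1$. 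Any of these works, but some such global input is indispensable: a pointwise standard-form argument at a fixed $\varphi$ cannot distinguish $u_\varphi$ from $-u_\varphi$.
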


\begin{proof}
By Lemma~\ref{lem:properties-of-Conf}~
\ref{lem:properties-of-Conf:Moebius-flow} 
the group $\Conf_+(S,\partial I)$ is isomorphic to $\IR_{\ge 0}$, and in particular is commutative.
Consequently, given elements $\varphi$ and $\psi$ of that group, 
$u_\varphi$ and $u_\psi$ commute.
If $v$ is any other unitary implementing $\psi$,
then $v=\theta u_\psi$ for some unit complex number $\theta\in S^1$, and hence $u_\varphi$ and $v$ commute.
In particular, $u_\varphi$ commutes with $L^2(\cala(\psi|_{I}))$.
The map
\begin{equation}\label{eq: homomorphism Conf_+(S,)-->S^1}
 \begin{split}
\Conf_+(S,\partial I) &\to\,\, S^1\\
\varphi\,\,\, &\mapsto\, 
 c_{\varphi}:=u_\varphi^*\circ L^2(\cala(\varphi|_{I}))
 \end{split}
\end{equation}
is therefore a homomorphism.

Let $f\in \Conf_-(S)$ and $F=L^2(\cala(f|_{I}))\circ J$ be as in the proof of Proposition~\ref{prop: Conf --> U_pm}.
The involution implementing $f$ being unique up to sign, we have $F=\pm u_f$.
From the equations
\[
Fu_\varphi F=u_fu_\varphi u_f=u_{f\varphi f}=u_{\varphi^{-1}}
\]
and
\[
 \begin{split}
F\, L^2(\cala(\varphi|_{I}))\, F 
&= L^2(\cala(f|_{I}))\,L^2(\cala(\varphi|_{I}))\,
 L^2(\cala(f|_{I}))\\
&= L^2(\cala((f\varphi f)|_{I})) = L^2(\cala(\varphi^{-1}|_{I})),
 \end{split}
\]
it follows that
\[
c_\varphi = F c_\varphi F = 
 \big(Fu_\varphi^*F\big)\circ\big(FL^2(\cala(\varphi|_{I}))F\big) = 
 u_{\varphi^{-1}}^*\circ L^2(\cala(\varphi^{-1}|_{I})) = c_{\varphi^{-1}}.
\]
The homomorphism~\eqref{eq: homomorphism Conf_+(S,)-->S^1} therefore takes its values in $\{\pm1\}$.
Since $\Conf_+(S,\partial I)$ is connected, it must be trivial.
\end{proof}

Given a conformal circle $S$ and an interval $I\subset S$,
we write $L^2(\cala(I))$ for the vacuum sector associated to $S$, $I$, and the involution $j\in\Conf_-(S)$ that fixes $\partial I$.

\begin{lemma}
\label{lem:T(varphi,I)}
For $\varphi \in \Conf_+(S)$, and $I \subset S$ an interval, consider the map
\begin{equation}\label{eq: def of T(phi,I)}
T(\varphi,I) \,:=\,u_\varphi^* \circ L^2(\cala(\varphi|_{I})) \colon L^2(\cala(I)) \to L^2(\cala(\varphi(I))).
\end{equation}
Then
\begin{enumerate}
\item \label{lem:T(varphi,I):intertwines}
$T(\varphi,I)$ is a morphism of $S$-sectors.
\item \label{lem:T(varphi,I):independent}
$T(\varphi,I)$ depends only on $I$ and on $\varphi(I)$, but not on $\varphi$:
if $\varphi$ and $\psi \in \Conf_+(S)$ are such that $\varphi(I) = \psi(I)$, then $T(\varphi,I) = T(\psi,I)$.
\item \label{lem:T(varphi,I):functorial}
If $\varphi$, $\psi \in \Conf_+(S)$, then $T(\psi \circ \varphi,I) = T(\psi,\varphi(I)) \circ T(\varphi, I)$. 
\end{enumerate}
\end{lemma}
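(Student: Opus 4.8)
Throughout, for an interval $K\subset S$ write $j_K\in\Conf_-(S)$ for the conformal involution fixing $\partial K$ (Lemma~\ref{lem:properties-of-Conf}\,\ref{lem:properties-of-Conf:unique-reflection}), let $L^2(\cala(K))$ be the vacuum $S$-sector associated to $S,K,j_K$, and let $\varphi\mapsto u^K_\varphi$ be the honest representation of $\Conf_+(S)$ on $L^2(\cala(K))$ implementing diffeomorphisms furnished by Proposition~\ref{prop: Conf --> U_pm} and Convention~\ref{conv: j to J}, which is the \emph{unique} continuous honest lift of the implementing projective action; in the definition of $T(\varphi,I)$ the operator $u_\varphi$ is $u^{\varphi(I)}_\varphi$, acting on the target $L^2(\cala(\varphi(I)))$. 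For part~\ref{lem:T(varphi,I):intertwines} the plan is to factor
\[
T(\varphi,I)\colon\ L^2(\cala(I))\xrightarrow{\ L^2(\cala(\varphi|_I))\ }\varphi^*L^2(\cala(\varphi(I)))\xrightarrow{\ (u^{\varphi(I)}_\varphi)^*\ }L^2(\cala(\varphi(I)))
\]
and check that each arrow is a morphism of $S$-sectors. The second arrow is one because $u^{\varphi(I)}_\varphi$ implements $\varphi$, i.e.\ $u^{\varphi(I)}_\varphi\colon L^2(\cala(\varphi(I)))\to\varphi^*L^2(\cala(\varphi(I)))$ is a unitary morphism of sectors (Definition~\ref{def:implements}), so its inverse is as well. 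For the first arrow I would use $L^2(f)(a\xi b)=f(a)L^2(f)(\xi)f(b)$ to verify the intertwining relation on $\cala(I)$ and on $\cala(I')$; the $\cala(I')$-case requires $\varphi\circ j_I=j_{\varphi(I)}\circ\varphi$, which holds since $\varphi j_I\varphi^{-1}$ is a conformal involution fixing $\partial\varphi(I)$ and hence equals $j_{\varphi(I)}$ by the uniqueness in Lemma~\ref{lem:properties-of-Conf}\,\ref{lem:properties-of-Conf:unique-reflection}. Since the elements of $\cala(K)$ on which the relevant intertwining holds form an ultraweakly closed subalgebra containing $\cala(K\cap I)$ and $\cala(K\cap I')$, strong additivity (exactly as in the proof of Lemma~\ref{lem: enough to check I_0 and I_0'}) upgrades this to an intertwining over every $\cala(K)$, $K\subset S$.

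For part~\ref{lem:T(varphi,I):independent}, suppose $\varphi(I)=\psi(I)=:I^+$. By part~\ref{lem:T(varphi,I):intertwines}, $T(\varphi,I)$ and $T(\psi,I)$ are unitary isomorphisms between the same two vacuum $S$-sectors, which are irreducible: by Haag duality (Proposition~\ref{prop: [Haag duality]}) $\cala(I^+)$ and $\cala((I^+)')$ are mutually commutant on the vacuum, so the algebra generated by the $S$-sector structure contains a factor together with its commutant and hence has trivial commutant. Schur's lemma then gives $T(\psi,I)^{-1}T(\varphi,I)=c\cdot\id$ with $c\in S^1$, and I would pin down $c=1$ by the computation
\[
T(\psi,I)^{-1}T(\varphi,I)=L^2(\cala(\psi|_I))^{-1}\,u^{I^+}_{\psi\varphi^{-1}}\,L^2(\cala(\varphi|_I)).
\]
The element $\mu:=\psi\varphi^{-1}$ stabilizes $I^+$ and, being orientation-preserving, fixes $\partial I^+$ pointwise, so by Lemma~\ref{lem:properties-of-Conf}\,\ref{lem:properties-of-Conf:Moebius-flow} it commutes with $j_{I^+}$; Lemma~\ref{lem: L^2(cala(varphi|_{I}))=u_varphi} then yields $u^{I^+}_\mu=L^2(\cala(\mu|_{I^+}))$, and since $\mu\varphi=\psi$ we get $L^2(\cala(\mu|_{I^+}))L^2(\cala(\varphi|_I))=L^2(\cala(\psi|_I))$, so the right-hand side collapses to $\id$.

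For part~\ref{lem:T(varphi,I):functorial}, set $J:=\varphi(I)$ and $K:=\psi(J)$. Expanding both sides using $u^K_{\psi\varphi}=u^K_\psi u^K_\varphi$ and $L^2(\cala((\psi\varphi)|_I))=L^2(\cala(\psi|_J))L^2(\cala(\varphi|_I))$, the identity $T(\psi\varphi,I)=T(\psi,\varphi(I))\circ T(\varphi,I)$ becomes, after cancelling $L^2(\cala(\varphi|_I))$ on the right, the naturality relation $L^2(\cala(\psi|_J))\,u^J_\varphi=u^K_{\psi\varphi\psi^{-1}}\,L^2(\cala(\psi|_J))$. To prove it, observe that $F:=L^2(\cala(\psi|_J))\colon L^2(\cala(J))\to\psi^*L^2(\cala(K))$ is a unitary morphism of $S$-sectors (the same verification as in part~\ref{lem:T(varphi,I):intertwines}, now applied to $\psi$), so conjugation by $F$ sends any operator implementing a diffeomorphism $\chi$ on $L^2(\cala(J))$ to one implementing $\psi\chi\psi^{-1}$ on $L^2(\cala(K))$. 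Hence $\varphi\mapsto F\,u^J_{\psi^{-1}\varphi\psi}\,F^{-1}$ is a continuous honest representation of $\Conf_+(S)$ on $L^2(\cala(K))$ implementing diffeomorphisms, so by the uniqueness clause of Proposition~\ref{prop: Conf --> U_pm} it coincides with $\varphi\mapsto u^K_\varphi$; re-indexing $\varphi\mapsto\psi\varphi\psi^{-1}$ gives the required relation.

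All the conceptual ingredients are in hand --- Schur's lemma on the irreducible vacuum, the uniqueness of the honest $\Conf_+(S)$-lift, the rigidity of conformal reflections (Lemma~\ref{lem:properties-of-Conf}), and above all Lemma~\ref{lem: L^2(cala(varphi|_{I}))=u_varphi} --- so I expect the only genuine difficulty to be the bookkeeping that forces every scalar ambiguity to be $1$: in part~\ref{lem:T(varphi,I):independent}, recognizing that $\psi\varphi^{-1}$ lies in the one-parameter stabilizer of $\partial\varphi(I)$ so that Lemma~\ref{lem: L^2(cala(varphi|_{I}))=u_varphi} applies; and in part~\ref{lem:T(varphi,I):functorial}, recasting the naturality of $L^2(\cala(\psi|_J))$ as an honest $\Conf_+(S)$-representation so that the uniqueness in Proposition~\ref{prop: Conf --> U_pm} can absorb the phase.
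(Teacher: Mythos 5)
Your proof is correct and follows essentially the same route as the paper's: part~\ref{lem:T(varphi,I):intertwines} by factoring $T(\varphi,I)$ through $\varphi^*L^2(\cala(\varphi(I)))$, part~\ref{lem:T(varphi,I):independent} by reducing to Lemma~\ref{lem: L^2(cala(varphi|_{I}))=u_varphi} applied to the stabilizer element $\psi\varphi^{-1}$ of $\partial\varphi(I)$ (your Schur's-lemma preamble is redundant, since the ensuing computation already yields the identity on the nose), and part~\ref{lem:T(varphi,I):functorial} by the same uniqueness-of-the-honest-lift principle that the paper invokes when it asserts that the $S$-sector structure determines the $\Conf_+(S)$-action. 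The only cosmetic difference is in part~\ref{lem:T(varphi,I):functorial}, where you apply that uniqueness to $L^2(\cala(\psi|_J))$ viewed as a sector morphism into $\psi^*L^2(\cala(K))$ (yielding the twisted relation $L^2(\cala(\psi|_J))\,u^J_\varphi=u^K_{\psi\varphi\psi^{-1}}L^2(\cala(\psi|_J))$), whereas the paper applies it to $T(\psi,\varphi(I))$ viewed as a morphism into $L^2(\cala(K))$ (yielding the untwisted intertwining); these are equivalent, and your extra bookkeeping of which vacuum carries which representation $u^{(\cdot)}$ is a welcome precision.
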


\begin{proof}
{\it \ref{lem:T(varphi,I):intertwines}}
By naturality of the vacuum sector construction, 
the map $L^2(\cala(\varphi|_I))$ is a morphism of sectors from $L^2(\cala(I))$ to $\varphi^*L^2(\cala(\varphi(I)))$.
The map $u_\varphi:L^2(\cala(\varphi(I))) \to \varphi^*L^2(\cala(\varphi(I)))$ is also a morphism of sectors.
The composite $u_\varphi^*\, L^2(\cala(\varphi|_{I}))$ is therefore also a morphism of sectors.\\
{\it \ref{lem:T(varphi,I):independent}} 
Let $K:=\varphi(I) = \psi(I)$. By applying Lemma~\ref{lem: L^2(cala(varphi|_{I}))=u_varphi}
to $\varphi \circ \psi^{-1}$, we get
\[\begin{split}
T(\varphi,I) & =u_{\varphi}^*\, L^2(\cala({\varphi}|_{I})) \\
& =u^*_\psi\, u_{{\varphi} \psi^{-1}}^*\, L^2\big(\cala({\varphi} \psi^{-1}|_K)\big)\, L^2\big(\cala(\psi|_I)\big) \\
& = u^*_\psi\, L^2(\cala(\psi|_I))=T(\psi,I). 
\end{split}\]
{\it \ref{lem:T(varphi,I):functorial}}
The operator $T(\psi,\varphi(I))$ is a isomorphism of $S$-sectors.
Since the $S$-sector structure on a vacuum sector uniquely determines the action of $\Conf_+(S)$ on the sector, 
the operator $T(\psi,\varphi(I)):L^2(\cala(\varphi(I)))\to L^2(\cala(\psi\circ\varphi(I)))$ intertwines the two actions of $\Conf_+(S)$.
Therefore
\[\begin{split}
T(\psi,\varphi(I)) \, T(\varphi,I) 
& =T(\psi,\varphi(I)) \, u_{\varphi}^* \, L^2(\cala(\varphi|_{I})) \\
& =u_{\varphi}^* \, T(\psi,\varphi(I))\, L^2(\cala(\varphi|_{I})) \\
& =u_{\varphi}^* \, u_{\psi}^* \, 
 L^2(\cala(\psi|_{\varphi(I)})) \, L^2(\cala(\varphi|_{I})) \hspace{1.9cm}\\
& =u_{\psi \, \varphi}^* \, 
 L^2(\cala(\psi\varphi|_{I})) 
= T(\psi \circ \varphi,I). 
\end{split}
\]
\end{proof}

\subsubsection*{Construction of the vacuum sector}
After all the above preparation, we can finally construct the vacuum sector associated to a conformal circle:

\begin{proof}[Proof of Theorem \ref{thm: Vaccum Sector}]
Given a conformal circle $S$, let $\cali$ be the category whose objects are the intervals of $S$, and in which every hom-set $\mathrm{Hom}(I,J)$ contains exactly one element.
Recall that for an interval $I\subset S$, we write $L^2(\cala(I))$ for the vacuum sector associated to $S$, $I$, and the involution $j\in\Conf_-(S)$ that fixes $\partial I$.

The assignment $I\mapsto L^2(\cala(I))$ extends to a functor from $\cali$ to the category of $S$-sectors of $\cala$ in the following way:
given two intervals $I,J\in\cali$, pick a map $\varphi\in \Conf_+(S)$ that sends $I$ to $J$.
The value of the functor on the unique morphism from $I$ to $J$ is then given by $u_\varphi^* \circ L^2(\cala(\varphi|_{I})) \colon L^2(\cala(I)) \to L^2(\cala(\varphi(I)))$. 
By Lemma \ref{lem:T(varphi,I)}, this assignment is well defined, independent of the choice of $\varphi$, and functorial.
We can therefore define
\[
H_0(S,\cala):=\lim_{I\in \cali}\, L^2(\cala(I)).
\]
We can therefore think of a vector in $H_0(S,\cala)$ as a collection of vectors ${\big\{\xi_I\! \in\! L^2(\cala(I))\big\}_{I\in\cali}}$ subject to the condition that 
\begin{equation}\label{eq: u_phi^* o L2(A(phi|_I)))(xi_I)=xi_phi(I)}
u_\varphi^* \, L^2(\cala(\varphi|_{I}))\,(\xi_I)=\xi_{\varphi(I)}
\end{equation}
for every $I\subset S$, and every $\varphi\in \Conf_+(S)$.
Let
\begin{equation}\label{eq: construction of v_I}
\begin{split}
v_I \,\colon\, H_0(S,\cala)\, &\to L^2(\cala(I))\\
\{\xi_{J}\}_{J\in\cali}\,&\mapsto \,\,\xi_I
\end{split}
\end{equation}
be the maps that exhibit $H_0(S,\cala)$ as the limit of all the $L^2(\cala(I))$, and note that they are all isomorphisms as $\cali$ is equivalent to the trivial category (with only one object and one identity arrow).
We equip $H_0(S,\cala)$ with the Hilbert space structure that makes the maps $v_I$ unitary.

So far, we have only constructed $S \mapsto H_0(S,\cala)$ as a functor from conformal circles and orientation-preserving maps.
Explicitly, for $\{\xi_I\}_{I\subset S_1}\in H_0(S_1,\cala)$ and $\varphi:S_1\to S_2$ an orientation-preserving map, 
we have
\[
H_0(\varphi,\cala)\big(\{\xi_I\}\big)=\big\{L^2(\cala(\varphi|_{\varphi^{-1}(K)}))(\xi_{\varphi^{-1}(K)})\big\}\in H_0(S_2,\cala)
\]
for $K\subset S_2$. Equivalently, the map $H_0(\varphi,\cala)$ is characterized by the fact that
\begin{equation}\label{eq:H_0 reversing -- SIB}
H_0(\varphi,\cala)= v_{\varphi(I)}^*\circ L^2(\cala(\varphi|_{I})) \circ v_{I}
\end{equation}
for every $I\subset S_1$.

If now $\varphi\in\Conf_-(S_1,S_2)$ is an orientation-reversing map, define 
\begin{equation}\label{eq:H_0 reversing}
H_0(\varphi,\cala):= v_{\varphi j(I)}^*\circ L^2(\cala(\varphi j|_{I})) \circ J \circ v_{I},
\end{equation}
where $I\subset S_1$ is an interval, and $j\in \Conf_-(S_1)$ is the involution that fixes $\partial I$.
To see that \eqref{eq:H_0 reversing} is independent of the choice of interval $I$,
given another interval $\tilde I\subset S_1$, pick $\psi\in\Conf_+(S_1)$ such that $\psi(I)=\tilde I$, and
let $\tilde j=\psi j\psi^{-1}$ be the involution that fixes $\partial \tilde I$.
Letting $A_1=\cala(I)$, $\tilde A_1=\cala(\tilde I)$, $A_2=\cala(\varphi j(I))$, and $\tilde A_2=\cala(\varphi \tilde j(\tilde I))$, 
we have to show that the following diagram is commutative
\[
\,\,\,\begin{matrix}
\begin{tikzpicture}[scale = 0.9]
\node (H0_1) at (-2.2,0) {$H_0(S_1,\cala)$};
\node (H0_1') at (8,0) {$H_0(S_2,\cala)$};
\node (L2A1) at (0.2,1.5) {$L^2(A_1)$};
\node (L2A1') at (2.8,1.5) {$L^2(A_1)$};
\node (L2A2) at (5.6,1.5) {$L^2(A_2)$};
\node (L2A1a) at (0.2,0) {$L^2(\tilde A_1)$};
\node (L2A1b) at (2.8,0) {$L^2(\tilde A_1)$};
\node (L2A1d) at (0.2,-1.5) {$L^2(\tilde A_1)$};
\node (L2A1e) at (2.8,-1.5) {$L^2(\tilde A_1)$};
\node (L2A2a) at (5.6,0) {$L^2(\tilde A_2)$};
\node (L2A2b) at (5.6,-1.5) {$L^2(\tilde A_2)$};
\draw (H0_1) to[->,out=85,in=180] node[above, xshift=-4] {$\scriptstyle v_{I}$} (L2A1);
\draw (H0_1) to[->,out=-85,in=180] node[below, xshift=-4] {$\scriptstyle v_{\tilde I}$} (L2A1d);
\draw (L2A1) to[->] node[above] {$\scriptstyle J$} (L2A1');
\draw (L2A1') to[->] node[above] {$\scriptstyle L^2(\varphi j)$} (L2A2);
\draw (L2A1a) to[->] node[above] {$\scriptstyle J$} (L2A1b);
\draw (L2A1b) to[->] node[above] {$\scriptstyle L^2(\varphi\tilde j)$} (L2A2a);
\draw (L2A1e) to[->] node[above] {$\scriptstyle L^2(\varphi\tilde j)$} (L2A2b);
\draw (L2A1d) to[->] node[above] {$\scriptstyle J$} (L2A1e);
\draw (L2A2) to[->,out=0,in=95] node[above, xshift=4] {$\scriptstyle v_{\varphi j(I)}^*$} (H0_1');
\draw (L2A2b) to[->,out=0,in=-95] node[below, xshift=4] {$\scriptstyle v_{\varphi \tilde j(\tilde I)}^*$} (H0_1');
\draw (L2A1) to[->] node[left] {$\scriptstyle L^2(\psi)$} (L2A1a);
\draw (L2A1a) to[->] node[left] {$\scriptstyle u_\psi^*$} (L2A1d);
\draw (L2A1') to[->] node[left] {$\scriptstyle L^2(\psi)$} (L2A1b);
\draw (L2A1b) to[->] node[left] {$\scriptstyle u_{\tilde j\psi\tilde j}^*$} (L2A1e);
\draw (L2A2) to[->] node[fill = white, inner ysep = 0, xshift=7] {$\scriptstyle L^2(\varphi\psi\varphi^{-1})$} (L2A2a);
\draw (L2A2a) to[->] node[right] {$\scriptstyle u_{\varphi\psi\varphi^{-1}}^*$} (L2A2b);
\end{tikzpicture} 
\end{matrix},
\]
where $L^2(\psi)$ stands for $L^2(\cala(\psi|_{I}))$, and we have used similar abbreviations for $L^2(\cala(\varphi j|_{I}))$, 
$L^2(\cala(\varphi \tilde j|_{\tilde I}))$, and $L^2(\cala(\varphi\psi\varphi^{-1}|_{\varphi j(I)}))$.
The commutativity of the left and right parts of the diagram are given by \eqref{eq: u_phi^* o L2(A(phi|_I)))(xi_I)=xi_phi(I)}.
The lower left square is commutative since $J=u_{\tilde j}$ on $L^2(\tilde A_1)$.
The commutativity of the remaining three squares is clear.

Given $\varphi\in\Conf(S_1,S_2)$ and $\psi\in\Conf(S_2,S_3)$, we still need to show that $H_0(\psi,\cala)\circ H_0(\varphi,\cala) = H_0(\psi\circ \varphi,\cala)$.
The case when both $\psi$ and $\varphi$ are in $\Conf_+$ and the case when $\psi\in\Conf_+$ and $\varphi\in\Conf_-$ are clear.  The case $\psi\in\Conf_-$ and $\varphi\in\Conf_+$ is checked as follows:
\[
\begin{split}
H_0(\psi,\cala)\circ H_0(\varphi,\cala)&= \big(v_{\psi\varphi j(I)}^*\, L^2(\cala(\psi \varphi j \varphi^{-1}|_{\varphi(I)})) \, J \, v_{\varphi(I)}\big)\,
\big(v_{\varphi(I)}^*\, L^2(\cala(\varphi|_{I})) \, v_{I}\big)\\
&=v_{\psi\varphi j(I)}^*\, L^2(\cala(\psi \varphi j \varphi^{-1}|_{\varphi(I)})) \, J \, L^2(\cala(\varphi|_{I})) \, v_{I}\\
&=v_{\psi\varphi j(I)}^*\, L^2(\cala(\psi \varphi j \varphi^{-1}|_{\varphi(I)})) \, L^2(\cala(\varphi|_{I}))\, J \, v_{I}\\
&=v_{\psi\varphi j(I)}^*\, L^2(\cala(\psi\varphi j|_{I})) \, J \, v_{I}\\
&=H_0(\psi\circ \varphi,\cala).
\end{split}
\] 
The last case $\varphi, \psi \in\Conf_-$ follows from the previous ones since
$H_0(\varphi,\cala) H_0(\psi,\cala) = 
H_0(\varphi j,\cala) H_0(j,\cala) H_0(j,\cala) H_0(j \psi,\cala) = 
H_0(\varphi j,\cala) H_0(j \psi,\cala) = 
H_0(\varphi\psi,\cala)$.
Here, $j\in\Conf_-$ is an arbitrary involution, and we have used {\it \ref{prop:vacuum-sector:J}} below to know that $H_0(j,\cala)$ is an involution.

Finally, we summarize the structure on $H_0(S,\cala)$:\\
{\it (i)} The vector space $H_0(S,\cala)$ is an $S$-sector by construction.\\
{\it (ii)} By Proposition \ref{prop:projective-implementation-diffeo}, $H_0(S,\cala)$ comes equipped with a projective $\Diff(S)$ action $\varphi\mapsto [u_\varphi]$,
uniquely determined by the requirement that $u_\varphi$ implements $\varphi$.
For $\varphi\in\Conf(S)$, the map $H_0(\varphi,\cala)$ also implements $\varphi$, and so $[H_0(\varphi,\cala)]=[u_\varphi]$.\\
{\it (iii)} The maps $v_I \colon H_0(S,\cala) \to L^2(\cala(I))$ are defined in \eqref{eq: construction of v_I};
diagram \eqref{eq: naturality of v_I} is equation \eqref{eq:H_0 reversing -- SIB}, and diagram \eqref{eq: naturality of v_I -- BIS} is equation \eqref{eq:H_0 reversing}.\\
{\it (iv)}
Letting $\varphi=j$ in \eqref{eq:H_0 reversing}, we get $H_0(j,\cala)= v_{I}^*\, J\, v_{I}$, which is equivalent to $J=v_I\,H_0(j,\cala)\,v_I^*$.
\end{proof}


\section{Finite conformal nets and their sectors}\label{sec:The finiteness condition}

\subsection{The index of a conformal net}\label{subs: The index of a conformal net}

We have defined a conformal net as functor $\cala:\INT\to \VN$ from the category of intervals to that of von Neumann algebras.
In the second paper of this series \cite{BDH(all-together)}, we will see that a conformal net can also be used to assign von Neumann 
algebras to arbitrary compact 1-manifolds, i.e., disjoint unions of intervals and circles.
For now we focus on disjoint unions of intervals, extending $\cala$ by setting $\cala(I_1\cup\ldots\cup I_n):=\cala(I_1)\,\bar\ox\,\ldots\,\bar\ox\,\cala(I_n)$.

\begin{definition}\label{def:index-for-nets}
  Let $S$ be a circle, split into four intervals $I_1$, $I_2$, $I_3$, $I_4$ such that each $I_i$ intersects $I_{i+1}$ (cyclic numbering) in a single point:
  \begin{equation}\label{picture: circle with four intervals}
  \tikzmath[scale=.07]
  { \useasboundingbox (-20,-22) rectangle (20,22);
        \draw (0,0) circle (15);
        \foreach \thet in {45,135, ..., 315} { \draw[thick] (\thet:14) -- (\thet:16);}
        \foreach \ii in {1, ..., 4} {  \node at (\ii*90-90:19) {$I_\ii$}; }}
  \end{equation} 
  Let $\cala$ be a conformal net (always assumed irreducible).
  The algebras
  $\cala(I_1\cup I_3)= \cala(I_1)\,\bar\otimes\,\cala(I_3)$ and
  $\cala(I_2\cup I_4)= \cala(I_2)\,\bar\otimes\,\cala(I_4)$ act on
  $H_0(S,\cala)$ and commute with each other.
  The \emph{index} $\mu(A)$ of the conformal net $\cala$ is the minimal index of the inclusion $\cala(I_1\cup I_3)\subseteq \cala(I_2\cup I_4)'$:
  \begin{equation*}
  \mu(\cala) := [\cala(I_2\cup I_4)':\cala(I_1\cup I_3)],
  \end{equation*}
  where the commutant is computed in $\bfB(H_0(S,\cala))$.  (See Definition~\ref{def: stat dim} and the paper~\cite{BDH(Dualizability+Index-of-subfactors)} for recollections about the notion of the minimal index of a subfactor.)
\end{definition}
Note  
that the index is an invariant of the net (that is, it does not depend on the choice of circle and intervals), and that it satisfies $\mu(\cala \otimes \calb) = \mu(\cala) \cdot \mu(\calb)$.

\begin{remark}
In~\cite{Kawahigashi-Longo-Mueger(2001multi-interval)}, 
nets of finite index were called \emph{completely rational}.
In our context however, that terminology would be quite misleading.
Consider any unitary conformal field theory (not necessarily rational, e.g., the free boson compactified on a circle of \emph{irrational} squared radius)
viewed as a net $\calo\mapsto \cala(\calo)$ on two-dimensional Minkowski space.  We believe that the restriction of such a net to the zero time slice provides an example of a conformal net with index $1$.
\end{remark}

Recall that a bimodule ${}_AH_B$ is dualizable (Definition~\ref{def:dual}) if and only if the inclusion $A\subset B'$ has finite index~\cite{BDH(Dualizability+Index-of-subfactors)}. 
A conformal net $\cala$ therefore has finite index if and only if the bimodule ${}_{\cala(I_1\cup I_3)}H_0(S)_{\cala(I_2\cup I_4)^\op}$ has a dual.
It is useful to identify this dual:

\begin{lemma}\label{lem: H_0(-S)}
Let $S$ be a circle split into intervals $I_1$, $I_2$, $I_3$, $I_4$ as above, and
let $\bar S$, $\bar I_1,\ldots,\bar I_4$ be the same manifolds with the reverse orientation.
Let $\cala$ be conformal net with finite index.
The dual of the bimodule ${}_{\cala(I_1\cup I_3)}H_0(S)_{\cala(I_2\cup I_4)^\op}$
is
\[
{}_{\cala(\bar I_2\cup \bar I_4)}H_0(\bar S)_{\cala(\bar I_1\cup \bar I_3)^\op}
\]
using the canonical identifications
$\cala(\bar I_1\cup \bar I_3)^\op\cong \cala(I_1\cup I_3)$ and
$\cala(\bar I_2\cup \bar I_4)^\op\cong \cala(I_2\cup I_4)$.
\end{lemma}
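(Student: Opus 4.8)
The plan is to combine two facts. First, the dual of a dualizable Hilbert-space bimodule over von Neumann algebras can be taken to be its conjugate bimodule. Second, the conjugate of a vacuum sector of $\cala$ on $S$ is a vacuum sector of $\cala$ on $\bar S$.

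By Definition~\ref{def:index-for-nets}, the hypothesis that $\cala$ has finite index says exactly that the inclusion $\cala(I_1\cup I_3)\subset\cala(I_2\cup I_4)'$ (commutant taken in $\bfB(H_0(S))$) has finite index $\mu(\cala)$. Hence ${}_{\cala(I_1\cup I_3)}H_0(S)_{\cala(I_2\cup I_4)^\op}$ is dualizable (Definition~\ref{def:dual} and~\cite{BDH(Dualizability+Index-of-subfactors)}), and one may take its dual to be the conjugate Hilbert space $\overline{H_0(S)}$ equipped with its canonical conjugate-bimodule structure: the algebra $\cala(I_2\cup I_4)^\op$, which acted on the right of $H_0(S)$, now acts on the left; the algebra $\cala(I_1\cup I_3)$, which acted on the left, now acts on the right; and both new actions are obtained from the old ones by conjugation. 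Equivalently, this dual is $\overline{H_0(S)}=\iota^*H_0(S)$ with its $\bar S$-sector structure, where $\iota\colon\bar S\to S$ is the orientation-reversing identity map and $\iota^*$ is the functor of~\eqref{eq: functor phi^*}: its restrictions to $\bar I_1\cup\bar I_3$ and $\bar I_2\cup\bar I_4$ are exactly the two new actions above, once $\cala(I_k\cup I_\ell)$ is identified with $\cala(\bar I_k\cup\bar I_\ell)^\op$ via the anti-isomorphism $\cala(\mathrm{Id})$.

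It remains to identify $\iota^*H_0(S)$ with $H_0(\bar S)$ as an $\bar S$-sector (equivalently, as a bimodule over the subinterval algebras). Realize $H_0(S)$ as $L^2(\cala(I))$ for some interval $I\subset S$ and orientation-reversing involution $j\in\Diff_-(S)$ fixing $\partial I$, with its standard $S$-sector structure (Section~\ref{subsec:sectors-for-nets}). Unwinding the definition of $\iota^*$ and using the standard compatibility of the $L^2$ functor with complex conjugation and with passage to opposite algebras (appendix; compare Remark~\ref{rem: L^2(A^op)}), one checks that $\iota^*L^2(\cala(I))$ is $L^2(\cala(\bar I))$ with precisely the structure that makes it the vacuum sector of $\cala$ associated to $\bar S$, $\bar I$, and $j$; here $j$ is still an orientation-reversing involution of $\bar S$, since whether a map reverses orientation does not depend on the chosen orientation, and it still fixes $\partial\bar I=\partial I$. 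By Corollary~\ref{cor: non canonical vacuum}, any vacuum sector of $\cala$ on $\bar S$ is isomorphic as an $\bar S$-sector to $H_0(\bar S)$. Reading off both sides through the canonical identifications stated in the lemma then yields the asserted bimodule isomorphism
\[
\text{dual of }\,{}_{\cala(I_1\cup I_3)}H_0(S)_{\cala(I_2\cup I_4)^\op}\ \cong\ {}_{\cala(\bar I_2\cup \bar I_4)}H_0(\bar S)_{\cala(\bar I_1\cup \bar I_3)^\op}.
\]

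I expect the main obstacle to be purely organizational: keeping track of the various opposite algebras and orientation reversals so that the canonical identifications $\cala(\bar I_k\cup\bar I_\ell)^\op\cong\cala(I_k\cup I_\ell)$ named in the statement are exactly the ones under which the conjugate-bimodule structure on $\overline{H_0(S)}$ coincides with the vacuum $\bar S$-sector structure on $H_0(\bar S)$. The conceptual ingredients --- dual equals conjugate, conjugate of a vacuum is a vacuum on the reversed circle, uniqueness of the vacuum up to non-canonical isomorphism --- are all either recorded earlier in the paper or are standard facts about standard forms of von Neumann algebras, so no new difficulty arises there.
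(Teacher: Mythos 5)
Your proof is correct and follows essentially the same route as the paper's: both rest on the fact that the dual of a dualizable bimodule is its complex conjugate with the actions $b\bar\xi a=\overline{a^*\xi b^*}$, followed by an identification of that conjugate with $H_0(\bar S)$ carrying its $\bar S$-sector structure. The paper carries out the second step by writing all four actions explicitly on $L^2(\cala(I_1\cup I_2))$ and exhibiting the modular conjugation $J$ as the intertwiner, whereas you package the same computation through the pullback functor $\iota^*$ of \eqref{eq: functor phi^*} together with the uniqueness of the vacuum sector (Corollary~\ref{cor: non canonical vacuum}).
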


\begin{proof}
Let $j\in \Diff_-(S)$ be a diffeomorphism that exchanges $I_1$ with $I_4$, and $I_2$ with $I_3$, and let us abbreviate $\cala(j)$ by $j_*$.
The $S$-sector $H_0(S,\cala)$ can be taken to be $L^2(\cala(I_1\cup I_2))$, with actions
\[
\begin{split}
a\xi= a\xi\quad\qquad&\text{for } a \in \cala(I_1\cup I_2)\\
b\xi= \xi j_*(b)\qquad&\text{for } b \in \cala(I_3\cup I_4)
\end{split}
\]
for $\xi\in L^2(\cala(I_1\cup I_2))$.
The actions of the bimodule ${}_{\cala(I_1\cup I_3)}H_0(S)_{\cala(I_2\cup I_4)^\op}$ are then given by
\[
(a_1\otimes a_3)\cdot\xi\cdot (a_2\otimes a_4)=(a_1a_2)\,\xi\, j_*(a_3a_4)
\]
for $a_1 \in \cala(I_1), a_2 \in \cala(I_2)^\op, a_3 \in \cala(I_3)$, and $a_4 \in \cala(I_4)^\op$.
The dual bimodule is the complex conjugate $\overline{L^2(\cala(I_1\cup I_2))}$, with actions
\begin{equation}\label{eq:aaxaa1}
\begin{split}
(a_2\otimes a_4)\cdot\bar\xi\cdot (a_1\otimes a_3)&=\overline{(a_1\otimes a_3)^*\!\cdot\xi\cdot (a_2\otimes a_4)^*}\\
&=\overline{(a_1^*a_2^*)\,\xi\, j_*(a_3^*a_4^*)}
\end{split}
\end{equation}
for $a_1 \in \cala(I_1), a_2 \in \cala(I_2)^\op, a_3 \in \cala(I_3)$, and $a_4 \in \cala(I_4)^\op$.  Here $\bar\xi$ denotes the vector $\xi \in L^2$ viewed as an element of $\overline{L^2}$.

Note that $H_0(\bar S,\cala)=L^2(\cala(\overline{I_1\cup I_2}))=L^2(\cala(I_1\cup I_2)^\op)$ has actions given by
\[
\begin{split}
a\eta= a\eta\quad\qquad&\text{for } a \in \cala(I_1\cup I_2)^\op\\
b\eta= \eta j_*(b)\qquad&\text{for } b \in \cala(I_3\cup I_4)^\op
\end{split}
\]
for $\eta\in L^2(\cala(I_1\cup I_2)^\op)$.
Using the canonical identification between $L^2(\cala(I_1\cup I_2)^\op)$ and $L^2(\cala(I_1\cup I_2))$ that exchanges the left $\cala(I_1\cup I_2)^\op$-module structure with the right $\cala(I_1\cup I_2)$-module structure
and the right $\cala(I_1\cup I_2)^\op$-module structure with the left $\cala(I_1\cup I_2)$-module structure, this becomes
\[
\begin{split}
a\xi= \xi a\quad\qquad&\text{for } a \in \cala(I_1\cup I_2)^\op\\
b\xi= j_*(b)\xi\qquad&\text{for } b \in \cala(I_3\cup I_4)^\op
\end{split}
\]
for $\xi\in L^2(\cala(I_1\cup I_2))$.
The bimodule ${}_{\cala(\bar I_2\cup \bar I_4)}H_0(\bar S)_{\cala(\bar I_1\cup \bar I_3)^\op}$ is therefore given by
\begin{equation}\label{eq:aaxaa2}
(a_2\otimes a_4)\cdot\xi\cdot (a_1\otimes a_3)= j_*(a_3a_4)\,\xi\,(a_1a_2),
\end{equation}
$a_1 \in \cala(I_1), a_2 \in \cala(I_2)^\op, a_3 \in \cala(I_3)$, $a_4 \in \cala(I_4)^\op$.
The isomoprhism that intertwines \eqref{eq:aaxaa1} and \eqref{eq:aaxaa2}
is given by the modular conjugation $J:\overline{L^2(\cala(I_1\cup I_2))}\to L^2(\cala(I_1\cup I_2))$.
\end{proof}

Instead of splitting the circle in four as in \eqref{picture: circle with four intervals}, one can also split it into $2n$ intervals, for arbitrary $n\ge 2$.
\begin{lemma}[\cite{Kawahigashi-Longo-Mueger(2001multi-interval)}]
Let $S$ be a circle, split into $2n$ intervals $I_1, I_2,\ldots,I_{2n}$ such that each $I_i$ intersects $I_{i+1}$ (cyclic numbering) in a single point:
\[
\tikzmath[scale=.07]{\useasboundingbox (-20,-20) rectangle (20,20);\draw (0,0) circle (15);\foreach \thet in {22.5,67.5, ..., 360} { \draw[thick] (\thet:14.7) -- (\thet:16);}
\foreach \ii in {1,2,3,4} {\node[scale=.9] at (\ii*45-45:19) {$I_\ii$}; }\node[scale=.9] at (-45:20) {$I_{2n}$}; 
\foreach \th in {174,180.5,...,190,276,269.5,...,260}{\filldraw (\th:19.3) circle (.165);}\draw [->] (182:15) -- (182.01:15);}
\]
Let $\cala$ be a conformal net with finite index, and let $\nu$ be the square root of the index of $\cala$.
Then the bimodule
\begin{equation}\label{eq: 2n-interval bimodule}
{}_{\cala(I_1\cup I_3\cup\ldots\cup I_{2n-1})}H_0(S)_{\cala(I_2\cup I_4\cup\ldots\cup I_{2n})^\op}
\end{equation}
is dualizable,
and its statistical dimension is given by $\nu^{n-1}$.
\end{lemma}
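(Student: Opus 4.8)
The plan is to reduce the $2n$-interval statement to the four-interval case by an inductive fusion argument, using the associativity and unitality of fusion of sectors established earlier in this section. First I would set up notation: write $S$ as a union of $2n$ intervals as pictured, and let $A = \cala(I_1\cup I_3\cup\cdots\cup I_{2n-1})$ and $B = \cala(I_2\cup I_4\cup\cdots\cup I_{2n})$. The claim has two parts: dualizability of ${}_AH_0(S)_{B^\op}$, and the computation of its statistical dimension as $\nu^{n-1}$. For dualizability, I would use that $H_0(S)$ is a unit for Connes fusion (equations \eqref{eq:left-unit-vacuum} and \eqref{eq:right-unit-vacuum}) together with the factorization of $H_0(S)$ as an iterated fusion product of copies of the $n=2$ vacuum bimodule. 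Concretely, Corollary~\ref{cor: vacuum * vacuum = vacuum} (or its conformal refinement, Corollary~\ref{cor: Conformal version of vacuum * vacuum = vacuum}) lets one write $H_0(S)$ as a fusion of $n-1$ copies of a four-interval vacuum sector along suitable algebras $\cala(I)$, and since the tensor product (over the relevant commuting algebras) of dualizable bimodules is dualizable, dualizability follows from the four-interval case.

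The key steps, in order: (1) Recall from the appendix/earlier results that a bimodule ${}_AH_B$ is dualizable iff $A\subset B'$ has finite index, and that statistical dimension is multiplicative under Connes fusion. (2) Establish the geometric decomposition: choosing appropriate ``chord'' circles $S^{(1)},\ldots, S^{(n-1)}$ cutting the $2n$-gon into $n-1$ ``quadrilateral'' pieces, realize $H_0(S)$ as $H_0(S^{(1)})\boxtimes_{\cala(C_1)} H_0(S^{(2)})\boxtimes\cdots\boxtimes_{\cala(C_{n-2})} H_0(S^{(n-1)})$ via iterated application of Corollary~\ref{cor: Conformal version of vacuum * vacuum = vacuum}, where each $S^{(k)}$ carries four intervals and each $C_k$ is an interval. (3) Track the bimodule structure: verify that under this decomposition the left $\cala(I_1\cup I_3\cup\cdots)$-action and right $\cala(I_2\cup I_4\cup\cdots)^\op$-action distribute across the factors as the odd/even-interval actions on each four-interval vacuum, using strong additivity and locality to identify the actions of the cutting algebras $\cala(C_k)$ as the ``internal'' ones being fused over. (4) Apply the $n=2$ case (the preceding Lemma together with Definition~\ref{def:index-for-nets}): each four-interval vacuum bimodule is dualizable with statistical dimension $\nu$. (5) Conclude by multiplicativity: the fusion of $n-1$ bimodules each of statistical dimension $\nu$ has statistical dimension $\nu^{n-1}$, and is dualizable.

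The main obstacle I expect is step (3): carefully matching the bimodule structures under the iterated fusion isomorphism. One must check that when $H_0(S)$ is written as a fusion product along the chord algebras $\cala(C_k)$, those chord algebras are precisely the ones over which we fuse (so they do not contribute to either the left or right module structure), and that the residual odd- and even-indexed interval algebras act on the fusion product exactly as the product of the odd/even actions on the quadrilateral factors. This is essentially bookkeeping with Corollary~\ref{cor:cala(K)-acts}, Lemma~\ref{lem: open cover of circle => sector}, strong additivity, and Haag duality (Proposition~\ref{prop: [Haag duality]}), but it requires choosing the chords and the conformal structures on the $S^{(k)}$ compatibly, so that each gluing is of the form covered by Corollary~\ref{cor: Conformal version of vacuum * vacuum = vacuum}. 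A secondary point is that the index $\nu^2 = \mu(\cala)$ is a priori defined via the four-interval picture; one should note (as the text already does) that $\mu(\cala)$ is independent of the chosen circle and subdivision, so $\nu$ is unambiguous. Once the decomposition and action-matching are in place, the dualizability and dimension count are formal consequences of the multiplicativity of statistical dimension under Connes fusion.
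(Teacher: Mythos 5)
Your high-level strategy---cut the $2n$-gon along chords, write $H_0(S)$ as a Connes fusion of vacuum sectors of the pieces, and invoke multiplicativity of the statistical dimension---is the strategy of the paper. But the implementation you propose, an iterated chain of $n-1$ quadrilaterals each ``contributing a factor of $\nu$,'' breaks down at steps (3) and (5), and the problem is not mere bookkeeping. The formula $\dim({}_A(H\boxtimes_BK)_C)=\dim({}_AH_B)\cdot\dim({}_BK_C)$ requires the left algebra $A$ of the composite to act entirely on $H$ and the right algebra $C$ entirely on $K$. In your chain, the odd-interval algebra and the even-interval algebra are each spread over \emph{all} the quadrilateral factors (indeed, for $n\ge 3$ any decomposition into $n-1$ quadrilaterals has a piece containing three consecutive $I_j$'s, hence both parities), so no single fusion in the chain separates $\cala(I_1\cup I_3\cup\cdots)$ from $\cala(I_2\cup I_4\cup\cdots)$ and the formula is not applicable. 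Relatedly, the number $\nu$ attached to a four-interval circle is its dimension for the \emph{alternating} bipartition of its sides; for the complementary bipartition (one interval versus the other three) Haag duality gives dimension $1$. Concretely, for $n=3$, cutting the hexagon along one chord $J$ into $(I_6,I_1,I_2,J)$ and $(\bar J,I_3,I_4,I_5)$ and multiplying ``piecewise dimensions'' gives either $1\cdot 1$ or $\nu\cdot\nu$ depending on which side of each piece you place $J$ --- the procedure is ill-defined precisely because its hypothesis fails.

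The paper repairs exactly this point by choosing $n$ \emph{parallel} chords $J_1,\dots,J_n$, producing $n+1$ pieces: two bigons $I_1\cup J_1$ and $I_{n+1}\cup\bar J_n$, and $n-1$ quadrilaterals $I_i\cup J_i\cup I_{2n+2-i}\cup\bar J_{i-1}$. The crucial feature is that each piece is monochromatic ($I_i$ and $I_{2n+2-i}$ have the same parity), so the odd pieces can be gathered into one external tensor product and the even pieces into another, and a \emph{single} application of multiplicativity to the fusion of these two groups over $\cala(\bar J_1)\,\bar\otimes\,\cala(J_2)\,\bar\otimes\cdots$ applies legitimately: the left group contributes $1\cdot\nu\cdots$ and the right group $\nu\cdots 1$, for a total of $\nu^{n-1}$. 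So your count --- $n-1$ quadrilaterals each worth $\nu$, the bigons worth $1$ --- is correct, but only after this two-sided monochromatic regrouping; the term-by-term chain fusion you describe cannot be made to yield it.
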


\begin{proof}
Glue intervals $J_1,\ldots,J_n$ to the circle $S$ as in the following picture
\[
\tikzmath[scale=.08]{\useasboundingbox (-20,-15) rectangle (20,15);\draw (0,0) circle (15);
\draw (22.5:15) to[bend right = 51]node[scale=.9,pos=.17,left, xshift=1]{$J_1$} (-22.5:15);
\draw (67.5:15) to[bend right = 17]node[scale=.9,pos=.17,left, xshift=1.3]{$J_2$} (-67.5:15);
\draw (112.5:15) to[bend left = 17]node[scale=.8,pos=.21, rotate=22,xshift=-1.5]{$.\,\raisebox{.5pt}{.}\,.$} (-112.5:15);
\draw (157.5:15) to[bend left = 51]node[scale=.9,pos=.17,right, xshift=-.5]{$J_n$} (-157.5:15);
\foreach \thet in {22.5,67.5, ..., 360} { \draw[thick] (\thet:14.7) -- (\thet:16);}
\foreach \xx in {11.25,3.38,-3.38,-11.25} {\draw [->] (\xx,0) -- (\xx, -.01);}}
\]
and let $S_1:=I_1\cup J_1$, $S_i:= I_i\cup J_i \cup I_{2n+2-i} \cup \bar J_{i-1}$ for $2\le i \le n$, $S_{n+1} := I_{n+1}\cup \bar J_n$:
\[
S_1: \tikzmath[scale=.05]{\useasboundingbox (-19,-15) rectangle (20,15);\draw[densely dotted] (0,0) circle (15);
\draw[densely dotted] (22.5:15) to[bend right = 51] (-22.5:15);\draw[densely dotted] (67.5:15) to[bend right = 17] (-67.5:15);
\draw[densely dotted] (112.5:15) to[bend left = 17] (-112.5:15);\draw[densely dotted] (157.5:15) to[bend left = 51] (-157.5:15);
\draw[line width=.7] (22.5:15) to[bend right = 51] (-22.5:15) arc (-22.5:22.5:15) -- cycle;\draw [->] (15,.6) -- (15, .61);}
\quad S_2: \tikzmath[scale=.05] {\useasboundingbox (-19,-15) rectangle (20,15);\draw[densely dotted] (0,0) circle (15);
\draw[densely dotted] (22.5:15) to[bend right = 51] (-22.5:15);\draw[densely dotted] (67.5:15) to[bend right = 17] (-67.5:15);
\draw[densely dotted] (112.5:15) to[bend left = 17] (-112.5:15);\draw[densely dotted] (157.5:15) to[bend left = 51] (-157.5:15);
\draw[line width=.7] (22.5:15) to[bend right = 51] (-22.5:15) arc (-22.5:-67.5:15) to[bend left = 17] (67.5:15) arc (67.5:22.5:15) -- cycle;\draw [->] (11.25,.4) -- (11.25, .41);}
\quad S_3: \tikzmath[scale=.05]{\useasboundingbox (-19,-15) rectangle (20,15);\draw[densely dotted] (0,0) circle (15);
\draw[densely dotted] (22.5:15) to[bend right = 51] (-22.5:15);\draw[densely dotted] (67.5:15) to[bend right = 17] (-67.5:15);
\draw[densely dotted] (112.5:15) to[bend left = 17] (-112.5:15);\draw[densely dotted] (157.5:15) to[bend left = 51] (-157.5:15);
\draw[line width=.7] (67.5:15) to[bend right = 17] (-67.5:15) arc (-67.5:-112.5:15) to[bend right = 17] (112.5:15) arc (112.5:67.5:15) -- cycle;\draw [->] (3.38,.4) -- (3.38, .41);}
\,\,\,\,\cdots\quad\,\,\,\,S_{n+1}:\tikzmath[scale=.05]{\useasboundingbox (-19,-15) rectangle (20,15);\draw[densely dotted] (0,0) circle (15);
\draw[densely dotted] (22.5:15) to[bend right = 51] (-22.5:15);\draw[densely dotted] (67.5:15) to[bend right = 17] (-67.5:15);
\draw[densely dotted] (112.5:15) to[bend left = 17] (-112.5:15);\draw[densely dotted] (157.5:15) to[bend left = 51] (-157.5:15);
\draw[line width=.7] (-157.5:15) to[bend right = 51] (157.5:15) arc (157.5:202.5:15) -- cycle;\draw [->] (-11.25,.6) -- (-11.25, .61);}
\]
The bimodule \eqref{eq: 2n-interval bimodule} can then be factored as
\[
\bigg(\!H_0(S_1)\otimes H_0(S_3) \otimes\ldots\bigg) \underset{\cala(\bar J_1)\,\bar\otimes\,\cala(J_2)\,\bar\otimes\,\cala(\bar J_3)\,\bar\otimes\,\cala(J_4)\,\bar\otimes\,\ldots\!}\boxtimes
\bigg(\!H_0(S_2)\otimes H_0(S_4) \otimes\ldots\bigg).
\]
Its statistical dimension is therefore (see~\cite[Prop 5.2]{BDH(Dualizability+Index-of-subfactors)}) the product of 
\[
\begin{split}
\quad\qquad&\dim\bigg(
{}_{\cala(I_1\cup I_3\cup\ldots\cup I_{2n-1})}\big(H_0(S_1)\otimes H_0(S_3) \otimes\ldots\big)_{\cala(\bar J_1\cup J_2\cup \bar J_3\cup J_4\ldots)}\bigg)\\
=\,\,&\dim\Big( {}_{\cala(I_1)}H_0(S_1)_{\cala(\bar J_1)}\Big)\cdot
\dim\Big( {}_{\cala(I_3\cup I_{2n-1})}H_0(S_3)_{\cala(J_2\cup\bar J_3)}\Big)\\
&\,\phantom{
\dim\Big( {}_{\cala(I_1)}H_0(S_1)_{\cala(\bar J_1)}\Big)} 
\cdot\dim\Big( {}_{\cala(I_5\cup I_{2n-3})}H_0(S_5)_{\cala(J_4\cup\bar J_5)}\Big)\cdots\\
=\,\,& 1 \cdot \nu \cdot \nu\cdot\ldots \,=\, \nu^{\lfloor\frac{n-1}2\rfloor}
\intertext{and}
&\dim\bigg(
{}_{\cala(\bar J_1\cup J_2\cup \bar J_3\cup J_4\ldots)}\big(H_0(S_2)\otimes H_0(S_4) \otimes\ldots\big)_{\cala(I_2\cup I_4\cup\ldots\cup I_{2n})^\op}\bigg)\\
=\,\,&
\dim\Big( {}_{\cala(\bar J_1\cup J_2)}H_0(S_2)_{\cala(I_2\cup I_{2n})^\op}\Big)\cdot
\dim\Big( {}_{\cala(\bar J_3\cup J_4)}H_0(S_4)_{\cala(I_4\cup I_{2n-2})^\op}\Big)\cdots\\
=\,\,& \nu \cdot \nu\cdot\ldots \,=\, \nu^{\lceil\frac{n-1}2\rceil},
\end{split}
\]
namely $\nu^{\lfloor\frac{n-1}2\rfloor}\cdot \nu^{\lceil\frac{n-1}2\rceil}=\nu^{n-1}$.
\end{proof}

\subsection{Finiteness properties of the category of sectors}\label{sec: Hilbert space for annulus}
The goal of this section is to prove Theorem \ref{thm: KLM -- all irreducible sectors are finite}, which says that the category of sectors of a conformal net with finite index is a fusion category (Definition \ref{def: fusion category}).

Let $\cala$ be an (irreducible) conformal net with finite index.
Hitherto, we have been talking about the vacuum sector $H_0(S)=H_0(S,\cala)$ as being associated to a circle~$S$.
However, it is sometimes convenient to think of it as being associated to a disk $\ID$ with $\partial \ID = S$.
More generally, given an oriented topological surface $\Sigma$ whose boundary $\partial \Sigma$ is equipped with a smooth structure, we will associate to it a Hilbert space $V(\Sigma)$,
well defined up to canonical-up-to-phase isomorphism.
The construction of $V(\Sigma)$ is rather involved, and we will only sketch it in Section \ref{sec: The Hilbert space associated to a surface}.
A thorough discussion, will appear in the second paper of this series \cite{BDH(all-together)}.
In this subsection, we will describe the case when $\Sigma$ is an annulus.

\subsubsection*{The sector of an annulus}
Let $S_l$ be a circle, decomposed into four intervals $I_1, \ldots, I_4$, as in \eqref{picture: circle with four intervals}, and let $S_r$ be another circle, similarly decomposed into four intervals $I_5, \ldots, I_8$.
Let $\varphi:I_5\to I_1$ and $\psi:I_7\to I_3$ be orientation-reversing diffeomorphisms.  These diffeomorphisms equip $H_0(S_l)$ with the structure of a right $\cala(I_5)\,\bar\otimes\,\cala(I_7)$-module.
We are interested in the Hilbert space
\[
H_{\mathit{ann}}\,\,:=\,\,\,\,H_0(S_l)\!\underset{\cala(I_5)\bar\otimes\cala(I_7)}\boxtimes\! H_0(S_r)
\]
This is the space $V(\Sigma)$ associated to the annulus $\Sigma=\ID_l\cup_{I_5\cup I_7} \ID_r$, where $\ID_l$ and $\ID_r$ are disks bounding $S_l$ and $S_r$.
\begin{equation}\label{eq: lem: H_0 otimes H_0   <  H_Ann}
\tikzmath[scale=.4]{
\coordinate (a) at (0,0);\coordinate (b) at (.15,1);\coordinate (c) at (-.2,2);\coordinate (d) at (0,3);\coordinate (e) at (-5,.4);\coordinate (f) at (-5,2.6);\coordinate (g) at (-1,1.2);\coordinate (h) at (-1,2);
\begin{scope}[xshift = 100, yshift = 85, rotate= 180]\coordinate(a') at (0,0);\coordinate(b') at (.15,1);\coordinate(c') at (-.2,2);\coordinate(d') at (0,3);\coordinate (e') at (-5,.4);\coordinate (f') at (-5,2.6);\coordinate (g') at (-1,1.2);\coordinate (h') at (-1,2);\end{scope}
\filldraw[fill = gray!30] (b) to node (I3) [right, xshift = -2] {$\scriptstyle I_3$} (a) to [out = 180, in = -45, looseness=1.1] 
(e) to [out = -45 + 180, in = 225, looseness=1.1] node[right, xshift = -1] {$\scriptstyle I_2$} (f) to [out = 225 + 180, in = 180, looseness=1.1] node (a1) [pos = .37] {}
(d) to [looseness=0] node (I1) [right, xshift = -2] {$\scriptstyle I_1$} (c) to [out = 180, in = 45, looseness=1.1] 
(h) to [out = 45 + 180, in = -225, looseness=1.1] node[left, xshift = 2] {$\scriptstyle I_4$} (g) to [out = -225 + 180, in = 180, looseness=1.1] (b);
\draw[->] (a1.center) -- ++ (180:0.01); \filldraw[fill = gray!30] (b') to node (I5) [left, xshift = 2] {$\scriptstyle I_5$} (a') to [out = 0, in = -45 + 180, looseness=1.1] node 
(a1') [pos = .6] {} (e') to [out = -45, in = 225 + 180, looseness=1.1] node[left, xshift = 1] {$\scriptstyle I_8$}
(f') to [out = 225, in = 0, looseness=1.1] (d') to [looseness=0] node (I7) [left, xshift = 2] {$\scriptstyle I_7$} (c') to [out = 0, in = 45 + 180, looseness=1.1] 
(h') to [out = 45, in = -225 + 180, looseness=1.1] node[right, xshift = -2] {$\scriptstyle I_6$} (g') to [out = -225, in = 0, looseness=1.1] (b');
\draw[->] (a1'.center) -- ++ (180:0.01); \node at (-3.1,2.4)  {$\scriptstyle \ID_l$}; \node at (6.7,2.4)  {$\scriptstyle \ID_r$};
\draw (I1) edge [<-,bend left=35] node [above]  {$\scriptscriptstyle \varphi$} (I5); \draw (I3) edge [<-,bend right=35] node [below]  {$\scriptscriptstyle \psi$} (I7);
} 
\,\,\,\leadsto\,\,\, \tikzmath[scale=.35]{
\coordinate (a) at (0,0);\coordinate (b) at (.15,1);\coordinate (c) at (-.2,2);\coordinate (d) at (0,3);\coordinate (e) at (-5,.4);\coordinate (f) at (-5,2.6);\coordinate (g) at (-1,1.2);\coordinate (h) at (-1,2);
\coordinate (a') at (d);\coordinate (b') at (c);\coordinate (c') at (b);\coordinate (d') at (a);\begin{scope}[yshift = 85, rotate= 180]\coordinate (e') at (-5,.4);\coordinate (f') at (-5,2.6);\coordinate (g') at (-1,1.2);\coordinate (h') at (-1,2);\end{scope}
\filldraw[fill = gray!30] (b) to (a) to [out = 180, in = -45, looseness=1.1] (e) to [out = -45 + 180, in = 225, looseness=1.1](f) to [out = 225 + 180, in = 180, looseness=1.1] node (a1) [pos = .37] {}
(d) to [looseness=0](c) to [out = 180, in = 45, looseness=1.1] (h) to [out = 45 + 180, in = -225, looseness=1.1](g) to [out = -225 + 180, in = 180, looseness=1.1] (b);
\draw[->] (a1.center) -- ++ (180:0.01); \filldraw[fill = gray!30] (b') to (a') to [out = 0, in = -45 + 180, looseness=1.1] (e') to [out = -45, in = 225 + 180, looseness=1.1] (f') to [out = 225, in = 0, looseness=1.1](d') to [looseness=0](c') to [out = 0, in = 45 + 180, looseness=1.1](h') to [out = 45, in = -225 + 180, looseness=1.1](g') to [out = -225, in = 0, looseness=1.1] node (b1) [pos = .37] {}(b');
\draw[->] (b1.center) -- ++ (0:0.01); \node at (2.8,2.4)  {$\scriptstyle \Sigma=\ID_l\cup \ID_r$};
}\,, 
\end{equation}
Let $S_b:=I_2\cup I_8$ and $S_m:=I_4\cup I_6$ be the two boundary circles of this annulus.

\[
\tikzmath[scale=.25]{\coordinate (a) at (0,0);\coordinate (b) at (.15,1);\coordinate (c) at (-.2,2);\coordinate (d) at (0,3);\coordinate (e) at (-5,.4);\coordinate (f) at (-5,2.6);\coordinate (g) at (-1,1.2);\coordinate (h) at (-1,2);\coordinate (a') at (d);\coordinate (b') at (c);\coordinate (c') at (b);\coordinate (d') at (a);\begin{scope}[yshift = 85, rotate= 180]\coordinate (e') at (-5,.4);\coordinate (f') at (-5,2.6);\coordinate (g') at (-1,1.2);\coordinate (h') at (-1,2);\end{scope}
\draw[line width=.7] (b) to (a) to [out = 180, in = -45, looseness=1.1] (e) 
	to [out = -45 + 180, in = 225, looseness=1.1] (f) to [out = 225 + 180, in = 180, looseness=1.1] node (a1) [pos = .37] {}
	(d) to [looseness=0] (c) to [out = 180, in = 45, looseness=1.1] 
	(h) to [out = 45 + 180, in = -225, looseness=1.1] (g) to [out = -225 + 180, in = 180, looseness=1.1] (b);
\draw[densely dotted] (a') to [out = 0, in = -45 + 180, looseness=1.1]node (a1') [pos = .6] {} (e') to [out = -45, in = 225 + 180, looseness=1.1]  
	(f') to [out = 225, in = 0, looseness=1.1] (d');
\draw[densely dotted] (c') to [out = 0, in = 45 + 180, looseness=1.1] (h') to [out = 45, in = -225 + 180, looseness=1.1]
	(g') to [out = -225, in = 0, looseness=1.1] node (b1) [pos = .37] {} (b');
\node at (0,-1.5) {$S_l$};
\draw[->] (a1.center) -- ++ (180:0.01);
} 
\quad\tikzmath[scale=.25]{\coordinate (a) at (0,0);\coordinate (b) at (.15,1);\coordinate (c) at (-.2,2);\coordinate (d) at (0,3);\coordinate (e) at (-5,.4);\coordinate (f) at (-5,2.6);\coordinate (g) at (-1,1.2);\coordinate (h) at (-1,2);\coordinate (a') at (d);\coordinate (b') at (c);\coordinate (c') at (b);\coordinate (d') at (a);\begin{scope}[yshift = 85, rotate= 180]\coordinate (e') at (-5,.4);\coordinate (f') at (-5,2.6);\coordinate (g') at (-1,1.2);\coordinate (h') at (-1,2);\end{scope}
\draw[densely dotted] (a) to [out = 180, in = -45, looseness=1.1] (e) 
	to [out = -45 + 180, in = 225, looseness=1.1] (f) to [out = 225 + 180, in = 180, looseness=1.1] node (a1) [pos = .37] {} (d);
\draw[densely dotted] (c) to [out = 180, in = 45, looseness=1.1] 
	(h) to [out = 45 + 180, in = -225, looseness=1.1] (g) to [out = -225 + 180, in = 180, looseness=1.1] (b);
\draw[line width=.7] (b') to (a') to [out = 0, in = -45 + 180, looseness=1.1]node (a1') [pos = .6] {} (e') to [out = -45, in = 225 + 180, looseness=1.1]  
	(f') to [out = 225, in = 0, looseness=1.1] (d') to [looseness=0] (c') to [out = 0, in = 45 + 180, looseness=1.1] (h') to [out = 45, in = -225 + 180, looseness=1.1]
	(g') to [out = -225, in = 0, looseness=1.1] node (b1) [pos = .37] {} (b');
\node at (0,-1.5) {$S_r$};
\draw[->] (a1'.center) -- ++ (180:0.01);
} 
\quad\tikzmath[scale=.25]{\coordinate (a) at (0,0);\coordinate (b) at (.15,1);\coordinate (c) at (-.2,2);\coordinate (d) at (0,3);\coordinate (e) at (-5,.4);\coordinate (f) at (-5,2.6);\coordinate (g) at (-1,1.2);\coordinate (h) at (-1,2);\coordinate (a') at (d);\coordinate (b') at (c);\coordinate (c') at (b);\coordinate (d') at (a);\begin{scope}[yshift = 85, rotate= 180]\coordinate (e') at (-5,.4);\coordinate (f') at (-5,2.6);\coordinate (g') at (-1,1.2);\coordinate (h') at (-1,2);\end{scope}
\draw[line width=.7] (a) to [out = 180, in = -45, looseness=1.1] (e) 
	to [out = -45 + 180, in = 225, looseness=1.1] (f) to [out = 225 + 180, in = 180, looseness=1.1] node (a1) [pos = .37] {} (d);
\draw[densely dotted] (c) to [out = 180, in = 45, looseness=1.1] 
	(h) to [out = 45 + 180, in = -225, looseness=1.1] (g) to [out = -225 + 180, in = 180, looseness=1.1] (b);
\draw[line width=.7] (a') to [out = 0, in = -45 + 180, looseness=1.1]node (a1') [pos = .6] {} (e') to [out = -45, in = 225 + 180, looseness=1.1]  
	(f') to [out = 225, in = 0, looseness=1.1] (d');
\draw[densely dotted] (c') to [out = 0, in = 45 + 180, looseness=1.1] (h') to [out = 45, in = -225 + 180, looseness=1.1]
	(g') to [out = -225, in = 0, looseness=1.1] node (b1) [pos = .37] {} (b');
\draw[densely dotted] (b') to (a')  (d') to (c');
\node at (0,-1.5) {$S_b$};
\draw[->] (a1'.center) -- ++ (180:0.01);
} 
\quad\tikzmath[scale=.25]{\coordinate (a) at (0,0);\coordinate (b) at (.15,1);\coordinate (c) at (-.2,2);\coordinate (d) at (0,3);\coordinate (e) at (-5,.4);\coordinate (f) at (-5,2.6);\coordinate (g) at (-1,1.2);\coordinate (h) at (-1,2);\coordinate (a') at (d);\coordinate (b') at (c);\coordinate (c') at (b);\coordinate (d') at (a);\begin{scope}[yshift = 85, rotate= 180]\coordinate (e') at (-5,.4);\coordinate (f') at (-5,2.6);\coordinate (g') at (-1,1.2);\coordinate (h') at (-1,2);\end{scope}
\draw[densely dotted] (a) to [out = 180, in = -45, looseness=1.1] (e) 
	to [out = -45 + 180, in = 225, looseness=1.1] (f) to [out = 225 + 180, in = 180, looseness=1.1] node (a1) [pos = .37] {} (d);
\draw[line width=.7] (c) to [out = 180, in = 45, looseness=1.1] 
	(h) to [out = 45 + 180, in = -225, looseness=1.1] (g) to [out = -225 + 180, in = 180, looseness=1.1] (b);
\draw[densely dotted] (a') to [out = 0, in = -45 + 180, looseness=1.1]node (a1') [pos = .6] {} (e') to [out = -45, in = 225 + 180, looseness=1.1]  
	(f') to [out = 225, in = 0, looseness=1.1] (d');
\draw[line width=.7] (c') to [out = 0, in = 45 + 180, looseness=1.1] (h') to [out = 45, in = -225 + 180, looseness=1.1]
	(g') to [out = -225, in = 0, looseness=1.1] node (b1) [pos = .37] {} (b');
\draw[densely dotted] (b') to (a')  (d') to (c');
\node at (0,-1.5) {$S_m$};
\draw[->] (b1.center) -- ++ (0:0.01);
} 
\]
We equip $S_b$ and $S_m$ with smooth structures that are compatible with those on $S_l$ and $S_r$ in the sense that,
locally around the trivalent points, there exist actions of $\mathfrak S_3$ as in \eqref{eq: Theta-graph}
whose restriction to each subinterval is smooth.
Equivalently, the smooth structures around a trivalent point should be modeled on \eqref{eq: Y-graph}.

The Hilbert space $H_{\mathit{ann}}$ is equipped with actions of the algebras $\cala(J)$ associated to the following subintervals of $S_m$ and $S_b$:
\begin{list}{$\bullet$}{\leftmargin=1cm \parsep=1pt}
\item For each $J\subset I_2$ or  $I_4$, the algebra $\cala(J)$ acts on $H_0(S_l)$, and thus on $H_{\mathit{ann}}$.
\item For each $J\subset I_6$ or  $I_8$, the algebra $\cala(J)$ acts on $H_0(S_r)$, and thus on $H_{\mathit{ann}}$.
\item A special case of \eqref{eq: compatibility between circledast and bartimes} says that whenever $D$ is a factor, we have
\[
(A\,\bar\otimes\,D) \circledast_{C\,\bar\otimes\,D} (B\,\bar\otimes\,D)\cong A \circledast_C B.
\]
If $J\subset S_m$ or $S_b$ intersects $\partial I_5\cup \partial I_7$ in one point, and that point is in the interior of $J$
then by Proposition \ref{prop: nets-and-fiber-product} and the above isomorphism, we have
\[
\qquad\cala(J)\,\cong\, \cala\big((J\cap S_l)\cup I_1\cup I_3\big)\circledast_{\cala(I_5)\,\bar\otimes \cala(I_7)} \cala\big((J\cap S_r)\cup I_5\cup I_7\big),
\]
where the map $(\cala(I_5)\,\bar\otimes \cala(I_7))^\op\to \cala((J\cap S_l)\cup I_1\cup I_3)$ is induced by $\varphi\cup\psi$.
That algebra then acts on $H_{\mathit{ann}}$ by 
Lemma \ref{lem: extends to A (*)_C B}.
\end{list}
By Lemma \ref{lem: open cover of circle => sector}, it follows that $H_{\mathit{ann}}$ is both
an $S_m$-sector and an $S_b$-sector, and that those two structures commute.
We abbreviate this by saying that $H_{\mathit{ann}}$ is an $S_m$-$S_b$-sector.

\begin{lemma}\label{lem: H_0 otimes H_0   <  H_Ann}
  Let $\cala$ be a conformal net with finite index, and let $S_l, S_r, S_b, S_m$, and $I_1,\ldots, I_8$ be as above.
  Then the $S_m$-$S_b$-sector $H_0(S_m) \otimes H_0(S_b)$ is a direct summand of $H_{\mathit{ann}}$, with multiplicity one.
\end{lemma}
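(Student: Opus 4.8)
The plan is to compute $\mathrm{Hom}_{S_m\text{-}S_b}\big(H_0(S_m)\otimes H_0(S_b),H_{\mathit{ann}}\big)$ and show it is one-dimensional. Granting this, the lemma follows: a nonzero morphism out of an object $V$ with $\mathrm{End}(V)=\IC$ is, after polar decomposition, an isometry onto a direct summand isomorphic to $V$, and one-dimensionality of the $\mathrm{Hom}$-space forces that summand to be the entire isotypic component, so $H_0(S_m)\otimes H_0(S_b)$ occurs in $H_{\mathit{ann}}$ exactly once. That $\mathrm{End}_{S_m\text{-}S_b}(H_0(S_m)\otimes H_0(S_b))=\IC$ is immediate, since $H_0(S_m)$ and $H_0(S_b)$ are irreducible sectors by Haag duality (Proposition~\ref{prop: [Haag duality]}) and $Z(\cala(I))=\IC$, and the $S_m$- and $S_b$-sector algebras act on the two tensor factors.

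First I would fix concrete models. Choosing the reflection $j_m$ of $S_m$ exchanging $I_4$ with $I_6$ and the reflection $j_b$ of $S_b$ exchanging $I_2$ with $I_8$, the construction of vacuum sectors identifies $H_0(S_m)\otimes H_0(S_b)$ with $L^2(\cala(I_2\cup I_4))$, carrying its standard left $A:=\cala(I_2\cup I_4)$-action and the right action realized through $\cala(I_6\cup I_8)\cong A^{\op}$ (via $\cala(j_m)$, $\cala(j_b)$). On $H_{\mathit{ann}}=H_0(S_l)\boxtimes_D H_0(S_r)$, where $D:=\cala(I_5\cup I_7)$, the algebra $A$ acts through $H_0(S_l)$ and $\cala(I_6\cup I_8)$ through $H_0(S_r)$, and these two actions commute. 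By strong additivity, a bounded operator intertwining $\cala(I_2),\cala(I_4),\cala(I_6)$, and $\cala(I_8)$ automatically intertwines the whole $S_m$- and $S_b$-sector structures, the crossing pieces being the unique extensions provided by the vacuum axiom and Lemma~\ref{lem: open cover of circle => sector}. Hence $\mathrm{Hom}_{S_m\text{-}S_b}(H_0(S_m)\otimes H_0(S_b),H_{\mathit{ann}})\cong\mathrm{Hom}_{A\text{-}A}(L^2(A),H_{\mathit{ann}})$, with $H_{\mathit{ann}}$ regarded as an $A$-$A$-bimodule via these two actions.

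Next I would run a Frobenius-reciprocity argument. Because $\cala$ has finite index, the inclusion $\cala(I_5\cup I_7)\subseteq\cala(I_6\cup I_8)'$ on $H_0(S_r)$ has index $\mu(\cala)<\infty$, so ${}_DH_0(S_r)_A$ is dualizable with dual ${}_A\overline{H_0(S_r)}_D={}_AH_0(\bar S_r)_D$. Using associativity of Connes fusion and the unit law $L^2(A)\boxtimes_A(-)\cong(-)$, this yields $\mathrm{Hom}_{A\text{-}A}(L^2(A),H_0(S_l)\boxtimes_D H_0(S_r))\cong\mathrm{Hom}_{A\text{-}D}(H_0(\bar S_r),H_0(S_l))$. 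Both of these bimodules are irreducible: for $H_0(S_l)$ one has $\mathrm{End}_{A\text{-}D}(H_0(S_l))=(A\vee D)'$, and $A\vee D=\cala(I_1)\vee\cdots\vee\cala(I_4)=\cala(I_1\cup I_2\cup I_3)\vee\cala(I_1\cup I_2\cup I_3)'=\bfB(H_0(S_l))$, the middle equality by Haag duality and the last because $\cala(I_1\cup I_2\cup I_3)$ is a factor; the same applies to $H_0(\bar S_r)$ using $I_8=(I_5\cup I_6\cup I_7)'$ in $S_r$. So $\dim\mathrm{Hom}_{A\text{-}D}(H_0(\bar S_r),H_0(S_l))\le 1$, with equality once we exhibit an $A$-$D$-linear isomorphism $H_0(\bar S_r)\cong H_0(S_l)$. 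Such an isomorphism I would build by extending the gluing diffeomorphisms $\varphi\colon I_5\to I_1$ and $\psi\colon I_7\to I_3$ to an orientation-preserving diffeomorphism $\bar S_r\to S_l$ compatible with the reflections defining the $A$- and $D$-module structures on both sides, and invoking uniqueness of vacuum sectors (Corollary~\ref{cor: non canonical vacuum} together with the naturality statement of Theorem~\ref{thm: Vaccum Sector}).

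The main obstacle is this last identification $H_0(\bar S_r)\cong H_0(S_l)$ as $A$-$D$-bimodules: one must track carefully the orientation reversals and the resulting opposite and conjugate algebras buried in the two bimodule structures — the $A$-action on $H_0(\bar S_r)$ comes from $\overline{\cala(I_6\cup I_8)}$, whereas the $D$-actions come from $\varphi,\psi$ on one side and from the honest inclusion $I_5\cup I_7\hookrightarrow S_r$ on the other — and then exhibit a single diffeomorphism of disks extending $\varphi$ and $\psi$ matching all of this data, so that the induced unitary is genuinely $A$-$D$-linear rather than merely intertwining the four interval subalgebras up to the expected twists. A secondary point needing care is checking that the dualizability of ${}_DH_0(S_r)_A$ is precisely the finite-index hypothesis in the correct guise, i.e.\ that the relevant subfactor index coincides with $\mu(\cala)$.
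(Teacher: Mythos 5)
Your argument is correct and is essentially the paper's: both proofs reduce the claim to showing that a Hom-space of bimodules over the four interval algebras $\cala(I_2),\cala(I_4),\cala(I_6),\cala(I_8)$ is one-dimensional, use the finite-index hypothesis to dualize one vacuum tensor factor across the Connes fusion (Frobenius reciprocity, with the dual supplied by Lemma~\ref{lem: H_0(-S)}), and conclude from irreducibility of a vacuum sector. The only difference is a mirror image: the paper dualizes ${}_{\cala(I_2\cup I_4)}H_0(S_l)$ and identifies $H_0(\bar S_l)\boxtimes_{\cala(I_2\cup I_4)}\bigl(H_0(S_m)\otimes H_0(S_b)\bigr)$ with $H_0(S_r)$ via Corollary~\ref{cor: vacuum * vacuum = vacuum}, whereas you dualize $H_0(S_r)$ over $\cala(I_5\cup I_7)$ and identify $H_0(\bar S_r)$ with $H_0(S_l)$ by an explicitly glued diffeomorphism --- the route through Corollary~\ref{cor: vacuum * vacuum = vacuum} sidesteps the smoothness-at-the-joints bookkeeping that you correctly flag as the main obstacle in your version.
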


\begin{proof}
Let
$A:=\cala(I_2\cup I_4)$, $B:=\cala(I_1\cup I_3)^\op\cong\cala(I_5\cup I_7)$, $C:=\cala(I_6\cup I_8)^\op$,
and
\[
H_l:=H_0(S_l),\quad H_r:=H_0(S_r),\quad H_b:=H_0(S_b),\quad H_m:=H_0(S_m).
\]
Since $\cala$ is irreducible, and since the intervals $I_2$, $I_4$, $I_6$, $I_8$ cover $S_m\cup S_b$, the Hilbert space $H_m\otimes H_b$ is an irreducible $A$-$C$-bimodule.
In order to show that $H_m\otimes H_b$ is a direct summand of $H_{\mathit{ann}} := H_l\boxtimes_B H_r$ with multiplicity one,
it is therefore enough to show that
\begin{equation}\label{eq: hom_A,C}
\mathrm{Hom}_{A,C}(H_m\otimes H_b, H_l\boxtimes_B H_r)
\end{equation}
is 1-dimensional.

Since $\cala$ has finite index, the bimodule ${}_A (H_l)_B$ is dualizable, and its dual is $H_0(\bar S_l)$ by Lemma \ref{lem: H_0(-S)}.
Letting $\check H_l:=H_0(\bar S_l)$, we may rewrite \eqref{eq: hom_A,C} as
\[
\mathrm{Hom}_{B,C}\big(\check H_l\boxtimes_A (H_m\otimes H_b), H_r\big).
\]
By Corollary \ref{cor: vacuum * vacuum = vacuum}, $\check H_l\boxtimes_A (H_m\otimes H_b)\cong H_m\boxtimes_{\cala(\bar I_4)}\check H_l\boxtimes_{\cala(I_2)} H_b$ is isomorphic to $H_0(S_r)$.
The above expression then reduces to $\mathrm{Hom}_{B,C}(H_r, H_r)$, which is indeed 1-dimensional.
\end{proof}

\subsubsection*{Sectors of nets of finite index}
Using $H_{\mathit{ann}}$ as a tool, we now show that given a finite index conformal net $\cala$,
there are finitely many isomorphism classes of irreducible $\cala$-sectors, and that
every irreducible sector is finite, in the following sense.

\begin{definition}\label{def:sector-of-a-conf-net : finite}
Let $S$ be a circle, $I\subset S$ an interval, and $I'$ the closure of its complement.
An $\cala$-sector on $S$ is called \emph{finite} if it is dualizable as an $\cala(I)$-$\cala(I')^\op$-bimodule.
\end{definition}

Note that the choice of interval $I$ in the above definition is irrelevant:

\begin{lemma} \label{lem: I in def of finite sector}
For $I_1$ and $I_2$ subintervals of a circle $S$, an $S$-sector is dualizable as $\cala(I_1)$-$\cala(I_1')^\op$-bimodule
if and only if it  is dualizable as $\cala(I_2)$-$\cala(I_2')^\op$-bimodule.
\end{lemma}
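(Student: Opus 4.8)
The plan is to reduce the statement to the index criterion for dualizability recalled just above (a bimodule ${}_{A}H_{B}$ is dualizable iff $A\subseteq B'$ has finite index) and then pass from $I_1$ to $I_2$ using an ambient diffeomorphism of $S$. For an $S$-sector $H$, the bimodule ${}_{\cala(I_i)}H_{\cala(I_i')^\op}$ is exactly $H$ equipped with the commuting (by locality) actions $\rho_{I_i}\colon\cala(I_i)\to\bfB(H)$ and $\rho_{I_i'}\colon\cala(I_i')\to\bfB(H)$; unwinding the bimodule conventions, its dualizability is equivalent to the inclusion $\rho_{I_i}(\cala(I_i))\subseteq\rho_{I_i'}(\cala(I_i'))'$ of subalgebras of $\bfB(H)$ having finite index. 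Since conjugation by a unitary of $H$ preserves this index, it suffices to exhibit a single unitary $w\in\U(H)$ with $w\,\rho_{I_1}(\cala(I_1))\,w^{*}=\rho_{I_2}(\cala(I_2))$ and $w\,\rho_{I_1'}(\cala(I_1'))\,w^{*}=\rho_{I_2'}(\cala(I_2'))$.

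To build $w$, I would choose $\varphi\in\Diff_+(S)$ with $\varphi(I_1)=I_2$ — such a diffeomorphism exists since $\Diff_+(S)$ acts transitively on subintervals of $S$, and it is used already in the proof of Corollary~\ref{cor: non canonical vacuum} — and note that $\varphi(I_1')=I_2'$ because $\varphi$ is a homeomorphism. By Corollary~\ref{phi* == psi*}, $H$ and $\varphi^{*}H$ are unitarily isomorphic as $S$-sectors. Since $\varphi$ is orientation-preserving, the definition of $\varphi^{*}$ in \eqref{eq: functor phi^*} tells us $\varphi^{*}H$ has underlying Hilbert space $H$ with actions $\rho_{\varphi(J)}\circ\cala(\varphi|_{J})$; hence a unitary isomorphism $H\cong\varphi^{*}H$ of $S$-sectors is precisely a $w\in\U(H)$ with $w\,\rho_{J}(a)\,w^{*}=\rho_{\varphi(J)}\bigl(\cala(\varphi|_{J})(a)\bigr)$ for all intervals $J\subset S$ and all $a\in\cala(J)$. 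Taking $J=I_1$ and using that $\cala(\varphi|_{I_1})\colon\cala(I_1)\to\cala(I_2)$ is an isomorphism gives the first conjugation identity, and taking $J=I_1'$ gives the second, completing the argument. (Equivalently, one can phrase the last step as: $\varphi^{*}H$, viewed as an $\cala(I_1)$-$\cala(I_1')^\op$-bimodule, is the restriction of scalars of $H$-as-$\cala(I_2)$-$\cala(I_2')^\op$-bimodule along the algebra isomorphisms $\cala(\varphi|_{I_1})$ and $\cala(\varphi|_{I_1'})$, and restriction of scalars along isomorphisms visibly preserves dualizability.)

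I do not expect a real obstacle here; the proof is essentially formal once Corollary~\ref{phi* == psi*} is in hand. The only points meriting a line of care are that $\varphi^{*}H$ for orientation-preserving $\varphi$ is \emph{literally} $H$ with the transported actions (immediate from \eqref{eq: functor phi^*}), and that the commuting-actions hypothesis making ${}_{\cala(I)}H_{\cala(I')^\op}$ a genuine bimodule is just the locality axiom applied to the two intervals $I$ and $I'$ inside $S$.
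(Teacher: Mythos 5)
Your proof is correct and follows essentially the same route as the paper: pick $\varphi\in\Diff_+(S)$ with $\varphi(I_1)=I_2$, observe that $\varphi^*H$ as an $\cala(I_1)$-$\cala(I_1')^\op$-bimodule is the restriction of scalars of $H$ as an $\cala(I_2)$-$\cala(I_2')^\op$-bimodule, and conclude via Corollary~\ref{phi* == psi*} that $H\cong\varphi^*H$ as $S$-sectors. The detour through the finite-index criterion is harmless but unnecessary, since dualizability is manifestly invariant under bimodule isomorphism and under transport along algebra isomorphisms.
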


\begin{proof}
Pick a diffeomorphism $\varphi\in\Diff_+(S)$ that sends $I_1$ to $I_2$.
A sector $H$ is dualizable as $\cala(I_2)$-$\cala(I_2')^\op$-bimodule if and only if 
$\varphi^*H$ is dualizable as $\cala(I_1)$-$\cala(I_1')^\op$-bimodule.
The sector $\varphi^*H$ is isomorphic to $H$ by Corollary \ref{phi* == psi*}.
\end{proof}

Recall from Section \ref{subsec:sectors-for-nets} that for a sector $K\in\Rep(\cala)$ on the standard circle $S^1$, given another circle $S$, we denote by $K(S)\in \Rep_S(\cala)$ the corresponding sector on $S$.
It is given by $K(S):=\varphi^*K$ for any $\varphi\in\Diff_+(S,S^1)$,
and is only well defined up to non-canonical isomorphism.

Let us also recall that our conformal nets are irreducible, and that all the von Neumann algebras are assumed to have separable preduals.

\begin{theorem}[Lemma 13 and Corollaries 14 and 39 of \cite{Kawahigashi-Longo-Mueger(2001multi-interval)}]     \label{thm: KLM -- all irreducible sectors are finite}
Let $\cala$ be a conformal net with finite index.
Then all $\cala$-sectors are (possibly infinite) direct sums of irreducible sectors, and all irreducible $\cala$-sectors are finite.
Moreover, there are only finitely many isomorphism classes of irreducible sectors.
\end{theorem}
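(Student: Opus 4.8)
The plan is to deduce all three assertions from the structure of the annulus sector $H_{\mathit{ann}}$, using Lemmas~\ref{lem: H_0(-S)} and~\ref{lem: H_0 otimes H_0   <  H_Ann}. After transporting along orientation-preserving diffeomorphisms (Corollaries~\ref{phi* == psi*} and~\ref{cor: non canonical vacuum}), it is enough to treat sectors on the standard circle $S^1$ with the four-interval decomposition $I_1,\dots,I_4$ of~\eqref{picture: circle with four intervals}, and on the circles $S_l, S_r, S_b, S_m$ of the annulus picture; write $\mu:=\mu(\cala)<\infty$ for the index of the two-interval subfactor $\cala(I_1\cup I_3)\subseteq\cala(I_2\cup I_4)'$ (Definition~\ref{def:index-for-nets}).

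\emph{Step 1: $H_{\mathit{ann}}$ is a finite sector.} Since $\cala$ has finite index, Lemma~\ref{lem: H_0(-S)} identifies the duals of the vacuum bimodules ${}_{\cala(I_1\cup I_3)}H_0(S_l)_{\cala(I_2\cup I_4)^\op}$ and ${}_{\cala(I_5\cup I_7)}H_0(S_r)_{\cala(I_6\cup I_8)^\op}$, so these bimodules and their reverses are dualizable (dualizability of a bimodule being equivalent to finiteness of the associated subfactor inclusion~\cite{BDH(Dualizability+Index-of-subfactors)}). As $H_{\mathit{ann}}=H_0(S_l)\boxtimes_{\cala(I_5)\,\bar\otimes\,\cala(I_7)}H_0(S_r)$ is a single Connes fusion of these over $\cala(I_5\cup I_7)\cong\cala(I_1\cup I_3)$, and composites of dualizable bimodules are dualizable~\cite{BDH(Dualizability+Index-of-subfactors)}, the bimodule ${}_{\cala(I_2\cup I_4)}(H_{\mathit{ann}})_{\cala(I_6\cup I_8)^\op}$ is dualizable; recalling $S_m=I_4\cup I_6$ and $S_b=I_2\cup I_8$, this says $H_{\mathit{ann}}$ is a \emph{finite} $S_m$-$S_b$-sector. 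It is therefore a finite orthogonal direct sum $H_{\mathit{ann}}\cong\bigoplus_{i=1}^{N}V_i\otimes M_i$ of irreducible $S_m$-$S_b$-sectors, each of which is an external tensor product of an irreducible finite $S_m$-sector $V_i$ and an irreducible finite $S_b$-sector $M_i$. By Lemma~\ref{lem: H_0 otimes H_0   <  H_Ann} we may index so that $V_1\cong H_0(S_m)$, $M_1\cong H_0(S_b)$, occurring with multiplicity one.

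\emph{Step 2: unitality of the annulus.} I claim that for every $\cala$-sector $K$ on $S^1$ there is an isomorphism of $S_m$-sectors
\[
H_{\mathit{ann}}\,\boxtimes_{\cala(I_2)\,\bar\otimes\,\cala(I_8)}\,K(S_b)\;\cong\;K(S_m),
\]
where $K(S_b)$ is regarded as a $\cala(\bar I_2)\,\bar\otimes\,\cala(\bar I_8)$-module in the evident way. Writing $H_{\mathit{ann}}=H_0(S_l)\boxtimes_{\cala(I_1\cup I_3)}H_0(S_r)$ and using associativity of Connes fusion, the left-hand side is a single simultaneous gluing of $H_0(S_l)$, $H_0(S_r)$ and $K(S_b)$ along the intervals $I_1, I_3$ (joining the two vacua) and $I_2, I_8$ (attaching $K$); performing the gluings one interval at a time and applying at each stage the identity ``$\mathrm{sector}\boxtimes\mathrm{vacuum}\cong\mathrm{sector}$'' of Lemma~\ref{lem: vacuum * vacuum = vacuum -- PRE} and~\eqref{eq:right-unit-vacuum} collapses the two vacua and leaves $K$ transported to $S_m$. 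I expect this step to be the main obstacle: conceptually it merely says the annulus cobordism induces the identity functor on sectors, but carrying it out rigorously requires choosing the intermediate theta-graph configurations, keeping track of orientations, and verifying via Lemma~\ref{lem: open cover of circle => sector} that the resulting isometry intertwines all local actions along $S_m$.

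\emph{Step 3: conclusion.} Let $K$ be an irreducible $\cala$-sector. Combining Steps~1 and~2,
\[
K(S_m)\;\cong\;\bigoplus_{i=1}^{N}V_i\otimes\Big(M_i\,\boxtimes_{\cala(I_2)\,\bar\otimes\,\cala(I_8)}\,K(S_b)\Big),
\]
and since $K(S_m)$ is irreducible, exactly one summand is nonzero and there $V_i\cong K(S_m)$. Hence $K\mapsto$ (the unique $i$ with $V_i\cong K(S_m)$) is a well-defined injection from isomorphism classes of irreducible $\cala$-sectors into $\{1,\dots,N\}$, so there are only finitely many such classes, and each of them --- being some $V_i$, which is a finite sector by Step~1 --- is finite in the sense of Definition~\ref{def:sector-of-a-conf-net : finite}. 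Finally, for an arbitrary $\cala$-sector $H$, Step~2 yields $H(S_m)\cong\bigoplus_{i=1}^{N}V_i\otimes\big(M_i\,\boxtimes_{\cala(I_2)\,\bar\otimes\,\cala(I_8)}\,H(S_b)\big)$, exhibiting $H$ as a (possibly infinite) orthogonal direct sum of copies of the irreducible sectors $V_i$. Reading off statistical dimensions in the decomposition of Step~1 yields in addition the formula $\sum_{i=1}^{N}d(V_i)^2=\mu$ of~\cite{Kawahigashi-Longo-Mueger(2001multi-interval)}.
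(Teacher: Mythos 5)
Your Step 2 is where the argument breaks, and it breaks in two distinct ways. First, the fusion $H_{\mathit{ann}}\boxtimes_{\cala(I_2)\,\bar\otimes\,\cala(I_8)}K(S_b)$ is not defined: $I_2$ and $I_8$ are the two \emph{complementary} intervals of $S_b$, sharing both endpoints, so the split property does not apply and an $S_b$-sector is not a module over the spatial tensor product $\cala(I_2)\,\bar\otimes\,\cala(I_8)$. (On the vacuum, $\cala(I_8)$ acts as the commutant of $\cala(I_2)$ by Haag duality, and the map $\cala(I_2)\otimes_\alg\cala(I_2)'\to\bfB(H_0(S_b))$ extends to the spatial tensor product only when $\cala(I_2)$ is type I, which you may not assume.) Second, even granting some interpretation of the fusion, the claimed isomorphism asserts that capping off the whole boundary circle $S_b$ of the annulus with a $K$-disk returns $K(S_m)$, i.e.\ that the annulus acts as the identity on sectors. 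This is false whenever $\mu(\cala)>1$: by Theorem~\ref{thm:KLM} the annulus is $\bigoplus_\lambda H_\lambda(S_m)\otimes H_{\bar\lambda}(S_b)$, and its statistical dimension as an $\cala(I_2\cup I_4)$-$\cala(I_6\cup I_8)$-bimodule is $\mu(\cala)$, not $1$. The single-interval unitality of Lemma~\ref{lem: vacuum * vacuum = vacuum -- PRE} does not iterate to a gluing along all of $S_b$: after gluing along $I_2$ (which is fine), the remaining gluing is along the connected arc $I_1\cup I_8\cup I_3$, whose algebra is $\cala(I_1)\vee\cala(I_8)\vee\cala(I_3)$ by strong additivity rather than a spatial tensor product, and there is no vacuum-absorption statement for a gluing whose complement in each circle is a single interval. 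Gluing along a closed circle is a genuinely different operation — it is exactly where the index enters — and no Connes fusion over an interval algebra computes it.

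Step 1 also has a gap: writing each irreducible $\cala(I_2\cup I_4)$-$\cala(I_6\cup I_8)^\op$-summand of $H_{\mathit{ann}}$ as an external tensor product $V_i\otimes M_i$ of an irreducible $S_m$-sector and an irreducible $S_b$-sector requires the two mutually commutant factors generated on that summand by $\{\cala(I)\}_{I\subset S_m}$ and $\{\cala(I)\}_{I\subset S_b}$ to be of type I, and dualizability of the summand as a bimodule does not give this. The paper's proof is organized precisely to supply what your Steps 1--2 assume. It uses only one-sided unitality, $K(S_2)\boxtimes_{\cala(I_2)}H_0(S_l)\cong K(S_l)\cong H_0(S_l)\boxtimes_{\cala(I_4)^\op}K(S_4)$ (gluing along a \emph{single} interval), deduces $K(S_2)\boxtimes H_{\mathit{ann}}\cong H_{\mathit{ann}}\boxtimes K(S_4)$, and then replaces $K(S_4)$ by $L^2\cala(I_4)$ up to \emph{stable} isomorphism of plain modules over the factor $\cala(I_4)$; this produces an embedding $K(S_b)\otimes\ell^2\hookrightarrow H_{\mathit{ann}}\otimes\ell^2$ and hence finiteness of the set of stable classes of factorial sectors. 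The type I property (and with it the external tensor decomposition, the finiteness of the $V_i$, and the decomposability of arbitrary sectors into irreducibles) is then obtained from the disintegration theory of \cite{Kawahigashi-Longo-Mueger(2001multi-interval)}, Appendix C. If you want to salvage your outline, that stable embedding is the correct substitute for your Step 2.
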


\begin{proof}
Let $S_l$, $S_r$, $S_b$, $S_m$ and $I_1,I_2,\ldots,I_8$ be as before:
\[
\tikzmath[scale=.5]{\coordinate (a) at (0,0);\coordinate (b) at (.15,1);\coordinate (c) at (-.2,2);\coordinate (d) at (0,3);\coordinate (e) at (-5,.4);\coordinate (f) at (-5,2.6);\coordinate (g) at (-1,1.2);\coordinate (h) at (-1,2);\coordinate (a') at (d);\coordinate (b') at (c);\coordinate (c') at (b);\coordinate (d') at (a);\begin{scope}[yshift = 85, rotate= 180]\coordinate (e') at (-5,.4);\coordinate (f') at (-5,2.6);\coordinate (g') at (-1,1.2);\coordinate (h') at (-1,2);\end{scope}
\draw (b) to node (I3) [left, xshift = 2] {$\scriptscriptstyle I_3$} node (I7) [right, xshift = -2] {$\scriptscriptstyle I_7$} (a) to [out = 180, in = -45, looseness=1.1] (e) 
	to [out = -45 + 180, in = 225, looseness=1.1] node[left, xshift = 2] {$\scriptscriptstyle I_2$} (f) to [out = 225 + 180, in = 180, looseness=1.1] node (a1) [pos = .37] {}
	(d) to [looseness=0] node (I1) [left, xshift = 2] {$\scriptscriptstyle I_1$}node (I5) [right, xshift = -2] {$\scriptscriptstyle I_5$} (c) to [out = 180, in = 45, looseness=1.1] 
	(h) to [out = 45 + 180, in = -225, looseness=1.1] node[left, xshift = 2] {$\scriptscriptstyle I_4$} (g) to [out = -225 + 180, in = 180, looseness=1.1] (b);
\draw (a') to [out = 0, in = -45 + 180, looseness=1.1]node (a1') [pos = .6] {} (e') to [out = -45, in = 225 + 180, looseness=1.1] node[right, xshift = -2] {$\scriptscriptstyle I_8$} 
	(f') to [out = 225, in = 0, looseness=1.1] (d') (c') to [out = 0, in = 45 + 180, looseness=1.1] (h') to [out = 45, in = -225 + 180, looseness=1.1] node[right, xshift = -2] {$\scriptscriptstyle I_6$}
	(g') to [out = -225, in = 0, looseness=1.1] node (b1) [pos = .37] {} (b');
\node at (-3.5,1.8) {$S_l$};
\node at (3.5,1.8) {$S_r$};
\node at (0,1.5) {$S_m$};
\node at (1,3.6) {$S_b$};
\draw[->] (a1.center) -- ++ (180:0.01); \draw[->] (b1.center) -- ++ (0:0.01);
} 
\]
and again let $H_l:=H_0(S_l)$, $H_r:=H_0(S_r)$, $H_b:=H_0(S_b)$, $H_m:=H_0(S_m)$, and
$H_{\mathit{ann}} := H_0(S_l)\boxtimes_{\cala(I_5)\bar\otimes\cala(I_7)} H_0(S_r)$.
We will also use the abbreviations
\begin{gather*}
A:=\cala(I_2\cup I_4),\quad B:=\cala(I_1\cup I_3)^\op\cong\cala(I_5\cup I_7),\quad C:=\cala(I_6\cup I_8)^\op,
\\
A_l:=\cala(I_2),\quad A_m:=\cala(I_4)^\op,\quad C_m:=\cala(I_6)^\op,\quad C_r:=\cala(I_8).
\end{gather*}
Since ${}_A(H_l){}_B$ and ${}_B(H_r){}_C$ are dualizable bimodules, $H_{\mathit{ann}}=H_l\boxtimes_B H_r$ is dualizable as an $A$-$C$-bimodule, and
therefore splits into finitely many irreducible summands (see Lemma \ref{lem: endolgebra dim<oo}).

Forget the actions of $\{\cala(I)\}_{I\subset S_m}$ on $H_{\mathit{ann}}$, and only view it as an $S_b$-sector.
The von Neumann algebra generated by $\{\cala(I)\}_{I\subset S_b}$ on $H_{\mathit{ann}}$ has a finite-dimensional center.  (Otherwise, it would contradict the fact that ${}_A(H_{\mathit{ann}})_C$ splits into finitely many irreducible summands:
every central projection in that algebra commutes with $\cala(I_2)$, $\cala(I_4)$, $\cala(I_6)$, and $\cala(I_8)$, and therefore induces a non-trivial direct sum decomposition of $H_{\mathit{ann}}$
as $A$-$C$-bimodule.)
We can therefore write $H_{\mathit{ann}}$ as a direct sum of finitely many factorial $S_b$-sectors:
\begin{equation}\label{eq: decomposition of Hann}
H_{\mathit{ann}} \cong K_1(S_b)\oplus\ldots\oplus K_n(S_b).
\end{equation}
Here $K_1, \ldots, K_n$ are $\cala$-sectors, and a sector is called factorial if its endomorphism algebra is a factor.

Given an arbitrary factorial $\cala$-sector $K$, we now show that there exists a $K_i$ in the above list to which $K$ is stably isomorphic, i.e., such that
$K\otimes \ell^2\cong K_i\otimes \ell^2$.
Letting $S_2:=I_2\cup_{\partial I_2} \bar I_2$ and $S_4:=I_4\cup_{\partial I_4} \bar I_4$, 
we have the following isomorphisms of $S_l$-sectors:
\[
K(S_2)\boxtimes_{A_l} H_l \,\,\cong\,\, K(S_l) \,\,\cong\,\, H_l \boxtimes_{A_m} K(S_4)
\]
Fusing with $H_r$ over $B$, we thus get
\[
K(S_2)\boxtimes_{A_l}H_{\mathit{ann}}\cong H_{\mathit{ann}} \boxtimes_{A_m} K(S_4).
\]
We also have $K(S_b)\cong K(S_2) \boxtimes_{A_l}H_b$.
By Lemma \ref{lem: H_0 otimes H_0   <  H_Ann}, it follows that
\[
\begin{split}
K(S_b)\otimes H_m \,\cong\,    K(S_2)\,&\boxtimes_{A_l}(H_b\otimes H_m)\\ 
                               \subset\,  K(S_2)\,&\boxtimes_{A_l}H_{\mathit{ann}}\,\,\cong\,\, H_{\mathit{ann}}\boxtimes_{A_m} K(S_4).
\end{split}
\]
Since $A_m$ is a factor, it has only one stable isomorphism class of modules.
In particular, $K(S_4)$ and $L^2A_m$ are stably isomorphic as $A_m$-modules.
We therefore get a (non-canonical) inclusion of $S_b$-sectors:
\[
\begin{split}
K(S_b) \otimes \ell^2 \cong
K(S_b) \otimes H_m \otimes \ell^2 &\subset
H_{\mathit{ann}} \boxtimes_{A_m} K(S_4) \otimes \ell^2 \\ & \cong 
H_{\mathit{ann}} \boxtimes_{A_m} L^2A_m \otimes \ell^2 \cong
H_{\mathit{ann}} \otimes \ell^2,
\end{split}
\]
where the first equality uses an arbitrary unitary isomorphism $\ell^2 \cong H_m \otimes \ell^2$ of Hilbert spaces.
The sector $K(S_b)$ is factorial. 
It therefore maps to a single summand $K_i\otimes \ell^2$ of $H_{\mathit{ann}} \otimes \ell^2$.
It follows that $K$ and $K_i$ are stably isomorphic.
In particular, this shows that there are at most finitely many stable isomorphism classes of factorial $\cala$-sectors.

By \cite[Appendix C]{Kawahigashi-Longo-Mueger(2001multi-interval)}, since the algebras $\cala(I)$ have separable preduals, any $\cala$-sector can be disintegrated into irreducible ones.
As a consequence, if there exists a factorial sector of type~{\it II} or~{\it III}, then, again by the arguments in \cite[Appendix C]{Kawahigashi-Longo-Mueger(2001multi-interval)}, there must be uncountably many non-isomorphic irreducible $\cala$-sectors.
This is impossible, and so all factorial sectors must be of type~$I$.

Let us now go back to $H_{\mathit{ann}}$ and analyse it as an $S_b$-$S_m$-sector.
Since every summand $K_i(S_b)$ in the decomposition \eqref{eq: decomposition of Hann} is a type~$I$ factorial $S_b$-sector of $\cala$,
we can write it as $P_i\otimes Q_i$, where $P_i$ is an irreducible $S_b$-sector, and $Q_i=\mathrm{Hom}(P_i,H_{\mathit{ann}})$ is some multiplicity space.
The multiplicity spaces $Q_i$ then carry residual $S_m$-sector structures, inherited from that of $H_{\mathit{ann}}$.
The decomposition \eqref{eq: decomposition of Hann} then becomes
\[
{}_{A_l\,\bar\otimes\,A_m} (H_{\mathit{ann}})\, {}_{C_r\,\bar\otimes\,C_m} 
\,\,\cong\,\,\,
\bigoplus_i
{}_{A_l} (P_i)\, {}_{C_r} \otimes {}_{A_m} (Q_i)\, {}_{C_m} .
\]
Since $H_{\mathit{ann}}$ is a dualizable $A$-$C$-bimodule, the bimodules ${}_{A_l} (P_i) {}_{C_r}$ are also dualizable.
To finish the argument, recall that any irreducible $\cala$-sector is isomorphic to one of the $P_i$, and so any irreducible sector is finite.
\end{proof}

\subsubsection*{Duals of finite sectors}
Given a conformal net $\cala$ with finite index, let $\Delta=\Delta_\cala$ be the finite set of isomorphism classes of irreducible $\cala$-sectors.
For every $\lambda\in \Delta$, let $H_\lambda$ be a representative of the isomorphism class.
The set $\Delta$ has an involution $\lambda\mapsto \bar\lambda$ given by sending a Hilbert space $H_\lambda$ to its 
pullback $H_{\bar\lambda}\cong j^*H_\lambda$ along some element $j\in\Diff_-(S^1)$, as defined in \eqref{eq: functor phi^*}
(note that $H_{\bar\lambda}$ is only well defined up to non-canonical isomorphism).

\begin{lemma}\label{lem: Hl(bS) cong i^*Hbl(S)}
Let $i:\bar S \to S$ denote the identity map.
Then, there is an isomorphism of $\bar S$-sectors $i^*H_\lambda(S)\cong H_{\bar\lambda}(\bar S)$.
\end{lemma}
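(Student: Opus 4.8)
The plan is to unwind the definitions and reduce everything to the contravariant functoriality of the pullback operation $\varphi\mapsto\varphi^*$ from \eqref{eq: functor phi^*}. Recall that $H_\lambda(S)=\varphi^*H_\lambda$ for some $\varphi\in\Diff_+(S,S^1)$, that $H_{\bar\lambda}(\bar S)=\psi^*H_{\bar\lambda}$ for some $\psi\in\Diff_+(\bar S,S^1)$, and that by definition of the involution $\lambda\mapsto\bar\lambda$ one has $H_{\bar\lambda}\cong j^*H_\lambda$ for some $j\in\Diff_-(S^1)$. All these sectors are only well defined up to non-canonical isomorphism, so it suffices to produce one isomorphism of $\bar S$-sectors between the two sides.

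First I would compute both sides using contravariance of $(-)^*$. Since $i\colon\bar S\to S$ is the orientation-reversing identity map, $i^*H_\lambda(S)=i^*\varphi^*H_\lambda=(\varphi\circ i)^*H_\lambda$, where $\varphi\circ i\in\Diff_-(\bar S,S^1)$; likewise $H_{\bar\lambda}(\bar S)=\psi^*H_{\bar\lambda}\cong\psi^*j^*H_\lambda=(j\circ\psi)^*H_\lambda$, where $j\circ\psi\in\Diff_-(\bar S,S^1)$. It then remains to check that the two orientation-reversing diffeomorphisms $\varphi\circ i$ and $j\circ\psi$ from $\bar S$ to $S^1$ yield isomorphic pullbacks of $H_\lambda$. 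For this, set $g:=(j\circ\psi)\circ(\varphi\circ i)^{-1}\in\Diff_+(S^1)$, so that $(j\circ\psi)^*=(\varphi\circ i)^*\circ g^*$ and hence $(j\circ\psi)^*H_\lambda=(\varphi\circ i)^*(g^*H_\lambda)$. By Corollary~\ref{phi* == psi*}, $g^*H_\lambda\cong H_\lambda$ as $S^1$-sectors, and applying the functor $(\varphi\circ i)^*$ to that isomorphism gives $(j\circ\psi)^*H_\lambda\cong(\varphi\circ i)^*H_\lambda$. Concatenating these isomorphisms yields $H_{\bar\lambda}(\bar S)\cong i^*H_\lambda(S)$, as desired. (Alternatively, one may exploit the non-canonicity directly: given $\varphi$ and any $\psi\in\Diff_+(\bar S,S^1)$, the map $j:=(\varphi\circ i)\circ\psi^{-1}$ lies in $\Diff_-(S^1)$ and is thus an admissible choice in the definition of $\bar\lambda$, and then $j\circ\psi=\varphi\circ i$ on the nose.)

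I do not expect a serious obstacle here; the only points that need a word of care are bookkeeping ones. One is that $\varphi\mapsto\varphi^*$ must be used as a contravariant functor on \emph{all} diffeomorphisms of circles, i.e.\ $(\psi\circ\varphi)^*=\varphi^*\circ\psi^*$ even when orientation-reversing maps occur; this is immediate from the explicit description in \eqref{eq: functor phi^*} (an orientation-reversing pullback is complex conjugation together with the evident relabelling of the actions, and two such compose to an orientation-preserving pullback). The other is that one cannot shorten the argument by asserting $\varphi^*H_\lambda\cong H_\lambda$ for orientation-reversing $\varphi$---that is false in general and is exactly the reason $\bar\lambda\neq\lambda$; only the orientation-preserving statement, Corollary~\ref{phi* == psi*}, is invoked.
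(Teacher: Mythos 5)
Your proof is correct and follows essentially the same route as the paper's: both arguments unwind the definitions of $H_\lambda(S)$, $H_{\bar\lambda}(\bar S)$, and the involution $\lambda\mapsto\bar\lambda$, and then use contravariant functoriality of pullback; indeed your parenthetical remark (choose $j:=(\varphi\circ i)\circ\psi^{-1}$ so that $j\circ\psi=\varphi\circ i$ on the nose) is precisely the paper's proof, which takes $S=S^1$, $\varphi=\mathrm{id}$, and $\psi$ a reflection viewed as an orientation-preserving map $\bar S^1\to S^1$. The detour through Corollary~\ref{phi* == psi*} in your main argument merely handles the independence of the various non-canonical choices, which the paper leaves implicit.
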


\begin{proof}
Without loss of generality, we take $S$ to be the standard circle $S^1$.
Let $j:S^1\to S^1$ be a reflection, and let us denote by $\bar j$
the same map, viewed as an orientation-preserving map from $\bar S^1$ to $S^1$. 
We have $H_{\bar \lambda}(\bar S^1)\cong \bar j^*H_{\bar\lambda}$ and $H_{\bar\lambda}\cong j^* H_\lambda$.
It follows that $H_{\bar\lambda}(\bar S^1)\cong \bar j^* j^* H_\lambda \cong i^*H_\lambda$.
\end{proof}

The following two lemmas describe the duals of sectors with respect to Connes fusion of bimodules and the monoidal product on sectors, respectively.

\begin{lemma}\label{lem: dual of H_lambda}
Let $S$ be a circle, decomposed into two subintervals $I$ and~$I'$.
Then the dual of the bimodule ${}_{\cala(I)}H_\lambda(S)_{\cala(\bar I')}$ is ${}_{\cala(\bar I')}H_{\bar \lambda}(\bar S)_{\cala(I)}$.
\end{lemma}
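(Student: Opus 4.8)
The plan is to identify this dual with the complex conjugate Hilbert space, and then to recognize that complex conjugate as $H_{\bar\lambda}(\bar S)$ using Lemma~\ref{lem: Hl(bS) cong i^*Hbl(S)}; the argument runs in close parallel to the proof of Lemma~\ref{lem: H_0(-S)}.

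Since $\cala$ has finite index, Theorem~\ref{thm: KLM -- all irreducible sectors are finite} shows that $H_\lambda$ is a finite sector, so by Lemma~\ref{lem: I in def of finite sector} the bimodule ${}_{\cala(I)}H_\lambda(S)_{\cala(\bar I')}$ is dualizable; it therefore suffices to exhibit one bimodule isomorphic to its dual. For a dualizable bimodule ${}_AH_B$ over von Neumann algebras with separable preduals, a dual is given by the complex conjugate Hilbert space $\overline H$ with actions $b\cdot\bar\xi\cdot a:=\overline{a^*\cdot\xi\cdot b^*}$ (see the appendix and~\cite{BDH(Dualizability+Index-of-subfactors)}), where $\bar\xi$ denotes $\xi$ regarded as an element of $\overline H$. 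Applying this with $A=\cala(I)$, $B=\cala(\bar I')$ and $H=H_\lambda(S)$ (whose relevant structure is the pair of commuting actions $\rho_I$, $\rho_{I'}$), the dual of ${}_{\cala(I)}H_\lambda(S)_{\cala(\bar I')}$ is $\overline{H_\lambda(S)}$ equipped with the left $\cala(\bar I')$-action and right $\cala(I)$-action obtained from $\rho_{I'}$ and $\rho_I$ by passing to adjoints.

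It then remains to recognize this conjugate bimodule as the restriction of an $\bar S$-sector. Let $i\colon\bar S\to S$ be the identity map, an orientation-reversing diffeomorphism, and consider the $\bar S$-sector $i^*H_\lambda(S)$ of~\eqref{eq: functor phi^*}: by construction its underlying Hilbert space is $\overline{H_\lambda(S)}$, and for an interval $J\subset\bar S$ (which we also regard as a subinterval of $S$) its action is obtained from $\rho_J$ by composing with the canonical antihomomorphism $\cala(J)\to\cala(J)^{\op}$ and passing to adjoints on $\overline{H_\lambda(S)}$. Restricting $i^*H_\lambda(S)$ to the decomposition $\bar S=\bar I'\cup\bar I$ therefore reproduces exactly the dual bimodule of the previous paragraph: its left $\cala(\bar I')$-action is the adjointed $\rho_{I'}$, and its right $\cala(I)$-action --- via the canonical identification $\cala(I)=\cala(\overline{\bar I})\cong\cala(\bar I)^{\op}$ --- is the adjointed $\rho_I$. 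Finally, Lemma~\ref{lem: Hl(bS) cong i^*Hbl(S)} supplies an isomorphism of $\bar S$-sectors $i^*H_\lambda(S)\cong H_{\bar\lambda}(\bar S)$, and transporting the bimodule structure along it identifies the dual with ${}_{\cala(\bar I')}H_{\bar\lambda}(\bar S)_{\cala(I)}$, as claimed.

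The step that requires genuine care --- and the abstract counterpart of the explicit modular-conjugation computation in the proof of Lemma~\ref{lem: H_0(-S)} --- is the matching asserted in the last paragraph, namely that the $\bar S$-sector structure on $i^*H_\lambda(S)$, restricted to $\bar I'$ and $\bar I$, really coincides with the adjoint actions defining the dual bimodule. This is bookkeeping with adjoints of antilinear operators and the various identifications $\cala(\bar K)\cong\cala(K)^{\op}$; I expect it to be routine but the most error-prone part of the argument.
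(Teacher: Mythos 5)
Your proposal is correct and takes essentially the same route as the paper: the paper's proof also identifies $H_{\bar\lambda}(\bar S)\cong i^*H_\lambda(S)$ as the complex conjugate Hilbert space with adjointed actions, writes out both bimodule structures, checks $b\bar\xi a=\overline{a^*\xi b^*}$, and invokes the characterization of the dual bimodule as the conjugate with exactly those actions. The "error-prone bookkeeping" you flag at the end is precisely the short explicit computation the paper carries out, and your description of it is accurate.
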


\begin{proof}
Letting $i:\bar S\to S$ be the identity map, we have $H_{\bar \lambda}(\bar S)\cong i^*H_\lambda(S)$ by the previous lemma.
Here, the sector $H_{\bar \lambda}(\bar S)$ is the complex conjugate of $H_\lambda$, with action
$a\overline \xi := \overline{a^*\xi}$ for $a\in \cala(\bar J)$, $J\subset S$.
The bimodule ${}_{\cala(I)}H_\lambda(S)_{\cala(\bar I')}$ has actions given by
\begin{equation}\label{eq:1 axib BIS}
\qquad \qquad\quad a \xi b:=ab\xi \qquad\quad a\in \cala(I),\, b\in\cala(\bar I'),
\end{equation}
and the bimodule ${}_{\cala(\bar I')}H_{\bar \lambda}(\bar S)_{\cala(I)}$ has actions given by
\begin{equation}\label{eq:2 axib BIS}
\qquad \qquad b \bar \xi a:=ab\bar\xi = \overline{a^*b^*\xi} \qquad\quad a\in \cala(I),\, b\in\cala(\bar I').
\end{equation}
Comparing \eqref{eq:1 axib BIS} and \eqref{eq:2 axib BIS}, we see that $b\bar\xi a=\overline{a^*\xi b^*}$, and so
the two bimodules are dual to each other (see the discussion after~\ref{def:dual} or~\cite[Cor 6.12]{BDH(Dualizability+Index-of-subfactors)}).
\end{proof}

\begin{lemma}\label{lem: identification of the dual sector}
The sector $H_{\bar\lambda}(S)$ is dual to $H_\lambda(S)$
with respect to the monoidal structure \eqref{eq: associative operation on A-reps} on $\Rep_S(\cala)$.
\end{lemma}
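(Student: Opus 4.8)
The plan is to reduce the statement to an identification of duals of $\cala(I)$-$\cala(I)$-bimodules under Connes fusion, which is precisely what Lemma~\ref{lem: dual of H_lambda} supplies. Fix an interval $I\subset S$ with complement $I'$ and an involution $j\in\Diff_-(S)$ fixing $\partial I$; by Proposition~\ref{prop: all monoidal structures are equivalent} it suffices to treat the monoidal structure $\boxtimes=\boxtimes_{I,j}$ of~\eqref{eq: associative operation on A-reps}. Recall from the discussion following~\eqref{eq: associative operation on A-reps} that this monoidal structure is, by construction, the Connes fusion over $\cala(I)$ of the $\cala(I)$-$\cala(I)$-bimodules ${}_{\cala(I)}H_{\cala(I)}$ attached to $S$-sectors $H$ by the left action $\rho_I$ and the right action $\rho_{I'}\circ\cala(j)$, with unit object $L^2(\cala(I))$. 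The forgetful functor from $(\Rep_S(\cala),\boxtimes)$ to $\cala(I)$-$\cala(I)$-bimodules with Connes fusion is fully faithful: a bounded operator between $S$-sectors that intertwines the two $\cala(I)$-actions intertwines $\rho_J$ for every $J\subset I$ and every $J\subset I'$, hence by strong additivity (as in~\eqref{eq: A(J) can be re-expressed as a fusion}) for every $J$ crossing $\partial I$, and therefore for all $J\subset S$ by Lemma~\ref{lem: open cover of circle => sector}. It follows that $H_{\bar\lambda}(S)$ is dual to $H_\lambda(S)$ for $\boxtimes$ if and only if ${}_{\cala(I)}H_{\bar\lambda}(S)_{\cala(I)}$ is the Connes-fusion dual of ${}_{\cala(I)}H_\lambda(S)_{\cala(I)}$.

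I would then compute this bimodule dual using Lemma~\ref{lem: dual of H_lambda}. By Theorem~\ref{thm: KLM -- all irreducible sectors are finite} the sector $H_\lambda(S)$ is finite, so ${}_{\cala(I)}H_\lambda(S)_{\cala(I)}$ is indeed dualizable. Let $\theta\colon\cala(I)\xrightarrow{\ \cong\ }\cala(\bar I')$ be the isomorphism induced by the orientation-preserving diffeomorphism $I\to\bar I'$ that is $j|_I$ followed by the orientation-reversal $I'\to\bar I'$. Unwinding the definitions, the bimodule ${}_{\cala(I)}H_\lambda(S)_{\cala(I)}$ above is obtained from the bimodule ${}_{\cala(I)}H_\lambda(S)_{\cala(\bar I')}$ of Lemma~\ref{lem: dual of H_lambda} by restricting the right action along $\theta$; since restriction of scalars along an isomorphism on one side of a bimodule carries the dual to the corresponding restriction on the other side, Lemma~\ref{lem: dual of H_lambda} then identifies the dual of ${}_{\cala(I)}H_\lambda(S)_{\cala(I)}$ with ${}_{\cala(\bar I')}H_{\bar\lambda}(\bar S)_{\cala(I)}$, left action restricted along $\theta$ --- a $\cala(I)$-$\cala(I)$-bimodule supported on the Hilbert space $H_{\bar\lambda}(\bar S)$.

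The remaining task is to recognize this bimodule, regarded again as an $S$-sector, as $H_{\bar\lambda}(S)$. Here I would use Lemma~\ref{lem: Hl(bS) cong i^*Hbl(S)}: for $i\colon\bar S\to S$ the identity map there is an isomorphism of $\bar S$-sectors $H_{\bar\lambda}(\bar S)\cong i^*H_\lambda(S)$; taking $H_{\bar\lambda}(S):=j^*H_\lambda(S)$ as a representative of its isomorphism class, one chases the actions of $\cala(J)$ for $J\subset I$ and for $J\subset I'$ through this isomorphism, through $\theta$, and through the canonical anti-isomorphisms $\cala(\bar J)\cong\cala(J)^\op$, to check that the $S$-sector structure thus obtained is exactly that of $j^*H_\lambda(S)$. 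The evaluation and coevaluation maps coming from the bimodule duality are then maps of $S$-sectors by the fully-faithfulness noted above, so they exhibit $H_{\bar\lambda}(S)$ as a dual of $H_\lambda(S)$ for $\boxtimes$. I expect this last comparison to be the only delicate point: all the mathematical content lies in Lemmas~\ref{lem: dual of H_lambda} and~\ref{lem: Hl(bS) cong i^*Hbl(S)} and in the definition of $\boxtimes$, and the work is the careful bookkeeping of the several orientation-reversing identifications (the involution $j$, the identity map $\bar S\to S$, and the maps $\cala(\bar J)\cong\cala(J)^\op$) needed to match the left and right $\cala(I)$-actions produced by Lemma~\ref{lem: dual of H_lambda} and $\theta$ with those coming from the sector structure on $j^*H_\lambda(S)$.
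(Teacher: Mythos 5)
Your route is valid but genuinely different from the paper's. You reduce the claim, via the (correct) observation that strong additivity makes the forgetful functor from $S$-sectors to $\cala(I)$-$\cala(I)$-bimodules fully faithful, to the bimodule duality of Lemma~\ref{lem: dual of H_lambda}, transported along the isomorphism $\theta\colon\cala(I)\to\cala(\bar I')$. The paper instead proves the statement directly, by the same two-line computation it used for Lemma~\ref{lem: dual of H_lambda} itself: writing $H_{\bar\lambda}\cong j^*H_\lambda$ as the conjugate Hilbert space $\overline{H_\lambda}$, it records the bimodule structure~\eqref{eq: associative operation on A-reps} on $H_\lambda$ as $a\xi b=a\,\cala(j)(b)\,\xi$ and the induced structure on $\overline{H_\lambda}$ as $a\bar\xi b=\overline{b^*\cala(j)(a^*)\xi}$, and observes that the latter is exactly the conjugate bimodule $a\bar\xi b=\overline{b^*\xi a^*}$, which is the canonical dual by~\cite[Cor~6.12]{BDH(Dualizability+Index-of-subfactors)}. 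What your approach buys is conceptual economy --- Lemma~\ref{lem: dual of H_lambda} is used as a black box rather than its proof being repeated --- but it does not actually save work: the ``bookkeeping'' you defer at the end (matching the $\bar S$-sector $H_{\bar\lambda}(\bar S)$ with left action restricted along $\theta$ against the $S$-sector $j^*H_\lambda(S)$ with the right action of~\eqref{eq: associative operation on A-reps}) is precisely the computation the paper performs, and is where all the orientation-reversal signs live. Since you have only asserted that this comparison will come out right rather than carried it out, you should either do that chase explicitly or, more efficiently, adopt the direct computation: every genuine step of your argument (full faithfulness, transport of duals along $\theta$, Lemma~\ref{lem: Hl(bS) cong i^*Hbl(S)}) is sound, but the one step you have not written down is the one that constitutes the proof. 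One further small point: your appeal to Theorem~\ref{thm: KLM -- all irreducible sectors are finite} for dualizability of ${}_{\cala(I)}H_\lambda(S)_{\cala(I)}$ is appropriate and is also implicitly needed in the paper's proof, since the characterization of the dual as the conjugate bimodule presupposes dualizability.
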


\begin{proof}
Let $j\in\Diff_-(S^1)$ be an involution fixing $\partial I$, for $I \subset S^1$.
The sector $H_{\bar\lambda} \cong j^*H_\lambda$ is the complex conjugate $\overline {H_\lambda}$, with actions
$a\overline \xi := \overline{\cala(j)(a^*)\xi}$ for $a\in \cala(J)$, $J\subset S^1$.
Following \eqref{eq: associative operation on A-reps}, we view $H_\lambda$ as an $\cala(I)$-$\cala(I)$-bimodule, with actions
\begin{equation}\label{eq:1 axib}
\qquad a \xi b:=a\cala(j)(b)\xi \qquad\quad a,b\in \cala(I).
\end{equation}
The same procedure on $j^*H_\lambda$ yields the following left and right actions on $\overline{H_\lambda}$:
\begin{equation}\label{eq:2 axib}
\qquad a \bar\xi b:=\overline{b^*\cala(j)(a^*)\xi} \qquad\quad a,b\in \cala(I).
\end{equation}
Comparing \eqref{eq:1 axib} and \eqref{eq:2 axib}, we see that $a\bar\xi b=\overline{b^*\xi a^*}$, and so
$j^*H_\lambda$ is the dual of $H_\lambda$ (again by~\cite[Cor 6.12]{BDH(Dualizability+Index-of-subfactors)}).
\end{proof}

\subsubsection*{Computation of the annular sector}
The following result, even though phrased in a rather different language, is essentially equivalent to~\cite[Theorem 9]{Kawahigashi-Longo-Mueger(2001multi-interval)}.
Recall that $\cala$ is irreducible.

\begin{theorem}
  \label{thm:KLM}
  Let $\cala$ be a conformal net with finite index, and let $S_m$, $S_b$, and $H_{\mathit{ann}}$ be as in \eqref{eq: lem: H_0 otimes H_0   <  H_Ann}.
  We then have a non-canonical isomorphism of $S_m$-$S_b$-sectors
  \begin{equation} \label{eq:KLM}
    H_{\mathit{ann}} \,\cong\,\, \bigoplus_{\lambda\in\Delta} H_\lambda(S_m) \otimes H_{\bar \lambda}(S_b).
  \end{equation}
\end{theorem}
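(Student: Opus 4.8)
The plan is to decompose the annular sector $H_{\mathit{ann}}$ using the finiteness results already established, and then identify the summands by a Hom-space computation. First I would use the analysis at the end of the proof of Theorem~\ref{thm: KLM -- all irreducible sectors are finite}: there we wrote $H_{\mathit{ann}}$ as a finite direct sum $\bigoplus_i P_i \otimes Q_i$, where the $P_i$ range over (some of the) irreducible $S_b$-sectors and the $Q_i = \mathrm{Hom}(P_i, H_{\mathit{ann}})$ are multiplicity spaces carrying residual $S_m$-sector structures, with the $A_l$-$C_r$-bimodules ${}_{A_l}(P_i)_{C_r}$ dualizable. Since every irreducible $\cala$-sector appears (up to isomorphism) as one of the $P_i$, I can reindex and write, for each $\lambda\in\Delta$, the $\lambda$-isotypic part of $H_{\mathit{ann}}$ as $H_\lambda(S_b)\otimes M_\lambda$ for some $S_m$-sector $M_\lambda$, so that $H_{\mathit{ann}}\cong \bigoplus_{\lambda\in\Delta} H_\lambda(S_b)\otimes M_\lambda$. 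The content of the theorem is then the identification $M_\lambda \cong H_{\bar\lambda}(S_m)$.

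To compute $M_\lambda$, I would express it as a Hom-space and manipulate it using duality. By construction $M_\lambda \cong \mathrm{Hom}_{S_b}(H_\lambda(S_b), H_{\mathit{ann}})$, and this Hom carries its $S_m$-sector structure from the $S_m$-action on $H_{\mathit{ann}}$. Passing to bimodule language over the algebras associated to the relevant intervals of $S_b$ (namely $A_l=\cala(I_2)$ and $C_r=\cala(I_8)$), this is $\mathrm{Hom}_{A_l,C_r}(H_\lambda(S_b), H_{\mathit{ann}})$, with $H_{\mathit{ann}}=H_l\boxtimes_B H_r$ where $B=\cala(I_1\cup I_3)^\op\cong\cala(I_5\cup I_7)$. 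Now I use that $\cala$ has finite index, so ${}_A(H_l)_B$ is dualizable with dual $H_0(\bar S_l)$ by Lemma~\ref{lem: H_0(-S)}; writing $\check H_l := H_0(\bar S_l)$ and moving it across the fusion (as in the proof of Lemma~\ref{lem: H_0 otimes H_0   <  H_Ann}) converts the Hom into $\mathrm{Hom}_{B,C}\big(\check H_l\boxtimes_{A} H_\lambda(S_b),\, H_r\big)$, where now the residual $S_m$-structure is carried on the left, through the algebra $A_m=\cala(I_4)^\op$. Using Corollary~\ref{cor: vacuum * vacuum = vacuum} (more precisely its conformal refinement, Corollary~\ref{cor: Conformal version of vacuum * vacuum = vacuum}, together with the $S_b$-sector structure of $H_\lambda(S_b)$), the object $\check H_l\boxtimes_A H_\lambda(S_b)$ fuses two vacuum-like pieces with one $H_\lambda$ inserted, and is isomorphic as a $B$-module — carrying the relevant $\cala(I_4)$-action — to $H_\lambda$ on the circle $S_r$, i.e. to $H_\lambda(S_r)$ equipped with its $A_m$-action coming from $I_4\subset S_r$. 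Thus $M_\lambda\cong \mathrm{Hom}_{B,C}\big(H_\lambda(S_r), H_r\big)$ with residual $S_m$-action from $A_m$; since $H_r=H_0(S_r)$ is the vacuum and $\cala$ is irreducible, this Hom is precisely the "co-multiplicity" of $H_\lambda$ inside the vacuum, which is computed by the unit property of the vacuum and Lemma~\ref{lem: identification of the dual sector} / Lemma~\ref{lem: dual of H_lambda}: the residual $S_m$-module one obtains is $H_{\bar\lambda}(S_m)$. Assembling, $H_{\mathit{ann}}\cong\bigoplus_{\lambda\in\Delta} H_\lambda(S_b)\otimes H_{\bar\lambda}(S_m)$, which is~\eqref{eq:KLM} after relabeling $\lambda\mapsto\bar\lambda$ (legitimate since $\lambda\mapsto\bar\lambda$ is an involution on $\Delta$).

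The main obstacle I expect is bookkeeping the two commuting sector structures ($S_m$ and $S_b$) simultaneously through the duality manipulations: each time I move a dualizing bimodule across a Connes fusion, I must check that the "other" action (the $S_m$-action, when I am computing the $S_b$-decomposition) is transported correctly, so that the multiplicity space really inherits the claimed $S_m$-sector structure rather than merely the correct underlying Hilbert space. Concretely, the delicate point is verifying that the isomorphism $\check H_l\boxtimes_A H_\lambda(S_b)\cong H_\lambda(S_r)$ is equivariant for the $\cala(I_4)=\cala(I_6)'$-actions on both sides, and that the final identification of $\mathrm{Hom}_{B,C}(H_\lambda(S_r),H_r)$ with $H_{\bar\lambda}(S_m)$ respects all the interval actions making up the $S_m$-sector; this is where Lemma~\ref{lem: open cover of circle => sector} (to check $S_m$-equivariance interval-by-interval, then conclude by strong additivity) and the dual-sector lemmas do the real work. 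Everything else is a formal consequence of finite index (dualizability of the vacuum bimodules) and the already-proven structure theory of sectors.
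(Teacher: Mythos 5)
Your proposal is correct and follows essentially the same route as the paper: decompose $H_{\mathit{ann}}$ using the structure theory from Theorem~\ref{thm: KLM -- all irreducible sectors are finite}, move the dual vacuum $\check H_l=H_0(\bar S_l)$ across the Connes fusion by Frobenius reciprocity, and finish with the duality lemmas for sectors. The one place the paper is slicker, and which dissolves the bookkeeping worry you flag at the end, is that it first records the full decomposition $H_{\mathit{ann}}\cong\bigoplus_{\lambda,\mu}N_{\lambda\mu}\,H_\lambda(S_m)\otimes H_\mu(S_b)$ (every irreducible summand of the finite $A$-$C$-bimodule $H_{\mathit{ann}}$ is such a tensor product) and then computes the \emph{scalar} $N_{\lambda\mu}=\dim\mathrm{Hom}_{A,C}(H_\lambda\otimes H_\mu,H_{\mathit{ann}})$ by fusing $\check H_l$ over the full algebra $A=A_l\,\bar\otimes\,A_m$ against the genuine $A$-$C$-bimodule $H_\lambda(S_m)\otimes H_\mu(S_b)$ and invoking Lemma~\ref{lem: characterization of duals}, so no residual $S_m$-action ever needs to be transported through the duality manipulations.
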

  We draw this isomorphism as
\[
\def\coords{
  \coordinate (a) at (0,0);
  \coordinate (b) at (.15,1);
  \coordinate (c) at (-.2,2);
  \coordinate (d) at (0,3);
  \coordinate (e) at (-5,.4);
  \coordinate (f) at (-5,2.6);
  \coordinate (g) at (-1,1.2);
  \coordinate (h) at (-1,2);
  \coordinate (a') at (d);
  \coordinate (b') at (c);
  \coordinate (c') at (b);
  \coordinate (d') at (a);
\begin{scope}[yshift = 85, rotate= 180]
  \coordinate (e') at (-5,.4);
  \coordinate (f') at (-5,2.6);
  \coordinate (g') at (-1,1.2);
  \coordinate (h') at (-1,2);
\end{scope}
}
\tikzmath[scale=.35]{
\coords
\filldraw[fill = gray!30] (b) to 
	(a) to [out = 180, in = -45, looseness=1.1] 
	(e) to [out = -45 + 180, in = 225, looseness=1.1]
	(f) to [out = 225 + 180, in = 180, looseness=1.1] node (a1) [pos = .37] {}
	(d) to [looseness=0]
	(c) to [out = 180, in = 45, looseness=1.1] 
	(h) to [out = 45 + 180, in = -225, looseness=1.1]
	(g) to [out = -225 + 180, in = 180, looseness=1.1] (b);
\draw[->] (a1.center) -- ++ (180:0.01);
\filldraw[fill = gray!30] (b') to
	(a') to [out = 0, in = -45 + 180, looseness=1.1] 
	(e') to [out = -45, in = 225 + 180, looseness=1.1]
	(f') to [out = 225, in = 0, looseness=1.1]
	(d') to [looseness=0]
	(c') to [out = 0, in = 45 + 180, looseness=1.1] 
	(h') to [out = 45, in = -225 + 180, looseness=1.1]
	(g') to [out = -225, in = 0, looseness=1.1]                                     node (b1) [pos = .37] {}
	(b');
\draw[->] (b1.center) -- ++ (0:0.01);
} 
  \; \cong \;
  \bigoplus_{\lambda\in\Delta}\, \;
\tikzmath[scale=.35]{\coords
\useasboundingbox (-2,.4) rectangle (2,2.6);
\filldraw[fill = gray!30] 
	(c) to [out = 180, in = 45, looseness=1.1] 
	(h) to [out = 45 + 180, in = -225, looseness=1.1]
	(g) to [out = -225 + 180, in = 180, looseness=1.1]
	(c') to [out = 0, in = 45 + 180, looseness=1.1] 
	(h') to [out = 45, in = -225 + 180, looseness=1.1]
	(g') to [out = -225, in = 0, looseness=1.1]                                     node (b1) [pos = .7] {}
	(b');
\draw[->] (b1.center) -- ++ (0:0.01);                        \node at (0,1.5)  {$\scriptscriptstyle\lambda$};
} 
\otimes
\tikzmath[scale=.35]{
\coords
\filldraw[fill = gray!30]
	(a) to [out = 180, in = -45, looseness=1.1] 
	(e) to [out = -45 + 180, in = 225, looseness=1.1]
	(f) to [out = 225 + 180, in = 180, looseness=1.1] node (a1) [pos = .5] {}
	(a') to [out = 0, in = -45 + 180, looseness=1.1] 
	(e') to [out = -45, in = 225 + 180, looseness=1.1]
	(f') to [out = 225, in = 0, looseness=1.1] (d');                        \node at (0,1.5)  {$\scriptstyle\bar \lambda$};
\draw[->] (a1.center) -- ++ (175:0.01);
} 
\]

\begin{proof}
Let $H_l=H_0(S_l)$, $H_r=H_0(S_r)$, $\check H_l=H_0(\bar S_l)$, and $A$, $B$, $C$, $A_l$, $A_m$, $C_m$, $C_r$
be as in the proofs of Lemma \ref{lem: H_0 otimes H_0   <  H_Ann} and Theorem \ref{thm: KLM -- all irreducible sectors are finite}.
The Hilbert space $H_{\mathit{ann}} = H_l\boxtimes_B H_r$ is a finite $A$-$C$-bimodule and therefore splits into finitely many irreducible summands.
By the argument in the proof of Theorem \ref{thm: KLM -- all irreducible sectors are finite}, each irreducible summand is the tensor product of an
irreducible $S_m$-sector and an irreducible $S_b$-sector, and so we can write $H_{\mathit{ann}}$ as a direct sum
\begin{equation*} 
H_{\mathit{ann}} \cong \bigoplus_{\lambda,\mu\in \Delta} N_{\lambda\mu}\, H_{\lambda}(S_m) \otimes H_{\mu}(S_b)
\end{equation*}
with finite multiplicities $N_{\lambda\mu}\in\IN$.

Given $\lambda,\mu\in \Delta$, we now compute $N_{\lambda\mu}$. 
By slight abuse of notation, we abbreviate $H_\lambda:=H_\lambda(S_m)$ and $H_\mu:=H_\mu(S_b)$.
We also let $K:=(H_\lambda\boxtimes H_\mu)(S_r)$, where the operation of fusion of sectors is described in Definition \ref{def: fusion v h}.
We then have
\[\begin{split}
\mathrm{Hom}_{A,C}\big(H_\lambda \otimes H_{\mu},H_{\mathit{ann}}\big)
\cong\,\,&\mathrm{Hom}_{A,C}\big(H_\lambda \otimes H_{\mu},H_l\boxtimes_B H_r\big)\\
\cong\,\,&\mathrm{Hom}_{B,C}\big(\check H_l\boxtimes_A(H_\lambda \otimes H_{\mu}),H_r\big)\\
\cong\,\,&\mathrm{Hom}_{B,C}\big(H_\lambda\boxtimes_{A_m^\op}\check H_l\boxtimes_{A_l^{\phantom{\op}}}\!\! H_{\mu},H_r\big)\\
\cong\,\,&\mathrm{Hom}_{B,C}\big(K,H_r\big)
\cong\,\,\begin{cases}
\IC&\text{if $\mu=\bar\lambda$}\\
0&\text{otherwise,}\\
\end{cases}
\end{split}\]
where the last equality is given by
Lemma \ref{lem: characterization of duals}.
If follows that $N_{\lambda\mu}=\delta_{\mu\bar\lambda}$.
\end{proof}

\begin{remark}
The isomorphism \eqref{eq:KLM} is non-canonical.
It does not even make sense to ask whether or not it is canonical since the right-hand side of the equation is only well defined up to non-canonical isomorphism.
\end{remark}

Given $\lambda\in \Delta$, consider the statistical dimension $d_\lambda:=\dim({}_{\cala(S^1_\top)}(H_\lambda)_{\cala(S^1_\bot)})$ of $H_\lambda$ as a bimodule for two complementary intervals.

\begin{corollary}
If a conformal net $\cala$ has finite index, then the index satisfies
\[
\mu(\cala) \,=\, \sum_{\lambda\in\Delta} d_\lambda^2.
\]
\end{corollary}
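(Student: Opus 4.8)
The plan is to compute the statistical dimension of $H_{\mathit{ann}}$, regarded as a bimodule for the pair of complementary intervals $I_2\cup I_4\subset S_l$ and $I_6\cup I_8\subset S_r$, in two different ways and to compare the answers. Write $A:=\cala(I_2\cup I_4)$, $B:=\cala(I_1\cup I_3)^{\op}\cong\cala(I_5\cup I_7)$, and $C:=\cala(I_6\cup I_8)^{\op}$, so that $A$ acts on $H_{\mathit{ann}}$ through $H_0(S_l)$ and $C$ acts through $H_0(S_r)$, and recall $\nu:=\sqrt{\mu(\cala)}$.

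First, since $H_{\mathit{ann}}=H_0(S_l)\boxtimes_{\cala(I_5)\bar\otimes\cala(I_7)}H_0(S_r)$, multiplicativity of the statistical dimension under Connes fusion (\cite[Prop.~5.2]{BDH(Dualizability+Index-of-subfactors)}) gives $\dim\bigl({}_A(H_{\mathit{ann}})_C\bigr)=\dim\bigl({}_A(H_0(S_l))_B\bigr)\cdot\dim\bigl({}_B(H_0(S_r))_C\bigr)$. Each factor is the statistical dimension of a four-interval bimodule of the type treated in the lemma computing the statistical dimension of the $2n$-interval bimodule (with $n=2$); up to the rotational relabeling $I_i\mapsto I_{i+1}$ of the four intervals, which is realized by an orientation-preserving diffeomorphism of the circle under which the vacuum sector is pulled back to itself (Corollary~\ref{phi* == psi*}), that lemma shows each factor equals $\nu$. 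Hence $\dim\bigl({}_A(H_{\mathit{ann}})_C\bigr)=\nu^2=\mu(\cala)$. Second, Theorem~\ref{thm:KLM} identifies the same bimodule with $\bigoplus_{\lambda\in\Delta}{}_{\cala(I_4)}\bigl(H_\lambda(S_m)\bigr)_{\cala(I_6)^{\op}}\otimes{}_{\cala(I_2)}\bigl(H_{\bar\lambda}(S_b)\bigr)_{\cala(I_8)^{\op}}$, since the $S_m$-sector $H_\lambda(S_m)$ carries the $\cala(I_4)$ and $\cala(I_6)$ actions and the $S_b$-sector $H_{\bar\lambda}(S_b)$ carries the $\cala(I_2)$ and $\cala(I_8)$ actions. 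Additivity of $\dim$ over direct sums together with multiplicativity over the external tensor product then yields $\dim\bigl({}_A(H_{\mathit{ann}})_C\bigr)=\sum_{\lambda\in\Delta}d_\lambda\,d_{\bar\lambda}$, where we use that $d_\lambda$, being the statistical dimension of $H_\lambda$ for a complementary pair of intervals on a circle, is independent of the circle and of the chosen pair (transport by orientation-preserving diffeomorphisms and Corollary~\ref{phi* == psi*}). Finally $d_{\bar\lambda}=d_\lambda$, because $H_{\bar\lambda}$ is the dual of $H_\lambda$ (Lemma~\ref{lem: dual of H_lambda}) and a dualizable bimodule has the same statistical dimension as its dual. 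Comparing the two computations gives $\mu(\cala)=\sum_{\lambda\in\Delta}d_\lambda^2$.

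The main obstacle is purely bookkeeping: one must keep careful track of the opposite-algebra conventions (the left/right roles of the ``odd'' and ``even'' intervals in ${}_A(H_0(S_l))_B$ are opposite to those in Definition~\ref{def:index-for-nets}, which is exactly why the rotational symmetry argument, or alternatively Lemma~\ref{lem: H_0(-S)}, is needed to see that the relevant four-interval bimodule still has statistical dimension $\nu$), and one must invoke the standard additivity, multiplicativity, and duality properties of the statistical dimension in the precise forms cited above. None of these steps is substantive beyond what has already been established.
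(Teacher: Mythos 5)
Your proof is correct and follows the same route as the paper: compute $\dim\bigl({}_{\cala(I_2\cup I_4)}(H_{\mathit{ann}})_{\cala(I_6\cup I_8)}\bigr)$ once by multiplicativity of the statistical dimension under Connes fusion (giving $\nu^2=\mu(\cala)$) and once via the decomposition of Theorem~\ref{thm:KLM} (giving $\sum_{\lambda}d_\lambda d_{\bar\lambda}$), then conclude with $d_{\bar\lambda}=d_\lambda$ from~\eqref{eq: dim (A H_B)= dim (B H_A)}. Your extra bookkeeping about the opposite-algebra conventions and the rotational relabeling is a correct elaboration of details the paper leaves implicit.
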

\begin{proof}
Let $I_1,\ldots, I_8$ be as in \eqref{eq: lem: H_0 otimes H_0   <  H_Ann}.  By the multiplicativity of dimension under Connes fusion~\cite[Prop 5.2]{BDH(Dualizability+Index-of-subfactors)}, the statistical dimension of $H_{\mathit{ann}}$ as an $\cala(I_2\cup I_4)$-$\cala(I_6\cup I_8)$-bimodule is the square of the statistical dimension of
${}_{\cala(\tikzmath[scale=.1]{
\useasboundingbox (-1.1,-1) rectangle (1.1,1);
\draw (-45:1) arc (-45:45:1) (135:1) arc (135:225:1);
\draw[line cap = round, line width = .2](.65,-.2) to[bend right = 25] (1,.2)(1,.2) to[bend right = 25] (1.35,-.2);
\pgftransformrotate{180}
\draw[line cap = round, line width = .2](.65,-.2) to[bend right = 25] (1,.2)(1,.2) to[bend right = 25] (1.35,-.2);
})}H_0(\cala)_{\cala(
\tikzmath[scale=.1]{
\useasboundingbox (-1,-1) rectangle (1,1);
\draw (45:1) arc (45:135:1) (-45:1) arc (-45:-135:1);
\pgftransformrotate{90}
\draw[line cap = round, line width = .2](.65,-.2) to[bend right = 25] (1,.2)(1,.2) to[bend right = 25] (1.35,-.2);
\pgftransformrotate{180}
\draw[line cap = round, line width = .2](.65,-.2) to[bend right = 25] (1,.2)(1,.2) to[bend right = 25] (1.35,-.2);
})}$.
In other words,
\[
\dim\big({}_{\cala(I_2\cup I_4)}(H_{\mathit{ann}})_{\cala(I_6\cup I_8)}\big) = \mu(\cala). 
\]
The result follows from \eqref{eq:KLM} and the fact~\eqref{eq: dim (A H_B)= dim (B H_A)} that $d_\lambda=d_{\bar \lambda}$.
\end{proof}

\subsection{The Hilbert space associated to a surface}\label{sec: The Hilbert space associated to a surface}

Given a closed 1-manifold $M$, we call a Hilbert space $H$ an \emph{$M$-sector of $\cala$} if it comes equipped with compatible actions $\rho_I:\cala(I)\to \bfB(H)$
for all the intervals~$I\subset M$.
Here, compatible means that $\rho_J=\rho_I|_{\cala(J)}$ whenever $J$ is contained in $I$,
and that $\cala(I)$ and $\cala(J)$ commute whenever $I$ and $J$ have disjoint interiors.
We denote by $\Rep_M(\cala)$ the category of $M$-sectors of $\cala$.

\begin{lemma}\label{lem: open cover of circle => sector ++}
Let $M$ be a closed 1-manifold, and let $I_i\subset M$ be intervals whose interiors cover $M$.
Let $\rho_{i} \colon \cala(I_i) \to \bfB(H)$ be actions 
subject to the conditions:
\begin{enumerate}
 \item $\rho_{i}|_{\cala(I_i\cap I_j)}=\rho_{j}|_{\cala(I_i\cap I_j)}$,
 \item if $J\subset I_i$ and $K\subset I_j$ are disjoint, then $\rho_{i}(\cala(J))$ commutes with $\rho_{j}(\cala(K))$.
\end{enumerate}
These actions extend in a unique way to the structure of an 
$M$-sector of $\cala$.

Moreover, if $I \subset M$ and $J\subset M$ are disjoint intervals, then the action of $\cala(I)\otimes_\alg\cala(J)$ on $H$ extends to the spatial tensor product $\cala(I)\,\bar\otimes\,\cala(J)$.
\end{lemma}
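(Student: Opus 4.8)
The plan is to reduce to the single-circle situation of Lemma~\ref{lem: open cover of circle => sector} by splitting $M$ into its connected components. Write $M = S_1\sqcup\dots\sqcup S_k$ with each $S_a$ a circle. Each $I_i$, being connected, lies in a unique $S_{a(i)}$, and for each $a$ the intervals $\{I_i : a(i)=a\}$ have interiors covering $S_a$ and satisfy the compatibility hypotheses of Lemma~\ref{lem: open cover of circle => sector}; applying that lemma componentwise produces, for each $a$, a unique $S_a$-sector structure $\{\rho_I : I\subseteq S_a\}$ extending the given actions. Declaring $\rho_I$, for an arbitrary interval $I\subseteq M$, to be the one coming from the component containing $I$ gives compatible actions with $\rho_J = \rho_I|_{\cala(J)}$ for $J\subseteq I$; and uniqueness of the $M$-sector structure is clear, since any such structure restricts componentwise to an $S_a$-sector structure extending the cover actions, which Lemma~\ref{lem: open cover of circle => sector} pins down uniquely.

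It then remains to verify the locality relations built into the notion of $M$-sector: $\rho_I(\cala(I))$ and $\rho_J(\cala(J))$ commute whenever $I,J\subseteq M$ have disjoint interiors. When $I$ and $J$ lie in the same component $S_a$ this is the statement that the $S_a$-sector produced by Lemma~\ref{lem: open cover of circle => sector} obeys locality: if $I\cup J\neq S_a$, embed $I\cup J$ in an interval $K\subsetneq S_a$ and use locality of $\cala$ inside $\cala(K)$ together with $\rho_I=\rho_K|_{\cala(I)}$ and $\rho_J=\rho_K|_{\cala(J)}$; and the case $I\cup J=S_a$ reduces to this one by shrinking $I$ to a sequence $I_n\nearrow\mathrm{int}(I)$ and applying Lemma~\ref{lem:irrelevance-of-points} (using that $\rho_I$ is normal). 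When $I\subseteq S_a$ and $J\subseteq S_b$ with $a\neq b$, I would instead exploit the explicit formula~\eqref{eq:diff trick for nets - bis} in the proof of Lemma~\ref{lem: open cover of circle => sector}, which writes each $\rho_I(a)$ with $I\subseteq S_a$ as a finite product of operators drawn from the algebras $\rho_{I_i}(\cala(I_i))$ with $I_i\subseteq S_a$. Every such algebra lies in the commutant of $\rho_{I_j}(\cala(I_j))$ for every cover interval $I_j\subseteq S_b$ (by hypothesis, since $I_i$ and $I_j$ have disjoint interiors), so $\rho_I(\cala(I))$ lies in that commutant as well; as this holds for all $I_j\subseteq S_b$, $\rho_I(\cala(I))$ commutes with $\bigvee_{I_j\subseteq S_b}\rho_{I_j}(\cala(I_j))$, which contains $\rho_J(\cala(J))$ by the same formula. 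This establishes that $H$ is an $M$-sector.

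For the ``moreover'' statement, let $I,J\subseteq M$ be disjoint intervals, and choose an interval $\hat I$, contained in the component of $I$, with $I\subseteq\mathrm{int}(\hat I)$, with $\hat I$ a proper subinterval of its component, and with $\hat I\cap J=\emptyset$ (possible because $I$ and $J$ are disjoint and closed). By the split property of $\cala$, the inclusion $\cala(I)\subseteq\cala(\hat I)$ is split and hence admits an intermediate type~I factor $R$, with $\cala(I)\subseteq R\subseteq\cala(\hat I)$ --- this is the same consequence of the split axiom used in the proof of Lemma~\ref{lem:cont-inner-cov}; compare \cite{Doplicher-Longo(Standard-and-split-inclusions-of-von-Neumann-algebras)}. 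Since $\rho_{\hat I}$ is unital and $R$ is a type~I factor, $\rho_{\hat I}(R)$ is a type~I factor on $H$, so there is a unitary identification $H\cong K_1\otimes K_2$ under which $\rho_{\hat I}(R)=\bfB(K_1)\otimes 1$ and $\rho_{\hat I}(R)'=1\otimes\bfB(K_2)$. Then $\rho_I(\cala(I))=\rho_{\hat I}(\cala(I))\subseteq\rho_{\hat I}(R)$ is of the form $\tilde\rho_I(\cala(I))\otimes 1$ for a normal homomorphism $\tilde\rho_I\colon\cala(I)\to\bfB(K_1)$, while $\rho_J(\cala(J))$ --- which commutes with $\rho_{\hat I}(\cala(\hat I))\supseteq\rho_{\hat I}(R)$ by the locality relations just proved --- is of the form $1\otimes\tilde\rho_J(\cala(J))$ for a normal homomorphism $\tilde\rho_J\colon\cala(J)\to\bfB(K_2)$. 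The tensor product $\tilde\rho_I\,\bar\otimes\,\tilde\rho_J\colon\cala(I)\,\bar\otimes\,\cala(J)\to\bfB(K_1)\,\bar\otimes\,\bfB(K_2)=\bfB(H)$ of two normal representations on distinct Hilbert spaces is normal, and on $\cala(I)\otimes_\alg\cala(J)$ it sends $a\otimes b$ to $(\tilde\rho_I(a)\otimes 1)(1\otimes\tilde\rho_J(b))=\rho_I(a)\rho_J(b)$, so it is the desired extension.

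The crux is the cross-component situation in both halves: there $I$ and $J$ do not sit inside any single interval, so neither the locality axiom nor the split property of $\cala$ applies directly. The resolution is, for commutation, to keep all relevant operators inside the von Neumann algebra generated by one component's cover actions, and, for the tensor extension, to upgrade the abstract split inclusion to a concrete tensor factorization of $H$ via an intermediate type~I factor; the only mildly delicate points --- that $\rho_{\hat I}(R)$ is genuinely a type~I factor with the claimed commutant, and that $\tilde\rho_I\,\bar\otimes\,\tilde\rho_J$ is well defined and normal --- are standard facts about normal representations of von Neumann algebras.
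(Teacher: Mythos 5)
Your proof is correct, and for the ``moreover'' statement it takes a genuinely different route from the paper. The first statement is handled the same way in both (componentwise reduction to Lemma~\ref{lem: open cover of circle => sector}; the paper calls this ``an immediate generalization'' and leaves the cross-component commutation implicit, which you spell out correctly using formula~\eqref{eq:diff trick for nets - bis} and the commutation built into the compatibility hypothesis). For the extension to $\cala(I)\,\bar\otimes\,\cala(J)$, the paper argues in two cases: when $I$ and $J$ lie in a common interval $K$ it applies the split axiom directly through $\cala(K)$, and when they lie in different components it decomposes $H$ as an $S_1$-sector into $\bigoplus_\lambda H_\lambda(S_1)\otimes M_\lambda$ via Theorem~\ref{thm: KLM -- all irreducible sectors are finite} and reads off the action of the spatial tensor product from the form of that decomposition --- an argument that leans on the finite-index hypothesis in force throughout Section~\ref{sec:The finiteness condition}. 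You instead upgrade the split inclusion $\cala(I)\subseteq\cala(\hat I)$ to an intermediate type~I factor and hence to a tensor factorization $H\cong K_1\otimes K_2$ separating $\rho_I$ from $\rho_J$; this treats both cases uniformly, requires no finiteness of the index, and is essentially the classical Doplicher--Longo formulation of the split property. The only point to flag is that the existence of the intermediate type~I factor is not literally the form of the split axiom invoked in Lemma~\ref{lem:cont-inner-cov} (which uses the spatial-tensor-product extension on $L^2(\cala(\hat I))$), but the two are equivalent by~\cite{Doplicher-Longo(Standard-and-split-inclusions-of-von-Neumann-algebras)}, as you indicate, so the argument stands; what it buys is a proof of the lemma valid for arbitrary conformal nets, at the cost of importing that standard equivalence rather than the sector-decomposition machinery the paper already has on hand.
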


\begin{proof}
The first statment is an immediate generalization of Lemma \ref{lem: open cover of circle => sector}.

If the disjoint intervals $I$ and $J$ belong to the same connected component of $M$, then
we may find an interval $K$ that contains both, in which case the map $\cala(I)\otimes_\alg\cala(J)\to \cala(K)\to \bfB(H)$ extends to $\cala(I)\,\bar\otimes\,\cala(J)$ by the split property.
Assume now that $I$ and $J$ belong to two different connected components; call those components $S_1$ and $S_2$.
Applying Theorem \ref{thm: KLM -- all irreducible sectors are finite} to $H$ viewed as an $S_1$-sector, we may decompose it as
\begin{equation}\label{eq: sum decomposition of arbitrary sector*}
H \,\,\cong\,\, \bigoplus_{\lambda\in \Delta}\, H_\lambda(S_1)\otimes M_\lambda,
\end{equation}
where $\Delta$ is the set of isomorphism classes of irreducible $S_1$-sectors of $\cala$, the multiplicity spaces $M_\lambda=\hom_{\Rep_{S_1}(\cala)}(H_\lambda(S_1),H)$ are Hilbert spaces, and the tensor product is the completed tensor product of Hilbert spaces.
The multiplicity spaces $M_\lambda$ have residual actions of $\cala(I')$ for every interval $I' \subset M$ not contained in $S_1$.
In particular, they are equipped with actions of $\cala(J)$. 
It now follows from the form of the decomposition \eqref{eq: sum decomposition of arbitrary sector*} that $H$ supports an action of $\cala(I)\,\bar\otimes\,\cala(J)$.
\end{proof}

In this section, we give a construction of a Hilbert space $V(\Sigma)\in\Rep_{\partial\Sigma}(\cala)$
associated to a topological surface with smooth boundary---we insist that every connected component of the surface have non-empty boundary.
The construction depends on the auxiliary choice of particular kind of cell decomposition of $\Sigma$.
We will show later, in the second paper of this series \cite{BDH(all-together)}, that it is actually independent of any choice,
and that the construction also makes sense for surfaces without boundary (at least when the conformal net has finite index).

A cell decomposition of a topological surface is called \emph{regular} if all the attaching maps are injective. 
We call a cell decomposition $\Sigma=\ID_1\cup\ldots\cup\ID_n$ \emph{ordered} if the set $\{\ID_1,\ldots,\ID_n\}$ of 2-cells is ordered.
Finally, a cell decomposition is \emph{collared} if the 1-cells are equipped with smooth structures and with germs of (1-dimensional) local coordinates at their two end-points.
The construction of $V(\Sigma)$ that we present here depends on the choice of a regular ordered collared cell decomposition of $\Sigma$.

The idea of the construction is to associate to each 2-cell $\ID_i\subset \Sigma$ the vacuum sector $H_0(\partial \ID_i)$, and to then glue them using Connes fusion.
Let $\Sigma_i:=\ID_1\cup\ldots\cup\ID_i$, and let us assume that the $M_i:=\ID_i\cap\Sigma_{i-1}$ contain no isolated points.
Give the 1-manifolds $M_i$ the orientations coming from $\partial \ID_i$.
Note that these manifolds have natural smooth structures due to the presence of collars
(although they are typically not smoothly embedded in $\Sigma$, even if the latter is smooth):
in any dimension, if two smooth manifolds have collars along their boundary, then glueing them along some boundary components will again produce a smooth manifold.
We also make the assumption that the manifolds $M_i$ are disjoint unions of intervals (in the sequel paper, that condition will be removed).
Finally, we define inductively $V(\Sigma_i)$ by
\begin{equation}\label{eq: V(D_i) -- BIS}
V(\Sigma_i):=\begin{cases}
H_0(\partial \ID_1)& \text{for }i=1\\
V(\Sigma_{i-1})\boxtimes_{\cala(M_i)}H_0(\partial \ID_i)& \text{for } 1<i\le n
\end{cases}
\end{equation}
where the algebras $\cala(M_i)$ are described in the beginning of Section \ref{subs: The index of a conformal net}.

For the above construction to work, we need to check that the algebras $\cala(M_i)$ act on $V(\Sigma_{i-1})$ and on $H_0(\partial \ID_i)$.
The existence of an action of $\cala(M_i)$ on $H_0(\partial \ID_i)$ follows from the split property axiom.
To see that $\cala(M_i)$ acts on $V(\Sigma_{i-1})$, it is enough to show that $V(\Sigma_{i-1})$ is a $\partial \Sigma_{i-1}$-sector of $\cala$.
Assuming by induction that $V(\Sigma_{i-2})$ is a $\partial \Sigma_{i-2}$-sector, that $V(\Sigma_{i-1})$ is a $\partial \Sigma_{i-1}$-sector is a consequence of the following lemma.

\begin{lemma}\label{lem: NNN}
Let $N_1$, $N_2$, $N_3$ be 1-manifolds equipped with collars, and with an identification of their boundaries $\partial N_1=\partial N_2=\partial N_3$.
Orient them so that
\[
M_1:=N_1\cup \bar N_2\qquad M_2:=N_2\cup N_3\qquad M_3:=N_1\cup N_3
\]
are closed and oriented (though not necessarily connected):
\[
\tikzmath{\useasboundingbox (-3,-.7) rectangle (3.1,3.7);
\node at (-2.7,0) {$N_1$};\node at (-2.4,3.2) {$N_2$};\node at (2.8,1) {$N_3$};\pgftransformcm{.9}{-.05}{0}{.5}{\pgfpoint{0pt}{0pt}} 
\draw (0,2) .. controls (3,2) and (3,5) .. (0,5);\draw (0,3) .. controls (1,3) and (1,4) .. (0,4);\draw (0,0) .. controls (3,0) and (3,1) .. (0,1);
\draw[->](1.37,.0965) -- (1.38,.099);\draw[->](1.4,2.29) -- (1.41,2.295);\draw[<-](0.37,3.056) -- (0.38,3.06);\pgftransformcm{-1}{-1}{0}{1}{\pgfpoint{0pt}{0pt}} 
\draw (0,0) .. controls (3,0) and (3,3) .. (0,3);\draw (0,1) .. controls (1,1) and (1,2) .. (0,2);\draw (0,4) .. controls (3,4) and (3,5) .. (0,5);
\draw[<-](1.4,.29) -- (1.41,.295);\draw[<-](1.37,4.0965) -- (1.38,4.099);\draw[->](0.47,1.096) -- (0.48,1.1);\pgftransformcm{.5}{1.45}{0}{1}{\pgfpoint{0pt}{0pt}} 
\draw (0,0) .. controls (5,0) and (5,5) .. (0,5);\draw (0,1) .. controls (3,1) and (3,4) .. (0,4);\draw (0,2) .. controls (1,2) and (1,3) .. (0,3);
\draw[->](1.4,1.29) -- (1.41,1.295);\draw[<-](0.37,2.056) -- (0.38,2.06);\draw[<-](3.06,0.99) -- (3.072,1.002);}
\]
Assume furthermore that none of the connected components of $N_2$ are circles.

Let $\cala$ be a conformal net, let $H_1$ be an $M_1$-sector, and let $H_2$ be an $M_2$-sector.
Then $H_3:=H_1\boxtimes_{\cala(N_2)}H_2$ is an $M_3$-sector of $\cala$.
\end{lemma}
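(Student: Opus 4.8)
The plan is to exhibit $H_3:=H_1\boxtimes_{\cala(N_2)}H_2$ as an $M_3$-sector by covering $M_3$ with intervals, defining an action of $\cala(K)$ on $H_3$ for each interval $K$ of the cover, checking that these actions agree on overlaps, and then invoking Lemma~\ref{lem: open cover of circle => sector ++}. Since no component of $N_2$ is a circle, $N_2$ is a disjoint union of intervals, the algebra $\cala(N_2)$ (the spatial tensor product of the $\cala$ of the components) is defined, and by irreducibility it is a factor. We regard $H_1$ as a right $\cala(N_2)$-module via $\cala(N_2)\xrightarrow{\scriptscriptstyle\sim}\cala(\bar N_2)\subset\bfB(H_1)$ and $H_2$ as a left $\cala(N_2)$-module via the inclusion $N_2\hookrightarrow M_2$; these are precisely the structures used to form $H_3$.

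Write $P:=\partial N_1=\partial N_3$ and choose a cover of $M_3$ by intervals $K$, each of which is (i) disjoint from $P$ and contained in $N_1$, or (ii) disjoint from $P$ and contained in $N_3$, or (iii) contains exactly one point $p\in P$, in its interior, and is small enough that $K^{(1)}:=K\cap N_1$ and $K^{(3)}:=K\cap N_3$ are intervals meeting only at $p$; in particular, intervals lying in circle components of $N_1$ or $N_3$ are of type (i) or (ii). On an interval of type (i), $\cala(K)\subset\cala(N_1)$ acts on $H_3$ through its action on $H_1$; on one of type (ii), through its action on $H_2$.

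For $K$ of type (iii), let $N\subset N_2$ be the component with $p\in\partial N$, let $N'$ be the union of the remaining components, and set $A:=\cala(K^{(1)}\cup\bar N_2)$, $C:=\cala(N_2)$, $B:=\cala(N_2\cup K^{(3)})$, with $A\leftarrow C^{\op}$ induced by $\mathrm{Id}_{N_2}\colon N_2\to\bar N_2$ and $C\to B$ by the inclusion $N_2\hookrightarrow N_2\cup K^{(3)}$. All of these 1-manifolds are disjoint unions of intervals, so (using the split property to pass from algebraic to spatial tensor products) $A$ acts on $H_1$ and $B$ acts on $H_2$, compatibly with the right, resp.\ left, $\cala(N_2)$-module structures above; Lemma~\ref{lem: extends to A (*)_C B} then yields an action of $A\circledast_C B$ on $H_1\boxtimes_C H_2=H_3$. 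Decomposing $A$, $B$, $C$ along $N_2=N\sqcup N'$ and applying the compatibility~\eqref{eq: compatibility between circledast and bartimes} gives $A\circledast_C B\cong\big(\cala(K^{(1)}\cup\bar N)\circledast_{\cala(N)}\cala(N\cup K^{(3)})\big)\bar\ox\big(\cala(\bar N')\circledast_{\cala(N')}\cala(N')\big)$. The first factor is $\cala(K)$ by Proposition~\ref{prop: nets-and-fiber-product} applied to the Y-graph $(K^{(1)}\cup\bar N)\cup_N(N\cup K^{(3)})$ of~\eqref{eq: Y-graph}, for which $(K^{(1)}\cup\bar N)\circledast_N(N\cup K^{(3)})=K^{(1)}\cup K^{(3)}=K$, compatibly with the inclusions of $\cala(K^{(1)})$ and $\cala(K^{(3)})$; the second factor equals $Z(\cala(N'))=\IC$ by irreducibility (when $N_2$ is connected this second factor is absent). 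Hence $A\circledast_C B\cong\cala(K)$, and composing with the Lemma~\ref{lem: extends to A (*)_C B} action produces an action of $\cala(K)$ on $H_3$ that restricts on $\cala(K^{(1)})$ to the action through $H_1$ and on $\cala(K^{(3)})$ to the action through $H_2$; by strong additivity $\cala(K)=\cala(K^{(1)})\vee\cala(K^{(3)})$, so this is the unique such action.

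Finally, if $K$ and $L$ are intervals of the cover, every interval contained in $K\cap L$ lies in $N_1\setminus P$ or in $N_3\setminus P$ — two type-(iii) intervals straddle distinct points of $P$, and a type-(i) interval is disjoint from every type-(ii) interval — and on such an interval both $\rho_K$ and $\rho_L$ restrict either to the action coming from the $M_1$-sector structure of $H_1$ (first case) or to that coming from the $M_2$-sector structure of $H_2$ (second case). Thus the $\{\rho_K\}$ are compatible, and Lemma~\ref{lem: open cover of circle => sector ++} promotes them to an $M_3$-sector structure on $H_3$. The one genuinely delicate point is the treatment of the straddling intervals in the third paragraph: because $H_3$ is a Connes fusion over the entire algebra $\cala(N_2)$ rather than over a single component, Corollary~\ref{cor:cala(K)-acts} does not apply directly, and one must instead realize $\cala(K)$ inside $\cala(K^{(1)}\cup\bar N_2)\circledast_{\cala(N_2)}\cala(N_2\cup K^{(3)})$ by combining Proposition~\ref{prop: nets-and-fiber-product}, the compatibility of $\circledast$ with $\bar\ox$, and the irreducibility of $\cala$.
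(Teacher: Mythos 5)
Your proof is correct and follows essentially the same route as the paper's: cover $M_3$ by intervals meeting $\partial N_1=\partial N_3$ in at most one point, let the non-straddling ones act through $H_1$ or $H_2$, realize $\cala(K)$ for a straddling interval as $\cala\big(K^{(1)}\cup\bar N_2\big)\circledast_{\cala(N_2)}\cala\big(N_2\cup K^{(3)}\big)$ acting on the fusion via Lemma~\ref{lem: extends to A (*)_C B}, and conclude with Lemma~\ref{lem: open cover of circle => sector ++}. The one place you go beyond the paper is in spelling out the reduction to the single component of $N_2$ touching the straddled point --- via~\eqref{eq: compatibility between circledast and bartimes} and irreducibility --- before invoking Proposition~\ref{prop: nets-and-fiber-product}, a step the paper's proof leaves implicit (it is the same factor-cancellation trick used explicitly in the annulus discussion of Section~\ref{sec: Hilbert space for annulus}).
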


\begin{proof}
Let $J\subset M_3$ be an interval that intersects $N_1\cap N_3$ in at most one point.
If $J\subset N_1$ (respectively if $J\subset N_3$), then $\cala(J)$ acts on $H_3$ via its action on $H_1$ (respectively on $H_2$).
If the point $J\cap N_1\cap N_3$ is in the interior of $J$ then, by Proposition \ref{prop: nets-and-fiber-product}, we have
\[
\qquad\cala(J)\,\cong\, \cala\big((J\cap N_1)\cup N_2\big)\circledast_{\cala(N_2)} \cala\big((J\cap N_3)\cup N_2\big),
\]
and that algebra acts on $H_1\boxtimes_{\cala(N_2)}H_2$ by Lemma \ref{lem: extends to A (*)_C B}.
The Hilbert space $H_3$ is therefore an $M_3$-sector by Lemma \ref{lem: open cover of circle => sector ++}.
\end{proof}

The requirement that $N_2$ does not have closed components is unnecessary.
In the second paper of this series, we will define $\cala(N_2)$ for any 1-manifold $N_2$, and in that more general case the proof of the above lemma goes through unchanged.
The collars can also be replaced by a slightly weaker piece of structure: one only needs smooth structures on $M_1$, $M_2$, and $M_3$ 
whose relationship to each other around a trivalent point is modeled on \eqref{eq: Y-graph}.

\begin{corollary}\label{cor: it's a partialSigma-sector}
The Hilbert space $V(\Sigma)$ is well defined (at this point, it still depends on a choice of cell decomposition of $\Sigma$), and is naturally equipped with the structure of a $\partial\Sigma$-sector.
\end{corollary}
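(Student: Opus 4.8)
The plan is to prove by induction on $i$ that, for each $1 \le i \le n$, the Hilbert space $V(\Sigma_i)$ built by the recursion \eqref{eq: V(D_i) -- BIS} is well defined --- meaning all the Connes fusions occurring in its construction make sense --- and carries the structure of a $\partial\Sigma_i$-sector of $\cala$; the corollary is then the case $i = n$. The base case $i = 1$ is immediate, since $V(\Sigma_1) = H_0(\partial\ID_1)$ is by construction an $S$-sector for the circle $S = \partial\ID_1 = \partial\Sigma_1$, hence a $\partial\Sigma_1$-sector.

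For the inductive step I would assume that $V(\Sigma_{i-1})$ has been constructed and is a $\partial\Sigma_{i-1}$-sector, and then feed the gluing $\Sigma_i = \Sigma_{i-1}\cup_{M_i}\ID_i$ into Lemma~\ref{lem: NNN}. Concretely, set $N_2 := M_i = \ID_i\cap\Sigma_{i-1}$, oriented as a subset of $\partial\ID_i$; let $N_3\subset\partial\ID_i$ be the closure of $\partial\ID_i\setminus N_2$ and $N_1\subset\partial\Sigma_{i-1}$ the closure of $\partial\Sigma_{i-1}\setminus N_2$, each oriented as a subset of $\partial\ID_i$, resp.\ $\partial\Sigma_{i-1}$. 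Because $\ID_i$ and $\Sigma_{i-1}$ sit on opposite sides of $M_i$ in $\Sigma_i$, the orientation $N_2$ receives from $\partial\Sigma_{i-1}$ is the opposite one, so that
\[
\partial\Sigma_{i-1} = N_1\cup\bar N_2, \qquad \partial\ID_i = N_2\cup N_3, \qquad \partial\Sigma_i = N_1\cup N_3 ;
\]
the collars on the $1$-cells equip $N_1, N_2, N_3$ with the collar structures that Lemma~\ref{lem: NNN} requires, and $N_2$ is a disjoint union of intervals by the hypothesis on the cell decomposition, so no component of $N_2$ is a circle. I would then check that $\cala(N_2)$ acts on $H_0(\partial\ID_i)$ (by the split property, applied componentwise via Lemma~\ref{lem: open cover of circle => sector ++}) and on $V(\Sigma_{i-1})$ (because $N_2\subset\partial\Sigma_{i-1}$ is a disjoint union of intervals and $V(\Sigma_{i-1})$ is a $\partial\Sigma_{i-1}$-sector, again via Lemma~\ref{lem: open cover of circle => sector ++}), so that $V(\Sigma_i) = V(\Sigma_{i-1})\boxtimes_{\cala(N_2)}H_0(\partial\ID_i)$ is legitimate; Lemma~\ref{lem: NNN}, with $H_1 = V(\Sigma_{i-1})$ an $M_1 = \partial\Sigma_{i-1}$-sector and $H_2 = H_0(\partial\ID_i)$ an $M_2 = \partial\ID_i$-sector, then yields that $V(\Sigma_i)$ is an $M_3 = \partial\Sigma_i$-sector. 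Taking $i = n$ finishes the argument.

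The only step demanding real care is the bookkeeping just described: identifying the boundaries $\partial\Sigma_{i-1}$, $\partial\ID_i$, $\partial\Sigma_i$ with the triple $N_1\cup\bar N_2$, $N_2\cup N_3$, $N_1\cup N_3$ of Lemma~\ref{lem: NNN}, with matching orientations and compatible smooth/collared structures around the trivalent points, so that the lemma applies verbatim. Once this is set up, the corollary is a formal consequence of Lemma~\ref{lem: NNN} and the split property. I would not address here the finer claims that $V(\Sigma)$ is independent of the cell decomposition, or is canonical up to phase --- those are explicitly deferred to the sequel.
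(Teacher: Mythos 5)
Your proof is correct and follows essentially the same route as the paper: induction on $i$, applying Lemma~\ref{lem: NNN} with $H_1=V(\Sigma_{i-1})$, $H_2=H_0(\partial\ID_i)$, $N_2=M_i$, and $N_1$, $N_3$ the closures of the complements of $M_i$ in $\partial\Sigma_{i-1}$ and $\partial\ID_i$. The extra bookkeeping you supply (orientations, collars, and the actions of $\cala(M_i)$ via the split property and the inductive hypothesis) matches the discussion the paper gives just before the corollary.
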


\begin{proof}
Assuming by induction that $V(\Sigma_{i-1})$ is a $\partial\Sigma_{i-1}$-sector,
we apply Lemma \ref{lem: NNN} to $H_1:=V(\Sigma_{i-1})$ and $H_2:=H_0(\partial\ID_i)$,
with $N_2 := M_i$ as in \eqref{eq: V(D_i) -- BIS},
$N_1$ the closure of $\partial \Sigma_{i-1}\setminus M_i$, and $N_3$ the closure of $\partial\ID_i\setminus M_i$.
\end{proof}

\subsection{Characterization of finite-index conformal nets}

Recall that a dagger category is a category equipped with an involutive, identity-on-objects, contravariant endofunctor.  Denote by $\mathsf{Hilb}$ the dagger category of Hilbert spaces and bounded linear maps.
Let us call a dagger category $\calc$ a $\mathsf{Hilb}$-category if it is a module over $(\mathsf{Hilb},\otimes)$, that is, if there is a functor $\odot:\mathsf{Hilb}\times \calc\to \calc$
along with invertible associator and unitality natural transformations
\[
\odot\circ (\mathrm{id}\times\odot)\Rightarrow\odot\circ (\otimes\times\mathrm{id}):\mathsf{Hilb}\times\mathsf{Hilb}\times \calc\to \calc,\quad
\odot\circ\, (1\times \mathrm{id})\Rightarrow\mathrm{id}:\calc\to \calc
\]
subject to the obvious compatibility conditions.

Let us call a $\mathsf{Hilb}$-category $\calc$ a \emph{tensor category} if it is equipped with a monoidal structure that is compatible with the $\mathsf{Hilb}$-module structure.
\begin{definition}\label{def: fusion category}
A tensor category is called a \emph{fusion category}\footnote{Strictly speaking, we should be calling these `fusion $\mathsf{Hilb}$-categories',
as the term `fusion category' usually refers to categories where each object is a finite direct sum of simples.}
if its underlying $\mathsf{Hilb}$-category is equivalent to $\mathsf{Hilb}^n$ (the category whose objects are $n$-tuples of Hilbert spaces) for some finite $n$,
and if all its irreducible objects are dualizable.
\end{definition}
\nid In other words, a tensor category is fusion if it is semisimple with finitely many simples, and every simple is dualizable.
(Note that the two possible definitions of dualizable, namely the one with and the one without the normalization condition in Definition \ref{def:dual} are equivalent to each other,
as shown in \cite[Thm.~4.12]{BDH(Dualizability+Index-of-subfactors)}.)

We know from Theorem \ref{thm: KLM -- all irreducible sectors are finite} and Lemma~\ref{lem: identification of the dual sector} 
that if $\cala$ is a conformal net with finite index, 
then the category of $S$-sectors of $\cala$ is fusion with respect to 
the monoidal structure~\eqref{eq: associative operation on A-reps}.
By a result of Longo-Xu~\cite{Longo-Xu(dichotomy)}, the converse
also holds.
In this section, we give an alternative proof of this result using
the coordinate-free point of view.  

\begin{theorem}
  \label{thm: characterization of finite conformal nets}
If $\Rep(\cala)$ is a fusion  category, then the conformal net $\cala$ has finite index.
\end{theorem}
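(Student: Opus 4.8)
The plan is to prove the contrapositive: if $\cala$ does not have finite index, then $\Rep(\cala)$ cannot be a fusion category, because it fails to have finitely many simple objects. The key geometric object is the annular sector $H_{\mathit{ann}} = H_0(S_l)\boxtimes_{\cala(I_5)\bar\otimes\,\cala(I_7)}H_0(S_r)$ from Section~\ref{sec: Hilbert space for annulus}, together with its $S_m$-$S_b$-sector structure. When $\cala$ has finite index, Theorem~\ref{thm:KLM} tells us $H_{\mathit{ann}}\cong\bigoplus_{\lambda\in\Delta}H_\lambda(S_m)\otimes H_{\bar\lambda}(S_b)$; the whole point is that when the index is infinite, the analogous decomposition must involve infinitely many distinct simple sectors. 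First I would establish, \emph{without} any finiteness assumption, that $H_{\mathit{ann}}$ is always an $S_m$-$S_b$-sector of $\cala$ (this part of the construction in Section~\ref{sec: Hilbert space for annulus} only uses Lemma~\ref{lem: open cover of circle => sector}, Proposition~\ref{prop: nets-and-fiber-product}, and Lemma~\ref{lem: extends to A (*)_C B}, none of which require finite index). Then, exactly as in the proof of Lemma~\ref{lem: H_0 otimes H_0   <  H_Ann} --- which also does not use finiteness --- the sector $H_0(S_m)\otimes H_0(S_b)$ is a direct summand of $H_{\mathit{ann}}$ with multiplicity one, via the computation $\mathrm{Hom}_{A,C}(H_m\otimes H_b, H_l\boxtimes_B H_r)\cong\mathrm{Hom}_{B,C}(\check H_l\boxtimes_A(H_m\otimes H_b), H_r)\cong\mathrm{Hom}_{B,C}(H_r,H_r)=\IC$, where the first step uses that ${}_A(H_l)_B$ is dualizable with dual $H_0(\bar S_l)$. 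The catch: dualizability of ${}_A(H_l)_B$ \emph{is} equivalent to finite index, so this argument as stated does not apply. Instead I would argue purely at the level of $S_b$-sectors: decompose $H_{\mathit{ann}}$, viewed only as an $S_b$-sector, using the disintegration theory of Section~\ref{subsec:central-decomposition} and~\cite[Appendix C]{Kawahigashi-Longo-Mueger(2001multi-interval)}, and show that if $\Rep(\cala)$ is fusion then $H_{\mathit{ann}}$ is a finite direct sum $\bigoplus_i n_i H_{\lambda_i}(S_b)\otimes M_i$ of isotypic pieces, with $M_i$ carrying residual $S_m$-sector structures.

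Next I would run the argument from the proof of Theorem~\ref{thm: KLM -- all irreducible sectors are finite}, which shows that \emph{every} factorial $\cala$-sector $K$ embeds, after stabilization, into $H_{\mathit{ann}}$: one has $K(S_b)\otimes H_m\cong K(S_2)\boxtimes_{A_l}(H_b\otimes H_m)\subset K(S_2)\boxtimes_{A_l}H_{\mathit{ann}}\cong H_{\mathit{ann}}\boxtimes_{A_m}K(S_4)$, and since $A_m$ is a factor, $K(S_4)$ is stably isomorphic to $L^2 A_m$, giving $K(S_b)\otimes\ell^2\hookrightarrow H_{\mathit{ann}}\otimes\ell^2$. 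Crucially, \emph{this} chain of isomorphisms does not use finite index: it only uses the unitality relations for Connes fusion along intervals (Lemma~\ref{lem: vacuum * vacuum = vacuum -- PRE} and the unit maps~\eqref{eq:left-unit-vacuum}, \eqref{eq:right-unit-vacuum}) and the factoriality of $A_m$. What it \emph{does} use, to conclude that there are finitely many stable isomorphism classes of factorial sectors, is that $H_{\mathit{ann}}$ decomposes into finitely many summands --- and that is precisely what being a fusion category (semisimplicity of $\Rep(\cala)$ with finitely many simples, applied to $H_{\mathit{ann}}$ as an $S_b$-sector) supplies. So: assuming $\Rep(\cala)$ is fusion, $\Delta_\cala$ is finite, $H_{\mathit{ann}}=\bigoplus_{\lambda\in\Delta}H_\lambda(S_m)\otimes H_{\bar\lambda}(S_b)$ (the computation of the multiplicities $N_{\lambda\mu}=\delta_{\mu\bar\lambda}$ in Theorem~\ref{thm:KLM} goes through once we know all the relevant sectors are dualizable, which holds because fusion categories have all simples dualizable), and we can form the bimodule ${}_{\cala(I_1\cup I_3)}H_0(S)_{\cala(I_2\cup I_4)^\op}$.

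Finally I would conclude by computing an index. Taking dimensions (Connes-fusion multiplicativity,~\cite[Prop 5.2]{BDH(Dualizability+Index-of-subfactors)}) in the now-finite decomposition $H_{\mathit{ann}}\cong\bigoplus_{\lambda\in\Delta}H_\lambda(S_m)\otimes H_{\bar\lambda}(S_b)$, we get $\dim\big({}_{\cala(I_2\cup I_4)}(H_{\mathit{ann}})_{\cala(I_6\cup I_8)}\big)=\sum_{\lambda\in\Delta}d_\lambda d_{\bar\lambda}=\sum_{\lambda\in\Delta}d_\lambda^2<\infty$, since each $d_\lambda$ is finite (simples in a fusion category are dualizable, hence have finite statistical dimension) and $\Delta$ is finite. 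On the other hand, $\dim\big({}_{\cala(I_2\cup I_4)}(H_{\mathit{ann}})_{\cala(I_6\cup I_8)}\big)=\mu(\cala)$ by the identification of $H_{\mathit{ann}}$ with the four-interval bimodule $H_0(\cala)$ over complementary pairs of intervals (the geometric identification in the Corollary after Theorem~\ref{thm:KLM}). Hence $\mu(\cala)=\sum_{\lambda\in\Delta}d_\lambda^2<\infty$, i.e. $\cala$ has finite index. The main obstacle, and the step requiring the most care, is the opening move: extracting finitely many $S_b$-isotypic summands of $H_{\mathit{ann}}$ and good control over the residual $S_m$-sector structure on the multiplicity spaces \emph{before} dualizability of the $H_l$, $H_r$ bimodules is available --- one must check that the disintegration/semisimplicity input from $\Rep(\cala)$ being fusion is enough to bootstrap the entire finite-index structure, rather than circularly assuming it.
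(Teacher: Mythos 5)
Your overall strategy --- decompose $H_{\mathit{ann}}$ using semisimplicity and then extract $\mu(\cala)$ from a dimension count --- does not close, and the gap sits exactly at the two places you yourself flag as needing care. First, every argument in Section~\ref{sec: Hilbert space for annulus} that controls the multiplicities in the decomposition of $H_{\mathit{ann}}$ (Lemma \ref{lem: H_0 otimes H_0   <  H_Ann}, the computation $N_{\lambda\mu}=\delta_{\mu\bar\lambda}$ in Theorem~\ref{thm:KLM}) proceeds by Frobenius reciprocity with respect to the dual $\check H_l=H_0(\bar S_l)$ of the \emph{four-interval} bimodule ${}_A(H_l)_B$, and that dual exists only if $\cala$ already has finite index. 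Semisimplicity of $\Rep(\cala)$ does give $H_{\mathit{ann}}\cong\bigoplus_{\lambda,\mu}H_\lambda(S_b)\otimes H_\mu(S_m)\otimes V_{\lambda\mu}$ with $\Delta$ finite, but it gives no bound on the multiplicity spaces $V_{\lambda\mu}$, which could a priori be infinite-dimensional; dualizability of the simples as \emph{two-interval} bimodules does not substitute for the missing four-interval duality, and your proposal offers no mechanism to bound these multiplicities. Second, and more fundamentally, even if you knew $\dim\big({}_A(H_{\mathit{ann}})_C\big)<\infty$, the multiplicativity $\dim(H_l\boxtimes_B H_r)=\dim(H_l)\cdot\dim(H_r)$ of~\cite[Prop 5.2]{BDH(Dualizability+Index-of-subfactors)} is only available when the factors are themselves dualizable, so it cannot be run backwards to conclude $\dim({}_A(H_l)_B)<\infty$. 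The implication ``the fusion of two bimodules is dualizable, hence each factor is dualizable'' is precisely the hard content of the theorem; your final step, identifying $\dim H_{\mathit{ann}}$ with $\mu(\cala)$, assumes it.

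The paper takes a genuinely different route that supplies exactly this missing implication: Lemma~\ref{lem: technical lemma --> finite conformal nets} shows that an irreducible ${}_AH_B$ is dualizable provided there is a partner ${}_BK_A$ such that $H\boxtimes_BK$ and $K\boxtimes_AH$ decompose into dualizable bimodules while $H\boxtimes_BK\boxtimes_A\overline K$ and $\overline K\boxtimes_BK\boxtimes_AH$ decompose into irreducibles; the proof \emph{constructs} evaluation and coevaluation maps for $H$ out of duality data for summands of the fusions and verifies the zig-zag identities using irreducibility of $H$. The theorem then follows by taking $H=H_0(S)$ and $K=H_0(\bar S)$: condition (1) is your decomposition of $H_{\mathit{ann}}$ as an $S_1$-$S_3$-sector, where only dualizability of the two-interval simples is needed and fusion-ness supplies it; condition (3) identifies $H\boxtimes_BK\boxtimes_A\overline K$ with the surface Hilbert space $V(\Sigma)$ of a three-disk surface with boundary $S$, so that semisimplicity of $\Rep_S(\cala)$ together with strong additivity (full faithfulness of the forgetful functor from $S$-sectors to $A$-$B$-bimodules) yields a decomposition into irreducible bimodules. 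To salvage your dimension-counting endgame you would first have to prove some version of this duality lemma; without it the argument is circular at the last step.
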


The proof is based on the following technical lemma:

\begin{lemma}\label{lem: technical lemma --> finite conformal nets}
Let $A$ and $B$ be von Neumann algebras, and let ${}_AH_B$ and ${}_BK_A$ be bimodules.
Let ${}_B\overline H_A$ be the complex conjugate of ${}_AH_B$.
If ${}_AH_B$ is irreducible and
\begin{list}{$\bullet$}{\leftmargin=.5cm \parsep=3pt}
\item[(1)] ${}_AH\boxtimes_BK_A$ is a direct sum of dualizable $A$-$A$-bimodules,
\item[(2)] ${}_BK\boxtimes_AH_B$ is a direct sum of dualizable $B$-$B$-bimodules,
\item[(3)] ${}_AH\boxtimes_BK\boxtimes_A\overline K_B$ is a direct sum of irreducible $A$-$B$-bimodules,
\item[(4)] ${}_A\overline K\boxtimes_BK\boxtimes_AH_B$ is a direct sum of irreducible $A$-$B$-bimodules,
\end{list}\smallskip
(where those direct sums are possibly infinite) then ${}_AH_B$ is a dualizable bimodule.
\end{lemma}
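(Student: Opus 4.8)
The plan is to deduce dualizability of ${}_AH_B$ (with dual ${}_B\overline H_A$) from two ``one-sided'' statements: that $H\boxtimes_B\overline H$ is a dualizable $A$-$A$-bimodule and that $\overline H\boxtimes_A H$ is a dualizable $B$-$B$-bimodule. Combined with the irreducibility of ${}_AH_B$, these feed into the characterization of dualizable bimodules from \cite{BDH(Dualizability+Index-of-subfactors)} (a bimodule is dualizable iff the associated inclusion of von Neumann algebras has finite index), which then yields that ${}_AH_B$ itself is dualizable. The four hypotheses are invariant under the substitution $(A,B,H,K)\mapsto(B,A,\overline H,\overline K)$, which exchanges (1) with the conjugate of (2) and (3) with the conjugate of (4) and preserves irreducibility and dualizability; under this substitution the $A$-$A$ statement about $H\boxtimes_B\overline H$ becomes the $B$-$B$ statement about $\overline H\boxtimes_A H$, so it suffices to prove the former.

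I would reduce that to the following claim: ${}_AH_B$ is a direct summand of $X\boxtimes_A\overline K$ for a \emph{single} dualizable $A$-$A$-bimodule $X$. Granting this, conjugating gives $\overline H$ as a direct summand of $K\boxtimes_A\overline X$, and fusing with $H$ on the left shows $H\boxtimes_B\overline H$ is a direct summand of $(H\boxtimes_B K)\boxtimes_A\overline X$, which by hypothesis (1) is a direct sum of dualizable $A$-$A$-bimodules. A direct summand of a (possibly infinite) direct sum of dualizable bimodules need not be dualizable, but it is automatically a direct sum of irreducible $A$-$A$-bimodules, and each such irreducible, being a summand of a dualizable bimodule, is itself dualizable; hence $H\boxtimes_B\overline H$ will be dualizable as soon as $\mathrm{End}_{A,A}(H\boxtimes_B\overline H)$ is finite-dimensional, i.e.\ only finitely many irreducibles occur.

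To prove the claim, decompose $H\boxtimes_B K=\bigoplus_i X_i$ into dualizable $A$-$A$-bimodules using (1), so that $H\boxtimes_B K\boxtimes_A\overline K=\bigoplus_i\big(X_i\boxtimes_A\overline K\big)$, a direct sum of irreducible $A$-$B$-bimodules by (3). Using the adjunctions $\mathrm{Hom}_{A,B}(X_i\boxtimes_A\overline K,H)\cong\mathrm{Hom}_{A,B}(\overline K,\overline X_i\boxtimes_A H)$ (available because each $X_i$ is dualizable) and summing over $i$ identifies $\mathrm{Hom}_{A,B}(H\boxtimes_B K\boxtimes_A\overline K,H)$ with $\mathrm{Hom}_{A,B}\big(\overline K,\overline K\boxtimes_B(\overline H\boxtimes_A H)\big)$, which is nonzero once $L^2(B)$ is a direct summand of $\overline H\boxtimes_A H$ as a $B$-$B$-bimodule. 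This last fact I would extract from the hypotheses via the sandwich identity $\overline{H\boxtimes_B K}\boxtimes_A(H\boxtimes_B K)=\overline K\boxtimes_B(\overline H\boxtimes_A H)\boxtimes_B K$, whose left side contains $L^2(A)$ as a summand by the dualizable decomposition (1), together with the analogous consequence of (2) and the semisimplicity supplied by (3) and (4). Given a nonzero morphism $H\boxtimes_B K\boxtimes_A\overline K\to H$, irreducibility of ${}_AH_B$ (so $\mathrm{End}_{A,B}(H)=\IC$) lets us rescale it to a coisometry, exhibiting $H$ as a direct summand of $\bigoplus_i(X_i\boxtimes_A\overline K)$; a further use of $\mathrm{End}_{A,B}(H)=\IC$ shows that the section and retraction can be taken to involve only one index $i_0$, so $H$ is a direct summand of $X\boxtimes_A\overline K$ with $X:=X_{i_0}$ dualizable.

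The main obstacle, as the above makes clear, is the interlocking web of finiteness assertions: that $L^2(A)$ (resp.\ $L^2(B)$) splits off the relevant sandwich bimodule, and that $\mathrm{End}_{A,A}(H\boxtimes_B\overline H)$ and its $B$-side counterpart are finite-dimensional. None of hypotheses (1)--(4) is dispensable: (1) and (2) are what make the constituent bimodules dualizable, hence semisimple with finite-dimensional endomorphism algebras, while (3) and (4) keep the various Connes fusions inside the semisimple world so that ``direct summand of an infinite direct sum of dualizables'' can be upgraded to ``dualizable''. Once these finiteness statements are secured, the resulting dualizability of $H\boxtimes_B\overline H$ and $\overline H\boxtimes_A H$, together with the index characterization of \cite{BDH(Dualizability+Index-of-subfactors)} and the irreducibility of ${}_AH_B$, finishes the proof.
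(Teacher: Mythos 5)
Your high-level architecture---produce nonzero morphisms of $L^2(A)$ and $L^2(B)$ into $H\boxtimes_B\overline H$ and $\overline H\boxtimes_A H$ and then use irreducibility of $H$ to normalize them---has the right shape, but the places where you defer to ``finiteness assertions to be secured'' are exactly where the content of the lemma lives, and your proposed routes to them do not go through. The crux is your claim that $H$ is a summand of a single $X\boxtimes_A\overline K$, which you reduce to splitting $L^2(B)$ off $\overline H\boxtimes_A H$; but that splitting is literally one of the two duality morphisms you are trying to construct, i.e.\ half of the conclusion. Your route to it, ``un-sandwiching'' the copy of $L^2(A)$ sitting inside $\overline K\boxtimes_B(\overline H\boxtimes_A H)\boxtimes_B K$, requires Frobenius reciprocity along $K$, and $K$ is not assumed dualizable (in the intended application its dualizability is precisely the statement being proved), so this step is circular. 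The paper's proof is built to avoid exactly this: it chooses dualizable summands $M\subset H\boxtimes_B K$ and $N\subset K\boxtimes_A H$ for which the induced map $K\boxtimes_A M\to N\boxtimes_B K$ is nonzero, uses the duals of $M$ and $N$ (which \emph{do} exist) to manufacture a nonzero map $\overline N\boxtimes_B K\to K\boxtimes_A\overline M$, invokes hypotheses (3)--(4) to find a common irreducible summand $P$ of source and target, and then defines $\mathit{Ev}\colon P\boxtimes_A H\to L^2B$ and $\mathit{Coev}\colon L^2A\to H\boxtimes_B P$ out of $\mathit{ev}_N$ and $\mathit{coev}_M$ so that one zig-zag identity holds on the nose; only the remaining scalar $\lambda$ from the other zig-zag has to be shown to equal $1$.

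A second gap is structural: your detour through ``$H\boxtimes_B\overline H$ is a dualizable $A$-$A$-bimodule'' needs $\mathrm{End}_{A,A}(H\boxtimes_B\overline H)$ to be finite-dimensional, and nothing in your argument delivers this---hypotheses (1)--(4) allow infinite direct sums, and being a summand of $\bigoplus_i X_i\boxtimes_A\overline X$ puts no bound on the number of irreducible constituents. Since that finiteness is essentially equivalent to the dualizability of $H$, you have again assumed a form of the conclusion. (The final passage from dualizability of the two fused bimodules back to dualizability of ${}_AH_B$ is also not a direct quotation of the index characterization in the cited reference and would need its own argument, e.g.\ via multiplicativity of statistical dimension, including the non-dualizable case.) I would abandon the reduction to $H\boxtimes_B\overline H$ entirely and instead construct the candidate dual object and both duality morphisms explicitly, as the paper does.
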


\begin{proof}
Pick dualizable bimodules ${}_A(M_i)_A$ and maps $f_i:M_i\to H\boxtimes_BK$ so that $F:=\bigoplus f_i:\bigoplus M_i\to H\boxtimes_BK$ is an isomorphism.
Similarly, pick dualizable bimodules ${}_B(N_j)_B$ and maps $g_j: K\boxtimes_AH\to N_j$ so that $G:=\bigoplus g_j: K\boxtimes_AH\to\bigoplus N_j$ is an isomorphism.
The composite
\[
K\boxtimes_A \textstyle\big(\bigoplus M_i\big) \xrightarrow{1\otimes F} 
K \boxtimes_AH\boxtimes_BK \xrightarrow{G \otimes 1}
\big(\bigoplus N_j\big)\boxtimes _B K
\]
being an isomorphism, one can chose indices $i$ and $j$ so that
\[
K\boxtimes_A M_i \xrightarrow{1\otimes f_i} 
K \boxtimes_AH\boxtimes_BK \xrightarrow{g_j \otimes 1}
N_j\boxtimes _B K
\]
is non-zero.
Write $M$ for $M_i$, and $N$ for $N_j$.
By duality, the composite
\[
\begin{split}
\overline N\boxtimes _B K \xrightarrow{1\otimes1\otimes \mathit{coev}_{M}}&\,
\overline N\boxtimes _B K \boxtimes_A M\boxtimes_A \overline M\\\xrightarrow{1\otimes1\otimes f\otimes 1}\,&\,
\overline N\boxtimes _B K \boxtimes_AH\boxtimes_BK\boxtimes_A \overline M\\\xrightarrow{1\otimes g\otimes 1\otimes 1}\,&\,
\overline N\boxtimes _B N \boxtimes_B K\boxtimes_A \overline M\xrightarrow{\mathit{ev}_N\otimes 1} K\boxtimes_A \overline M
\end{split}
\]
is also non-zero.

The bimodules $\overline N\boxtimes _B K$ and $K\boxtimes_A \overline M$ are direct summands of 
$\overline H\boxtimes_A\overline K\boxtimes_BK$ and $K\boxtimes_A\overline K\boxtimes_B\overline H$, respectively.
By assumption, they can therefore be written as direct sums of irreducible bimodules:
\[
\textstyle \overline N\boxtimes _B K\cong \bigoplus P_\alpha\qquad\qquad K\boxtimes_A \overline M\cong\bigoplus Q_\beta\,.
\]
Pick $\alpha$ and $\beta$ so that the map
\begin{equation}\label{eq: From P to Q}
\begin{split}
P_\alpha\,\hookrightarrow\,
\overline N\boxtimes _B K \xrightarrow{1\otimes1\otimes \mathit{coev}_{M}}&\,
\overline N\boxtimes K \boxtimes M\boxtimes \overline M\\\xrightarrow{1\otimes1\otimes f\otimes 1}\,&\,
\overline N\boxtimes K \boxtimes H\boxtimes K\boxtimes \overline M\\\xrightarrow{1\otimes g\otimes 1\otimes 1}\,&\,
\overline N\boxtimes N \boxtimes K\boxtimes \overline M\xrightarrow{\mathit{ev}_N\otimes 1} K\boxtimes_A \overline M
\,\twoheadrightarrow\, Q_\beta
\end{split}
\end{equation}
is non-zero.
Since $P_\alpha$ and $Q_\beta$ are irreducible, that map is actually an isomorphism.
Use \eqref{eq: From P to Q} to identify $P_\alpha$ and $Q_\beta$, and call it simply $P$.
The maps
\[
\mathit{Ev}\,:\,\,P\boxtimes_A H\,\hookrightarrow\, \overline N\boxtimes _B K \boxtimes _A H
\xrightarrow{1\otimes g}
\overline N\boxtimes _B N \xrightarrow{\mathit{ev}_N} L^2B
\]
\[
\mathit{Coev}:\,
L^2A \xrightarrow{\mathit{coev}_M} M\boxtimes _A \overline M 
\xrightarrow{f\otimes 1}H\boxtimes_B K\boxtimes _A \overline M\twoheadrightarrow H\boxtimes_B P
\]
then satisfy $(\mathit{Ev}\boxtimes 1_P)\circ(1_P\boxtimes \mathit{Coev})=1_P$.
Since $H$ is irreducible, there is some $\lambda\in \IC$ for which
\[
(1_H\boxtimes \mathit{Ev})\circ(\mathit{Coev}\boxtimes 1_H)=\lambda 1_H.
\]
Moreover, by evaluating $(\mathit{Ev}\boxtimes\mathit{Ev}\boxtimes 1_P)\circ(1_P\boxtimes \mathit{Coev}\boxtimes \mathit{Coev})$
in two different ways, one can see that $\lambda=1$, and so $H$ is dualizable.
\end{proof}

\begin{proof}[Proof of Theorem \ref{thm: characterization of finite conformal nets}]
Let $\Delta$ be the set of isomorphism classes of $\cala$-sectors.
Since by assumption the category $\Rep(\cala)$ is semisimple, any object $H$ can be decomposed as
\begin{equation}\label{eq: sum decomposition of arbitrary sector}
H \,\,\cong\,\, \bigoplus_{\lambda\in \Delta}\, H_\lambda\otimes \mathrm{Hom}_{\Rep(\cala)}\big(H_\lambda,H\big),
\end{equation}
where the multiplicity spaces $\mathrm{Hom}_{\Rep(\cala)}(H_\lambda,H)$ are Hilbert spaces, and the tensor product is the completed tensor product of Hilbert spaces.

We need to show that
\[
H\,\,:=\,\,{}_{\cala(I_1\cup I_3)}H_0(S)_{\cala(I_2\cup I_4)^\op}
\]
is a dualizable bimodule,
where $S$ and $I_1, \ldots, I_4$ are as in \eqref{picture: circle with four intervals}.
For that, we verify the assumptions in Lemma~\ref{lem: technical lemma --> finite conformal nets} for the bimodules $H$ and $K:={}_{\cala(I_2\cup I_4)^\op}H_0(\bar S)_{\cala(I_1\cup I_3)}
$.
We only check the first and third conditions of the lemma, as the other two are entirely similar.
Following the notation of the lemma, we let $A:=\cala(I_1\cup I_3)$ and $B:= \cala(I_2\cup I_4)^\op$.

(1)\,\, The fusion ${}_AH\boxtimes_B K_A$ is the Hilbert space $H_{\mathit{ann}}$ studied in Section \ref{sec: Hilbert space for annulus}.
It is an $S_1$-$S_3$-sector for the circles $S_1:=I_1\cup_{\partial I_1}\bar I_1$ and $S_3:=I_3\cup_{\partial I_3}\bar I_3$.
By applying \eqref{eq: sum decomposition of arbitrary sector} to $H_{\mathit{ann}}$, viewed as an $S_1$-sector,
we get
\[
H_{\mathit{ann}} \,\cong\, \bigoplus_{\lambda\in \Delta} H_\lambda(S_1)\otimes \mathrm{Hom}\big(H_\lambda(S_1),H_{\mathit{ann}}\big).
\]
The multiplicity space $\mathrm{Hom}(H_\lambda(S_1),H_{\mathit{ann}})$ is itself an $S_3$-sector, and so we can apply \eqref{eq: sum decomposition of arbitrary sector}
once more to write
\[
H_{\mathit{ann}} \,\cong\, \bigoplus_{\lambda,\mu\in \Delta} H_\lambda(S_1)\otimes H_\mu(S_3)\otimes V_{\lambda\mu},
\]
where $V_{\lambda\mu}$ is a multiplicity space.  The $A$-$A$-bimodule $H_\lambda(S_1)\otimes H_\mu(S_3)$ is a tensor product of
${}_{\cala(I_1)}H_\lambda(S_1)_{\cala(\bar I_1)^\op}$ and ${}_{\cala(I_3)}H_\mu(S_3)_{\cala(\bar I_3)^\op}$, which are dualizable by assumption.
We have thus verified the first condition of Lemma~\ref{lem: technical lemma --> finite conformal nets}.

(3)\,\, Observe that $H\boxtimes_BK\boxtimes_A\overline K$ is the Hilbert space $V(\Sigma)$ associated to the following surface (with the indicated cell decomposition)
\[
\Sigma \,\,\,\,=\quad \tikzmath{
\coordinate (a) at (2,0);\coordinate (b) at (1,.4);\coordinate (c) at ($(a)+(b)$);

\filldraw[fill=gray] (0,0) .. controls (.3,-1) and (1.7,-1) .. (a) -- (c) ..controls ($(1.7,-1)+(b)$) and ($(.3,-1)+(b)$) .. (b) --cycle;
\filldraw[fill=black!35, line join = bevel] (a) -- (c) -- (b) -- cycle;
\filldraw[fill=black!60, line join = bevel] (a) .. controls ($(.2,1.2)+(a)$) and ($(.8,1.2)+(a)$) .. (c) -- (b) -- cycle;

\draw (.56,.958) -- +(a);\draw (1.38,-.68) -- +(b);
\filldraw[line join = bevel, fill=gray!40] (1.38,-.68) .. controls (1.71,-.55) and (1.91,-.3) .. (a) -- (c) ..controls ($(1.91,-.3)+(b)$) and ($(1.71,-.55)+(b)$) .. ($(1.38,-.68)+(b)$) -- cycle;

\filldraw [fill=gray!30] (.56,.958) .. controls (.35,.95) and (.12,.7) .. (0,0) -- (a) ..controls ($(.12,.7)+(a)$) and ($(.35,.95)+(a)$) .. ($(.56,.958)+(a)$) -- cycle;
\draw [densely dotted] (0,0) -- (b) -- (2.09,.4);
\draw [densely dotted] (0,0) .. controls (.2,1.2) and (.8,1.2) .. (b)
(c) ..controls ($(1.7,-1)+(b)$) and ($(.3,-1)+(b)$) .. (b);
}
\]
with boundary $\partial \Sigma = S$.
By  Corollary \ref{cor: it's a partialSigma-sector}, it is not only an $A$-$B$-bimodule, but also an $S$-sector.
By strong additivity, the forgetful functor from $S$-sectors to $A$-$B$-bimodules is fully faithful.
Therefore, a subspace of $H\boxtimes_BK\boxtimes_A\overline K$ is an irreducible sub-$S$-sector if and only if it is an irreducible sub-$A$-$B$-bimodule.
Since $\Rep_S(\cala)$ is semisimple by assumption, every $S$-sector can be written as a direct sum of irreducible $S$-sectors.
Decomposing $H\boxtimes_BK\boxtimes_A\overline K$ as a direct sum of irreducible $S$-sectors then also provides a decomposition into irreducible $A$-$B$-bimodules.
This verifies the third condition of Lemma~\ref{lem: technical lemma --> finite conformal nets}.
\end{proof}


\section{Comparing conformal and positive-energy  nets}
  \label{sec:pos-energy-nets}

\subsection{Circle-based nets}
     \label{subsec:circle-nets}

In this section, we present an alternative version of the definition of conformal nets that will provide an intermediary between our notion of coordinate-free conformal nets and existing notions of conformal nets in the 
literature \cite{Gabbiani-Froehlich(OperatorAlg-CFT), Longo(Lectures-on-Nets-II)}. 
Recall that $S^1:=\{z\in\mathbb C:|z|=1\}$ denotes the standard circle, and $S^1_\top := \{ z \in S^1 : \Im\mathrm{m} (z) \geq 0 \}$ its upper half.
Let $\INT_{S^1}$ be the poset of subintervals of $S^1$.
Given a Hilbert space $H$ (always assumed separable), we write $\VN_H$ for the poset of von Neumann subalgebras of $\bfB(H)$.
Recall that given an interval $I\subset S^1$, we denote by $I'$ the closure of its complement.

A \emph{net on the circle} $(\bfA,H)$ is a Hilbert space $H$ equipped with a continuous projective action 
$\Diff(S^1) \to \PU_\pm(H)$, $\varphi \mapsto [u_\varphi]$ and an order preserving map
\begin{equation*}
\INT_{S^1}\to\, \VN_H\,,\,\,\,\,\, I \,\mapsto \bfA(I).
\end{equation*} 
It is required that $u_\varphi$ be complex linear if $\varphi$ is orientation-preserving and complex antilinear if $\varphi$ is orientation-reversing,
where $u_\varphi$ is any representative of $[u_\varphi]$.
Moreover, the continuity condition refers to the $\calc^\infty$ topology on $\Diff(S^1)$, and the quotient of the strong topology on $\U_\pm(H)$.
     
\begin{definition}
\label{def:conformal-net-circle}
A \emph{conformal net on the circle} is a net on the circle $(\bfA,H)$, and a unitary isomorphism $v:H \xrightarrow{\scriptscriptstyle\cong} L^2(\bfA(S^1_\top))$, subject to the following conditions.
Here, $I$, $K$, and $L$ will denote subintervals of $S^1$. 
\begin{enumerate}
\item\emph{Locality:}
If $I$ and $K$ have disjoint interiors, then $\bfA(I)$ and $\bfA(K)$ are commuting subalgebras of $\bfB(H)$.
\item \emph{Strong additivity:}
If $L = I \cup K$, then $\bfA(L) = \bfA(I) \vee \bfA(K)$.
\item \emph{Split property:}
If $I$ and $K$ are disjoint, then the ultraweak closure of the algebraic tensor product $\bfA(I) \ox_{\alg} \bfA(K)\subset\bfB(H)$ is the spatial tensor product $\bfA(I)\,\bar{\ox}\,\bfA(K)$.
\item \label{def:conformal-net-circle:cov}
\emph{Covariance:}
For $\varphi \in \Diff(S^1)$, we have $u_\varphi \bfA(I) u_\varphi^* = \bfA(\varphi(I))$.
If $\varphi \in \Diff(S^1)$ restricts to the identity on $I'$, then $u_\varphi \in \bfA(I)$. 
\item \label{def:conformal-net-circle:vacuum} 
\emph{Vacuum:}  
The map $v$ intertwines the $\bfA(S^1_\top)$-module structures on $H$ and on $L^2(\bfA(S^1_\top))$.
Moreover, letting $j:S^1\to S^1$ be complex conjugation, and
$J$ the modular conjugation on $L^2(\bfA(S^1_\top))$,
then we have $u_j = v^* J\, v$ in $\PU_-(H)$.
\end{enumerate}
\end{definition}

\noindent We remind the reader that intervals are by definition closed. 
In particular, there is a gap between the disjoint intervals
appearing in the preceding definition.

\begin{construction}[coordinate-free to circle-based conformal nets] \label{constr:nets-->nets-on-circle}
Let $\cala$ be an irreducible conformal net (Definition~\ref{def:conformal-net}).
By Theorem~\ref{thm: Vaccum Sector}, there is a canonical vacuum sector $H_0(S^1,\cala)$ associated to the standard circle,
and it is equipped with a continuous projective action $\varphi \mapsto [u_\varphi]$ of $\Diff(S^1)$ such that $u_\varphi$ implements $\varphi$. 
We therefore obtain a net on the circle by setting $H := H_0(S^1,\cala)$, and defining $\bfA(I)$ to be the image of $\cala(I)$ under its action on $H_0(S^1,\cala)$. 
\end{construction}

\begin{proposition}  \label{prop:nets-->nets-on-circle}
Let $\cala$ be an irreducible conformal net (Definition~\ref{def:conformal-net}).
Then the above construction produces a conformal net on the circle (Definition~\ref{def:conformal-net-circle}).
\end{proposition}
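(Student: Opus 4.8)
The goal is to check that the data $(\bfA,H,v)$ produced by Construction \ref{constr:nets-->nets-on-circle} satisfies conditions (i)--(v) of Definition \ref{def:conformal-net-circle}. Since $\cala$ is irreducible, the vacuum sector $H = H_0(S^1,\cala)$ is itself irreducible, and by Theorem \ref{thm: Vaccum Sector} it comes with a continuous projective $\Diff(S^1)$-action $\varphi \mapsto [u_\varphi]$ implementing diffeomorphisms, and with unitary isomorphisms $v_I \colon H_0(S^1,\cala) \to L^2(\cala(I))$ for each $I \subset S^1$. We take $v := v_{S^1_\top}$. The key point to keep in mind throughout is that each action $\rho_I \colon \cala(I) \to \bfB(H)$ is \emph{injective} (a net takes orientation-preserving embeddings to injective homomorphisms, and $\rho_I$ factors an injective map of the form $\cala(I) \hookrightarrow \cala(K) \to \bfB(H)$ through Haag duality / the $S$-sector structure), so $\bfA(I) := \rho_I(\cala(I)) \cong \cala(I)$ is a von Neumann algebra with separable predual, and $I \mapsto \bfA(I)$ is order preserving by the compatibility $\rho_I|_{\cala(J)} = \rho_J$.

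\textbf{Steps.} First, \emph{locality}: if $I,K \subset S^1$ have disjoint interiors, pick an interval $L \subset S^1$ containing both (or cover by two such); by the locality axiom for $\cala$, $\cala(I)$ and $\cala(K)$ are commuting subalgebras of $\cala(L)$, and applying $\rho_L$ gives that $\bfA(I)$ and $\bfA(K)$ commute in $\bfB(H)$. Second, \emph{strong additivity}: if $L = I \cup K$ with $L \subset S^1$ an interval, then $\cala(L) = \cala(I) \vee \cala(K)$ by the strong additivity axiom; since $\rho_L$ is an injective normal homomorphism it sends generated von Neumann algebras to generated von Neumann algebras, giving $\bfA(L) = \bfA(I) \vee \bfA(K)$ in $\bfB(H)$. (For the case $L = S^1$ itself one uses instead Haag duality, Proposition \ref{prop: [Haag duality]}, together with Lemma \ref{lem:irrelevance-of-points}.) Third, \emph{split property}: for disjoint $I, K$ contained in a common interval $L$, the split property axiom for $\cala$ gives a map $\cala(I) \,\bar\otimes\, \cala(K) \to \cala(L)$ extending the algebraic tensor product; compose with $\rho_L$. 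Fourth, \emph{covariance}: the first half, $u_\varphi \bfA(I) u_\varphi^* = \bfA(\varphi(I))$, is exactly the statement that $u_\varphi$ implements $\varphi$ as a morphism of $S^1$-sectors $H \to \varphi^* H$ (Definition \ref{def:implements}, equation \eqref{eq:implement}); the second half, that $u_\varphi \in \bfA(I)$ when $\varphi$ restricts to the identity on $I'$, follows from the inner covariance axiom for $\cala$ — a unitary $w \in \cala(I)$ with $\Ad(w) = \cala(\varphi|_I)$ exists, and $\rho_I(w)$ and $u_\varphi$ implement the same automorphism, hence differ by a scalar by Schur's lemma (irreducibility), so $u_\varphi \in \bfA(I)$ after rescaling $u_\varphi$ within its projective class.

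\textbf{The vacuum condition.} Condition (v) is where the content of Theorem \ref{thm: Vaccum Sector} is really used, so I expect this to be the main obstacle. The first clause — that $v = v_{S^1_\top}$ intertwines the $\bfA(S^1_\top)$-module structure on $H$ with the standard left-module structure on $L^2(\bfA(S^1_\top)) \cong L^2(\cala(S^1_\top))$ — holds because, by part \eqref{prop:vacuum-sector:L^2} of Theorem \ref{thm: Vaccum Sector}, $v_{I}$ is by construction an isomorphism of $S^1$-sectors onto the vacuum sector associated to $(S^1, I, j)$, and the $\cala(I)$-action on the latter is exactly the left action on $L^2(\cala(I))$. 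The second clause — that $u_j = v^* J v$ in $\PU_-(H)$, where $j$ is complex conjugation and $J$ the modular conjugation on $L^2(\cala(S^1_\top))$ — is precisely part \eqref{prop:vacuum-sector:J} of Theorem \ref{thm: Vaccum Sector}: with $I = S^1_\top$, the involution fixing $\partial I = \{\pm 1\}$ is $z \mapsto \bar z$, and that part states $v_I \circ H_0(j,\cala) \circ v_I^* = J$; since $H_0(j,\cala)$ implements $j$ it represents $[u_j]$ in $\PU_-(H)$, giving $u_j = v^* J v$ projectively. Hence all five axioms hold and $(\bfA, H, v)$ is a conformal net on the circle. The only subtlety requiring care is bookkeeping: the axioms of Definition \ref{def:conformal-net-circle} quantify over \emph{all} subintervals of $S^1$, whereas the axioms of Definition \ref{def:conformal-net} are stated for subintervals of a fixed ambient interval $K$; one handles a pair $I, K$ that together span all of $S^1$ by passing to a slightly smaller pair via Lemma \ref{lem:irrelevance-of-points}, or by invoking Haag duality directly, so that one may always work inside an honest ambient interval.
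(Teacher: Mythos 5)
Your overall structure coincides with the paper's proof: locality, strong additivity, and the split property are immediate from the corresponding axioms for $\cala$; the first half of covariance is the statement that $u_\varphi$ implements $\varphi$; and the vacuum condition is read off from parts \emph{(ii)}, \emph{(iii)}, \emph{(iv)} of Theorem~\ref{thm: Vaccum Sector} exactly as you say. The one step where your argument does not go through as written is the second half of the covariance axiom. You invoke the inner covariance axiom to produce a unitary $w\in\cala(I)$ with $\Ad(w)=\cala(\varphi|_I)$, but that axiom applies only to diffeomorphisms of $I$ that restrict to the identity in a \emph{neighborhood} of $\partial I$. A diffeomorphism $\varphi\in\Diff(S^1)$ that is the identity on $I'$ need not have this property: $\varphi|_I$ fixes $\partial I$ and is infinitely tangent to the identity there, but it can fail to equal the identity on any neighborhood of $\partial I$ inside $I$. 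So the hypothesis of the axiom you cite is not satisfied, and your Schur-lemma comparison has nothing to compare $u_\varphi$ with.

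The repair is short and is exactly what the paper does: since $u_\varphi$ implements $\varphi$ and $\varphi|_{I'}=\mathrm{id}_{I'}$, the unitary $u_\varphi$ commutes with $\bfA(I')$, hence $u_\varphi\in\bfA(I')'=\bfA(I)$ by Haag duality (Proposition~\ref{prop: [Haag duality]}) --- no appeal to inner covariance or to Schur's lemma is needed. Your closing remark about pairs of intervals that jointly cover $S^1$ is a legitimate bookkeeping point but causes no trouble: for locality one either shrinks via Lemma~\ref{lem:irrelevance-of-points} or quotes Haag duality directly, and the strong additivity axiom of Definition~\ref{def:conformal-net-circle} only quantifies over unions that are again intervals, so the case $L=S^1$ does not arise.
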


\begin{proof}
The locality, strong additivity, and split property axioms
of Definition \ref{def:conformal-net-circle} follow immediately from the corresponding axioms of Definition \ref{def:conformal-net}.
We have $u_\varphi \bfA(I) u_\varphi^* = \bfA(\varphi(I))$ because $u_\varphi$ implements $\varphi$.
If $\varphi$ restricts to the identity on $I'$, then since $u_\varphi$ implements $\varphi$, $u_\varphi$ commutes with $\bfA(I')$, and so $u_\varphi \in \bfA(I')' = \bfA(I)$ by Haag duality, Proposition~\ref{prop: [Haag duality]}.
The isomorphism $v:H \to L^2(\bfA(S^1_\top))$ is the map $v_{S^1_\top}$ from part {\it (iii)} of Theorem~\ref{thm: Vaccum Sector}.
It is a morphism of $\bfA(S^1_\top)$-modules, and we have $u_j=H_0(j,\cala)=v^*J\,v$, by parts {\it (ii)} and {\it (iv)} of Theorem~\ref{thm: Vaccum Sector}.
\end{proof}

\begin{construction}[circle-based to coordinate-free conformal nets]  \label{constr:circle-net-->net}
Let now $(\bfA,H)$ be a conformal net on the circle.
Given an abstract interval $I \in \INT$, we let $\cale_I$ be the category whose objects are smooth embeddings of $I$ into $S^1$, and in which every hom-set contains exactly one element.
We define a functor 
\[
\bfA_I \,\colon\, \cale_I \to\, \VN
\]
as follows. At the level of objects, it is given by $\bfA_I(\iota):=\bfA(\iota(I))$.

To define $\bfA_I$ on morphisms, we need to provide an \text{(anti-)}isomorphism $\bfA(\iota(I))\to \bfA(\iota'(I))$ for every pair of elements $\iota, \iota' \in \cale_I$.
Given $\iota, \iota' \in \cale_I$, pick an extension $\varphi \in \Diff(S^1)$ of $\iota' \circ \iota^{-1} \colon \iota(I) \to \iota'(I)$.
If $\varphi$ is orientation-preserving then $a \mapsto u_\varphi a u_\varphi^*$ defines an isomorphism $\bfA(\iota(I)) \to \bfA(\iota'(I))$,
and if $\varphi$ is orientation-reversing then $a \mapsto u_\varphi a^* u_\varphi^*$ defines an anti-isomorphism $\bfA(\iota(I)) \to \bfA(\iota'(I))$.
If $\tilde\varphi$ is another extension of $\iota' \circ \iota^{-1}$, then $\varphi^{-1} \tilde\varphi$ is the identity on $\iota(I)$,
and so $\Ad(u_{\varphi^{-1} \tilde\varphi})$ is the identity on $\bfA(\iota(I))$ by the covariance and locality axioms.
Therefore, the \text{(anti-)}isomorphism $\bfA(\iota(I)) \to \bfA(\iota'(I))$ defined above is independent of the choice of extension $\varphi$.

If $\iota'' \colon I \to S^1$ is a third embedding, one checks, by picking compatible
extensions $\varphi$, $\varphi'$, $\varphi''$ of $\iota'' \circ \iota^{-1}$, $\iota'' \circ \iota'{}^{-1}$, and $\iota' \circ \iota^{-1}$, that
$\Ad(u_{\varphi}) = \Ad(u_{\varphi'}) \circ \Ad(u_{\varphi''})$.
The above prescription therefore defines a functor $\bfA_I$, and it makes sense to set
\begin{equation*}
\textstyle \cala(I) := \lim_{\iota \in \cale_I} \bfA_I(\iota) = \lim_{\iota \in \cale_I} \bfA(\iota(I)).
\end{equation*}
By definition, an element $a\in \cala(I)$ is a collection of operators $\{ a_\iota \in \bfA(\iota(I)) \}_{\iota \in \cale_I}$ subject to 
the conditions that $u_\varphi a_\iota {u_\varphi}^* = a_{\varphi \circ \iota}$ for each embedding $\iota:I\to S^1$ and $\varphi \in \Diff_+(S^1)$,
and that $u_\varphi {a_\iota}^* {u_\varphi}^* = a_{\varphi \circ \iota}$ for each $\iota$ and $\varphi \in \Diff_-(S^1)$.

Given a smooth embedding of abstract intervals $f: I \to J$, there is an induced (anti-)homomorphism $\cala(f):\cala(I)\to \cala(J)$ that sends
$a=\{a_\iota \in \bfA(\iota(I)) \}_{\iota \in \cale_I}$ to the unique element $b=\{b_\jmath \in \bfA(\jmath(J)) \}_{\jmath \in \cale_J}$
that satisfies 
$b_\jmath = u_\varphi a_\iota {u_\varphi}^*$ (or $b_\jmath = u_\varphi a_\iota^* {u_\varphi}^*$) for every $\varphi \in \Diff_+(S^1)$ (respectively $\varphi \in \Diff_-(S^1)$) with $\jmath \circ f = \varphi \circ \iota$.
This defines a functor $\cala \colon \INT \to \VN$
that sends orientation-preserving embeddings to homomorphisms, and orientation-reversing embeddings to anti-homomorphisms.
\end{construction}

\begin{proposition} \label{prop:nets-on-circle-->nets}
Let $(\bfA,H)$ be a conformal net on the circle (Definition~\ref{def:conformal-net-circle}),
then the functor $\cala \colon \INT \to \VN$ given by the above construction is a coordinate-free conformal net (Definition~\ref{def:conformal-net}). 
\end{proposition}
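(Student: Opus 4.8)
The plan is to check the six conditions of Definition~\ref{def:conformal-net} one at a time, in each case reducing to the corresponding property of the net on the circle $(\bfA,H)$. The basic device is that for every abstract interval $I$ the category $\cale_I$ is equivalent to the trivial category, so any choice of smooth embedding $\iota_0\colon I\hookrightarrow S^1$ yields an isomorphism $\cala(I)=\lim_{\iota\in\cale_I}\bfA_I(\iota)\xrightarrow{\ \sim\ }\bfA(\iota_0(I))$, $a\mapsto a_{\iota_0}$. Under such identifications the structure map $\cala(f)\colon\cala(I)\to\cala(J)$ of an embedding $f\colon I\to J$ is $\Ad(u_\varphi)$, composed with $x\mapsto x^*$ when $f$ reverses orientation, for any extension $\varphi\in\Diff(S^1)$ of $\jmath_0\circ f\circ\iota_0^{-1}$; since $\Ad(u_\varphi)$ is an (anti-)automorphism of $\bfB(H)$ carrying $\bfA(\iota_0(I))$ onto $\bfA(\varphi(\iota_0(I)))=\bfA(\jmath_0(f(I)))\subseteq\bfA(\jmath_0(J))$, each $\cala(f)$ is an injective (anti-)homomorphism. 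Thus $\cala$ is a net in the sense of the excerpt, and $\cala(I)\cong\bfA(\iota_0(I))\subseteq\bfB(H)$ has separable predual.

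For the algebraic axioms, fix a common embedding $\iota_0\colon K\hookrightarrow S^1$ and restrict it to $I,J\subset K$; then the inclusions $\cala(I),\cala(J)\hookrightarrow\cala(K)$ become $\bfA(\iota_0(I)),\bfA(\iota_0(J))\hookrightarrow\bfA(\iota_0(K))\subseteq\bfB(H)$, so \emph{locality}, \emph{strong additivity} and the \emph{split property} for $\cala$ are immediate from the same axioms for $(\bfA,H)$. For \emph{inner covariance}, given $\varphi\in\Diff_+(I)$ equal to the identity near $\partial I$, extend $\iota_0\varphi\iota_0^{-1}$ by the identity to $\chi\in\Diff_+(S^1)$; since $\chi$ is the identity on $\iota_0(I)'$, the covariance axiom~\ref{def:conformal-net-circle:cov} gives $u_\chi\in\bfA(\iota_0(I))$, and $\cala(\varphi)=\Ad(u_\chi)$ is inner. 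For \emph{continuity}, one reduces (as recalled in the excerpt) to continuity of $\Diff_+(I)\to\Aut(\cala(I))$; writing $\cala(\varphi)=\Ad(u_{\chi_\varphi})|_{\bfA(\iota_0(I))}$ for a continuous choice of extensions near the identity, this follows from continuity of $\Diff(S^1)\to\PU_\pm(H)$ together with the arguments of Lemma~\ref{lem:cont-inner-cov} and Proposition~\ref{prop:projective-implementation-diffeo} (the split property plus Proposition~\ref{prop:subspace-topology-on-PU(A_0)}).

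The main point is the \emph{vacuum axiom}. By the vacuum axiom~\ref{def:conformal-net-circle:vacuum} for $(\bfA,H)$ we may take $H=L^2(\bfA(S^1_\top))$ with $\bfA(S^1_\top)$ acting by left multiplication, so that $u_j$ is the modular conjugation $J$, where $j$ is complex conjugation. Combining covariance with locality one first records Haag duality $\bfA(I')=\bfA(I)'$ for subintervals $I\subset S^1$: the inclusion $\supseteq$ is locality, while $\bfA(S^1_\top)'=J\,\bfA(S^1_\top)\,J=\Ad(u_j)(\bfA(S^1_\top))=\bfA(S^1_\bot)$ by Tomita's theorem and covariance, and the general case follows by transporting along an element of $\Diff_+(S^1)$. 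Now take $J\subsetneq I$ with $p\in\partial I\cap\partial J$, choose the embedding $\iota_0\colon I\hookrightarrow S^1$ with $\iota_0(I)=S^1_\top$ and $\iota_0(p)=-1$, and equip $J\cup_p\bar J$ with the smooth structure identifying it with the subinterval $\iota_0(J)\cup j(\iota_0(J))\subset S^1$ (one of the structures allowed by the axiom). Then $L^2(\cala(I))\cong H=L^2(\bfA(S^1_\top))$, the action of $\cala(J)=\bfA(\iota_0(J))$ is by left multiplication, and the action of $\cala(\bar J)\cong\cala(J)^{\op}$ by right multiplication is $J\,\bfA(\iota_0(J))\,J=\Ad(u_j)(\bfA(\iota_0(J)))=\bfA(j(\iota_0(J)))$; by strong additivity these generate $\bfA(\iota_0(J)\cup j(\iota_0(J)))=\cala(J\cup_p\bar J)$. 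Tracing through the identifications then shows that $\cala(J)\ox_\alg\cala(\bar J)\to\bfB(L^2\cala(I))$ factors through the inclusion of $\cala(J\cup_p\bar J)$, which is the required extension.

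I expect the only genuine difficulty to be bookkeeping rather than mathematical: one must check that the canonical isomorphism $\cala(\bar J)\cong\cala(J)^{\op}$ appearing in the vacuum axiom, the isomorphism $\cala(\bar J)\cong\bfA(\overline{\iota_0(J)})$ coming from the construction of $\cala$, and the identity $u_j=J$ are mutually compatible, so that the operators assembled above really are the image of $\cala(J\cup_p\bar J)$ under its inclusion into $\bfB(L^2\cala(I))$, and similarly that all the identifications $\cala(I)\cong\bfA(\iota_0(I))$ used along the way are the ones dictated by functoriality of $\cala$.
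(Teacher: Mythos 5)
Your proposal is correct and follows essentially the same route as the paper: identify $\cala(I)$ with $\bfA(\iota_0(I))$, transfer locality, strong additivity, the split property and inner covariance directly, and reduce the vacuum axiom to the computation that, under $v$ and $u_j=J$, the right action of $\cala(\bar J)\cong\cala(J)^\op$ becomes left multiplication by $\bfA(j(\iota_0(J)))=J\,\bfA(\iota_0(J))\,J$, so that strong additivity supplies the required extension to $\bfA(\iota_0(J)\cup j(\iota_0(J)))$. Two cosmetic points: in the vacuum axiom the quantifiers should be reversed---start from an arbitrary admissible smooth structure on $J\cup_p\bar J$ and then choose $\iota_0$ adapted to it (as the paper does), rather than fixing the structure induced from $S^1$---and for continuity the relevant ingredient is the continuity of $\Ad$ from the normalizer of $\bfA(\iota_0(I))$ in $\PU(H)$ (strong topology) to $\Aut(\bfA(\iota_0(I)))$ ($u$-topology), i.e.\ Lemma~\ref{lem:ad-is-cont}, rather than Proposition~\ref{prop:subspace-topology-on-PU(A_0)}, which only applies to unitaries lying in $\bfA(\iota_0(I))$ itself (the extensions $\chi_\varphi$ of a general $\varphi\in\Diff_+(I)$ need not be supported in $\iota_0(I)$).
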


\begin{proof}
The locality, strong additivity, and split property axioms for $(\bfA,H)$ imply the corresponding axioms for $\cala$.
For the remaining requirements, we argue as follows.

\emph{Covariance.}
Let $\varphi\in\Diff_+(I)$ be a diffeomorphism that restricts to the identity on a neighborhood of $\partial I$.
We assume without loss of generality that $I$ is contained in the standard circle, and let $\hat \varphi\in\Diff_+(S^1)$ be the extension of $\varphi$ by the identity map on $I'$.
The map $\cala(\varphi):\cala(I)\to \cala(I)$ is given by 
$\{a_\iota\}_{\iota\in\cale_I} \mapsto \{b_\jmath\}_{\jmath \in \cale_I}$,
where the $b_\jmath$ are determined by the requirement that
$b_\jmath = u_\psi\, a_\iota {u_\psi}^*$
for every $\psi \in \Diff_+(S^1)$ with $\jmath \circ \varphi = \psi \circ \iota$.
Letting $\iota=\jmath=\id_I$ and $\psi=\hat\varphi$, we learn that $b_{\id_I}=u_{\hat\varphi}a_{\id_I}u_{\hat\varphi}^*$.
Under the identification $\{a_\iota\} \mapsto a_{\id_I}$ of $\cala(I)$ with $\bfA(I)$, the map $\cala(\varphi):\cala(I)\to \cala(I)$
therefore corresponds to $\Ad(u_{\hat \varphi}):\bfA(I)\to \bfA(I)$.
Since $u_{\hat \varphi}\in\bfA(I)$, it follows that $\cala(\varphi)$ is an inner automorphism.

\emph{Continuity.}
Let $I$ be an interval, which we take, without loss of generality, to be a subinterval of $S^1$.
We identify $\cala(I)$ with $\bfA(I)$ by $a = \{a_\iota\} \mapsto a_{\id_I}$.
By Lemma~\ref{lem: equiv def of continuity axiom} below, it suffices to check the continuity of the map $\Diff_+(I)\to \Aut(\bfA(I))$.
Letting $\Diff_+(S^1,I) := \{ \varphi \in \Diff_+(S^1) \mid \varphi(I) = I \}$ and
$\PN(\bfA(I)) := \{ U \in \U(H) \mid U \bfA(I) U^* = \bfA(I) \}/S^1$,
we have the following commutative diagram
\begin{equation*}
\tikzmath{
\matrix [matrix of math nodes,column sep=.9cm,row sep=8mm]
{ |(a)| \Diff_+(S^1) \pgfmatrixnextcell |(b)| \Diff_+(S^1,I)  \pgfmatrixnextcell |(c)| \Diff_+(I)\\ 
|(d)| \PU(H) \pgfmatrixnextcell |(e)| \PN(\bfA(I)) \pgfmatrixnextcell |(f)| \Aut(\bfA(I))\\ }; 
\draw[->] (b) -- (a); \draw[->>] (b) -- (c);
\draw[->] (e) -- (d); \draw[->] (e) --node[above]{$\scriptstyle \Ad$} (f);
\draw[->] (a) --node[right]{$\scriptstyle u$} (d);
\draw[->] (b) --node[right]{$\scriptstyle u|_{\Diff_+(S^1,I)}$} (e);
\draw[->] (c) -- (f);
}
\end{equation*}
where the existence of the middle vertical map is guaranteed by the covariance axiom.
The two horizontal maps on the left are subgroup inclusions, equipped with subspace topologies.
The map $\Ad$ is continuous by Lemma~\ref{lem:ad-is-cont}, and the map $u$ is continuous by assumption.
The middle vertical is continuous by restriction.
The vertical map on the right is $\varphi \mapsto\cala(\varphi)$, and we have to show that it is continuous.
The $\calc^\infty$ topology on $\Diff(I)$ coincides with the quotient topology under the map $\Diff_+(S^1,I) \twoheadrightarrow \Diff(I)$.
The continuity of $\varphi \mapsto \cala(\varphi)$ therefore follows from that of $\Ad \circ\hspace{.4mm} u|_{\Diff_+(S^1,I)}$.

\emph{Vacuum sector.}
Let $K \subsetneq I$ contain the boundary point $p \in \dd I$,
and let $K \cup_p \bar{K}$ be equipped with any smooth structure that extends the ones on $K$ and $\bar K$ and for which the orientation-reversing involution $j_K$ that exchanges $K$ and $\bar{K}$ is smooth.
We have to show that the following diagram can be completed:
\[
\tikzmath{
\matrix [matrix of math nodes,column sep=.9cm,row sep=5mm]
{ 
|(a)| \cala(K) \ox_{\alg} \cala(\bar{K}) \pgfmatrixnextcell |[xshift=70](b)| \cala(K) \ox_{\alg} \cala(K)^\op \pgfmatrixnextcell |(b')| \cala(I) \ox_{\alg} \cala(I)^\op\\ 
|(c)| \cala(K \cup_p \bar{K}) \pgfmatrixnextcell\pgfmatrixnextcell |(d)| \bfB(L^2\cala(I)). \\
}; 
\draw[->] (a) --node[above]{$\scriptstyle (\mathrm{id},\cala(j_K))$} (b);
\draw[->] (a) -- (c);
\draw[->] (b) -- (b');
\draw[->] (b') -- (d);
\draw[->,dashed] (c) -- (d);
}
\]

Extend the smooth structure on $K \cup_p \bar{K}$ to one on $S:=I\cup_{\partial I}\bar I$
(make sure that the involution that exchanges $I$ and $\bar{I}$ is smooth),
and pick an orientation-preserving  diffeomorphism $\varphi:S\to S^1$ that intertwines the above involution with the standard involution $j$ on $S^1$ and
that sends $I$ to the upper half $S^1_\top$ of the circle.
We then have isomorphisms 
\begin{alignat*}{3}
\cala(K)&\to\bfA(\varphi(K))
&\cala(\bar K)&\to\bfA(\varphi(\bar K))\\
\cala(K\cup\bar K)&\to\bfA(\varphi(K\cup\bar K))\qquad
&\cala(I)&\to\bfA(S^1_\top)
\end{alignat*}
(given by $a\mapsto a_\iota$ where $\iota$ is the appropriate restriction of $\varphi$)
that make the following diagram commute:
\[
\tikzmath{
\node (a1) at (0,3.15) {$\scriptstyle\cala(K) \ox_{\alg} \cala(\bar K)$};\node (b1) at (4.5,3.15) {$\scriptstyle\cala(K) \ox_{\alg} \cala(K)^\op$};
\node (b1') at (8,3.15) {$\scriptstyle\cala(I) \ox_{\alg} \cala(I)^\op$}; \node (c1) at (-1.5,2.15) {$\scriptstyle\cala(K \cup_p \bar{K})$};
\node (d1) at (9.5,2.15) {$\scriptstyle\bfB(L^2\cala(I))$}; \node (a) at (0,1) {$\scriptstyle\bfA(\varphi(K)) \ox_{\alg} \bfA(\varphi(\bar K))$};
\node (b) at (4.5,1) {$\scriptstyle\bfA(\varphi(K)) \ox_{\alg} \bfA(\varphi(K))^\op$}; \node (b') at (8,1) {$\scriptstyle\bfA(S^1_\top) \ox_{\alg} \bfA(S^1_\top)^\op$}; 
\node (c) at (-1.5,0) {$\scriptstyle\bfA(\varphi(K \cup_p \bar{K}))$}; \node (x) at (7.2,0) {$\scriptstyle\bfB(H)$};\node (d) at (9.5,0) {$\scriptstyle\bfB(L^2\bfA(S^1_\top))$}; 
\draw[->] (a1) --node[above]{$\scriptscriptstyle (\mathrm{id},\,\cala(j_K)$} (b1);\draw[->] (a1) -- (c1);\draw[->] (b1) -- (b1');\draw[->] (b1') -- (d1);
\draw[->] (a) --node[above]{$\scriptscriptstyle (\mathrm{id},\,a\mapsto u_ja^*u_j)$} (b);\draw[->] (a) -- (c);\draw[->] (b) -- (b');\draw[->] (b') -- (d);
\draw[->] (x) --node[above]{$\scriptscriptstyle \cong$} (d);\draw[->] (a1) -- (a);\draw[->] (b1) -- (b);\draw[->] (b1') -- (b');\draw[->] (c1) -- (c);\draw[->] (d1) -- (d);
\draw[->,dashed] (c) -- (x);\draw[white, line width=13] (c1) -- (d1);\draw[->,dashed] (c1) -- (d1);
}
\]
Indeed, if 
$a=\{a_\alpha\}\in \cala(\bar K)$
and
$b=\{b_\beta\}\in \cala(K)$ 
are such that $\cala(j_K)(a)=b$,
then $b_\beta=u_\psi a_\alpha^* u_\psi^*$ for every $\psi\in\Diff_-(S^1)$ with $\beta \circ j_K=\psi\circ\alpha$.
Setting $\alpha=\varphi$, $\beta=\varphi$, and $\psi=j$, we get the commutativity of the rear left square:
\(
\tikzmath{
\node[xshift = 9, name=A] at (0,.8) {$a$};
\node[anchor=west, name=X] at (0,0) {$a_{\varphi} \mapsto u_ja_{\varphi}^*u_j=b_{\varphi}$.};
\node[xshift = -12, name=B] at ($(X.east)+(0,.85)$) {$b$};
\draw[->] (A) -- (B.west |-A);
\draw[line cap=round] (A.east) ++(0,.06) -- ++(0,-.12);
\path (A) -- node[yshift=-1, rotate=-90, scale=.95]{$\mapsto$} (X.north-|A)
(B) -- node[yshift=-1, rotate=-90, scale=.95]{$\mapsto$} (X.north-|B);
}
\)

Let us identify $K$, $\bar K$, and $K\cup\bar K$ with their images under $\varphi$ in order to simplify the notation.
The existence of the top dotted arrow is now equivalent to the existence of an arrow completing the following diagram:
\[
\tikzmath{
\node (a) at (0,1.15) {$\bfA(K) \ox_{\alg} \bfA(\bar K)$};
\node (b) at (5,1.15) {$\bfA(K) \ox_{\alg} \bfA(K)^\op$};
\node (b') at (9,1.15) {$\bfA(S^1_\top) \ox_{\alg} \bfA(S^1_\top)^\op$}; 
\node (c) at (0,0) {$\bfA(K \cup_p \bar{K})$};
\node (x) at (6.7,0) {$\bfB(H)$};
\node (d) at (9,0) {$\bfB(L^2\bfA(S^1_\top))$}; 
\draw[->] (a) --node[above]{$\scriptstyle (\mathrm{id},\,a\mapsto u_ja^*u_j)$} (b);
\draw[->] (a) -- (c);
\draw[->] (b) -- (b');
\draw[->] (b') -- (d.north-|b');
\draw[->,dashed] (c) -- (x);
\draw[->] (x) --node[above]{$\scriptstyle \cong$} (d);
}
\]
We claim that the natural action $\bfA(K \cup_p \bar{K})\to\bfB(H)$ provided by the data of a conformal net on the circle
makes the above diagram commute.

On the subalgebra $\bfA(K)$ of $\bfA(K) \ox_{\alg} \bfA(\bar K)$, 
the commutativity of the above diagram is obvious since, by assumption, the isomorphism $v:H \to L^2(\bfA(S^1_\top))$ is equivariant for the left actions of $\bfA(S^1_\top)$.
On the subalgebra $\bfA(\bar K)$, we argue as follows.
Pick $a\in \bfA(\bar K)$, with image $u_j a^* u_j \in \bfA(K)^\op\subset \bfA(S^1_\top)^\op$.
That element goes to $J(u_j a^* u_j)^*J$ under the right action map $\bfA(S^1_\top)^\op\to \bfB(L^2\bfA(S^1_\top))$, where $J$ is the modular conjugation.
By assumption, the isomorphism $\bfB(H)\to \bfB(L^2\bfA(S^1_\top))$ sends $u_j$ to $J$.
Since $J=J^*$, the above expression then simplifies to $J(J a^* J)^*J=a$, which proves the commutativity of the diagram.
\end{proof}

Recall the definition of the continuity of a net from the beginning of Section \ref{sec: Definition of conformal nets} and Haagerup's $u$-topology from the Appendix.

\begin{lemma}\label{lem: equiv def of continuity axiom}
Given a net $\cala$, if the natural maps $\Diff_+(I)\to \mathrm{Aut}(\cala(I))$ are continuous,
then so are the maps $\mathrm{Hom}_{\INT}(I,J)\to \mathrm{Hom}_{\VN}(\cala(I),\cala(J))$.
\end{lemma}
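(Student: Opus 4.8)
The plan is to absorb the ``varying subinterval of $J$'' that appears in $f\mapsto\cala(f)$ into a diffeomorphism of a fixed slightly larger interval, so that the hypothesis on $\Diff_+$ can be applied directly. First I would reduce to orientation-preserving embeddings: $\mathrm{Hom}_{\INT}(I,J)$ is the disjoint union of its (open) subsets of orientation-preserving and orientation-reversing embeddings, pre-composition with a fixed orientation-reversing diffeomorphism $\rho$ of $I$ identifies these two subsets homeomorphically, and $\phi\mapsto\phi\circ\cala(\rho)$ is continuous for the $u$-topology because $\cala(\rho)$ is a contraction. So it is enough to prove continuity at a fixed orientation-preserving $f_0\in\mathrm{Hom}_{\INT}(I,J)$. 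I would then choose an enlargement $\hat J$ of $J$ with $J\subset\mathrm{int}(\hat J)$ and write $\iota\colon J\hookrightarrow\hat J$ for the inclusion; every orientation-preserving embedding $f\colon I\to J$ then has image in the compact set $J\subset\mathrm{int}(\hat J)$. For $f$ in a neighborhood of $f_0$, the diffeomorphism $f\circ f_0^{-1}\colon f_0(I)\to f(I)$ is $\calc^\infty$-close to the inclusion of $f_0(I)$ into $\hat J$, so by a standard (and in dimension one elementary) extension argument it extends to a diffeomorphism $\psi_f\in\Diff_+(\hat J)$ that is the identity near $\dd\hat J$, chosen to depend continuously on $f$ and to satisfy $\psi_{f_0}=\id$.

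By construction $\iota\circ f=\psi_f\circ(\iota\circ f_0)$ as embeddings $I\to\hat J$, so applying the functor $\cala$ yields
$$\cala(\iota)\circ\cala(f)\;=\;\cala(\psi_f)\circ\cala(\iota\circ f_0).$$
Here $f\mapsto\cala(\psi_f)$ is continuous into $\Aut(\cala(\hat J))$, being $f\mapsto\psi_f$ followed by the hypothesized continuous map $\Diff_+(\hat J)\to\Aut(\cala(\hat J))$, and post-composition with the fixed homomorphism $\cala(\iota\circ f_0)$ is $u$-continuous, again because homomorphisms are contractions. Hence $f\mapsto\cala(\iota)\circ\cala(f)=\cala(\iota\circ f)$ is continuous from a neighborhood of $f_0$ into $\mathrm{Hom}_{\VN}(\cala(I),\cala(\hat J))$ with its $u$-topology.

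It then remains to descend this to continuity of $f\mapsto\cala(f)$ into $\mathrm{Hom}_{\VN}(\cala(I),\cala(J))$. Since $\cala$ is a net, $\cala(\iota)\colon\cala(J)\to\cala(\hat J)$ is injective, so the predual map $\theta\mapsto\theta\circ\cala(\iota)$ has norm-dense image in $\cala(J)_*$; and every homomorphism $\cala(f)$ is a contraction. An $\epsilon/2$-approximation of an arbitrary $\nu\in\cala(J)_*$ by functionals of the form $\theta\circ\cala(\iota)$ then shows that whenever $f_\alpha\to f_0$ with $\cala(\iota)\circ\cala(f_\alpha)\to\cala(\iota)\circ\cala(f_0)$ in the $u$-topology, one already has $\cala(f_\alpha)\to\cala(f_0)$ in the $u$-topology of $\mathrm{Hom}_{\VN}(\cala(I),\cala(J))$. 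As $f_0$ was arbitrary, this establishes continuity.

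The main obstacle is really this last step, not the geometry: one cannot naively restrict the codomain of a $u$-continuous map from $\cala(\hat J)$ down to the possibly non-complemented von Neumann subalgebra $\cala(J)$, so the argument genuinely needs both the norm-density of $\{\theta\circ\cala(\iota)\}$ in $\cala(J)_*$ (i.e.\ the injectivity of $\cala(\iota)$, which is exactly where the definition of a net is used) and the uniform contractivity of the homomorphisms $\cala(f)$. Everything else --- the one-dimensional isotopy-extension input producing the $\psi_f$, and the continuity of pre- and post-composition for the $u$-topology --- is routine.
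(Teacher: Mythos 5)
Your proof is correct and follows essentially the same strategy as the paper's: reduce to orientation-preserving embeddings, extend them to diffeomorphisms of an ambient interval so that the hypothesis on $\Diff_+\to\Aut$ applies, and then transfer the resulting $u$-convergence back to $\mathrm{Hom}_{\VN}(\cala(I),\cala(J))$ through the preduals. The two implementations diverge only in technique: where you construct a genuinely continuous family of extensions $f\mapsto\psi_f\in\Diff_+(\hat J)$ (which does require a Seeley-type continuous extension operator, though this is indeed elementary in dimension one), the paper sidesteps that construction with a filter/diagonal argument using, for each fixed $n$, extensions that are merely $C^n$-close; and where you descend from $\cala(\hat J)$ to $\cala(J)$ via norm-density of restricted functionals plus uniform contractivity of the $\cala(f)$, the paper equivalently lifts each $\xi\in\cala(J)_*$ to the ambient predual and projects back down to $\cala(I)_*$.
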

\begin{proof}
It is enough to show continuity on the subset $\mathrm{Hom}_{\INT}^+(I,J)$ of orientation-preserving embeddings.
Given a generalized sequence $\varphi_i\in \mathrm{Hom}_{\INT}^+(I,J)$, indexed by the poset $\mathcal I$, with limit $\varphi$, and given a vector $\xi\in \cala(J)_*$ in the predual,
we need to show that $\cala(\varphi_i)_*(\xi)$ converges to $\cala(\varphi)_*(\xi)$ in $\cala(I)_*$.

Pick an interval $K$, identify $I$ and $J$ with subintervals of $K$ via some fixed embeddings $I\hookrightarrow K$, $J\hookrightarrow K$ into its interior,
and extend $\varphi$ to a diffeomorphism $\hat\varphi \in\Diff_+(K)$.
For each natural number~$n$, pick an extension $\hat\varphi_{n,i}\in \Diff_+(K)$ of $\varphi_i$ such that
$\|\hat\varphi_{n,i}-\hat \varphi\|_{C^n}<\|\varphi_{i}- \varphi\|_{C^n}$,
where $\|\,\,\|_{C^n}$ is any norm that induces the $C^n$ topology on $\Diff_+(K)$.
It follows that
\[
F\text{-lim}\,\, \hat\varphi_{n,i} \,=\, \hat\varphi,
\]
where $F$ is the filter\footnote{Recall that a filter $F$ on a set $\cali$ is a collection of subsets $S\subset \cali$ such that if $S \in F$ and $T \in F$ then $S \cap T \in F$, and if $S\in F$ and $S\subset S'$ then $S'\in F$.
The expression $F\text{-lim}\,\,x_i=x$ means that for every neighborhood $U$ of $x$, the set $\{i\in \cali\,|\,x_i\in U\}$ is an element of $F$.} on $\mathbb N\times \mathcal I$ generated by the sets $\{(n,i)\in \IN\times \mathcal I\,|\,n\ge n_0, i\ge i_0(n)\}_{n_0 \in \IN, i_0: \IN \rightarrow \mathcal I}$.
Given a lift $\hat\xi\in\cala(K)_*$ of $\xi$ then, by assumption, $F\text{-lim}\,\,\cala(\hat\varphi_{n,i})_*(\hat\xi)=\cala(\hat\varphi)_*(\hat\xi)$.
Composing with the projection $\pi:\cala(K)_*\twoheadrightarrow \cala(I)_*$,
it follows that $\cala(\varphi_i)_*(\xi)=\pi(\cala(\hat\varphi_{n,i})_*(\hat\xi))$ converges to $\pi(\cala(\hat\varphi)_*(\hat\xi))=\cala(\varphi)_*(\xi)$.
\end{proof}

\subsection{Positive-energy nets}    \label{subsec:pos-energy-nets}

The following notion, that we call ``positive-energy net'', corresponds to what most people would call ``conformal net'' in the literature.
The goal of this section is to show that positive-energy nets subject to the extra conditions of strong additivity, the split property, and diffeomorphism covariance,
yield examples of conformal nets in the sense of Definition~\ref{def:conformal-net}.

\begin{definition}
\label{def:positive-energy-nets} 
A \emph{positive-energy net} $(\bfA,H)$ is a Hilbert space $H$, a unit vector $\Omega\in H$ called the \emph{vacuum vector}, a continuous action $\Conf_+(S^1) \to \U(H)$, $\varphi \mapsto u_\varphi$,
and an order preserving map
\begin{equation*}
\INT_{S^1}\to\, \VN_H\,,\,\,\,\,\, I \,\mapsto \bfA(I)
\end{equation*} 
subject to the following conditions.
Here, $I$ and $J$ are subintervals of $S^1$:
\begin{enumerate}
\item\emph{Locality:}
If $I$ and  $J$ have disjoint interiors, then $\bfA(I)$ and $\bfA(J)$ are commuting subalgebras of $\bfB(H)$.
\item \label{def:PEN:cov} \emph{Covariance:}
For $\varphi \in \Conf_+(S^1)$, we have $u_\varphi \bfA(I) u_\varphi^* = \bfA(\varphi(I))$.
\item \label{def:PEN:vacuum} \emph{Vacuum vector:}
The subspace of $\Conf_+(S^1)$-invariant vectors of $H$ is spanned by $\Omega$.
Moreover, $\Omega$ is a cyclic vector for the action of the algebra $\bigvee_{I \in \INT_{S^1}} \bfA(I)$.
\item \label{def:PEN:positive-energy}
\emph{Positive-energy:}
Let $L_0$ be the conformal Hamiltonian, 
defined by the equation $u_{r_\alpha} = e^{i\alpha L_0}$, where $r_\alpha \in \Conf_+(S^1)$ is the anticlockwise rotation by angle $\alpha$.
Then $L_0$ is a positive operator. 
\end{enumerate}
\end{definition}

There are three further conditions that may be imposed on a positive-energy net:

1. A positive-energy net satisfies \emph{strong additivity} if $\bfA(I\cup J) = \bfA(I) \vee \bfA(J)$.
Note that if the interiors of $I$ and $J$ have non-empty intersection, then that condition is automatic \cite{Fredenhagen-Martin(pointlike-localized-fields)}.

2. A positive-energy net satisfies the \emph{split property} if for any pair of disjoint subintervals
$I, J \subseteq S^1$, the closure of the algebraic tensor product $\bfA(I) \ox_{\alg} \bfA(J)\subset \bfB(H)$ is the spatial tensor product $\bfA(I)\,\bar{\ox}\,\bfA(J)$.

3. A positive-energy net is \emph{diffeomorphism covariant} if $u \colon \Conf_+(S^1) \to \U(H)$ extends to a continuous projective action of the orientation-preserving diffeomorphisms 
$\Diff_+(S^1) \to \PU(H)$, $\varphi \mapsto [u_\varphi]$, such that
\begin{enumerate}
\item[({\it i}\hspace{.4mm})] $u_\varphi \bfA(I) u_\varphi^* = \bfA(\varphi(I))$
\item[({\it ii}\hspace{.4mm})] if $\varphi$ has support in $I$ (i.e., is the identity outside $I$), then $u_\varphi \in \bfA(I)$,
\end{enumerate}
for any lift $u_\varphi\in \U(H)$ of $[u_\varphi]$.
If a positive-energy net is diffeomorphism covariant then, by a result of Carpi and Weiner~\cite[Theorem~5.5]{Carpi-Weiner(2005UniqueDiffSymmetry-in-CFT)},
the extension $\Diff_+(S^1) \to \PU(H)$ is uniquely determined by the above two conditions.
We will use this result to further extend the action to orientation-reversing diffeomorphisms.

Recall that $S^1_\top = \{ z \in S^1 : \Im\mathrm{m} (z) \geq 0 \}$ is the upper half of the standard circle,
and that $j:S^1\to S^1$ denotes complex conjugation.

\begin{proposition}
\label{prop:Vacuum-sector-PENS}
Let $(\bfA,H)$ be a positive-energy net.
Then there is an $\bfA(S^1_\top)$-linear unitary isomorphism $v:H \xrightarrow{\scriptscriptstyle \cong} L^2(\bfA(S^1_\top))$ such that,
letting $\bfJ=v^*J\,v$ with $J$ the modular conjugation, we have
\[
\begin{matrix}
\bfJ \bfA( I ) \bfJ \,=\, \bfA ( j (I) ) &  \forall \, I \subset S^1,  \smallskip\\
\bfJ \,u_\varphi\, \bfJ \,=\, u_{j \circ \varphi \circ j}  &  \forall\,\varphi \in \Conf_+(S^1).
\end{matrix}
\]
\end{proposition}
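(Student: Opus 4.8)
The plan is to first produce the isomorphism $v$ from the Bisognano--Wichmann / Tomita--Takesaki machinery for positive-energy nets, and then to deduce the two covariance identities for $\bfJ$ by a uniqueness argument. First I would recall the standard fact (Fr\"ohlich--Gabbiani, Brunetti--Guido--Longo) that for a positive-energy net the vacuum vector $\Omega$ is cyclic and separating for $\bfA(S^1_\top)$: cyclicity is part of Reeh--Schlieder applied to a half-circle (using positivity of the energy together with the $\Conf_+(S^1)$-covariance and locality), and separating-ness follows by applying cyclicity to the complementary interval $S^1_\bot$ and using locality. Then $(H,\Omega)$ is, by definition of $L^2(\bfA(S^1_\top))$ as the standard form, canonically and $\bfA(S^1_\top)$-linearly isometric to $L^2(\bfA(S^1_\top))$ via the map sending $\Omega\mapsto$ the canonical cyclic vector; this is the map $v$, and its unitarity is exactly the statement that $\Omega$ is cyclic and separating. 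Set $\bfJ:=v^*Jv$; by construction $\bfJ$ is the modular conjugation of $(\bfA(S^1_\top),\Omega)$ on $H$, an antiunitary involution with $\bfJ\,\bfA(S^1_\top)\,\bfJ=\bfA(S^1_\top)'$.

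Next I would identify $\bfJ\,\bfA(S^1_\top)\,\bfJ$ geometrically. By Haag duality for positive-energy nets on the circle, $\bfA(S^1_\top)'=\bfA(S^1_\bot)=\bfA(j(S^1_\top))$, which already gives the desired identity for the single interval $I=S^1_\top$. The heart of the matter is the Bisognano--Wichmann property: the modular group $\Delta^{it}$ of $(\bfA(S^1_\top),\Omega)$ acts geometrically as the one-parameter subgroup of $\Conf_+(S^1)$ fixing the two endpoints $\{1,-1\}$ of $S^1_\top$ (the ``dilations'' of Lemma~\ref{lem:properties-of-Conf}), and the modular conjugation $\bfJ$ implements the reflection $j$ that fixes $\{1,-1\}$, i.e. $\bfJ\,u_\varphi\,\bfJ=u_{j\varphi j}$ for $\varphi$ in that one-parameter group, and $\bfJ\,\bfA(I)\,\bfJ=\bfA(j(I))$ for all $I$. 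This is the standard theorem of Brunetti--Guido--Longo (deduced from the positive-energy condition via the Borchers/Wiesbrock commutation relations); I would cite it rather than reprove it. From $\bfJ\,\bfA(I)\,\bfJ=\bfA(j(I))$ for all $I$ one gets the first displayed identity directly.

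For the second displayed identity, $\bfJ\,u_\varphi\,\bfJ=u_{j\varphi j}$ for all $\varphi\in\Conf_+(S^1)$: the map $\varphi\mapsto \bfJ\,u_{j\varphi j}\,\bfJ$ is a (strongly continuous) unitary representation of $\Conf_+(S^1)$, since $\bfJ$ is an antiunitary involution and $\varphi\mapsto j\varphi j$ is an automorphism of $\Conf_+(S^1)$. By the first identity together with $\bfJ\,\bfA(I)\,\bfJ=\bfA(j(I))$, this representation also satisfies the covariance relation $\bfJ u_{j\varphi j}\bfJ\cdot\bfA(I)\cdot\bfJ u_{j\varphi j}^*\bfJ=\bfA(\varphi(I))$. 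So both $\varphi\mapsto u_\varphi$ and $\varphi\mapsto \bfJ u_{j\varphi j}\bfJ$ are positive-energy-compatible covariant representations of $\Conf_+(S^1)$ on $H$ with the same vacuum vector $\Omega$ (note $\bfJ\Omega=\Omega$ and $\bfJ u_{j\varphi j}\bfJ\,\Omega=\bfJ u_{j\varphi j}\Omega=\bfJ\Omega=\Omega$ using $\Conf_+$-invariance of $\Omega$); on the rotation subgroup the positivity of the generator is preserved because conjugation by the antiunitary $\bfJ$ together with $r_\alpha\mapsto j r_\alpha j=r_{-\alpha}$ sends $e^{i\alpha L_0}$ to $e^{i\alpha L_0}$. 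Uniqueness of the covariant positive-energy representation with given vacuum (again Brunetti--Guido--Longo, or directly: the generator of rotations is fixed by the net via modular theory on rotated half-circles) then forces $\bfJ u_{j\varphi j}\bfJ=u_\varphi$, i.e. $\bfJ u_\varphi\bfJ=u_{j\varphi j}$ after substituting $\varphi\leadsto j\varphi j$.

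The main obstacle is the Bisognano--Wichmann identification in the second paragraph: everything hinges on knowing that the modular data of $(\bfA(S^1_\top),\Omega)$ is geometric, which is a genuine theorem requiring the positive-energy hypothesis (via the Borchers commutation relations between the modular group of a wedge and translations/rotations). I would not reprove it; the honest content of the proof is assembling the cited inputs --- Reeh--Schlieder, Haag duality, and Bisognano--Wichmann for circle nets --- and then running the short uniqueness argument for the $u_\varphi$ identity. A secondary (but routine) point to be careful about is the passage from the half-circle $S^1_\top$ to an arbitrary interval $I$ in the first identity: this is immediate once one knows $\bfJ\,\bfA(I)\,\bfJ=\bfA(j(I))$ for all $I$, which is itself part of the Bisognano--Wichmann package (the reflection $j$ acts geometrically on the whole net, not just on the distinguished interval).
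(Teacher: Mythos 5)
Your proposal is correct and follows essentially the same route as the paper: Reeh--Schlieder gives that $\Omega$ is cyclic and separating for $\bfA(S^1_\top)$, the map $v$ is the unique $\bfA(S^1_\top)$-linear unitary matching $\Omega$ with the canonical cyclic vector $\omega^{1/2}$ of the standard form, $\bfJ$ is thereby the modular conjugation of $(\bfA(S^1_\top),\Omega)$, and the two geometric identities are imported from the Borchers/Bisognano--Wichmann-type results in the literature (the paper cites Borchers and Gabbiani--Fr\"ohlich, Thm.~2.19, for exactly these). Your additional uniqueness argument for the identity $\bfJ\,u_\varphi\,\bfJ = u_{j\varphi j}$ is simply an unpacking of the content of those citations and is consistent with the paper's proof.
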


\begin{proof}
By the Reeh-Schlieder theorem~\cite[Thm.~2.8]{Gabbiani-Froehlich(OperatorAlg-CFT)}, $\Omega$ is cyclic and separating for each algebra $\bfA(I)$, and in particular for $\bfA(S^1_\top)$.
Since $\Omega$ is separating, the corresponding state $\omega\in L^1(\bfA(S^1_\top))$, $\omega(a) = \langle a \Omega , \Omega \rangle_H$, is faithful.
The vector $\omega^{1/2}$ is therefore cyclic in $L^2(\bfA(S^1_\top))$, and so
there is a unique $\bfA(S^1_\top)$-linear isometry $L^2(\bfA(S^1_\top))\to H$ that sends $\omega^{1/2}$ to $\Omega$.
That map is then surjective because $\Omega$ is cyclic for the action of $\bfA(S^1_\top)$ on $H$.

The operator $\bfJ$ is the modular conjugation of Tomita-Takesaki theory for the action of $\bfA(S^1_\top)$ on $H$ with respect to the cyclic and separating vector $\Omega$.
Using a result of Borchers~\cite[Thm.~II.9]{Borchers(CPT-thm-in-2d-theories)}, one can then show that
$\bfJ \bfA( I ) \bfJ = \bfA ( j (I) )$ and $\bfJ u_\varphi \bfJ = u_{j \circ \varphi \circ j}$, see~\cite[Thm.~2.19]{Gabbiani-Froehlich(OperatorAlg-CFT)}.
\end{proof}

Let $j:S^1\to S^1$ and $\bfJ:H\to H$ be as in the previous proposition.

\begin{proposition}
\label{prop:PENS-orientation-reversing-diffs-act}
If a positive-energy net $(\bfA,H)$ is diffeomorphism covariant, then the formula
\begin{equation}
\label{eq:U_varphi*j=U_varphi*J}
\qquad u_{\varphi \circ j} = u_\varphi \circ \bfJ,\qquad\varphi\in \Diff_+(S^1)
\end{equation}
defines an extension of the projective action of $\Diff_+(S^1)$ on $H$ to the group $\Diff(S^1)$ of all diffeomorphisms of $S^1$.
\end{proposition}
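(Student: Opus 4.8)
\emph{Proof plan.} First I would dispose of the bookkeeping. Every $\psi\in\Diff_-(S^1)$ is uniquely of the form $\psi=\varphi\circ j$ with $\varphi:=\psi\circ j\in\Diff_+(S^1)$, so \eqref{eq:U_varphi*j=U_varphi*J} does unambiguously define a map $\Diff(S^1)\to\PU_\pm(H)$ extending the given projective action: on $\Diff_-(S^1)$ it is $[\psi]\mapsto[u_{\psi\circ j}\circ\bfJ]$, which lands in $\PU_-(H)$ because $\bfJ$ is antilinear, and it is well defined modulo phases since $u_{\psi\circ j}$ is. Continuity is also immediate: $\Diff_+(S^1)$ and $\Diff_-(S^1)$ are the two clopen components of $\Diff(S^1)$; on the first the map is the given continuous action, and on the second it is the composite of the homeomorphism $\psi\mapsto\psi\circ j$, the given action $u$, and right multiplication by the fixed operator $\bfJ$, which is continuous for the strong topology. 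So the only real content is that the extended assignment $\psi\mapsto[u_\psi]$ is a group homomorphism, to be checked in four cases according to the orientations of the two diffeomorphisms being composed.

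The crux is the identity $\bfJ\,u_\varphi\,\bfJ=u_{j\circ\varphi\circ j}$ in $\PU(H)$, valid for \emph{all} $\varphi\in\Diff_+(S^1)$. Proposition~\ref{prop:Vacuum-sector-PENS} gives it only for $\varphi\in\Conf_+(S^1)$, and I would bootstrap it to all orientation-preserving diffeomorphisms via the Carpi--Weiner uniqueness theorem \cite[Theorem~5.5]{Carpi-Weiner(2005UniqueDiffSymmetry-in-CFT)}. Consider the map $\tilde u\colon\Diff_+(S^1)\to\PU(H)$, $\tilde u_\varphi:=\bfJ\,u_{j\varphi j}\,\bfJ$. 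Since $\bfJ$ is a self-adjoint antiunitary involution ($\bfJ^2=v^*J^2v=\id$) and $\varphi\mapsto j\varphi j$ is a continuous automorphism of $\Diff_+(S^1)$, $\tilde u$ is again a continuous projective action; by Proposition~\ref{prop:Vacuum-sector-PENS} applied to $j\varphi j\in\Conf_+(S^1)$ it agrees with $u$ on $\Conf_+(S^1)$, so $\tilde u$ is a continuous projective extension of the conformal action. It remains to verify the two conditions in the definition of diffeomorphism covariance for $\tilde u$: using $\bfJ\bfA(I)\bfJ=\bfA(j(I))$ and $u_\psi\bfA(K)u_\psi^*=\bfA(\psi(K))$ one gets $\tilde u_\varphi\bfA(I)\tilde u_\varphi^*=\bfJ\,u_{j\varphi j}\bfA(j(I))u_{j\varphi j}^*\,\bfJ=\bfJ\bfA(j\varphi(I))\bfJ=\bfA(\varphi(I))$, and if $\varphi$ is supported in $I$ then $j\varphi j$ is supported in $j(I)$, so $u_{j\varphi j}\in\bfA(j(I))$ and hence $\tilde u_\varphi=\bfJ u_{j\varphi j}\bfJ\in\bfJ\bfA(j(I))\bfJ=\bfA(I)$. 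By the cited uniqueness, $\tilde u=u$, and replacing $\varphi$ by $j\varphi j$ gives the claimed identity.

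With that identity in hand the homomorphism property is purely formal; write $u_j:=\bfJ$ and all equalities in $\PU_\pm(H)$. For $\varphi_1,\varphi_2\in\Diff_+(S^1)$ it is the given relation. For $\varphi_1\in\Diff_+(S^1)$ and $\psi_2=\varphi_2 j\in\Diff_-(S^1)$: $u_{\varphi_1\psi_2}=u_{\varphi_1\varphi_2}\bfJ=u_{\varphi_1}u_{\varphi_2}\bfJ=u_{\varphi_1}u_{\psi_2}$. For $\psi_1=\varphi_1 j\in\Diff_-(S^1)$ and $\varphi_2\in\Diff_+(S^1)$, write $\psi_1\varphi_2=\varphi_1(j\varphi_2 j)j$ and use the identity in the form $u_{j\varphi_2 j}\bfJ=\bfJ u_{\varphi_2}$: $u_{\psi_1\varphi_2}=u_{\varphi_1}u_{j\varphi_2 j}\bfJ=u_{\varphi_1}\bfJ u_{\varphi_2}=u_{\psi_1}u_{\varphi_2}$. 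Finally, for $\psi_i=\varphi_i j\in\Diff_-(S^1)$, $\psi_1\psi_2=\varphi_1(j\varphi_2 j)$ and $u_{\psi_1\psi_2}=u_{\varphi_1}u_{j\varphi_2 j}=u_{\varphi_1}\bfJ u_{\varphi_2}\bfJ=(u_{\varphi_1}\bfJ)(u_{\varphi_2}\bfJ)=u_{\psi_1}u_{\psi_2}$. The main obstacle is the middle paragraph: pushing $\bfJ u_\varphi\bfJ=u_{j\varphi j}$ from the M\"obius subgroup to all of $\Diff_+(S^1)$, where one genuinely needs the uniqueness result of Carpi--Weiner; the rest is routine.
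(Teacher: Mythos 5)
Your proposal is correct and follows essentially the same route as the paper: the core step in both is to define $\tilde u_\varphi:=\bfJ\,u_{j\varphi j}\,\bfJ$, observe via Proposition~\ref{prop:Vacuum-sector-PENS} that it agrees with $u$ on $\Conf_+(S^1)$, verify the two diffeomorphism-covariance conditions for $\tilde u$, and invoke the Carpi--Weiner uniqueness theorem to conclude $\tilde u=u$, hence $u_{j\varphi j}=\bfJ u_\varphi\bfJ$ up to phase for all $\varphi\in\Diff_+(S^1)$. Your additional bookkeeping (well-definedness, continuity, and the explicit four-case check of the homomorphism property) is left implicit in the paper but is consistent with it.
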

   
\begin{proof}
In order to show that~\eqref{eq:U_varphi*j=U_varphi*J} defines a representation, we need to verify that
$u_{j \varphi j} = \bfJ\, u_\varphi \bfJ$ holds up to phase for all $\varphi \in \Diff_+(S^1)$.
Consider the homomoprhism $\Diff_+(S^1) \to \PU(H):\varphi\mapsto [\tilde u_\varphi]$ given by $\tilde u_\varphi = \bfJ u_{j \varphi j} \bfJ$.
We have to show $[\tilde u_\varphi] = [u_\varphi]$ for all $\varphi \in \Diff_+(S^1)$.
For $\varphi \in \Conf_+(S^1)$, this equation holds by Proposition~\ref{prop:Vacuum-sector-PENS}.
In particular, both $u$ and $\tilde u$ are extensions of $u|_{\Conf_+(S^1)}$.
By the uniqueness result of Carpi and Weiner~\cite[Theorem~5.5]{Carpi-Weiner(2005UniqueDiffSymmetry-in-CFT)},
it suffices to check that $\tilde u$ satisfies the same two conditions as $u$:
\begin{enumerate}
\item[({\it i}\hspace{.4mm})] $\tilde u_\varphi \bfA(I) \tilde u_\varphi^* = \bfA(\varphi(I))$
\item[({\it ii}\hspace{.4mm})] if $\varphi$ has support in $I$, then $\tilde u_\varphi \in \bfA(I)$,
\end{enumerate}
For the first condition, we check using Proposition~\ref{prop:Vacuum-sector-PENS} that
\[
\tilde u_\varphi \cala(I) \tilde u_\varphi^* 
= \bfJ u_{j \varphi j} \bfJ \cala(I) \bfJ u_{j \varphi j}^* \bfJ
= \bfJ u_{j \varphi j}\cala(j(I))u_{j \varphi j}^* \bfJ
= \bfJ \cala(j (\varphi(I))) \bfJ 
= \cala(\varphi(I)). 
\]
For the second condition, if $\varphi$ has support in $I$, then $j \varphi j$ has support in $j(I)$.
By the definition of diffeomorphism covariance, it follows that $u_{j \varphi j} \in \cala(j(I))$, and so
$\tilde u_\varphi = \bfJ u_{j \varphi j} \bfJ \in \bfJ \cala(j(I)) \bfJ = \cala(I)$ by Proposition~\ref{prop:Vacuum-sector-PENS}. 
\end{proof}

\begin{proposition}
\label{prop:pos-energy-net--->conf-net}
Every positive-energy net (Definition \ref{def:positive-energy-nets}) that
satisfies strong additivity, the split property, and diffeomorphism covariance
extends to a conformal net on the circle (Definition~\ref{def:conformal-net-circle})
and therefore to a conformal net (Definition~\ref{def:conformal-net}).
Moreover, the resulting conformal net is irreducible.
\end{proposition}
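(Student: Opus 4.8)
The plan is to equip the given positive-energy net $(\bfA,H)$ with the extra data of Definition~\ref{def:conformal-net-circle}, invoke Proposition~\ref{prop:nets-on-circle-->nets} to pass to a coordinate-free conformal net, and finish by verifying irreducibility directly.

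\emph{Producing a conformal net on the circle.} First I would supply the $\Diff(S^1)$-action. Diffeomorphism covariance provides a continuous projective extension of the $\Conf_+(S^1)$-action to $\Diff_+(S^1)$, unique by the Carpi--Weiner theorem \cite{Carpi-Weiner(2005UniqueDiffSymmetry-in-CFT)}, and Proposition~\ref{prop:PENS-orientation-reversing-diffs-act} extends it further to $\Diff(S^1)$ via the rule $u_{\varphi\circ j}:=u_\varphi\circ\bfJ$, where $j$ is complex conjugation and $\bfJ=v^*J\,v$ is the operator furnished by Proposition~\ref{prop:Vacuum-sector-PENS}. The linearity conventions and the continuity of the resulting map $\Diff(S^1)\to\PU_\pm(H)$ are immediate, since $\bfJ$ is antilinear and fixed and $\Diff(S^1)=\Diff_+(S^1)\sqcup\Diff_-(S^1)$. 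With this in place, the locality, strong additivity, and split property axioms of Definition~\ref{def:conformal-net-circle} are three of the standing hypotheses. For the covariance axiom I would check $u_\varphi\bfA(I)u_\varphi^*=\bfA(\varphi(I))$ on $\Diff_+(S^1)$ by hypothesis, and on $\Diff_-(S^1)$ by writing $\varphi=\psi\circ j$ and using $\bfJ\bfA(I)\bfJ=\bfA(j(I))$ from Proposition~\ref{prop:Vacuum-sector-PENS}; and I would note that a $\varphi$ restricting to the identity on $I'$ is necessarily orientation-preserving, hence supported in $I$, so $u_\varphi\in\bfA(I)$ by the support clause of diffeomorphism covariance. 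The vacuum axiom holds because the unitary $v$ of Proposition~\ref{prop:Vacuum-sector-PENS} is $\bfA(S^1_\top)$-linear and $u_j=u_{\mathrm{id}\circ j}=\bfJ=v^*J\,v$. Applying Proposition~\ref{prop:nets-on-circle-->nets} then yields a coordinate-free conformal net $\cala$ (Definition~\ref{def:conformal-net}), with $\cala(I)$ canonically identified with $\bfA(I)\subseteq\bfB(H)$ for every $I\subset S^1$.

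\emph{Irreducibility.} By Proposition~\ref{lem: Z(A)} it suffices to show that $Z(\cala(S^1_\top))=\cala(S^1_\top)\cap\cala(S^1_\top)'$ is trivial. Take $z$ in this center. Using $z\in\cala(S^1_\top)$, $z\in\cala(S^1_\top)'$, locality, and strong additivity, I would show that $z$ commutes with $\cala(I)$ for every subinterval $I\subsetneq S^1$: this is clear when $I\subset S^1_\top$ (since $z$ is central in $\cala(S^1_\top)$) or $I\subset S^1_\bot$ (since $\cala(I)\subseteq\cala(S^1_\top)'$ by locality), and follows in general by expressing $\cala(I)$, via strong additivity, as a join of algebras of subintervals lying in $S^1_\top$ or $S^1_\bot$. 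Next, since $\Diff_+(S^1)$ is generated by the subgroups $\Diff_0(I)$ (as used in the proof of Lemma~\ref{lem: open cover of circle => sector}) and $u_\varphi\in\bfA(I)$ whenever $\varphi\in\Diff_0(I)$, every $u_\varphi$ with $\varphi\in\Conf_+(S^1)$ lies in the von Neumann algebra generated by the $\cala(I)$; hence $z$ commutes with all such $u_\varphi$. Therefore $u_\varphi(z\Omega)=z(u_\varphi\Omega)=z\Omega$ for $\varphi\in\Conf_+(S^1)$, so $z\Omega$ is a $\Conf_+(S^1)$-invariant vector and thus equals $\lambda\Omega$ for some scalar $\lambda$; since $\Omega$ is cyclic for $\bigvee_I\cala(I)$ and $z$ commutes with this algebra, $z=\lambda\cdot 1$. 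Hence $Z(\cala)=\IC$ and $\cala$ is irreducible.

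\emph{Expected difficulty.} Essentially every nontrivial analytic ingredient---Reeh--Schlieder, the Bisognano--Wichmann/Borchers identities, and the uniqueness of the diffeomorphism extension---has already been isolated in Propositions~\ref{prop:Vacuum-sector-PENS} and \ref{prop:PENS-orientation-reversing-diffs-act}, so the bulk of the argument is bookkeeping. The one place needing a genuine (if short) argument is the factoriality of the local algebras, and there the subtle point is ensuring $z$ commutes with the \emph{entire} image of $\Conf_+(S^1)$; this is why one must pass through the decomposition of diffeomorphisms into interval-supported factors rather than trying to use $\Conf_+(S^1)$ directly.
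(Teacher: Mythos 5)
Your proposal is correct, and its first half---assembling the circle-based net from Propositions~\ref{prop:Vacuum-sector-PENS} and \ref{prop:PENS-orientation-reversing-diffs-act}, checking covariance for $\psi=\varphi j$ via $\bfJ\bfA(I)\bfJ=\bfA(j(I))$, and feeding the result into Proposition~\ref{prop:nets-on-circle-->nets}---is essentially the paper's argument verbatim; you even supply two small checks (that a diffeomorphism fixing $I'$ pointwise is automatically orientation-preserving, and the continuity of the extended projective action) that the paper leaves implicit. Where you genuinely diverge is irreducibility: the paper simply cites \cite[Proposition~6.2.9]{Longo(Lectures-on-Nets-II)} together with uniqueness of the vacuum vector, whereas you inline the standard argument---reduce to $Z(\cala(S^1_\top))=\IC$ via Proposition~\ref{lem: Z(A)}, show a central $z$ commutes with every local algebra (cutting a general interval at $\pm1$ and iterating strong additivity), deduce via fragmentation of diffeomorphisms and the support clause of covariance that $z$ commutes with the $\Conf_+(S^1)$-action up to phase, conclude $z\Omega\in\IC\Omega$ from uniqueness of the invariant vector, and finish by cyclicity of $\Omega$. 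This is valid; the only points that deserve a word of care are that fragmentation determines $u_\varphi$ only up to a scalar, which is harmless for the commutation claim, and that strong additivity as stated is for two subintervals, so the decomposition of a general $I\subsetneq S^1$ at the points $\pm1$ needs to be applied twice. What your route buys is self-containedness (no external citation for irreducibility); what the paper's route buys is brevity and a pointer to the general principle that uniqueness of the vacuum forces irreducibility for M\"obius-covariant nets.
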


\begin{proof}
We first check that a positive-energy net extends to a conformal net on the circle.
The isomorphism $v:H \to L^2(\bfA(S^1_\top))$ is given by Proposition~\ref{prop:Vacuum-sector-PENS}.
The projective action of $\Diff(S^1)$ on $H$ is provided by Proposition~\ref{prop:PENS-orientation-reversing-diffs-act}.
The equation $u_j=\bfJ$ holds by definition, and
the condition $u_\psi \bfA(I) u_\psi^* = \bfA(\psi(I))$ for $\psi=\varphi j\in\Diff_-(S^1)$ follows from Proposition~\ref{prop:Vacuum-sector-PENS}:
\[
u_\psi \bfA(I) u_\psi^* = u_\varphi \bfJ \bfA(I) \bfJ u_\varphi^* = u_\varphi \bfA(j(I)) u_\varphi^*= \bfA(\varphi (j(I)))= \bfA(\psi(I)).
\]

To get a conformal net from a positive-energy net, apply Proposition~\ref{prop:nets-on-circle-->nets} to the conformal net on the circle associated to the positive-energy net.
Irreducibility follows, using~\cite[Proposition~6.2.9]{Longo(Lectures-on-Nets-II)}, from the uniqueness of the vacuum vector.
\end{proof}

We will later need the following result from the literature:
\begin{theorem}
\label{thm:conf-Hamiltonian+split} 
Let $(\bfA,H)$ be a positive-energy net with conformal Hamiltonian~$L_0$.
If $e^{-\beta L_0}$ is of trace class for all $\beta > 0$, then $(\bfA,H)$ satisfies the split property.
\end{theorem}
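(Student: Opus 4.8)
\emph{Strategy.} I would deduce the split property from a \emph{modular nuclearity} condition, via the general result of Buchholz--D'Antoni--Longo that modular nuclearity of an inclusion forces the existence of an intermediate type $I$ factor, together with the Doplicher--Longo characterization of split inclusions. Concretely, the proof is an assembly of: (i) Reeh--Schlieder, Bisognano--Wichmann, and Haag duality for M\"obius-covariant positive-energy nets (all available from the cited work of Gabbiani--Fr\"ohlich and Brunetti--Guido--Longo, using only the structure present in the definition of a positive-energy net); (ii) the nuclearity machinery of Buchholz--Wichmann and Buchholz--D'Antoni--Longo; and (iii) the hypothesis that $e^{-\beta L_0}$ is trace class for all $\beta>0$.

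\emph{Reduction.} First I would reduce to the following local statement: for any pair of intervals with $\bar I_1\subset \operatorname{int} I_2$, the inclusion $\bfA(I_1)\subset \bfA(I_2)$ is split, i.e.\ there is a type $I$ factor $N$ with $\bfA(I_1)\subset N\subset \bfA(I_2)$. Given this, let $I,K\subset S^1$ be disjoint; after shrinking slightly (the case of a shared endpoint following from inner continuity and an irrelevance-of-points argument as in Lemma~\ref{lem:irrelevance-of-points}) we may assume $\bar I\cap \bar K=\varnothing$, and then choose $\hat I$ with $\bar I\subset \operatorname{int}\hat I$ and $\hat I\subset \operatorname{int}(K')$, where $K'$ is the complementary interval. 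By Haag duality on the circle, $\bfA(K)'=\bfA(K')\supset \bfA(\hat I)$, so a type $I$ factor between $\bfA(I)$ and $\bfA(\hat I)$ sits between $\bfA(I)$ and $\bfA(K)'$; by the Doplicher--Longo theory of split inclusions this is precisely equivalent to the spatial tensor factorization $\bfA(I)\,\barox\,\bfA(K)\cong \bfA(I)\vee \bfA(K)$ demanded by the split axiom of Definition~\ref{def:conformal-net-circle}.

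\emph{Modular nuclearity.} By Reeh--Schlieder, $\Omega$ is cyclic and separating for $\bfA(I_2)$; let $\Delta_{I_2}$ be the associated modular operator. By Bisognano--Wichmann, $\Delta_{I_2}^{it}=u_{\delta_t}$ where $\delta_t\in\Conf_+(S^1)$ is the one-parameter ``dilation'' subgroup fixing $\partial I_2$. Consider $\Theta\colon \bfA(I_1)\to H$, $\Theta(a)=\Delta_{I_2}^{1/4}\,a\,\Omega$. The Buchholz--D'Antoni--Longo criterion states that if $\Theta$ is nuclear then $\bfA(I_1)\subset \bfA(I_2)$ is split, so it remains to bound the nuclear norm of $\Theta$. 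Picking $g\in\Conf_+(S^1)$ that maps $I_2$ onto a short interval $g(I_2)$ around a fixed point, one has $u_g\,\Delta_{I_2}^{1/4}\,u_g^{-1}=\Delta_{g(I_2)}^{1/4}$ and $u_g\Omega=\Omega$; comparing the relevant one-parameter subgroups of $\PSL_2(\IR)$ shows that, for a suitable $\beta=\beta(I_1,I_2)>0$ (which may be taken larger the deeper the inclusion $I_1\Subset I_2$), $\Delta_{g(I_2)}^{1/4}$ is dominated on the positive cone by $e^{-\beta L_0}$. Writing $\Theta(a)=u_g^{-1}\bigl(\Delta_{g(I_2)}^{1/4}\,(u_g a u_g^{-1})\,\Omega\bigr)$ and invoking the hypothesis then gives $\|\Theta\|_{\mathrm{nuc}}\le C\,\|e^{-\beta L_0}\|_1<\infty$.

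\emph{Main obstacle.} The genuinely technical point is the last one: the quantitative operator domination $\Delta_{g(I_2)}^{1/4}\le C\,e^{-\beta L_0}$ on the appropriate domain, i.e.\ controlling the modular operator of a small localized subalgebra by the conformal Hamiltonian. This is exactly the analysis underlying the Buchholz--Wichmann nuclearity index in chiral conformal field theory, and so the ``proof'' is really a careful bookkeeping of that external estimate plus a check that Bisognano--Wichmann and Haag duality are available under the stated hypotheses, rather than a new argument; I would therefore present it as a citation-level proof, spelling out the reduction and the role of the trace-class hypothesis and referring to Buchholz--D'Antoni--Longo for the nuclearity-implies-split step and the domination estimate.
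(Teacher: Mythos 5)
The paper offers no argument of its own here: its ``proof'' of Theorem~\ref{thm:conf-Hamiltonian+split} consists entirely of the two citations to Longo's lecture notes and to Gabbiani--Fr\"ohlich. Your proposal is thus in the same spirit---an assembly of external results---and your skeleton (Reeh--Schlieder, Haag duality, a nuclearity criterion, the Doplicher--Longo passage from an intermediate type $I$ factor to the spatial tensor factorization demanded by the split axiom) is sound. Where you differ from the standard cited argument is in the choice of nuclearity criterion. You route through \emph{modular} nuclearity: Bisognano--Wichmann to identify $\Delta_{I_2}^{it}$ with the dilations fixing $\partial I_2$, the Buchholz--D'Antoni--Longo theorem that nuclearity of $a\mapsto\Delta_{I_2}^{1/4}a\Omega$ on the unit ball of $\bfA(I_1)$ forces splitness of $\bfA(I_1)\subset\bfA(I_2)$, and then a comparison of $\Delta_{g(I_2)}^{1/4}$ with $e^{-\beta L_0}$. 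That comparison is indeed the hard point, and as you have phrased it (``dominated on the positive cone by $e^{-\beta L_0}$'') it cannot be literally correct, since $\Delta_{g(I_2)}^{1/4}$ is unbounded while $e^{-\beta L_0}$ is trace class; the usable statement is an estimate on the image of the unit ball of $\bfA(I_1)$ under $a\mapsto a\Omega$, obtained by analytic continuation in the positive-energy representation of $\PSL_2(\IR)$. The proofs the paper cites avoid this entirely by using \emph{energy} nuclearity: under the hypothesis, the map $\Theta_\beta\colon a\mapsto e^{-\beta L_0}a\Omega$ is nuclear for free, being the composite of the bounded map $a\mapsto a\Omega$ with a trace-class operator, and the Buchholz--Wichmann / Buchholz--D'Antoni--Fredenhagen machinery then produces the intermediate type $I$ factor. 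So your route costs exactly the estimate you identify as the main obstacle and buys nothing in return; if the proof is to remain citation-level, the energy-nuclearity version is the one to cite, since there the trace-class hypothesis enters in one line. Two minor points: the shared-endpoint case in your reduction does not arise, because the split axiom only concerns \emph{disjoint} closed intervals, which automatically have disjoint closures; and of the Doplicher--Longo equivalence you only need the easy direction (an intermediate type $I$ factor between $\bfA(I)$ and $\bfA(K)'$ immediately extends the representation of $\bfA(I)\ox_{\alg}\bfA(K)$ to $\bfA(I)\,\barox\,\bfA(K)$), so no standardness hypotheses need to be verified at that step.
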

\begin{proof}
See~\cite[Thm~7.3.3]{Longo(Lectures-on-Nets-II)} 
or~\cite[Lem.~2.12]{Gabbiani-Froehlich(OperatorAlg-CFT)}.
\end{proof}

\subsection{The loop group conformal nets}
\label{subsec:loop-SU(N)-net}
In this section, we describe, following \cite{Gabbiani-Froehlich(OperatorAlg-CFT)}, the construction of positive-energy nets associated to loop groups.  We verify that these positive-energy nets satisfy strong additivity, the split property, and diffeomorphism covariance, and therefore extend to coordinate-free conformal nets.
There is such a net associated to each compact, simple, simply connected Lie group $G$ equipped with a choice of a positive integer $k\in\mathbb N$ called the \emph{level}.

Let $\mathfrak g$ be the Lie algebra of $G$, $\mathfrak g_\IC$ its complexification, and $\mathfrak h\subset \mathfrak g_\IC$ a Cartan subalgebra.
When dealing with loop groups, it is customary \cite{Pressley-Segal(Loop-groups)} to equip $\mathfrak g$ with the negative definite $G$-invariant inner product $\langle\cdot,\cdot\rangle$
such that every short coroot $\theta \in \mathfrak h$ has $\langle \theta, \theta\rangle=2$---this is the so-called basic inner product.
With respect to the dual inner product, a long root $\alpha \in \mathfrak h^*$ then satisfies $\langle \alpha, \alpha\rangle=2$.

Let $L\mathfrak g:=\calc^\infty(S^1,\mathfrak g)$ be the Lie algebra of functions on $S^1$ with values in $\mathfrak g$, under the pointwise Lie bracket operation.
Let $c$ be the $2$-cocycle on $L\mathfrak g$ given by
\begin{equation}\label{eq: cocycle for Lg}
c(f,g)=\frac1{2\pi}\int_{S^1} \langle f,dg\rangle
\end{equation}
and let $\widetilde{L\mathfrak g}$ be the central extension of $L\mathfrak g$ by $\IR$ that corresponds to that cocycle.
Finally, let $\omega$ be the left invariant closed 2-form on the loop group $LG:=\calc^\infty(S^1,G)$ whose value at the origin is given by $c$.
The integral of $\omega$ against a generator of $H_2(LG,\IZ)\cong \IZ$ is $2\pi$,
and so there is a principal bundle with connection $\IR/2\pi\IZ\to P\to LG$ whose curvature is $\omega$.
The group of connection-preserving automorphisms of $P$ that cover left translations is 
the simply connected Lie group that integrates the Lie algebra $\widetilde{L\mathfrak g}$.
It is a central extension
\[
\IS^1\,\to\, \widetilde{LG} \,\to\, LG
\]
of $LG$ by the abelian group $\IS^1:=\IR/2\pi\IZ$, and it is the universal central extension of $LG$ inside the category of Fr\'echet Lie groups.

To construct the appropriate Hilbert space representations of $\widetilde{LG}$, one starts at the Lie algebra level.
The Lie algebra $L\mathfrak g$ has a dense subalgebra $L \mathfrak g_{\mathit{pol}}$
given by Laurent polynomial functions $\IC^\times\to \mathfrak g_\IC$ whose restriction to $S^1\subset \IC^\times$ take values in $\mathfrak g$.
Its complexification $L \mathfrak g_{\mathit{pol}}\otimes_\IR \IC=\mathfrak g_\IC[z,z^{-1}]$ is the algebra of all polynomial functions on $\IC^\times$ with values in $\mathfrak g_\IC$.
We denote by $\hat {\mathfrak g}$ the central extension of $L \mathfrak g_{\mathit{pol}}$ given by the same cocycle \eqref{eq: cocycle for Lg},
and by $\hat {\mathfrak g}_\IC=\hat {\mathfrak g}\otimes_\IR \IC$ the corresponding central extension of $\mathfrak g_\IC[z,z^{-1}]$ by $\IC$.
Finally, we let $\hat {\mathfrak g}_+\subset \hat {\mathfrak g}_\IC$ be the restriction of that last central extension to $\mathfrak g_\IC[z]\subset \mathfrak g_\IC[z,z^{-1}]$.
Note that the cocycle $c$ is trivial on $\mathfrak g_\IC[z]$, and so $\hat {\mathfrak g}_+$
splits as a direct sum of Lie algebras: $\hat {\mathfrak g}_+ \cong \mathfrak g_\IC[z]\oplus \IC$.

Let $\IC_{0,k}$ be the one-dimensional $\hat {\mathfrak g}_+$-module in which the first summand $\mathfrak g_\IC[z]$ acts by zero,
and the second summand $\IC$ acts by $x\mapsto kix$ (the derivative of the $k$th irreducible representation of $\IS^1$).
We then consider the induced $\hat {\mathfrak g}_\IC$-module $W_{0,k}:=U\hat {\mathfrak g}_\IC\otimes_{U\hat {\mathfrak g}_+} \IC_{0,k}$,
and let $L_{0,k}:=W_{0,k}/J$ be the quotient by its unique maximal proper submodule.
The module $L_{0,k}$ can be equipped \cite[Chapt. 11]{Kac(Inf-dim-Lie-alg)} with a positive definite $\hat {\mathfrak g}$-invariant inner product.
We denote by $H_{0,k}$ its Hilbert space completion.
The action of $\hat {\mathfrak g}$ on $L_{0,k}$ extends to an action of $\widetilde{L\mathfrak g}$ on $H_{0,k}$ by unbounded skew-adjoint operators,
and the latter can then be integrated \cite{Goodman-Wallach(Structure+cocycle-rep-loop-groups), Toledano(Integrating-unitary-rep-infinite-dim)} to a continuous unitary representation
\begin{equation}\label{eq:rhoLG}
\pi:\widetilde{LG}\,\,\longrightarrow\,\,\U(H_{0,k}).
\end{equation}
Moreover, by~\cite[Thm.~6.7]{Goodman-Wallach(Structure+cocycle-rep-loop-groups)} or~\cite[Thm.~6.1.2]{Toledano(Integrating-unitary-rep-infinite-dim)}, one can use the Segal-Sugawara formulae to extend the induced map $LG \rightarrow \PU(H_{0,k})$ to a continuous projective representation
\begin{equation}\label{eq:rhoLG--}
LG \rtimes \Diff_+(S^1)\,\longrightarrow\,\PU(H_{0,k}).
\end{equation}
Finally, the projective action of $\Diff_+(S^1)$ on $H_{0,k}$ restricts to an honest action of the conformal group $\Conf_+(S^1)\subset \Diff_+(S^1)$, and so one gets a homomorphism
\begin{equation}\label{eq:rhoLG----}
\widetilde{LG} \rtimes \Conf_+(S^1)\,\longrightarrow\,\U(H_{0,k}).
\end{equation}

Note that the infinitesimal generator $L_0$ of the rotation group $S^1 \subset \Conf_+(S^1)$ has positive spectrum, and satisfies the assumption of Theorem~\ref{thm:conf-Hamiltonian+split}:

\begin{lemma} \label{lem:L_0-pos+trace-class}
Let $L_0$ be the operator on $H_{0,k}$ defined by $u_{r_\alpha} = e^{i\alpha L_0}$ (see Definition~\ref{def:positive-energy-nets}), where $r_\alpha \in S^1$ is the anticlockwise rotation by angle $\alpha$.
Then $L_0$ is a positive self-adjoint operator.
Moreover, for every $\beta > 0$, the operator $e^{-\beta L_0}$ is trace class.
\end{lemma}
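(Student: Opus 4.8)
The plan is to recall the standard description of $H_{0,k}$ as a graded Hilbert space and to identify $L_0$ with the grading operator, then to bound the dimensions of the graded pieces by a polynomial in order to get trace-class-ness of $e^{-\beta L_0}$. First I would note that on the dense subspace $L_{0,k}\subset H_{0,k}$, the Segal--Sugawara construction provides the operator $L_0$ as (a shift of) the energy operator coming from the Sugawara representation of the Virasoro algebra; concretely, $L_{0,k}=\bigoplus_{n\ge 0} L_{0,k}[n]$ is graded by the eigenvalues of $L_0$, with $L_0$ acting as $n+h$ on $L_{0,k}[n]$, where $h$ is the minimal energy (here $h=0$ since we are in the vacuum module). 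Each graded piece $L_{0,k}[n]$ is finite-dimensional: this is a standard fact about integrable highest-weight modules of affine Lie algebras \cite{Kac(Inf-dim-Lie-alg)}, and positivity of $L_0$ is immediate from $n\ge 0$. The rotation $r_\alpha$ acts on $L_{0,k}[n]$ by the scalar $e^{in\alpha}$, which matches $u_{r_\alpha}=e^{i\alpha L_0}$ and pins down the normalization; a priori $u_{r_\alpha}$ is only projective, but the projective ambiguity is a scalar, so $L_0$ is determined up to an additive constant, and the constant is fixed by requiring $\Omega$ (the image of the highest weight vector) to have energy $0$.

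Next I would address the trace-class assertion. Writing $d_n:=\dim L_{0,k}[n]$, we have $\mathrm{tr}(e^{-\beta L_0})=\sum_{n\ge 0} d_n e^{-\beta n}$, so it suffices to show that $d_n$ grows subexponentially — in fact the Weyl--Kac character formula gives an explicit asymptotic, but for this statement I only need a crude polynomial bound, which already follows from the fact that $L_{0,k}$ is a quotient of the Verma-type module $W_{0,k}=U\hat{\mathfrak g}_\IC\otimes_{U\hat{\mathfrak g}_+}\IC_{0,k}$. The PBW theorem identifies $W_{0,k}$ with $U(\mathfrak g_\IC[z^{-1}]z^{-1})\otimes\IC_{0,k}$ as a graded vector space, where $\mathfrak g_\IC[z^{-1}]z^{-1}$ is graded by assigning degree $m$ to $xz^{-m}$; hence the graded dimension of $W_{0,k}$ in degree $n$ equals the coefficient of $q^n$ in $\prod_{m\ge 1}(1-q^m)^{-\dim\mathfrak g}$, which is the number of partitions of $n$ into parts colored by $\dim\mathfrak g$ colors. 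This coefficient grows like $\exp(C\sqrt n)$ (Hardy--Ramanujan type estimate), in particular subexponentially, so $\sum_n d_n e^{-\beta n}\le\sum_n (\dim W_{0,k}[n]) e^{-\beta n}<\infty$ for every $\beta>0$. Thus $e^{-\beta L_0}$ is trace class.

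I expect the main obstacle to be a bookkeeping one rather than a conceptual one: carefully matching the operator-theoretic $L_0$ defined via $u_{r_\alpha}=e^{i\alpha L_0}$ with the algebraically-defined energy grading on $L_{0,k}$, including the passage from the honest action \eqref{eq:rhoLG----} of $\widetilde{LG}\rtimes\Conf_+(S^1)$ to its restriction to the rotation subgroup, and checking that self-adjointness of $L_0$ on $H_{0,k}$ (as opposed to essential self-adjointness on the algebraic span) is in place. The latter is standard: $L_{0,k}$ is a core of analytic vectors for $L_0$, the graded pieces are eigenspaces, and the operator is diagonalizable with real eigenvalues of finite multiplicity bounded below by $0$, which makes it a genuine positive self-adjoint operator. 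The estimate $\dim W_{0,k}[n]\le e^{C\sqrt n}$ is elementary generating-function combinatorics and I would only state it, citing the product formula, rather than proving the Hardy--Ramanujan asymptotics; any bound of the form $\dim W_{0,k}[n]\le e^{o(n)}$ (even $\le e^{\sqrt n}$ or a polynomial times $e^{\sqrt n}$) is more than enough to conclude.
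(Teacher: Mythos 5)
Your proof is correct and follows essentially the same route as the paper's: bound $\dim L_{0,k}(n)$ by $\dim W_{0,k}(n)$, use PBW to identify $W_{0,k}$ with the symmetric algebra on $z^{-1}\mathfrak g_\IC[z^{-1}]$ (whence non-negativity of the grading and hence positivity of $L_0$), and conclude convergence of $\sum_n \dim W_{0,k}(n)\,e^{-\beta n}$. The only cosmetic difference is that the paper recognizes this sum directly as the convergent product $\prod_{n\ge 1}\bigl(1+e^{-n\beta}+e^{-2n\beta}+\cdots\bigr)^{\dim\mathfrak g}$, whereas you pass through a Hardy--Ramanujan subexponential bound on the coefficients, which is a slightly longer way to say the same thing.
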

  
\begin{proof}
Let $L_{0,k}(n)\subset L_{0,k}$ denote the subspace where $S^1$ acts by its $n$th representation, and let $W_{0,k}(n)$ be the corresponding subspace of $W_{0,k}$.
We have to show that $L_{0,k}(n)=0$ for $n<0$, and that
\[
\tr_{H_{0,k}}(e^{-\beta L_0})=\sum_{n\ge 0} \dim(L_{0,k}(n))e^{-\beta n}<\infty.
\]
Clearly, since $L_{0,k}(n)$ is a quotient of $W_{0,k}(n)$, it is enough to show that $W_{0,k}(n)=0$ for $n<0$, and that $\sum_{n} \dim(W_{0,k}(n))e^{-\beta n}$ is summable.

By the Poincar\'e-Birkhoff-Witt theorem, there is an $S^1$-equivariant isomorphism between $W_{0,k}=U\hat {\mathfrak g}_\IC\otimes_{U\hat {\mathfrak g}_+} \IC_{0,k}$
and the symmetric algebra on $\hat {\mathfrak g}_\IC/\hat {\mathfrak g}_+=z^{-1}{\mathfrak g}_\IC[z^{-1}]$.
The rotation group $S^1$ acts by  its $n$th representation on the span of $z^{-n}$.
We therefore have $W_{0,k}(n)=0$ for $n<0$, and
\[
\sum_{n} \dim(W_{0,k}(n))e^{-\beta n}
= \prod_{n\ge 1}\big(1+e^{-n \beta}+e^{-2n \beta}+e^{-3n \beta}+\ldots\,\big)^{\dim(\mathfrak g)},
\]
which converges.
\end{proof}

For an interval $I \subset S^1$, we denote by $L_IG\subset LG$ the subgroup of loops with support in $I$,
and by $\widetilde{L_IG}$ the preimage of $L_IG$ in $\widetilde{LG}$. 
We also write $L_I\mathfrak g$ and $\widetilde{L_I\mathfrak g}$ for the respective Lie algebras.

Consider $\pi^{(2)}:\widetilde{LG}\times \widetilde{LG}\to \U(H_{0,k})$, $\pi^{(2)}(g,h)=\pi(gh)$, with $\pi$ as in~\eqref{eq:rhoLG}.
The following proposition is proven in~\cite[Chapter IV, Proposition 1.3.2]{Toledano(PhD-thesis)}:

\begin{proposition}\label{prop: 1.3.2 of Toledano's PhD}
Let $I, J\subset S^1$ intersect in one point, and let $K$ be their union.
Then $\pi^{(2)}\big(\widetilde{L_IG}\times \widetilde{L_JG}\big)$ is dense in $\pi\big(\widetilde{L_KG}\big)$,
where those two groups are given the subspace topology from $(\U(H_{0,k}),\text{\rm strong})$.
\end{proposition}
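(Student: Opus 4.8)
The plan is to reduce the statement to an approximation problem inside a neighbourhood of the identity in $L_KG$, and then to settle that problem using the sharp energy bounds available for the vacuum positive-energy representation.

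First I would record three elementary points. (i) Since $I\cap J$ is the single point $p$, and since a loop in $L_IG$ (resp.\ $L_JG$) equals $e$ near $p$ (it is $e$ on the complementary interval, whose closure contains $p$), for any $\gamma_1\in L_IG$, $\gamma_2\in L_JG$ and any $x\in S^1$ at least one of $\gamma_1(x)$, $\gamma_2(x)$ is $e$; hence $L_IG$ and $L_JG$ commute elementwise. (ii) For $f\in L_I\mathfrak g$, $g\in L_J\mathfrak g$ the function $\langle f,dg\rangle$ is supported on $\supp(f)\cap\supp(g)\subseteq\{p\}$, so the cocycle \eqref{eq: cocycle for Lg} vanishes on $L_I\mathfrak g\oplus L_J\mathfrak g$; together with (i) this forces $\widetilde{L_IG}$ and $\widetilde{L_JG}$ to commute inside $\widetilde{LG}$ (the commutator map $L_IG\times L_JG\to\IS^1$ is a continuous bihomomorphism of connected groups whose derivative is the antisymmetrised cocycle, which is $0$ on $L_I\mathfrak g\oplus L_J\mathfrak g$, hence the bihomomorphism is trivial). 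In particular $\widetilde{L_IG}\cdot\widetilde{L_JG}$ is a subgroup of $\widetilde{LG}$, so $D:=\pi^{(2)}\big(\widetilde{L_IG}\times\widetilde{L_JG}\big)=\pi(\widetilde{L_IG})\pi(\widetilde{L_JG})$ is a subgroup of $\U(H_{0,k})$. (iii) Multiplication on $\U(H_{0,k})$ is jointly strongly continuous, so the strong closure $\overline{D}$ is a closed subgroup, and $\overline{D}\subseteq\overline{\pi(\widetilde{L_KG})}$ because $\widetilde{L_IG},\widetilde{L_JG}\subseteq\widetilde{L_KG}$. It then remains to prove $\pi(\widetilde{L_KG})\subseteq\overline{D}$. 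Now $\widetilde{L_KG}$ is connected, hence generated by its central circle together with the one-parameter subgroups $t\mapsto\exp(tX)$ for $X\in L_K\mathfrak g$; the central circle lies in $\widetilde{L_IG}\subseteq D$; and since $\overline{D}$ is a group, it suffices to show $\pi(\exp X)\in\overline{D}$ for every $X\in L_K\mathfrak g$ (here $\exp X$ denotes the corresponding element of $\widetilde{L_KG}$, which covers $\exp X\in L_KG$).

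The analytic input I would invoke is the linear energy bound for $\pi$: the integrability results of Goodman--Wallach and Toledano-Laredo underlying \eqref{eq:rhoLG} yield a bound of the form $\|d\pi(f)v\|\lesssim\|f\|_{H^{1/2}}\,\|(1+L_0)^{1/2}v\|$ (or a slightly weaker Sobolev bound), which, together with positivity of $L_0$, makes $X\mapsto\pi(\exp X)$ strongly continuous on $L_K\mathfrak g$ for the $H^{1/2}$-norm: if $X_n\to X$ in $H^{1/2}$ with all $X_n,X$ smooth and supported in $K$, then $\pi(\exp X_n)\to\pi(\exp X)$ strongly. Granting this, fix $X\in L_K\mathfrak g$ and approximate it as follows. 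Choose logarithmic cut-offs $\chi_\delta$ on $S^1$ that vanish on a $\delta$-neighbourhood of $p$, equal $1$ outside a fixed small neighbourhood of $p$, and interpolate over a logarithmic range, so that $\|1-\chi_\delta\|_{H^{1/2}}\to0$ as $\delta\to0$; this is possible exactly because a point has zero $H^{1/2}$-capacity on the line. Put $X_\delta:=\chi_\delta X$. Then $X_\delta\to X$ in $H^{1/2}$, and $X_\delta$ vanishes near $p$, so $X_\delta=X_\delta^I+X_\delta^J$ with $X_\delta^I\in L_I\mathfrak g$, $X_\delta^J\in L_J\mathfrak g$ of disjoint supports; these two summands commute, so $\exp X_\delta=\exp(X_\delta^I)\exp(X_\delta^J)\in L_IG\cdot L_JG$, and lifting (and using that the central circle lies in $\widetilde{L_IG}$) gives $\pi(\exp X_\delta)\in D$. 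By the strong continuity above, $\pi(\exp X_\delta)\to\pi(\exp X)$ strongly, whence $\pi(\exp X)\in\overline{D}$, which is what was needed.

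The hard part is the last step, and it is genuinely delicate rather than routine: the naïve decomposition $X=X^I+X^J$ is not available for general smooth $X$ — its closure already in $C^\infty$, and even in $H^1$, omits every $X$ with $X(p)\neq0$, an obstruction of dimension $\dim\mathfrak g$ concentrated at the joining point. One is therefore forced to work at a Sobolev regularity low enough that point-restriction at $p$ is no longer continuous (so that this obstruction disappears), yet high enough that the energy bound still applies; the exponent $1/2$ sits precisely at this threshold, and making the logarithmic cut-off estimate and the $H^{1/2}$-continuity of $X\mapsto\pi(\exp X)$ fully rigorous (as opposed to schematic) is where the real work lies. This is carried out in~\cite[Chapter IV]{Toledano(PhD-thesis)}, which is why we simply cite it here.
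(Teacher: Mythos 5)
The paper does not actually prove this proposition --- it simply cites Toledano-Laredo's thesis \cite[Chapter IV, Proposition 1.3.2]{Toledano(PhD-thesis)} --- and your sketch is precisely the argument carried out there (reduction to one-parameter subgroups via connectedness of $\widetilde{L_KG}$, logarithmic cut-offs at the joining point exploiting the vanishing $H^{1/2}$-capacity of a point, and the linear energy bound $\|d\pi(f)v\|\lesssim\|f\|_{H^{1/2}}\|(1+L_0)^{1/2}v\|$), terminating in the same citation for the delicate analytic estimates. Your proposal is therefore correct in outline and has exactly the same logical status as the paper's treatment, which supplies no argument of its own to compare against.
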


\begin{definition}
The \emph{loop group net for $G$ at level $k$} is the positive-energy net $(\cala_{G,k}, H_{0,k})$ given by
\begin{equation*}
\begin{split}
\cala_{G,k}:\,\, &\INT_{S^1} \to\, \VN_{H_{0,k}}\\
&\,\,\,\,I\mapsto \pi\big(\widetilde{L_IG}\big)''
\end{split}
\end{equation*}
along with the action \eqref{eq:rhoLG----} of $\Conf_+(S^1)$ on $H_{0,k}$,
and the unique (up to scalar) fixed vector $\Omega\in H_{0,k}$ for the rotation group $S^1\subset \Conf_+(S^1)$.
\end{definition}
\noindent The axioms of positive-energy nets are verified as follows.
First note that if $I$ and $J\subset S^1$ have disjoint interiors, then the cocycle $c(f,g)$ vanishes for $f\in L_I\mathfrak g$ and $g\in L_J\mathfrak g$.
As a consequence, the Lie algebras $\widetilde{L_I\mathfrak g}$ and $\widetilde{L_J\mathfrak g}$ commute inside  $\widetilde{L\mathfrak g}$,
the subgroups $\widetilde{L_IG}$ and $\widetilde{L_JG}$ commute in $\widetilde{LG}$,
and the subalgebras $\cala_{G,k}(I)$ and $\cala_{G,k}(J)$ commute in $\bfB(H_{0,k})$.
The covariance axiom holds because the action of $\varphi\in\Conf_+(S^1)$ conjugates $\widetilde{L_IG}$ into $\widetilde{L_{\varphi(I)}G}$.
Positive-energy follows from Lemma~\ref{lem:L_0-pos+trace-class}.
Finally, by the classification of unitary positive-energy representations of $\PSL_2(\IR)$ $\cong \Conf_+(S^1)$,
any vector that is fixed by $S^1\subset \Conf_+(S^1)$ is actually fixed by the whole group $\Conf_+(S^1)$.
In particular, $\Omega$ is a fixed vector for $\Conf_+(S^1)$.

\begin{theorem} \label{thm:loop-group-nets}
The positive-energy net $(\cala_{G,k}, H_{0,k})$ satisfies strong additivity, the split property, and diffeomorphism covariance.

Moreover, if $G=\SU(n)$, then the conformal net associated (by Proposition~\ref{prop:pos-energy-net--->conf-net}) to the positive energy net $(\cala_{\SU(n),k}, H_{0,k})$ has finite index (Definition \ref{def:index-for-nets}).
\end{theorem}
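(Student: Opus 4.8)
The plan is to verify the three additional axioms for the positive-energy net $(\cala_{G,k},H_{0,k})$ one at a time, each reducing to a statement already recorded in this section together with a standard input, and then to quote the literature for the finite-index claim when $G=\SU(n)$.

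First I would establish \emph{strong additivity}. Let $I,J\subset S^1$ be subintervals with $I\cup J$ again an interval. If the interiors of $I$ and $J$ meet, the conclusion is automatic \cite{Fredenhagen-Martin(pointlike-localized-fields)}. Otherwise $I\cap J$ is a single point and Proposition~\ref{prop: 1.3.2 of Toledano's PhD} applies: $\pi^{(2)}\big(\widetilde{L_IG}\times\widetilde{L_JG}\big)$ is strongly dense in $\pi\big(\widetilde{L_{I\cup J}G}\big)$, so passing to bicommutants gives $\cala_{G,k}(I\cup J)=\pi\big(\widetilde{L_{I\cup J}G}\big)''=\cala_{G,k}(I)\vee\cala_{G,k}(J)$. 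For the \emph{split property} I would combine Lemma~\ref{lem:L_0-pos+trace-class}, which gives that the conformal Hamiltonian $L_0$ on $H_{0,k}$ is positive with $e^{-\beta L_0}$ trace class for every $\beta>0$, directly with Theorem~\ref{thm:conf-Hamiltonian+split}.

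The \emph{diffeomorphism covariance} needs a little more. Restricting the projective representation~\eqref{eq:rhoLG--} of $LG\rtimes\Diff_+(S^1)$ to the $\Diff_+(S^1)$ factor produces the required continuous extension $\Diff_+(S^1)\to\PU(H_{0,k})$ of the $\Conf_+(S^1)$-action. Condition~(i) follows because conjugation by $\varphi$ inside $LG\rtimes\Diff_+(S^1)$ carries $L_IG$ onto $L_{\varphi(I)}G$, hence $\Ad(u_\varphi)$ carries $\cala_{G,k}(I)=\pi\big(\widetilde{L_IG}\big)''$ onto $\cala_{G,k}(\varphi(I))$. For the localization condition~(ii), if $\varphi$ is supported in $I$ then $\varphi$ fixes $\widetilde{L_{I'}G}$ pointwise --- the cocycle~\eqref{eq: cocycle for Lg} is invariant under orientation-preserving reparametrization --- so $u_\varphi\,\pi(\tilde g)\,u_\varphi^*$ differs from $\pi(\tilde g)$ by a phase depending continuously and multiplicatively on $\tilde g\in\widetilde{L_{I'}G}$; since $\pi$ is scalar on the central $\IS^1$ while $L_{I'}G$ has no non-trivial continuous characters, this phase is $1$. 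Hence $u_\varphi$ commutes with $\cala_{G,k}(I')=\pi\big(\widetilde{L_{I'}G}\big)''$, and Haag duality for M\"obius-covariant nets \cite{Brunetti-Guido-Longo(1993modular+duality-in-CQFT)} yields $u_\varphi\in\cala_{G,k}(I')'=\cala_{G,k}(I)$ (this localization statement is also contained, in a different formulation, in \cite{Gabbiani-Froehlich(OperatorAlg-CFT)}). With the three extra axioms verified, Proposition~\ref{prop:pos-energy-net--->conf-net} promotes $(\cala_{G,k},H_{0,k})$ to an irreducible conformal net.

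It remains to handle the finite-index claim for $G=\SU(n)$. Writing $\cala$ for the associated conformal net and identifying $H_0(S,\cala)$ with $H_{0,k}$, the subfactor $\cala(I_1\cup I_3)\subseteq\cala(I_2\cup I_4)'$ appearing in Definition~\ref{def:index-for-nets} is, after the tensor decompositions supplied by the split property, the two-interval (Jones--Wassermann) subfactor of the $\SU(n)_k$ loop group net; the finiteness of its Jones index is the theorem of Wassermann \cite{Wassermann(ICM)} (see also the list of completely rational examples in \cite{Kawahigashi-Longo-Mueger(2001multi-interval)}), and since the minimal index never exceeds the Jones index it follows that $\mu(\cala)<\infty$. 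I expect this last step to be the main obstacle, in the sense that all the serious harmonic analysis of loop group representations is concentrated there, while the first three parts amount to assembling results already available in this section.
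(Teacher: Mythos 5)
Your treatment of strong additivity, the split property, and diffeomorphism covariance is essentially the paper's: strong additivity from Proposition~\ref{prop: 1.3.2 of Toledano's PhD} by taking bicommutants, the split property from Lemma~\ref{lem:L_0-pos+trace-class} together with Theorem~\ref{thm:conf-Hamiltonian+split}, and covariance~(i) from the fact that conjugation by $\varphi$ carries $L_IG$ to $L_{\varphi(I)}G$. For the localization condition~(ii) you take a mildly different route: where the paper forms the honest semidirect product $\widetilde{LG}\rtimes\widetilde{\Diff}$ (with $\widetilde{\Diff}$ the pulled-back central extension) and checks at the Lie algebra level that $\tilde\varphi$ commutes with $\widetilde{L_{I'}G}$ in that group, you stay with the projective representation and kill the residual phase by observing that it defines a continuous character of $\widetilde{L_{I'}G}$ trivial on the centre, hence trivial since $L_{I'}G$ has no non-trivial continuous characters. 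Both arguments are correct and end with the same appeal to Haag duality for the positive-energy net; yours trades the central-extension bookkeeping for a perfectness argument.

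The genuine problem is the last step. The inclusion $\cala(I_1\cup I_3)\subseteq\cala(I_2\cup I_4)'$ of Definition~\ref{def:index-for-nets} is \emph{not} one of Wassermann's Jones--Wassermann subfactors: Wassermann's theorem computes the index of the single-interval subfactors $\pi_\lambda(L_IG)''\subseteq\pi_\lambda(L_{I'}G)'$ in the positive-energy representations, whereas the quantity $\mu(\cala)$ is a two-(disjoint-)interval index in the vacuum representation. Its finiteness does not follow from Wassermann's results without substantial further input; that further input is precisely the content of Xu's paper on Jones--Wassermann subfactors for disconnected intervals, which is what the paper cites. (The alternative route --- deducing $\mu(\cala)<\infty$ from the fusion structure of the positive-energy representations via Theorem~\ref{thm: characterization of finite conformal nets} --- requires identifying the category of sectors of the net with the category of positive-energy representations of $L\SU(n)$ at level $k$, which the paper flags in Remark~\ref{rem:alternative-xu} as a nontrivial point.) So your closing sentence attributes the hardest step to the wrong theorem; you should replace the appeal to Wassermann's main theorem by a citation of Xu's finiteness result for the multi-interval inclusion, or else supply the missing bridge from single-interval to two-interval index.
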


\begin{proof}
Strong additivity follows from Proposition \ref{prop: 1.3.2 of Toledano's PhD} and
the split property follows from Lemma \ref{lem:L_0-pos+trace-class} and Theorem~\ref{thm:conf-Hamiltonian+split}.
We now check diffeomorphism covariance.

Let $p:\widetilde{\Diff}\to \Diff_+(S^1)$ be the central extension by $S^1$ pulled back along \eqref{eq:rhoLG--},
and let $q: \Aut (LG) \to \Aut (\widetilde{LG})$ be the isomorphism given by the functoriality of universal central extensions.
The semidirect product $\widetilde{LG}\rtimes\widetilde{\Diff}$ for the action
\[
\widetilde{\Diff} \,\xrightarrow{p}\, \Diff_+(S^1) \,\to\, \Aut (LG) \,\xrightarrow{q}\, \Aut (\widetilde{LG})
\]
then acts on $H_{0,k}$.
One first observes that the action \eqref{eq:rhoLG--} of a diffeomorphism $\varphi$ conjugates $\widetilde{L_IG}$ into $\widetilde{L_{\varphi(I)}G}$,
and therefore $\cala_{G,k}(I)$ into $\cala_{G,k}(\varphi(I))$.
Indeed, at the Lie algebra level, the action of $\varphi\in\Diff_+(S^1)$ on $\widetilde{L\mathfrak g}=L\mathfrak g\oplus \IR$ is simply given by $\varphi\cdot(f,a)=(f\circ \varphi^{-1},a)$.
Let now $I\subset S^1$ be an interval, and let $\varphi$ be a diffeomorphism with support in $I$.
Denote by $I'$ the closure of $S^1\setminus I$.
By Haag duality for positive-energy nets \cite[Thm.~2.19.(ii)]{Gabbiani-Froehlich(OperatorAlg-CFT)},
in order to show that $u_\varphi\in \cala_{G,k}(I)$, it is enough to argue that it commutes with $\cala_{G,k}(I')$.
Equivalently, we have to show that the chosen lift $\tilde\varphi\in \widetilde{\Diff}$ of $\varphi$ commutes with $\widetilde{L_{I'}G}$ inside the group $\widetilde{LG}\rtimes\widetilde{\Diff}$,
i.e., that the action of $p(\tilde\varphi)=\varphi$ on $\widetilde{L_{I'}G}$ is trivial.
The last statement can be verified at the Lie algebra level.

Finally, building on work of Wassermann \cite{Wassermann(Operator-algebras-and-conformal-field-theory)}, Feng Xu proves in~\cite{Xu(Jones-Wassermann-subfactors)} 
that $(\cala_{\SU(n),k}, H_{0,k})$ has finite index.
\end{proof}

\begin{remark} \label{rem:alternative-xu}
For other compact, simple, simply-connected Lie groups, it is expected that the conformal nets $\cala_{G,k}$ have finite index, as is known to be the case for $G=\SU(n)$.
It is also expected that the category of positive energy representations of $LG$ at level $k$ is equivalent to the category of sectors for the corresponding conformal net.\footnote{We 
have been informed that this will, in fact, be a consequence of ongoing work of Carpi-Weiner; see also~\cite{Weiner-(Conformal-covariance-and-positivity-of-energy)}.
For the case $G=\SU(N)$, this result is a consequence of~\cite[Thm. 2.2]{Xu(Jones-Wassermann-subfactors)}, \cite[Thm. 33]{Kawahigashi-Longo-Mueger(2001multi-interval)},
and \cite[Thm. 3.5]{Xu(Jones-Wassermann-subfactors)}.}
However, as far as we know, those problems are still open.

The main theorem of Wassermann~\cite[p.535]{Wassermann(Operator-algebras-and-conformal-field-theory)}
(combined with~\cite[Theorem 4.1]{Longo(Index-of-subfactors-and-statistics-of-quantum-fields-II)})
shows that the category of positive energy representations of $L\SU(n)$ at level $k$ is a fusion category.
Therefore, assuming that the category of positive energy representations of $L\SU(n)$ at level $k$ is equivalent to the category of sectors for the corresponding conformal net,
it would follow that the category $\Rep(\cala_{\SU(n),k})$ is fusion.
An application of Theorem~\ref{thm: characterization of finite conformal nets} would then yield an alternative proof that $\cala_{\SU(n),k}$ has finite index.
\end{remark}

\section*{Appendix}
\label{sec: Preliminaries}
\setcounter{theorem}{0}
\setcounter{section}{1}
\renewcommand{\thesection}{\Alph{section}}

\subsection*{von Neumann algebras}
\addtocontents{toc}{\SkipTocEntry}
    Given a Hilbert space $H$, let $\bfB(H)$ denote its algebra of 
    bounded operators. 
    The ultraweak topology on $\bfB(H)$ is the topology of pointwise 
    convergence with respect to the pairing with its predual, 
    the trace class operators.

  \begin{definition}
    A von Neumann algebra, is a topological *-algebra\footnote{{\it Warning:} there is no compatibility between the topology and the algebra structure; the multiplication map $(\bfB(H),\text{ultraweak})\times (\bfB(H),\text{ultraweak})\to (\bfB(H),\text{ultraweak})$ is not continuous.} that is embeddable 
    as closed subalgebra of $\bfB(H)$ with respect to the ultraweak topology.
  \end{definition}

  The spatial tensor product $A_1\bar\otimes A_2$ of von Neumann algebras 
  $A_i\subset \bfB(H_i)$ is the ultraweak 
  closure in $\bfB(H_1\otimes H_2)$ of 
  their algebraic tensor product $A_1\otimes_\alg A_2$.

  \begin{definition}
    Let $A$ be a von Neumann algebra.
    A left (right) $A$-module is a Hilbert space $H$ equipped with a 
    continuous homomorphism from $A$ (respectively $A^\op$) to $\bfB(H)$.
    We will use the notation ${}_AH$ (respectively $H_A$) to denote the 
    fact that $H$ is a left (right) $A$-module.
  \end{definition}

The main distinguishing feature of the representation theory of von Neumann algebras is expressed in the following lemma.
Here, $\ell^2$ stands for the Hilbert space $\ell^2(\IN)$ if all the spaces in the statement of the lemma are separable.
Otherwise, it stands for $\ell^2(X)$, where $X$ is any set of sufficiently large cardinality.

\begin{lemma}
\label{prop:modules-are-summands}
Let $A$ be a von Neumann algebra and let $H$ and $K$ be two faithful left $A$-modules.
Then $H\otimes \ell^2\cong K\otimes \ell^2$.
In particular, any $A$-module is isomorphic to a direct summand of $H\otimes \ell^2$.
\end{lemma}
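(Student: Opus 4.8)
The statement to prove is Lemma~\ref{prop:modules-are-summands}: for a von Neumann algebra $A$ and two faithful left $A$-modules $H$ and $K$, one has $H \otimes \ell^2 \cong K \otimes \ell^2$, and consequently any $A$-module embeds as a direct summand of $H \otimes \ell^2$.

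\textbf{Approach.} This is the standard ``uniqueness of the faithful normal representation up to amplification'' result from von Neumann algebra theory. The plan is to reduce the general statement to a comparison with a single reference module, namely $L^2(A)$ (or better, $L^2(A) \otimes \ell^2$), and to invoke the comparison theory of projections in the commutant. First I would recall that for any faithful normal left $A$-module $H$, the commutant $A' \subset \bfB(H)$ is a von Neumann algebra, and that the module $H$ is determined up to isomorphism by a ``coupling'' invariant living in the extended center-valued dimension theory of $A'$; amplifying by $\ell^2$ has the effect of making $A'$ properly infinite on each central summand, which kills this invariant. Concretely, the clean way to organize the argument: (i) show $H \otimes \ell^2 \cong L^2(A) \otimes \ell^2$ for every faithful $H$, and then (ii) conclude $H \otimes \ell^2 \cong K \otimes \ell^2$ by transitivity.

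\textbf{Key steps.} For step (i): Realize $H$ as a submodule of $L^2(A) \otimes \ell^2$. Since $H$ is faithful, there is a projection $p \in (A' \text{ on } L^2(A)) \otimes \bfB(\ell^2) = (A \text{ on } L^2(A))' \otimes \bfB(\ell^2)$ with $H \cong p(L^2(A) \otimes \ell^2)$ as an $A$-module, where $p$ has central support $1$ in the commutant. Tensoring with a further $\ell^2$, I would compare $p \otimes 1_{\ell^2}$ with $1 \otimes 1_{\ell^2}$ inside $(A' \otimes \bfB(\ell^2)) \otimes \bfB(\ell^2) \cong A' \otimes \bfB(\ell^2)$: both projections have central support $1$, and in a properly infinite von Neumann algebra (which $A' \otimes \bfB(\ell^2)$ is, being stabilized) two projections with the same central support are Murray--von Neumann equivalent. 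A partial isometry implementing this equivalence, being in the commutant, is an $A$-module isomorphism $H \otimes \ell^2 \xrightarrow{\;\sim\;} L^2(A) \otimes \ell^2$. Step (ii) is then immediate: $H \otimes \ell^2 \cong L^2(A)\otimes \ell^2 \cong K \otimes \ell^2$. For the final sentence: given any $A$-module $N$, the module $H \oplus N$ (or $(H \oplus N) \otimes \ell^2$, which absorbs the extra summand) is faithful since $H$ is, so by what was just proved $(H \oplus N) \otimes \ell^2 \cong H \otimes \ell^2$; since $N \otimes \ell^2$ is a direct summand of the left side and $N$ is a summand of $N \otimes \ell^2$, and $H \otimes \ell^2$ absorbs $\ell^2$-amplification, $N$ is a direct summand of $H \otimes \ell^2$. (Alternatively cite Dixmier directly.)

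\textbf{Main obstacle.} The only real subtlety is the comparison-of-projections step: one needs that $A' \otimes \bfB(\ell^2)$ is properly infinite, or more precisely that any two projections in it with full central support are equivalent. This is where the cardinality hypothesis on $\ell^2$ enters — in the non-separable case one must take $\ell^2(X)$ with $|X|$ at least the cardinality of a generating set, so that the amplification genuinely dominates; I would flag that the statement of the lemma already builds this in. I expect the bulk of a full write-up to be bookkeeping about central supports and the identification $(A \text{ on } L^2 A)' \cong A^{\op}$, none of which is deep; I would likely compress it by citing the standard reference (Dixmier, \emph{Les alg\`ebres d'op\'erateurs dans l'espace hilbertien}) for the equivalence of faithful normal representations after stabilization, exactly as the paper does elsewhere. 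So in practice the proof in the paper is probably just a one-line appeal to the literature, and my plan above is the justification behind that citation.
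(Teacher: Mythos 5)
Your proposal is correct, and in fact the paper offers no proof of this lemma at all: it sits in the appendix among recalled standard facts (with a blanket pointer to the companion paper and the literature), so your reconstruction is precisely the argument that the implicit citation stands in for. The overall strategy --- realize a faithful $H$ as $p\bigl(L^2(A)\otimes\ell^2\bigr)$ for a projection $p$ of full central support in the commutant, stabilize, and compare projections --- is the standard one and works.

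One statement in your key step should be tightened, although it does not affect the validity of the application. It is not true in a properly infinite von Neumann algebra that any two projections with the same central support are Murray--von Neumann equivalent: in $\bfB(\ell^2)$ a rank-one projection and the identity both have central support $1$ but are not equivalent. The correct comparison theorem requires both projections to be properly infinite (and countably decomposable, which holds here because the paper assumes separable preduals throughout). In your situation both $p\otimes 1_{\ell^2}$ and $1\otimes 1_{\ell^2}$ are of the form $q\otimes 1_{\bfB(\ell^2)}$, hence properly infinite, so the equivalence does hold and the implementing partial isometry, lying in the commutant of $A$, gives the desired $A$-linear unitary $H\otimes\ell^2\cong L^2(A)\otimes\ell^2$. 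Your deduction of the final sentence of the lemma from the first is also fine: $H\oplus N$ is faithful, so $(H\oplus N)\otimes\ell^2\cong H\otimes\ell^2$, and $N$ is a summand of $N\otimes\ell^2$, which is a summand of the left-hand side.
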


\subsection*{The Haagerup $L^2$-space}
\addtocontents{toc}{\SkipTocEntry}
  (See~\cite[\S 2]{BDH(Dualizability+Index-of-subfactors)} for further details.) A faithful left module $H$ for a von Neumann algebra $A$ is called a 
  \emph{standard form} if it comes equipped with 
  an antilinear isometric involution $J$ and a selfdual cone $P\subset H$ 
  subject to the properties
  \begin{enumerate}
	\item $J A J = A'$ on $H$,
	\item $J c J = c^*$ for all $c \in Z(A)$,
	\item $J \xi = \xi$ for all $\xi \in P$,
	\item $a J a J (P) \subseteq P$ for all $a \in A$
  \end{enumerate}
  where $A'$ denotes the commutant of $A$.
  The standard form is unique up to unique unitary 
  isomorphism~\cite{Haagerup(1975standard-form)}.
  It is an $A$-$A$-bimodule, with right action $\xi a := J a^* J \xi$.

  The space of continuous linear functionals $A\to \IC$ forms a Banach 
  space $A_*=L^1(A)$ called the predual of $A$.
  It comes with a positive cone 
  $L^1_+(A):=\{\phi\in A_*\,|\,\phi(x)\ge0\,\, \forall x\in A_+\}$ 
  and two commuting $A$-actions given by $(a\phi b)(x):=\phi(bxa)$.
  Given a von Neumann algebra $A$, its Haagerup $L^2$-space is an 
  $A$-$A$-bimodule that is canonically associated to 
  $A$ \cite{Kosaki(PhD-thesis)}.
  It is the completion of 
  $$\bigoplus_{\phi\in L^1_+(A)} \IC\textstyle\sqrt{\phi}$$
  with respect to some pre-inner product, and is denoted $L^2(A)$.
  The positive cone in $L^2A$ is given by 
  $L^2_+(A):=\{\sqrt\phi\,\,|\,\phi\in L^1_+(A)\}$.
  The space $L^2A$ is also equipped with the modular conjugation $J_A$ 
  that sends $\lambda\sqrt{\phi}$ to $\bar\lambda\sqrt{\phi}$ for 
  $\lambda\in\IC$, and satisfies
  \begin{equation}\label{eq: main property of J}
   J_A(a\xi b)=b^*J_A(\xi)a^*.
  \end{equation}
  All together, the triple $(L^2(A),J_A,L^2_+(A))$ is a standard form for 
  the von Neumann algebra $A$.

  \begin{remark} \label{rem: L^2(A^op)}
    There is an isomorphism $L^2(A) \cong L^2(A^\op)$ under which the 
    left action of $A$ on $L^2A$
    corresponds to the right action of $A^\op$ on $L^2(A^\op)$, 
    and the right action of $A$ on $L^2A$
    corresponds to the left action of $A^\op$ on $L^2(A^\op)$.
  \end{remark}

  \begin{remark} \label{rem:L^2(iso)}
    The assignment $A\mapsto L^2(A)$ defines a functor from the category 
    of factors and isomorphisms,
    to the category of Hilbert spaces and bounded linear maps.
    (This still true for the larger category whose morphisms are 
    finite homomorphisms between 
    factors~\cite[Thm 6.7]{BDH(Dualizability+Index-of-subfactors)}, 
    but in the present paper
    we only need this functor for isomorphisms.)
  \end{remark}

\subsection*{Connes fusion}
\addtocontents{toc}{\SkipTocEntry}
(See~\cite[\S 3]{BDH(Dualizability+Index-of-subfactors)} for further details.)
\begin{definition}
Given two modules $H_A$ and ${}_AK$ over a von Neumann algebra $A$,
their Connes fusion $H\boxtimes_A K$ is the completion 
\cite{Connes(Geometrie-non-commutative), Sauvageot(Sur-le-produit-tensoriel-relatif), Wassermann(Operator-algebras-and-conformal-field-theory)} of
\begin{equation}\label{eq:def of CFus}
\mathrm{Hom}\big(L^2(A)_A,H_A\big)\otimes_A L^2(A)\otimes_A \mathrm{Hom}\big({}_AL^2(A),{}_AK\big)
\end{equation}
with respect to the inner product
$\big\langle\phi_1\otimes \xi_1\otimes \psi_1,\,\phi_2\otimes \xi_2\otimes \psi_2\big\rangle:=
\big\langle(\phi_2^*\phi_1)\xi_1(\psi_1\psi_2^*),\xi_2\big\rangle$.
Here, we have written the action of $\psi_i$ on the right, which means that
$\psi_1\psi_2^*$ stands for the composite $L^2(A)\xrightarrow{\psi_1} K\xrightarrow{\psi_2^*} L^2(A)$.
\end{definition}

The $L^2$ space is a unit for Connes fusion in the sense that there are canonical unitary isomorphisms 
\begin{equation}\label{eq: unitality of CFusion}
{}_A L^2(A) \boxtimes_A H \cong {}_A H \qquad \text{and} \qquad H\boxtimes_A L^2(A)_A \cong H_A.
\end{equation}

\subsection*{Dualizability}
\addtocontents{toc}{\SkipTocEntry}
(See~\cite[\S 4]{BDH(Dualizability+Index-of-subfactors)} for further details.) A von Neumann algebra whose center is $\IC$ is called a \emph{factor}.

\begin{definition}\label{def:dual}
For $A$ and $B$ factors, given an $A$-$B$-bimodule $H$, we say that a $B$-$A$-bimodule $\bar H$ is dual to $H$ if it comes equipped with maps
\begin{equation}\label{eq:duality maps}
R:{}_AL^2(A)_A \rightarrow {}_AH\boxtimes_B \bar H_A\qquad\quad
S:{}_BL^2(B)_B \rightarrow  {}_B\bar H\boxtimes_A H_B
\end{equation}
subject to the duality equations $(R^*\otimes 1)(1\otimes S)=1$, $(S^*\otimes 1)(1\otimes R)=1$, and to
the normalization ${R^*(x\otimes 1)R} = {S^*(1\otimes x)S}$ for all $x\in \mathrm{End}({}_AH_B)$.
A bimodule whose dual module exists is called \emph{dualizable}.
\end{definition}

If ${}_AH_B$ is a dualizable bimodule, then its dual bimodule is well defined up to canonical unitary isomorphism~\cite[Thm 4.22]{BDH(Dualizability+Index-of-subfactors)}.
Moreover, the dual bimodule is canonically isomorphic to the complex conjugate Hilbert space $\overline H$, with the actions $b\bar \xi a:= \overline{a^* \xi b^*}$~\cite[Cor 6.12]{BDH(Dualizability+Index-of-subfactors)}.

\begin{lemma}[{\cite[Lemma 4.6]{BDH(Dualizability+Index-of-subfactors)}}]\label{lem: characterization of duals}
Let ${}_AH_B$ and ${}_BK_A$ be dualizable irreducible bimodules. Then
\[
\mathrm{Hom}_{A,A}\big(H\boxtimes_BK,L^2(A)\big)=\begin{cases}
\IC&\text{if ${}_BK_A\cong{}_B\bar H_A$}\\
0&\text{otherwise}\\
\end{cases}
\]
\end{lemma}

\begin{lemma}[{\cite[Lemma 4.10]{BDH(Dualizability+Index-of-subfactors)}}]\label{lem: endolgebra dim<oo}
If ${}_AH_B$ is a dualizable bimodule, then its algebra of 
$A$-$B$-bilinear endomorphisms is finite-dimensional.
\end{lemma}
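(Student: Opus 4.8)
The plan is to show that $M:=\mathrm{End}({}_AH_B)$, the algebra of $A$-$B$-bilinear endomorphisms of $H$, is finite-dimensional by bounding the size of its orthogonal families of projections in terms of the statistical dimension of $H$. First, $M$ is a von Neumann algebra: it is precisely the commutant inside $\bfB(H)$ of the image of $A\,\barox\,B^\op$ under the bimodule action, and commutants of von Neumann algebras are von Neumann algebras. So it suffices to exhibit a number $N$ such that every family of pairwise orthogonal nonzero projections in $M$ has at most $N$ elements, because a von Neumann algebra with this property is necessarily a finite direct sum $\bigoplus_{i=1}^k M_{n_i}(\IC)$ of finite matrix algebras with $\sum_i n_i\le N$, hence finite-dimensional.

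The bound is supplied by the statistical dimension $d$. Recall from Section~\ref{sec:The finiteness condition} and \cite{BDH(Dualizability+Index-of-subfactors)} that, for factors $A$ and $B$, an $A$-$B$-bimodule is dualizable (Definition~\ref{def:dual}) precisely when $d$ is finite; that $d$ is additive under finite direct sums of bimodules; that every nonzero $A$-$B$-bimodule between factors satisfies $d\ge1$; and that a nonzero direct summand of a dualizable bimodule is again dualizable. (For this last point one uses that $A$ and $B$ are factors: for a nonzero projection $p\in M$, both $A$ and $B$ act faithfully on $pH$, since their kernels are weakly closed two-sided ideals of the factors $A$ and $B$ and $pH\ne0$.) Now let $p_1,\dots,p_n\in M$ be pairwise orthogonal nonzero projections and set $p_0:=1-\sum_{i=1}^n p_i$, so that $H=\bigoplus_{i=0}^n p_iH$ as an $A$-$B$-bimodule. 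Each $p_iH$ is a (possibly zero) direct summand of the dualizable bimodule $H$, hence dualizable; and for $i\ge1$ it is nonzero, so $d(p_iH)\ge1$. By additivity and nonnegativity of $d$,
\[
d(H)=\sum_{i=0}^n d(p_iH)\geq\sum_{i=1}^n d(p_iH)\geq n .
\]
Since $H$ is dualizable, $d(H)<\infty$, and therefore $n\le d(H)$.

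Taking $N:=\lfloor d(H)\rfloor$ in the first paragraph shows that $M$ is finite-dimensional; more precisely $M\cong\bigoplus_{i=1}^k M_{n_i}(\IC)$ with $\sum_i n_i\le d(H)$, so $\dim_\IC M\le d(H)^2$.

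The one point that needs care is the small package of facts about the statistical dimension invoked in the second paragraph: additivity under direct sums, the lower bound $d\ge1$ for nonzero bimodules between factors, and the stability of dualizability (equivalently, of finiteness of $d$) under passage to nonzero summands, together with the faithfulness of the $A$- and $B$-actions on the pieces $p_iH$. These are all standard properties of the minimal index established in \cite{BDH(Dualizability+Index-of-subfactors)}; granted them, the counting argument above leaves no further obstacle.
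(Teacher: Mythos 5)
Your proof is correct. One preliminary remark: the paper does not prove this statement at all --- it is quoted as Lemma 4.10 of the companion paper \cite{BDH(Dualizability+Index-of-subfactors)} --- so there is no in-paper argument to compare against. Your route (identify $\mathrm{End}({}_AH_B)$ as the commutant in $\bfB(H)$ of the image of $A\,\barox\,B^\op$, hence a von Neumann algebra, and then bound the size of any family of pairwise orthogonal nonzero projections by $d(H)$ via additivity of the statistical dimension and the lower bound $d\ge 1$ on nonzero summands) is a complete and standard way to establish the lemma, and it even gives the quantitative refinement $\dim_\IC\mathrm{End}({}_AH_B)\le d(H)^2$. The only caveat worth recording is logical rather than mathematical: since the lemma belongs to \cite{BDH(Dualizability+Index-of-subfactors)}, you should check that the three facts you import from there are not themselves derived from it. They are not, and each admits a short direct proof: dualizability passes to summands by compressing the un-normalized duality maps (which suffices, since the normalized and un-normalized notions agree by \cite[Thm.~4.12]{BDH(Dualizability+Index-of-subfactors)}); additivity of $d$ under finite direct sums is verified on $R=R_1\oplus R_2$, $S=S_1\oplus S_2$, the normalization condition surviving because both sides vanish on off-diagonal endomorphisms; and the bound $d\ge 1$ for a nonzero dualizable $H$ follows without any appeal to the minimal index from $1=\|1_H\|=\|(R^*\boxtimes 1)(1\boxtimes S)\|\le\|R\|\,\|S\|=d$, using that $\|R\|^2=\|R^*R\|=d=\|S\|^2$. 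With those inputs secured, your counting argument closes.
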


\subsection*{Statistical dimension and minimal index}
\addtocontents{toc}{\SkipTocEntry}
(See~\cite[\S 5]{BDH(Dualizability+Index-of-subfactors)} for further details.)
\begin{definition}\label{def: stat dim}
The statistical dimension of a dualizable bimodule ${}_AH_B$ is given by
\begin{equation*}
\qquad\dim ({}_A H_B) := R^*R = S^* S\,\in\,\IR_{\ge 0}
\end{equation*}
where $R$ and $S$ are as in \eqref{eq:duality maps}.
For non-dualizable bimodules, one declares $\dim({}_AH_B)$ to be $\infty$.
\end{definition}

Note that from the definition, it is obvious that 
\begin{equation}\label{eq: dim (A H_B)= dim (B H_A)}
\dim ({}_A H_B)=\dim ({}_B \bar H_A).
\end{equation}
The minimal index $[B:A]$ of an inclusion of factors 
${\iota:A \to B}$ is the square of the statistical dimension of ${}_AL^2B_B$.
If ${}_AH_B$ is a faithful bimodule between factors, 
then we have $[B':A]=[A':B]=\dim({}_AH_B)^2$.

\begin{definition}\label{def: finite homomrphism}
Let $\iota:A\to B$ be an inclusion of factors.
If the minimal index $[B:A]$ is finite, we say that $\iota$ is a \emph{finite} homomorphism.
\end{definition}

As a corollary of Lemma \ref{lem: endolgebra dim<oo}, we have:

\begin{lemma}[{\cite[Lemma 5.15]{BDH(Dualizability+Index-of-subfactors)}}]\label{lem: rel comm is finite dim}
Let $\iota:A\to B$ be a finite homomorphism between factors.
Then the relative commutant of $\iota(A)$ in $B$ is finite-dimensional.
\end{lemma}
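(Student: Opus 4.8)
The plan is to recognize the relative commutant $\iota(A)'\cap B$ as the endomorphism algebra of a single dualizable bimodule, and then invoke Lemma~\ref{lem: endolgebra dim<oo}.

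First I would unwind the definition of finiteness: by Definition~\ref{def: finite homomrphism}, saying $\iota\colon A\to B$ is finite means $[B:A]<\infty$, and by the discussion after Definition~\ref{def: stat dim} this minimal index is the square of the statistical dimension of the bimodule ${}_AL^2(B)_B$ (with the left $A$-action via $\iota$ and the right $B$-action the standard-form action). Hence $\dim({}_AL^2(B)_B)<\infty$, i.e.\ ${}_AL^2(B)_B$ is dualizable.

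Next I would identify $\operatorname{End}({}_AL^2(B)_B)$ with $\iota(A)'\cap B$. Since $L^2(B)$ is a standard form for $B$, the commutant of the right $B$-action on $L^2(B)$ is exactly the left $B$-action; so a bounded operator on $L^2(B)$ that is right-$B$-linear is left multiplication by a (unique) element $b\in B$. Such an operator is moreover left-$A$-linear (for the action through $\iota$) precisely when $b$ commutes with $\iota(A)$, i.e.\ when $b\in \iota(A)'\cap B$. Therefore $\operatorname{End}({}_AL^2(B)_B)\cong \iota(A)'\cap B$ as von Neumann algebras. Applying Lemma~\ref{lem: endolgebra dim<oo} to the dualizable bimodule ${}_AL^2(B)_B$ then gives that $\iota(A)'\cap B$ is finite-dimensional, which is the claim.

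The only step requiring any care is the identification of the $A$-$B$-bilinear endomorphism algebra with the relative commutant, and this is entirely routine given that $L^2(B)$ is the standard form (so that the bicommutant-type statement "right-$B$-linear operators are left multiplications by $B$" is available); there is no real obstacle, and the lemma follows as an immediate corollary of Lemma~\ref{lem: endolgebra dim<oo}.
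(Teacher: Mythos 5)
Your proposal is correct and is exactly the derivation the paper intends: the paper introduces this lemma with the phrase ``As a corollary of Lemma~\ref{lem: endolgebra dim<oo}'' (deferring details to the cited reference), and your argument — finiteness of $[B:A]$ gives dualizability of ${}_AL^2(B)_B$, the standard-form property identifies $\operatorname{End}({}_AL^2(B)_B)$ with $\iota(A)'\cap B$, and Lemma~\ref{lem: endolgebra dim<oo} finishes — is precisely how that corollary is obtained.
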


\subsection*{Haagerup's  $u$-topology.}
\addtocontents{toc}{\SkipTocEntry}
Given von Neumann algebras $A$ and $B$, the $u$-topology on $\mathrm{Hom}(A,B)$
is defined by declaring that a generalized sequence $\{\varphi_i\}$ converges to $\varphi\in\mathrm{Hom}(A,B)$
if for every $\xi\in L^1(B)$, we have $\lim_i(\xi\circ\varphi_i) = \xi\circ\varphi$ in $L^1(A)$.
Equivalently, it is the topology generated by the semi-norms
\begin{equation*}
\varphi \,\mapsto\, \| \xi \circ \varphi \|_{L^1(A)} = \sup_{a \in A, \|a\| \le 1} | \xi( \varphi(a)) | 
\end{equation*}
for $\xi \in L^1(B)$.

The subgroup $\mathrm N(A) := \{ u \in \U(L^2(A)) \,|\, uAu^* = A \}\subset \U(L^2(A))$
is closed for the strong (=\,weak) topology on $\U(L^2(A))$.

\begin{lemma}\label{lem:ad-is-cont}
The adjoint map $\Ad \colon \mathrm N(A) \to \Aut(A)$, $\Ad(u)(a)=uau^*$, is continuous for the strong topology on $\mathrm N(A)$ and the $u$-topology on $\Aut(A)$.
\end{lemma}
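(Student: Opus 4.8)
\textbf{Proof plan for Lemma~\ref{lem:ad-is-cont}.}
The plan is to unwind both topologies to their defining semi-norms and reduce the continuity statement to an elementary estimate about how the functional $\xi\circ\Ad(u)$ varies as $u$ moves in the strong topology. Recall that the $u$-topology on $\Aut(A)$ is generated by the semi-norms $\alpha\mapsto\|\xi\circ\alpha\|_{L^1(A)}=\sup_{\|a\|\le1}|\xi(\alpha(a))|$ for $\xi\in L^1(A)$, and that on $\mathrm N(A)\subset\U(L^2(A))$ the strong, weak, and ultraweak topologies agree. Since $\mathrm N(A)$ is a topological group, it suffices to prove continuity at the identity: given $\xi\in L^1(A)$ and $\varepsilon>0$, I would produce a strong neighbourhood $U$ of $1\in\mathrm N(A)$ such that $\|\xi\circ\Ad(u)-\xi\|_{L^1(A)}<\varepsilon$ for all $u\in U$.

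First I would represent $\xi$ by a trace-class operator on $L^2(A)$, so that $\xi(a)=\tr(t\,a)$ for a fixed trace-class $t$, where $a\in A$ acts by left multiplication on $L^2(A)$. Then $\xi(\Ad(u)(a))=\tr(t\,uau^*)=\tr(u^*tu\,a)$, so that
\[
\xi\circ\Ad(u)-\xi \;=\; \big(\,\cdot\,\mapsto \tr\big((u^*tu-t)\,\cdot\,\big)\big)
\]
as a functional on $A$, and hence $\|\xi\circ\Ad(u)-\xi\|_{L^1(A)}\le\|u^*tu-t\|_1$, the trace-class norm. So the lemma reduces to the statement that conjugation $u\mapsto u^*tu$ is strongly continuous at $u=1$ for the trace-class norm, with $t$ fixed. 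This is a standard fact: for $t$ of rank one, $t=\langle\cdot,\eta\rangle\zeta$, one has $u^*tu=\langle\cdot,u^*\eta\rangle u^*\zeta$, and $\|u^*tu-t\|_1\le\|u^*\zeta-\zeta\|\,\|\eta\|+\|\zeta\|\,\|u^*\eta-\eta\|$, which is small when $u$ (hence $u^*$, since inversion is strongly continuous on the unitary group) is strongly close to $1$ on the finitely many relevant vectors. For general trace-class $t$ one approximates $t$ in trace-class norm by finite-rank operators and uses that $\|u^*tu\|_1=\|t\|_1$ to control the tails uniformly in $u$.

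The only mild subtlety — and the step I would be most careful about — is the passage from the strong topology on $\U(L^2(A))$ to strong continuity of $u\mapsto u^*$: multiplication is jointly strongly continuous on bounded sets, but the adjoint is not strongly continuous on all of $\bfB(L^2(A))$; on the unitary group, however, $u\mapsto u^{-1}=u^*$ is strongly continuous (this is exactly the footnoted fact that makes $\U_\pm(H)$ a topological group), so this causes no trouble. Assembling the pieces: choose finite-rank $t_0$ with $\|t-t_0\|_1<\varepsilon/3$; then for $u$ in a suitable strong neighbourhood of $1$, $\|u^*t_0u-t_0\|_1<\varepsilon/3$; and $\|u^*tu-u^*t_0u\|_1=\|t-t_0\|_1<\varepsilon/3$, so $\|\xi\circ\Ad(u)-\xi\|_{L^1(A)}\le\|u^*tu-t\|_1<\varepsilon$, as required.
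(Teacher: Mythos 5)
Your proof is correct, but it takes a genuinely different route from the paper's. You represent a general $\xi\in L^1(A)$ by a trace-class operator $t$ on $L^2(A)$, bound $\|\xi\circ\Ad(u)-\xi\|_{L^1(A)}$ by $\|u^*tu-t\|_1$, and then prove trace-norm continuity of $u\mapsto u^*tu$ at $1$ via the rank-one computation plus finite-rank approximation (using $\|u^*su\|_1=\|s\|_1$ to control the tails uniformly in $u$). The paper instead exploits the standard form: after restricting to $\xi\in L^1_+(A)$, it writes $\xi(a)=\langle a\sqrt{\xi},\sqrt{\xi}\,\rangle_{L^2(A)}$ for the single vector $\sqrt{\xi}$ in the positive cone, so that $\xi(vav^*)=\langle av^*\sqrt{\xi},v^*\sqrt{\xi}\,\rangle$ and the whole estimate is one application of bilinearity and Cauchy--Schwarz, $|\xi(u_nau_n^*)-\xi(uau^*)|\le 2\,\|\sqrt{\xi}\,\|\cdot\|(u_n-u)^*\sqrt{\xi}\,\|$, uniformly over the unit ball of $A$ --- essentially your rank-one case with $\eta=\zeta=\sqrt{\xi}$ and no approximation step. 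Both arguments hinge on the same subtle point, which you correctly isolate: strong continuity of $u\mapsto u^*$ on the unitary group (not on all of $\bfB(H)$). Your reduction to continuity at the identity is legitimate, since translations in $\Aut(A)$ with the $u$-topology are homeomorphisms (left translation by $\beta$ carries the semi-norm of $\xi$ to that of $\xi\circ\beta$, and right translation preserves each semi-norm), though the paper's estimate works directly at an arbitrary point and avoids this. What your approach buys is independence from the standard form --- it goes through verbatim in any faithful normal representation and handles non-positive $\xi$ without polarization; what the paper's buys is brevity.
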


\begin{proof}[Proof.\footnotemark]
\footnotetext{Courtesy of \sf{http://mathoverflow.net/questions/87324/}}
Given $\xi \in L^1_+(A)$, we need to show that
\[
\begin{split}
f_\xi \,\,\colon\,\, \mathrm N(A&) \to\,\,\,\, \IC\\
u \,&\mapsto \sup_{a \in A, \|a\| \le 1} | \xi( u a u^*) |
\end{split}
\]
is continuous for the strong topology on $\mathrm N(A)$.
Let $u_n \to u$ be a convergent sequence in $\mathrm N(A)$.
For every $v\in \mathrm N(A)$, we have 
\[
\textstyle \xi (v a v^*) =\big\langle v a v^* \sqrt{\xi}, \sqrt{\xi}\, \big\rangle_{L^2(A)} = \big\langle a v^* \sqrt{\xi},v^* \sqrt{\xi}\, \big\rangle_{L^2(A)}.
\]
Therefore, given $a$ in the unit ball of $A$, we have
\begin{eqnarray*}
\lefteqn{ \big| \xi (u_n a u_n^*) - \xi(u a u^*) | } & & \\ & = & \textstyle
| \langle a u_n^* \sqrt{\xi},(u_n-u)^* \sqrt{\xi} \rangle_{L^2(A)}  +  \langle a (u_n-u)^* \sqrt{\xi},u^* \sqrt{\xi} \rangle_{L^2(A)} \big|
\\
& \leq & \textstyle 2 \cdot\| \sqrt{\xi} \|_{L^2(A)} 
\cdot \| (u_n-u)^* \sqrt{\xi} \|_{L^2(A)}. 
\end{eqnarray*}
Since $u_n^*\to u^*$ in the strong topology, 
we have $\lim_n \| (u_n-u)^* \sqrt{\xi} \|_{L^2(A)} = 0$,
and so $\xi (u_n a u_n^*)$ converges to $\xi (u a u^*)$ uniformly in the unit ball of $A$.
\end{proof}

The functoriality of $L^2$ yields a map $\Aut(A) \to \mathrm N(A)$, $\psi \mapsto L^2(\psi)$.
Haagerup calls this the canonical implementation.
He also shows~\cite[Prop.~3.5]{Haagerup(1975standard-form)} that it exhibits $\Aut(A)$ with the $u$-topology as a closed subgroup of $\mathrm N(A)$ with the strong topology.

\begin{proposition}
\label{prop:subspace-topology-on-PU(A_0)}
Let $A_0 \subseteq A$ be a subfactor.
Assume that the action of the algebraic tensor product $A_0 \ox_\alg A'$ on $L^2(A)$ extends to the spatial tensor product $A_0 \,\bar{\otimes}\, A'$.
Then $\Ad \colon \U(A_0) \to \Aut(A)$ induces a homeomorphism from $\PU(A_0)$ with the quotient strong topology onto its image in $\Aut(A)$ with the $u$-topology.
\end{proposition}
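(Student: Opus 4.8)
The plan is to factor $\Ad$ through the canonical implementation on $L^2(A)$ and to use the split hypothesis to identify the relevant topologies. The formal parts come first: since $A_0\subseteq A$, every $u\in\U(A_0)$ normalizes $A$, so $\U(A_0)\subseteq\mathrm N(A)$, and Lemma~\ref{lem:ad-is-cont} gives that $\Ad\colon\U(A_0)\to\Aut(A)$ is continuous from the strong topology to the $u$-topology. Its kernel on $\U(A_0)$ is $\U(A_0)\cap A'=\U(A_0)\cap Z(A)=S^1$ because $A$ is a factor, so $\Ad$ descends to a continuous injection $\PU(A_0)\hookrightarrow\Aut(A)$. What remains is to show this injection is a homeomorphism onto its image, i.e.\ that whenever $u_i\in\U(A_0)$ and $\Ad(u_i)\to\Ad(u)$ in the $u$-topology, then $[u_i]\to[u]$ in $\PU(A_0)$.

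Next I would translate this into $L^2(A)$. For $u\in\U(A)$ the canonical implementation of $\Ad u$ is $L^2(\Ad u)=uJuJ$: it implements $\Ad u$ since $JuJ\in JAJ=A'$, and it preserves the positive cone $L^2_+(A)$ by property~(4) of the standard form. By Haagerup's theorem~\cite{Haagerup(1975standard-form)} (quoted just above the proposition), $L^2\colon\Aut(A)\to\mathrm N(A)$ is a homeomorphism onto a closed subgroup for the $u$- and strong topologies; hence $\Ad(u_i)\to\Ad(u)$ in the $u$-topology is equivalent to $u_iJu_iJ\to uJuJ$ strongly in $\bfB(L^2(A))$. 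Now the hypothesis enters: the action of $A_0\,\bar\otimes\,A'$ on $L^2(A)$ is a normal unital $*$-homomorphism $\pi\colon A_0\,\bar\otimes\,A'\to\bfB(L^2(A))$ with $\pi(a\otimes b)=ab$. Because $A_0$ and $A'$ are factors, $A_0\,\bar\otimes\,A'$ is a factor, so the ultraweakly closed ideal $\ker\pi$ is either $0$ or everything; as $\pi(1)=1\neq0$ it is $0$, and $\pi$ is a normal isomorphism onto a von Neumann subalgebra of $\bfB(L^2(A))$. In particular $\pi$ restricts to a homeomorphism of unitary groups for the strong operator topology, since on unitary groups the strong topology agrees with the ultraweak one and $\pi$ preserves the latter. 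Finally, for $u\in\U(A_0)$ one has $JuJ\in JA_0J\subseteq A'$, so $u\otimes JuJ\in\U(A_0\,\bar\otimes\,A')$ and $\pi(u\otimes JuJ)=u\,JuJ$.

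With this in hand I would run the decoupling step. From $u_iJu_iJ\to uJuJ$ strongly and the previous paragraph, $u_i\otimes Ju_iJ\to u\otimes JuJ$ strongly in $A_0\,\bar\otimes\,A'$. Choose a normal state $\omega'$ on $A'$ with $\omega'(JuJ)\neq0$ (a vector state will do) and apply the right slice map $\mathrm{id}_{A_0}\,\bar\otimes\,\omega'\colon A_0\,\bar\otimes\,A'\to A_0$, which is normal, unital and completely positive, hence strongly continuous on the unit ball; this gives $\omega'(Ju_iJ)\,u_i\to\omega'(JuJ)\,u$ strongly. Taking norms on a fixed unit vector shows $|\omega'(Ju_iJ)|\to|\omega'(JuJ)|=:\rho>0$, so for large $i$ the scalars $\lambda_i:=\overline{\omega'(Ju_iJ)}\,|\omega'(Ju_iJ)|^{-1}$ are well defined, and $\lambda_i u_i=|\omega'(Ju_iJ)|^{-1}\,\omega'(Ju_iJ)\,u_i$ converges strongly to $\rho^{-1}\omega'(JuJ)\,u$, which is $u$ times a unit scalar. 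Absorbing that scalar into the $\lambda_i$ yields a net of unit scalars with $\lambda_i u_i\to u$ strongly, i.e.\ $[u_i]\to[u]$ in $\PU(A_0)$, which finishes the argument.

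I expect the real work to be in the last two paragraphs: checking that the split action $\pi$ is normal and faithful (the factoriality observation), and, above all, extracting convergence of the $u_i$ up to phase from convergence of $u_i\otimes Ju_iJ$. The latter is exactly where the split property is indispensable, since without it $A_0$ and $A'$ need not generate a tensor product and no slice map is available; the first two paragraphs are bookkeeping on top of Lemma~\ref{lem:ad-is-cont} and Haagerup's theorem.
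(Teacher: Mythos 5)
Your proof is correct, and its overall scaffolding coincides with the paper's: reduce via Haagerup's canonical implementation to showing that $u\mapsto uJuJ$ is a homeomorphism of $\PU(A_0)$ onto its image in $\U(L^2(A))$, use the split hypothesis to view $uJuJ$ as the unitary $u\otimes JuJ$ of $A_0\,\bar\otimes\,A'$, and then recover $u$ from $u\otimes JuJ$ up to phase. The one genuinely different ingredient is the last step. The paper isolates it as Lemma~\ref{lem:u_n-ox-v_n--->u-ox-v} and proves it by normalizing $\langle u_n\xi\mid\xi\rangle\ge 0$ and invoking weak-$*$ compactness of the unit balls to pass to convergent subnets; you instead apply the slice map $\mathrm{id}_{A_0}\,\bar\otimes\,\omega'$ for a normal state $\omega'$ with $\omega'(JuJ)\neq 0$. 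Your route is more direct (no compactness or subnet extraction, and it works verbatim for nets), at the cost of invoking the standard fact that a normal u.c.p.\ map is SOT-continuous on bounded sets, which you should justify (e.g.\ via the Kadison--Schwarz inequality $\Phi(x)^*\Phi(x)\le\Phi(x^*x)$ together with ultraweak continuity). Two cosmetic points: your definition $\lambda_i=\overline{\omega'(Ju_iJ)}\,|\omega'(Ju_iJ)|^{-1}$ is inconsistent with the displayed identity that follows it --- the scalar that works is $\omega'(Ju_iJ)\,|\omega'(Ju_iJ)|^{-1}$ (times the fixed phase $\overline{\omega'(JuJ)}/|\omega'(JuJ)|$ that you absorb at the end), since $\overline{c_i}/c_i$ need not converge; and your formula $L^2(\Ad u)=uJuJ$ (rather than the paper's $uJu^*J$) is the one that actually descends to $\PU(A_0)$ and preserves the positive cone, so keep it.
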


\begin{proof} 
Recall that the canonical implementation $\psi \mapsto L^2(\psi)$ identifies $\Aut(A)$ with a closed subgroup of $\U(L^2(A))$. 
For $u \in \U(A)$, the canonical implementation of $\Ad(u) \in \Aut(A)$ is $L^2(\Ad(u)) = u J u^* J$, where $J$ is the modular conjugation on $L^2(A)$.
Thus it suffices to show that
\[
\begin{split}
f \colon \U(A_0) \,&\to \U(L^2(A)),\\
u \,&\mapsto\, u J u^* J
\end{split}
\]
descends to a homeomorphism $\bar f \colon \PU(A_0)\to f(\U(A_0))$.

The map $\bar{f}$ is well-defined, bijective, and continuous by Lemma~\ref{lem:ad-is-cont}. 
It remains to show that $\bar{f}^{-1}$ is continuous.
Let $u_n$ and $u$ be elements of $\U(A_0)$ so that $\lim_n f(u_n) = f(u)$ in $\U(L^2(A))$.
We would like to know that $\lim_n [u_n] = [u]$ in $\PU(A_0)$.
For that, it is enough to show that there exist $\lambda_n \in S^1$ such that $\lim_n\lambda_n u_n = u$ in $\U(A_0)$.

Since the algebra generated by $A_0$ and $A'$ on $L^2(A)$ is their spatial tensor product,
we can identify $f(u_n)$ and $f(u)$ with the elements $u_n \otimes J u_n^* J$ and $u \otimes J u^* J$ of $\U(A_0 \bar{\ox} A)$.
The existence of the $\lambda_n$ then follows from Lemma~\ref{lem:u_n-ox-v_n--->u-ox-v}.
\end{proof}

\begin{lemma} \label{lem:u_n-ox-v_n--->u-ox-v}
Let $A$ and $B$ be factors.
Let $\{u_n\}$ be a sequence in $\U(A)$, and $\{v_n\}$ a sequence in $\U(B)$ so that
\[
\qquad\qquad\qquad\lim_n (u_n \otimes v_n) = u \otimes v\qquad\quad\text{in $\,\U(A \,\bar{\otimes}\, B)$}
\]
for given $u \in \U(A)$ and $v \in \U(B)$.
Then there exist $\lambda_n \in S^1$ so that $\lim_n\lambda_n u_n = u$ and $\lim_n \lambda_n^{-1} v_n = v$.
\end{lemma}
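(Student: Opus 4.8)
The plan is to fix faithful normal representations $A\subseteq\bfB(H_1)$ and $B\subseteq\bfB(H_2)$, so that $A\,\bar\otimes\,B$ acts faithfully on $H_1\otimes H_2$ and the hypothesis $\lim_n(u_n\otimes v_n)=u\otimes v$ in $\U(A\,\bar\otimes\,B)$ becomes strong-operator convergence $u_n\otimes v_n\to u\otimes v$ on $H_1\otimes H_2$. I would then fix once and for all unit vectors $\xi_0\in H_1$ and $\eta_0\in H_2$, and test the convergence against $\xi_0\otimes\eta_0$, getting $u_n\xi_0\otimes v_n\eta_0\to u\xi_0\otimes v\eta_0$ in $H_1\otimes H_2$.

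\emph{Extracting the phases.} First I would observe that
\[
\langle u_n\xi_0,u\xi_0\rangle\,\langle v_n\eta_0,v\eta_0\rangle=\langle u_n\xi_0\otimes v_n\eta_0,\,u\xi_0\otimes v\eta_0\rangle\longrightarrow 1,
\]
and since $|\langle u_n\xi_0,u\xi_0\rangle|\le1$ and $|\langle v_n\eta_0,v\eta_0\rangle|\le1$, both factors converge to a complex number of modulus one. Write $\langle u_n\xi_0,u\xi_0\rangle=r_ne^{i\theta_n}$ with $r_n\to1$ (choosing $\theta_n$ arbitrarily for the finitely many $n$ with vanishing inner product) and set $\lambda_n:=e^{-i\theta_n}\in S^1$. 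The identity $\|u_n\xi_0-\lambda_n^{-1}u\xi_0\|^2=2-2r_n$ gives $\lambda_n u_n\xi_0\to u\xi_0$. Writing similarly $\langle v_n\eta_0,v\eta_0\rangle=s_ne^{i\phi_n}$ with $s_n\to1$ one gets $e^{-i\phi_n}v_n\eta_0\to v\eta_0$; since $r_ns_ne^{i(\theta_n+\phi_n)}\to1$ forces $e^{i(\theta_n+\phi_n)}\to1$, hence $e^{-i\phi_n}-e^{i\theta_n}\to0$, this upgrades to $\lambda_n^{-1}v_n\eta_0\to v\eta_0$.

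\emph{Upgrading to strong convergence.} The key step is to show the \emph{same} sequence $\lambda_n$ works against every test vector. For arbitrary $\xi\in H_1$, test the hypothesis against $\xi\otimes\eta_0$ to get $u_n\xi\otimes v_n\eta_0\to u\xi\otimes v\eta_0$. Writing $v_n\eta_0=\lambda_n v\eta_0+\lambda_n\varepsilon_n$ with $\varepsilon_n:=\lambda_n^{-1}v_n\eta_0-v\eta_0\to0$, and using $\|u_n\xi\otimes\varepsilon_n\|=\|\xi\|\,\|\varepsilon_n\|\to0$, one obtains $(\lambda_n u_n\xi)\otimes v\eta_0\to(u\xi)\otimes v\eta_0$. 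Applying the bounded operator $\mathrm{id}_{H_1}\otimes\langle\,\cdot\,,v\eta_0\rangle\colon H_1\otimes H_2\to H_1$ (legitimate since $\|v\eta_0\|=1\neq0$) yields $\lambda_n u_n\xi\to u\xi$; as $\xi$ is arbitrary, $\lambda_n u_n\to u$ strongly, i.e. in $\U(A)$. The symmetric argument — testing against $\xi_0\otimes\eta$, writing $u_n\xi_0=\lambda_n^{-1}u\xi_0+\lambda_n^{-1}\delta_n$ with $\delta_n\to0$, and dividing by the nonzero vector $u\xi_0$ — gives $\lambda_n^{-1}v_n\to v$ strongly.

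The genuinely delicate point is the upgrading step: the phase $\lambda_n$ coming from the single test vector $\xi_0\otimes\eta_0$ is a priori unrelated to the phase one would read off from a different test vector, and the statement is false without a coherent global choice. The mechanism forcing coherence is the rigidity of a fixed nonzero vector sitting in one tensor slot: once $v_n\eta_0$ is pinned down up to the scalar $\lambda_n$, convergence of $u_n\xi\otimes v_n\eta_0$ drags that same scalar onto $u_n\xi$ for every $\xi$, and symmetrically in the other variable. (Note that factoriality of $A$ and $B$ is not actually used; only the existence of faithful normal representations, which is automatic.)
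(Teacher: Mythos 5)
Your proof is correct, but it takes a genuinely different route from the paper's. The paper reduces to $u=v=1$, normalizes the phases so that $\langle u_n\xi,\xi\rangle\ge 0$ for a fixed unit vector $\xi$, and then argues by weak-$*$ compactness of the unit balls $A_1$, $B_1$: any weak limit point $(\hat u,\hat v)$ of $(u_n,v_n)$ satisfies $\hat u\otimes\hat v=1\otimes 1$ (using that the product map $A_1\times B_1\to (A\,\bar\otimes\,B)_1$ is weakly continuous --- a point the paper flags as specific to the spatial tensor product), forcing $\hat u=\lambda$, $\hat v=\lambda^{-1}$ with $\lambda=1$ by the normalization; weak convergence of unitaries to a unitary then upgrades to strong convergence. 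You instead work entirely with test vectors: you read the phase $\lambda_n$ off a single vector $\xi_0\otimes\eta_0$ and then propagate it to all other test vectors through the tensor structure, using the slicing map $\mathrm{id}\otimes\langle\,\cdot\,,v\eta_0\rangle$. This avoids both the compactness/subsequence argument and the weak continuity of the product map, and is arguably more elementary; the paper's argument is shorter on the page but leans on those two facts. Your closing observation that factoriality is not needed is accurate (it is not essentially used in the paper's argument either --- the implication $\hat u\otimes\hat v=1\otimes1\Rightarrow\hat u,\hat v$ scalar holds for arbitrary von Neumann algebras). One phrasing to fix: the sentence ``both factors converge to a complex number of modulus one'' is not literally true --- the two inner products need not converge individually, only their moduli $r_n,s_n$ tend to $1$ and the product of their phases tends to $1$; but that is exactly what you actually use in the next lines, so the argument itself is unaffected.
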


\begin{proof}
The identity $\lim_n (u_n \otimes v_n) = u \otimes v$ is equivalent to $\lim_n (u^*u_n \otimes v^*v_n) = 1 \otimes 1$,
so we may assume that $u = 1$ and $v = 1$.
Pick a faithful representation $H$ of $A$, and a unit vector $\xi \in H$.
Replacing $u_n$ by $\lambda_n u_n$ and $v_n$ by $\lambda_n^{-1} v_n$ for appropriate $\lambda_n \in S^1$,
we may also assume that $\langle u_n \xi \,|\, \xi \rangle \geq 0$.

Denote by $A_1$ and $B_1$ the unit balls in $A$ and $B$. 
These are compact in the weak (= ultraweak) topology, and so it is enough to show that the limit of any weakly convergent subsequence of $\{u_n\}$ is equal to $1$, and the same for $\{v_n\}$.
We can therefore assume that $\hat{u}:=\lim_n u_n$ and and $\hat{v}:=\lim_n v_n$ exist.
The product map $A_1 \x B_1 \to (A \,\bar{\ox}\, B)_1$ is continuous for the weak topology.\footnote{This fails if one replaces $A \,\bar{\ox}\, B$ by some other completion of $A \otimes_\alg B$.}
It follows that $\hat{u} \otimes \hat{v} = 1 \otimes 1$, and so $\hat{u} = \lambda$ and $\hat{v} = \lambda^{-1}$ for some $\lambda \in S^1$.
Finally, $\lim_n \langle u_n \xi \,|\, \xi \rangle = \langle \hat{u} \xi \,|\, \xi \rangle = \lambda$ is positive, and so $\lambda = 1$.
\end{proof}

Given two von Neumann algebras $A$ and $B$, recall that $\mathrm{Hom}_{\VN}(A,B)$ denotes the space
of homomorphisms and antihomomorphisms from $A$ to $B$.
That set is topologized as the disjoint union of $\mathrm{Hom}(A,B)$ and $\mathrm{Hom}(A,B^\op)$,
where both of those Hom sets are given the Haagerup $u$-topology.


\bibliographystyle{abbrv}
\bibliography{../Files/db-cn3}

\end{document}